\documentclass[11pt]{article}
\usepackage[margin=1in]{geometry}
\usepackage{authblk}
\usepackage{footmisc} 
\usepackage[table, dvipsnames]{xcolor}
\definecolor{NCSUred}{RGB}{204,0,0}
\definecolor{NCSUyellow}{RGB}{250,200,0}
\definecolor{NCSUorange}{RGB}{209,73,5}
\definecolor{NCSUdarkred}{RGB}{153,0,0}
\definecolor{NCSUaqua}{RGB}{0,132,115}
\definecolor{NCSUgreen}{RGB}{111,125,28}
\definecolor{NCSUblue}{RGB}{65,86,161}
\definecolor{NCSUlightblue}{RGB}{66,126,147} 
\usepackage{graphicx, url, caption, subcaption, cite, booktabs, enumitem}
\usepackage[colorlinks=true, linkcolor=NCSUaqua, urlcolor=NCSUorange, citecolor=NCSUblue]{hyperref}
\usepackage{amsmath, amsfonts, amssymb, amsthm}
\allowdisplaybreaks
\theoremstyle{plain}
\newtheorem{problem}{Problem} 
 
\newtheorem{definition}{Definition}[section]
\newtheorem{assumption}[definition]{Assumption} 
\newtheorem{theorem}[definition]{Theorem}
\newtheorem{lemma}[definition]{Lemma}
\newtheorem{proposition}[definition]{Proposition}
\newtheorem{corollary}[definition]{Corollary}

\newtheorem{remark}[definition]{Remark} 

\RequirePackage[T1]{fontenc}
\usepackage{libertinus, libertinust1math}
\usepackage[cal=rsfso, calscaled=0.95]{mathalpha}
\newcommand{\mc}[1]{\mathcal{#1}}
\newcommand{\ms}[1]{\mathsf{#1}}
\newcommand{\mr}[1]{\mathrm{#1}}

\newcommand{\mbb}[1]{\mathbb{#1}}
\newcommand{\rg}[1]{\mathring{#1}}
\newcommand{\mR}{\mathbb{R}}
\newcommand{\mC}{\mathbb{C}}
\newcommand{\mN}{\mathbb{N}}
\newcommand{\xD}[1]{\mr{d} #1}
\newcommand{\xDD}[2]{\frac{\xD{#1}}{\xD{#2}}}

\newcommand{\bra}[1]{\left( #1 \right)}
\newcommand{\Bra}[1]{\left[ #1 \right]}
\newcommand{\BRA}[1]{\left\{ #1 \right\}}
\newcommand{\norm}[1]{\left\| #1 \right\|}
\newcommand{\abs}[1]{\left\vert  #1 \right\vert}
\newcommand{\ip}[2]{\left\langle #1, \, #2 \right\rangle}
\newcommand{\ipp}[3]{\left\langle #1 \right\vert #2 \left\vert #3 \right\rangle}

\newcommand{\ve}{\varepsilon}

\renewcommand{\Re}{\mr{Re}\,}

\DeclareMathOperator{\tr}{tr}
\DeclareMathOperator{\diag}{diag}
\DeclareMathOperator{\Diag}{Diag}

\title{Koopman--Lyapunov and Koopman--Riccati Controller Synthesis}
\author{Wentao Tang \thanks{Corresponding author: \href{mailto:wtang23@ncsu.edu}{\tt wtang23@ncsu.edu} }}
\affil{Department of Chemical and Biomolecular Engineering, North Carolina State University}
\date{\today}

\begin{document}
\maketitle
\begin{abstract} 
    While the Koopman operator-based lifted representations of nonlinear autonomous and controlled systems have been well covered in the literature, the direct controller synthesis problem largely remains open. 
    As the controlled system is generally a bilinear (instead of fully linear) Koopman model, controller synthesis appears to lose the linear--quadratic structures of the classical Lyapunov and Riccati methods for finite-dimensional linear systems. 
    However, in \emph{infinite dimensions}, bilinearity in the feature map should be merely artifactual and hence reducible to linearity in principle. 
    In this work, by considering the operator semigroups in the reproducing kernel Hilbert space with a linear--Sobolev kernel and utilizing the Yosida approximation of their generators, the synthesis problems of a stabilizing controller and an input-unconstrained optimal controller can be formulated as an \emph{operator Lyapunov inequality} and an \emph{operator Riccati equation}, respectively. 
    These formulations recover the desirable linear--quadratic structures, and hence, with sample data, they are solvable efficiently in analogous ways to usual linear systems by semidefinite programming solvers and Kleinman iterations, respectively. 
    In addition to proving control performance bounds dependent on the sampling denseness and Yosida regularization, through numerical comparison to linearized LQR, bilinear EDMD-based synthesis, nonlinear MPC, and Koopman MPC methods on numerical models and process control examples, this work demonstrates the proposed Koopman--Riccati synthesis as a computationally efficient approach to obtain a near-optimal state-feedback control law. 
\end{abstract}

\newpage 

\section{Introduction} 
The operator-theoretic framework, comprising the Koopman operator, the Perron--Frobenius operator, and akin concepts, has become increasingly well recognized as a standard pathway to the modeling and analysis of nonlinear systems \cite{mauroy2020koopman, mezic2021koopman, otto2021koopman, brunton2022modern}. This framework is built on two cornerstones. 
\begin{itemize}
    \item \emph{Operator theory.} The composition operator on a state-dependent function space\footnote{A state-dependent function in the chosen function space is called an ``observable'' in the literature. The author tends not to use this terminology to avoid confusion with observability and observer synthesis topics in control theory.} 
    or the transition operator on a probability measure space, if well-defined, can be used to ``lift'' a nonlinear system in finite dimensions into a linear system in an infinite-dimensional space. The richness of nonlinear system behaviors (convergence, ergodicity, etc.) is therefore embodied in the spectrum of the operator defined on the chosen function space \cite{lan2013linearization, korda2020data, das2020koopman, colbrook2024rigorous}. 
    For continuous-time systems and especially stochastic systems, it is more appropriate to use operator semigroups (parameterized by time) and consider their generators \cite{vcrnjaric2020koopman, susuki2021koopman, meng2026resolvent}. 
    \item \emph{Operator learning.} The learning of Koopman operator, Perron--Frobenius operator, or corresponding generators in continuous time from data can be formulated as least squares problems or regularized ones. The intuitive idea originated from the extended dynamic mode decomposition (EDMD) method, aiming to find the transition matrix between predecessor--successor data, transformed by nonlinear features \cite{bagheri2013koopman, williams2015data}. 
    The use of kernel-based learning instead of pre-assigned nonlinear features (known as kernel EDMD) suggests the definition of a reproducing kernel Hilbert space (RKHS) \cite{kevrekidis2016kernel}. 
    The analysis of kernel-based learning error was conducted with the help of statistical learning theory \cite{klus2020eigendecompositions, kostic2022learning, nuske2023finite}; for example, based on spectral analysis that assesses the limitation of finite data-based operator estimation after compactification into a space of weaker topologies, the standard analysis in Kostic \cite{kostic2023sharp} provided a sharp probabilistic error bound. 
\end{itemize} 

\paragraph{Koopman operator on RKHS and Sobolev-type RKHS.} 
The importance of choosing a space that guarantees the invariance of the Koopman operator (semigroup) is self-evident --- without invariance, the modeling error tends to be hard to quantify and its effect on dynamics is hard to predict \cite{haseli2026modeling}.
Although it may be beneficial to know some particular system structures that allow a finite-dimensional lifting \cite{shang2026existence}, systems in general cannot be fully linearized on a finite dictionary basis. Hence, RKHSs have been commonly used in the literature \cite{hou2023sparse, bevanda2024nonparametric, hou2026behavioral}, at least as a means to reconcile theoretical rigor and ease of learning. 
In K{\"{o}}hne et al. \cite{kohne2025error}, it was proved that a \emph{Sobolev-type RKHS} (associated with a Sobolev kernel) is equivalent to a Sobolev--Hilbert space $H^s(\mbb{X})$, and thus accommodates a well-defined, bounded linear Koopman operator, as long as the dynamics have a correspondingly high smoothness index; the fact that the Sobolev kernel gives an RKHS on $\mR^d$ and any regular bounded region $\mbb{X}\subset \mR^d$ is well known in kernel-based learning \cite{wendland2004scattered}. 
As also proved in \cite{kohne2025error}, if the Koopman operator is learned from the Sobolev-type RKHS, its error, when applied to predicting any RKHS function, is uniformly bounded. The error can be improved to be proportional to $|x|$ if the origin is forced to be sampled \cite{bold2025kernel}. 

\paragraph{Linear--Sobolev kernel and linear--Sobolev RKHS.} 
Realizing that a radial kernel, such as the Sobolev kernel $\kappa_{\mr{Sob}}(x, x')\propto \rho(|x-x'|)$, is not suitable for modeling a system with an equilibrium (at the origin)\footnote{More specifically, with any nontrivial radial kernel, the corresponding canonical feature map $\phi$, which ``lifts'' any $x$ into the RKHS, satisfies $\|\phi(x)\|= \rho(0)^{1/2}=\mr{const.}>0$. Thus, there is no longer an ``origin point'' after lifting, which is inconvenient for stability analysis and controller synthesis.}, 
the author \cite{tang-ye2025koopman} proposed to use a ``linear--radial product kernel'' instead. This kernel is the product kernel of the linear kernel $\kappa_{\mr{lin}}(x,x')=x\cdot x'$ and the Sobolev kernel $\kappa_{\mr{Sob}}$. All the functions in the resulting RKHS are ``locally at least linear'' and ``globally regular in the Sobolev sense''. (The precise meaning can be found in Section \ref{sec:preliminaries}.) 
This new RKHS definition brings the following beneficial advances. 
\begin{enumerate}[nolistsep]
    \item The Koopman spectrum--stability relation is found to be analogous to linear systems, i.e., the asymptotic stability of the origin in discrete time is reflected by a Koopman spectrum strictly confined in the unit disk of the complex plane \cite{tang-ye2025koopman}. 
    \item Using kernel quadratic and kernel sum-of-squares (SOS) forms, Lyapunov functions and their decay rates \cite{tang-ye2026koopman}, as well as the storage function and a corresponding supply rate function in the dissipation inequality \cite{tang-ye2026dissipativity}, can be represented, thus formulating Lyapunov stability and dissipativity certificates as operator linear inequalities.
    \item The data-driven estimation of a Zubov function, which solves Zubov's partial differential equation in a viscosity sense and thereby characterizes the maximal domain of attraction of an (autonomous) nonlinear system, can be reduced to a least-squares problem, where the Zubov function (by a suitable nonlinear transformation) is written as $1$ plus a function in this RKHS \cite{tang2026data}. 
    \item The synthesis of state observers for nonlinear systems that can be measured only in the output ($\dot{x} = f(x)$, $y=h(x)$) was formulated as the solution of an \emph{operator Lyapunov inequality} in a data-based empirical version \cite{tang2026koopman}. Concepts of Koopman observability and detectability were used to characterize the existence of such a Koopman--Luenberger observer. 
\end{enumerate} 

\paragraph{Problem of interest: Koopman-based control.}
The popularity of Koopman operator among control theorists was largely motivated by the hope of using linear controller synthesis approaches for nonlinear systems. In the paper of Korda and Mezi{\'{c}} written a decade ago \cite{korda2018linear}, it was claimed that ``\textit{these linear predictors can be \emph{readily} used to design controllers for the nonlinear dynamical system using linear controller design methodologies.}''\footnote{The linear predictor here refers to a lifted but finite-dimensional discrete-time model of the form $x^+=Ax+Bu$, proposed to be learnable via least squares in Proctor et al. \cite{proctor2018generalizing}. The non-existence of such a linear predictor for general nonlinear systems, however, makes such a natural idea appear to be oversimplified and become suboptimal.} 
However, it was clear that with input variables, the operator model can at simplest be \emph{bilinear} --- when differentiating or differencing any function of state, the input-dependent term generally still involves the state. 
This fact was pointed out in \cite{goswami2021bilinearization, iacob2024koopman} based on assuming closure conditions on Koopman eigenfunctions, and further discussed in Bevanda et al. for its nonparametric learning in RKHSs \cite{bevanda2024nonparametric}. Recently in \cite{morris-ye-tang2026}, it was proved that with the RKHS formulation, when the discrete-time dynamics satisfy certain regularity conditions, the Koopman model is \emph{exactly bilinear}. For continuous-time input-affine systems, it is not hard to realize that the bilinear model does not require any nonlinear features of the input variable \cite{williams2016extending, peitz2020data}. 

Now that the Koopman control model appears to be not fully linear, is it still possible to control in an analogous way to linear systems as originally conceived in \cite{korda2018linear}? More concretely, can we still expect to have a Lyapunov equation/inequality for stabilization and a Riccati equation for linear--quadratic regulator (LQR) synthesis?

\subsection{Relevant Works}
Since \cite{korda2018linear}, there have been a large number of works that explored the use of EDMD or Koopman operators in control, e.g., of robotics \cite{bruder2020data, shi2022deep}, vehicles \cite{xiao2022deep}, and chemical processes \cite{narasingam2019koopman, han2025deep, shi2026data}. 
A common limitation is that due to the obscurity of the underlying true Koopman model, the error in an arbitrarily chosen approximator, such as a neural network \cite{wang2022koopman, zhao2024deep}, is not analyzable in terms of its effect on Koopman modeling error and consequently the control performance degradation. 
Hence, many existing Koopman-based control schemes are not considered as rigorously formulated. 
In this section, relevant works are reviewed; however, with the voluminous literature under the theme of Koopman operators, this review can only be non-exhaustive. 

\paragraph{Koopman-based controller synthesis for bilinear Koopman models.} 
The representative works of Str{\"a}sser et al. \cite{strasser2024koopman, strasser2025kernel} considered the controller analysis for bilinear systems $\phi(x^+) = A\phi(x)+B_0u+ \sum_{j=1}^{d_u} u_jB_j\phi(x) + r(x,u)$ with a feature map $\phi$, in which the modeling error $r(x,u)$ is bounded by $|r(x,u)|\leq c_x|\phi(x)| + c_u|u|$.\footnote{
This \emph{sectorial} bound was established by a Taylor polynomial argument when $\phi(x)$ is chosen to be a finite-dimensional feature that preserves the origin and has sufficient smoothness, and when $\phi$ is the infinite-dimensional canonical feature of a Sobolev kernel, the bound comes from forcing the origin into the sample \cite{bold2025kernel}. The bound remains valid when taking into account the stochasticity of data samples \cite{nuske2023finite}. 
With the linear--Sobolev kernel, the sectorial bound can be naturally established in an alternative way, considering the projection onto a finite-dimensional lifted space and the sample stochasticity \cite{ye2026}. } 
Upon the imposition of a feedback law $u=K\phi(x)$, the closed-loop system contains a bilinear term in the feature map $\phi(x)$. 
To handle this bilinearity, in the SafEDMD framework of Str{\"a}sser et al.  \cite{strasser2024koopman, strasser2025kernel}, an additional ellipsoidal constraint on the vector-valued $\phi(x)$ is introduced, thus regarding the bilinear term as an uncertain linear term that is amenable to robust controller synthesis with guarantees. 
However, this treatment is at least unnatural. Because the bilinearity in the Koopman model is not a genuine source of uncertainty, the introduction of a range constraint not only restricts the applicability to a (somewhat obscurely) \textit{a priori} assigned region and increases the conservativeness of the controller, but also rejects the possibility of obtaining optimal closed-loop properties (e.g., global stabilization would not be achievable). 
Readers are recommended to consult the recent review by Str{\"a}sser et al. \cite{strasser2026overview} and a tutorial by Mezi{\'c} et al. \cite{mezic2026koopman}. 

\paragraph{Koopman model predictive control (MPC).} 
When directly synthesizing a control law is difficult for a nonlinear system, or if the system has input/state constraints, MPC is a commonly used approach to determine control inputs recursively in discrete time, based on optimizing the future trajectory in a horizon. Hence it becomes attractive to use a Koopman model for MPC from a practical perspective \cite{son2021application, de2024koopman, zhong2025data, yan2026data}. 
When using a bilinear \cite{bold2025kernel, schimperna2026data} or even a linear Koopman model as the surrogate \cite{shang26exponential} in MPC, with appropriate bounds on the modeling errors, recursive feasibility and closed-loop stability have been studied.  
However, when the Koopman model used in the MPC is bilinear \cite{schimperna2026data}, it remains doubtful whether identifying a high-order bilinear Koopman model provides advantage over a low-order fully nonlinear first-principles or empirical (e.g., NARX or Volterra) model, while a linear Koopman model in principle contains a mismatch. 
From a theoretical point of view, the closed-loop guarantees of MPC hinge on a synthesis problem --- the terminal cost, terminal constraint, and input constraint must admit a hypothetical terminal feedback law that satisfies a Lyapunov inequality. While in practice, MPC is often posed by ignoring the nonlinear system's synthesis problem or degrading it to an assumption, the synthesis problem as the prerequisite issue ought not to be overlooked. 

\paragraph{Koopman--Nemytskii operator.} 
In the author's prior work \cite{tang2025koopman}, the Koopman operator for autonomous systems was extended to an operator for closed-loop systems, called Koopman--Nemytskii operator, that maps the state's feature and the control policy's feature (in an RKHS) to the successor state's feature. For any given policy (feedback law), its performance (i.e., the accumulated cost) can thus be evaluated by predicting future states with the learned Koopman--Nemytskii operator. 
This brings the controller design problem to a kernel-based optimization problem, namely that of finding the policy with the optimal feature, for which an efficient algorithm was given in Rudi et al. \cite{rudi2025finding}. 
The advantages over Koopman MPC, both in computational time and in achieved optimality, were demonstrated in \cite{tang2025koopman}. However, the approach relies on defining the control policy's feature and hence requires a parametric family of policies; this typically applies to low-dimensional systems with partial prior knowledge. 
In this work, seeking a general \emph{synthesis} approach, we instead consider nonparametric policies. 

\paragraph{Koopman operators in optimal control and stochastic optimal control.} 
For open-loop optimal control problems, Villanueva et al. defined a Pontryagin--Koopman operator to lift the dynamics of state and co-state in Pontryagin's minimum principle, and characterized optimal feedback through the symplectic spectral structure \cite{villanueva2021towards}. In a similar spirit, Vaidya \cite{vaidya2025koopman} related Koopman eigenfunctions to the Hamilton--Jacobi--Bellman (HJB) equation. 
For stochastic systems, Houska \cite{houska2025convex} considered the Fokker--Planck--Kolmogorov dynamics of the state/input density under control and hence formulated the optimal control problem as a convex one in the density. Bevanda et al. made this framework data-driven by learning the generator of the controlled diffusion in an RKHS \cite{bevanda2025kernel}. For deterministic systems, since the density is not bounded away from zero, the density-based formulation has to exclude a neighborhood of the origin and hence only a practical stability guarantee is attainable \cite{huang2022convex}. 
A third line of work learns the Koopman generator to recover the vector fields and then solves the HJB equation by classical means, where the Koopman generator is used during the iterations for HJB solution \cite{zeng2025datadriven, chen2026datadriven}. 
We also note the works of Breiten and H{\"{o}}veler \cite{breiten2023approximability, breiten2025unifying}, where by considering any asymptotically stable system with a known weighting function $w$ that characterizes its convergence rate, Koopman operator-based Lyapunov conditions and Riccati equations have been formulated in weighted $L^\infty$ spaces based on the investigation of quadratic forms; on the other hand, a direct synthesis formulation was missing. 

\bigskip
In addition, the optimal control theory for infinite-dimensional systems in a fully linear form ($\dot{x} = Ax + Bu$) has been well studied in the literature \cite{li1995optimal, bensoussan2007representation, curtain2020introduction, tucsnak2009observation}. The problem of interest in this paper is with regard to a bilinear infinite-dimensional system. 
For finite-dimensional linear systems, data-driven control taking an ``informativity'' approach posed the interesting problem of whether a well-performing controller can be found without enough data for identifying a full model \cite{van2020data}. This work shares a similar philosophy; the controller synthesis approach in this work is Koopman-based, but without explicitly identifying a Koopman model.

\subsection{Contributions of This Work}
In this work, the direct controller synthesis problem is considered for a continuous-time nonlinear system in the input-affine form: 
\begin{equation}\label{eq:system}
    \dot{x}(t)=f_0(x(t)) + \sum_{j=1}^{d_u} u_j(t) f_j(x(t)). 
\end{equation}
Assuming that the origin $x=0$ is an equilibrium point under zero input and assuming sufficient smoothness on the vector fields $f_0$, $f_1, \cdots, f_{d_u}$, when an equilibrium-consistent feedback policy closes the loop, the Koopman semigroup is well-defined on the RKHS with a linear--Sobolev kernel ($\mc H_{\rg\kappa}(\mbb X)$) and its generator is expressed in terms of the Koopman semigroups associated with $f_0$, $f_1, \cdots, f_{d_u}$ and a multiplication operator with the policy components. 
On this premise, we formulate the stabilizing controller and optimal controller synthesis problems. 
\begin{enumerate}[nolistsep]
    \item For stabilization, a control Lyapunov function in a kernel sum-of-squares form, therefore a ``locally at least quadratic'' and ``globally Sobolev-regular'' function of state, is sought to satisfy a decaying condition, with an assigned decay rate function being in a kernel quadratic form. This Lyapunov condition is first expressed as a pointwise bilinear \emph{operator Lyapunov inequality}. 
    \item For optimal control, the cost function under any given policy is expressed as a kernel sum-of-squares, and therefore is optimizable over the set of stabilizing policies that ensure exponential stability of the closed-loop semigroup. The optimization problem results in an optimality condition, which, upon the finite-time Yosida approximation of Meng et al. \cite{meng2026resolvent}, gives a formula for the optimal policy and an \emph{operator Riccati equation} for the optimal cost. 
    \item With suitable transformations, the bilinear terms in the feature maps are absorbed in the infinite dimensions, thus eliminating the bilinearity or multilinearity in the data-approximated synthesis problems. With data, the operator Lyapunov inequality and Riccati equation are approximated in finite dimensions, resulting in a convex semidefinite programming (SDP) problem and a quadratic matrix equation that can be solved by classical Kleinman iterations, respectively. The idea contained in the approach to resolve the bilinearity is simple --- \emph{an infinite-dimensional RKHS is an algebra by itself.}
    \item Building on existing ingredients of learning theory and methods, including the techniques used in scattered data approximation theory of Wendland et al. \cite{wendland2004scattered} and Yosida approximation \cite{meng2026resolvent}, the degradation of control performance due to the use of sample data is analyzed. The error bound depends on the growth rate of the closed-loop semigroup, Yosida approximation constants, and sampling density. 
    \item Comparisons to existing approaches, including na{\"i}ve linear Koopman model-based control, the robust control approach of Str{\"a}sser et al. \cite{strasser2024koopman}, Sobolev kernel-based Koopman MPC \cite{schimperna2026data} and the linear--Sobolev kernel counterpart of Koopman MPC (examined in \cite{tang2025koopman}), as well as the LQR using linearized model, and nonlinear MPC using the true model, are conducted on several numerical and process control examples. 
\end{enumerate}
In total, the paper contributes to a unifying approach of Koopman-based controller synthesis for nonlinear systems (in continuous-time input-affine forms). The approach can be considered as theoretically rigorous with provable probabilistic performance bounds, and in the meantime, due to the high similarity of its formulations to finite-dimensional linear systems (with order equal to the sample size), it is naturally efficient in computation. 
It should be emphasized that the approach is completely different from any ``explicit MPC'' or analogous surrogate of an MPC solver. While the approach is conceptually related to reinforcement learning (RL), the use of kernel method to convert the problem to linear formulations makes the solution free from using neural function approximators within the RL routine.

\subsection{Notation}
We use lower-case Latin and Greek letters for scalar and vector-valued objects, and upper-case letters for matrices and operators. Matrix approximations of operators will use \textsf{Sans Serif} fonts (e.g., $\ms{P}$ for $P$). $\mN=\{0, 1, 2, \cdots\}$, $\mN_+=\mN \setminus\{0\}$, $\mR_+=[0, \infty)$, $\mR^d$, and regions or intervals thereon use blackboard fonts. The component functions $x\mapsto x_k$ are denoted as $e_k$, while the standard basis vectors of $\mR^d$ are denoted as $\ms e_k$ ($k=1,\cdots,d$) in \textsf{Sans Serif} for the sake of distinction. 
\par Function spaces are denoted by normal ($H^s(\mbb{X})$ -- Sobolev space, namely $W^{s,2}(\mbb X)$) or calligraphic upper-case letters (e.g., $\mc{H}_\kappa$: an RKHS with kernel $\kappa$). Operator spaces use calligraphic letters (e.g., $\mc{B}$ -- space of bounded operators, $\mc{B}_2$ -- space of Hilbert--Schmidt operators on a Hilbert space). The domain of an unbounded operator $A$ is written as $\mc{D}(A)$. 
Norms are denoted by $\|\cdot\|$ or $|\cdot|$ (in $\mR^d$), inner products are $\ip{\cdot}{\cdot}$, and outer product between two vectors on a Hilbert space (which forms a rank-$1$ operator) is denoted by $\times$, i.e., $(z_1\times z_2)z_3 = \ip{z_2}{z_3}z_1$. This avoids the confusion with $\otimes$, the Kronecker product between two vectors defined by the relation $\ip{z_1\otimes z_2}{z_3\otimes z_4} = \ip{z_1}{z_3}\ip{z_2}{z_4}$. 
The spectrum of an operator or a matrix $A$ is denoted as $\sigma(A)$ and the spectral radius is written as $r(A)$. 

\par A positive (semi-)definite matrix or positive (nonnegative) operator is indicated by $P\succ(\succeq)0$, and hence $P\succeq Q$ indicates $P-Q\succeq 0$ (and similarly for $\succ$, $\preceq$, and $\prec$). A multi-index $\beta$ refers to a tuple $(\beta_1, \cdots, \beta_d) \in \mN^d$, the partial derivative $\partial^\beta h = \partial^{|\beta|} h /\partial x_1^{\beta_1}\cdots \partial x_d^{\beta_d}$, where the length of the multi-index $|\beta|=\beta_1+\cdots+\beta_d$. 
For $a, b\geq0$, we write $a\lesssim (\propto) b$ to mean that $a\leq(=) \mr{const}\cdot b$.

\section{Preliminaries}\label{sec:preliminaries}
Consider the continuous-time input-affine system \eqref{eq:system}, with a $d_x$-dimensional state and a $d_u$-dimensional input. Focusing on the controller synthesis problem, without loss of generality, we let the origin be an equilibrium point of interest to be regulated at, i.e., $f_0(0)=0$. 
We assume that the states always evolve on a \emph{compact} region $\mbb{X} \subset \mR^{d_x}$.\footnote{While the invariance of $\mbb X$ appears to be restrictive, it is achievable by multiplying all the vector fields $f_0$, $f_1, \cdots, f_{d_u}$ with a smooth bump function $\chi$, which satisfies $\chi(x)\equiv 1$ for all $x$ in a closed ball $\overline{\mbb B}_1$ (centered at the origin) and $\chi(x)\equiv 0$ outside a larger ball $\overline{\mbb B}_2$. Thus, the modified dynamics is invariant on $\overline{\mbb B}_2$, and there is no modification on $\overline{\mbb B}_1$. As the difference between these two balls can be made arbitrarily small, there will be no substantial effect on the learning procedures.}
More regularity assumptions will be made as we navigate through the fundamentals of Koopman operator theory for system \eqref{eq:system}. 

\subsection{Koopman and Perron--Frobenius Semigroups}
\par The vector field $f_0$ on $\mbb{X}$ gives an operator $L_0: g\mapsto \mr{D}g\cdot f_0$, which is the infinitesimal generator of the following Koopman semigroup, if such a semigroup is strongly continuous on the function space on which it is defined:
\begin{equation}\label{eq:Koopman.semigroup}
    K_0^t: \mc{G}\to \mc{G}, \enspace g\mapsto g\circ S_{f_0}^t, 
\end{equation}
namely $K_0^tg(x)=g(S_{f_0}^t(x))$. Here $S_{f_0}^t$ is the flow map of the autonomous system $\dot{x} = f_0(x)$, mapping any initial state $x(0)$ to the state after a time duration of $t$. 
As long as $f_0$ defines a global flow on $\mbb X$ (i.e., the solution is well-defined at all $t\geq0$ starting from any $x\in \mbb{X}$), then $\{S_{f_0}^t\}_{t\geq 0}$ is a semigroup of mappings under the composition operation (namely $S_{f_0}^0 = \mr{id}$ and $S_{f_0}^{t_2}\circ S_{f_0}^{t_1} = S_{f_0}^{t_1+t_2}$ for all $t_1,t_2\geq0$), and hence $\{K_0^t\}_{t\geq 0}$ is a semigroup of linear operators. 
For the Koopman semigroup to be a well-defined strongly continuous semigroup, we should assume certain regularity on the vector field $f_0$ and choose the defining function space $\mc{G}$ in concert. 

\begin{definition}[Spaces of Sobolev functions and smooth functions]
    For $s\in \mbb{N}$, the $s$-th Sobolev--Hilbert space $H^s(\mbb{X})$ is the collection of generalized functions\footnote{Here we only focus on real-valued function spaces. Whenever their spectral properties are mentioned, which possibly involve complex numbers, we tacitly refer to their complexified versions. This will not cause any confusion.}
    (namely tempered distributions) such that all generalized partial derivatives up to order $s$ exist in $L^2(\mbb{X})$. That is, 
    $$H^s(\mbb{X}) = \BRA{ g:\mbb{X}\to\mR \ : \ \partial^\beta g \in L^2(\mbb{X}), |\beta|\leq s}. $$
    The inner product and its induced norm are defined as:
    $$ \ip{g}{h}_{H^s} = \sum_{|\beta|\leq s} \int_{\mbb{X}} \partial^\beta g(x) \partial^\beta h(x) \xD{x}, \quad 
    \norm{g}_{H^s} = \Bra{ \sum_{|\beta|\leq s} \int_{\mbb{X}} |\partial^\beta g(x)|^2 \xD{x}}^{1/2} $$
    Similarly, $C^s(\mbb{X})$ is the collection of functions with continuous partial derivatives up to order $s$:
    $$C^s(\mbb{X}) = \BRA{ g:\mbb{X}\to\mR \ : \ \partial^\beta g \in C(\mbb{X}), |\beta|\leq s}. $$
    It is a Banach space with norm:
    $$ \norm{g}_{C^s} = \max_{|\beta|\leq s} \max_{x\in \mbb{X}} |\partial^\beta g(x)|. $$
    For fractional $s>0$, writing $s = (1-\theta)\lfloor s \rfloor + \theta \lceil s \rceil = \lfloor s \rfloor+\theta$ for some $\theta\in(0,1)$, $H^s(\mbb{X})$ is the $\theta$-th interpolation space between $H^{\lfloor s \rfloor}(\mbb{X})$ and $H^{\lceil s \rceil}(\mbb{X})$, namely the Slobodecky--Sobolev spaces, and $C^s(\mbb{X})$ is the $\theta$-th interpolation space between $C^{\lfloor s \rfloor}(\mbb{X})$ and $C^{\lceil s \rceil}(\mbb{X})$, namely the H{\"{o}}lder--Zygmund spaces. 
\end{definition}
In the sequel, we only consider the case of $s>d_x/2$ and assume that $\mbb{X}$ has a Lipschitz boundary. When so, according to the Sobolev embedding theorem and extension theorem \cite{adams2003sobolev}, $H^s(\mbb{X})$ is continuously embedded into $C(\mbb{X})$, and therefore any member of $H^s(\mbb{X})$ is genuinely a function (instead of only a tempered distribution).
In view of the equilibrium point condition: $f_0(0)=0$, the following variants of the Sobolev space $H^s(\mbb X)$ and smooth function space $C^s(\mbb X)$ are considered. 
\begin{definition}[Spaces of ``linear--Sobolev'' and ``linear--smooth'' functions]
    Writing $e_k: \mbb{X}\to\mR$, $x\mapsto x_k$ as the component projections $(k=1,\cdots,d_x)$, we denote 
    $$\rg H^s(\mbb{X}) = \BRA{\sum_{k=1}^{d_x} e_kg_k \ : \ g_k\in H^s(\mbb{X}), \forall k=1,\cdots,d_x}, $$
    which is a Hilbert space endowed with an inner product, and hence induced norm
    $$\ip{\sum_{k=1}^{d_x} e_kg_k}{\sum_{k=1}^{d_x} e_kh_k}_{{\rg H}^s} = \sum_{k=1}^{d_x} \ip{g_k}{h_k}_{H^s},
    \quad \norm{\sum_{k=1}^{d_x} e_kg_k}_{{\rg H}^s(\mbb{X})} = \Bra{ \sum_{k=1}^{d_x} \norm{g_k}_{H^s(\mbb{X})}^2 }^{1/2}. $$ 
    Also, we write
    $$\rg C^s(\mbb{X}) = \BRA{\sum_{k=1}^{d_x} e_kg_k : g_k\in C^s(\mbb{X}), \forall k=1,\cdots,d_x}, $$
    endowed with the following norm\footnote{A given function in $\rg H^s(\mbb{X})$ or $\rg C^s(\mbb{X})$ may have non-unique representations as a linear combination of component functions multiplied by $H^s$- or $C^s$-functions. For example, for $g(x)=x_1x_2$, $g\in \rg C^s(\mbb{X})$. If viewed as $e_1g_1$ with $g_1 = e_2$, then $\|g\|_{\rg C^s} = \sup_{x\in \mbb{X}}|x_2|$, while if viewed as $e_2g_2$ with $g_2=e_1$, then $\|g\|_{\rg C^s} = \sup_{x\in \mbb{X}}|x_1|$. To be precise, the norm should be defined as the maximum among all these representations. In this example, $\|g\|_{\rg C^s} = \max \BRA{\sup_{x\in \mbb{X}} |x_1|, \sup_{x\in \mbb{X}} |x_2|}$. }, which makes ${\rg C}^s(\mbb{X})$ a Banach space:
    $$\norm{\sum_{k=1}^{d_x} e_kg_k}_{{\rg C}^s} = \max_{k=1,\cdots,d_x} \|g_k\|_{C^s}.$$ 
\end{definition}
We interpret any member of $\rg H^s(\mbb X)$ as a function that is ``locally at least linear'' (since it contains component functions as factors of its constituent terms) and ``globally Sobolev-regular'' (since component functions clearly belong to $H^s(\mbb{X})$ and a $\rg H^s(\mbb X)$-function is automatically an $H^s(\mbb X)$-function). 
 
The following conclusion, which relates the smoothness of any vector field to the strong continuity of the Koopman semigroup that it induces, is natural. A proof can be found, e.g., in \cite{ye2026}. 
\begin{lemma}[Strong continuity of the Koopman semigroup]
    Suppose that $\mbb{X} \subset \mR^{d_x}$ is compact. 
    If $f\in [C^s(\mbb{X})]^{d_x}$, the semigroup $\{K^t\}_{t\geq 0}$ with infinitesimal generator $L: g\mapsto \mr{D}g\cdot f$ is a strongly continuous semigroup on $\mc{G} = H^s(\mbb{X})$. 
    If $f\in [\rg C^s(\mbb{X})]^{d_x}$, then the semigroup is a strongly continuous semigroup on $\rg{\mc{G}} = \rg H^s(\mbb{X})$. 
\end{lemma}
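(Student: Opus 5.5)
Three things must be checked, in this order: that the flow is global and $C^s$ in $x$, that $K^t$ is bounded on $H^s(\mbb X)$ with a norm bound $\|K^t\|\leq Me^{\omega t}$, and that $t\mapsto K^tg$ is continuous at $t=0$. Global existence follows from invariance of the compact $\mbb X$. Since $f\in C^s$, classical ODE theory gives that the flow $S^t=S_f^t$ is $C^s$ in $x$. The variational equations show that $\mr D^\beta S^t$ ($|\beta|\leq s$) is bounded on $\mbb X$ by $C e^{\omega t}$, where $\omega$ depends on $\|f\|_{C^s}$; this is done by Gronwall induction on $|\beta|$. The Jacobian determinant satisfies $\det \mr D S^t(x)=\exp\int_0^t \nabla\cdot f(S^\tau x)\,\xD\tau$, so it is bounded above and below by $e^{\pm\|f\|_{C^1}t}$.

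For integer $s$, boundedness comes from the multivariate chain rule (Faà di Bruno). It writes $\partial^\beta(g\circ S^t)$ as a sum of $(\partial^\gamma g)\circ S^t$, $|\gamma|\leq|\beta|$, times polynomials in derivatives of $S^t$ of order at most $|\beta|$. Changing variables $y=S^tx$ and using the Jacobian lower bound gives
$$\|g\circ S^t\|_{H^s}\leq M e^{\omega t}\|g\|_{H^s}.$$
For fractional $s$, interpolate between $H^{\lfloor s\rfloor}$ and $H^{\lceil s\rceil}$. One caveat is that $f\in C^s$ with fractional $s$ only makes $S^t$ Hölder–Zygmund of order $s$. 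I would then handle the top-order term directly with the Slobodeckij seminorm and a change of variables, or else assume slightly more regularity and interpolate.

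Strong continuity then follows by density. Take $g\in C^\infty(\mbb X)$, which is dense in $H^s$ for a Lipschitz $\mbb X$. From $\frac{\xD{}}{\xD t}(g\circ S^t)=(\mr Dg\cdot f)\circ S^t$ one gets $\|K^tg-g\|_{H^{s-1}}\leq t\sup_\tau\|K^\tau(\mr Dg\cdot f)\|_{H^{s-1}}\to0$. To upgrade this to $H^s$, note that $\partial^\beta(g\circ S^t)\to\partial^\beta g$ uniformly on $\mbb X$ for $|\beta|\leq s$, because $S^t\to\mr{id}$ in $C^s$ as $t\to0$ by continuity of the flow's derivatives in $t$. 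Together with the uniform bound $\|K^t\|\leq Me^{\omega t}$, an $\varepsilon/3$ argument extends continuity to all $g\in H^s$. The generator agrees with $L$ on smooth functions by the same derivative identity, and $L$ is the closure there.

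For the linear–Sobolev case, let $h=\sum_k e_kg_k$ with $f\in[\rg C^s]^{d_x}$, so $f(x)=F(x)x$ with $F_{ik}\in C^s$. I will show $S^t(x)=\Phi^t(x)x$ with $\Phi^t$ a $C^s$ matrix function. To do so, write $S^t(x)=\int_0^1 \mr D S^t(\theta x)\,\xD\theta\; x$, using $S^t(0)=0$. This Hadamard-lemma step needs star-shapedness of $\mbb X$, or else a $C^s$ extension of $f$ to a convex neighbourhood. It also costs one derivative, which I would avoid by using instead the ODE $\dot\Phi=F(\Phi x)\Phi$, $\Phi^0=I$, whose solution is $C^s$ in $x$. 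Then
$$h\circ S^t=\sum_k (\Phi^t x)_k\, g_k\circ S^t=\sum_j e_j\sum_k \Phi^t_{kj}\,(g_k\circ S^t).$$
Since $C^s$ multipliers act boundedly on $H^s$, the $H^s$ bounds above transfer to $\rg H^s$. Continuity again follows by density plus $\Phi^t\to I$ in $C^s$.

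I expect the main obstacle to be the loss-free regularity of $\Phi^t$ and the fractional-order bookkeeping in the linear–Sobolev case. I would settle it first via the ODE for $\Phi$.
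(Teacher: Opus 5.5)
The paper gives no proof of this lemma; it defers to an external reference. Your argument therefore has to stand on its own, and for integer $s$ it does. Fa{\`a} di Bruno, the Liouville bound on $\det \mr{D}S^t$, and the change of variables $y=S^t(x)$ give $\|K^t\|_{H^s\to H^s}\lesssim \mr{e}^{\omega t}$. Density of smooth functions together with $S^t\to\mr{id}$ in $C^s$ then gives strong continuity.

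Your device $S^t(x)=\Phi^t(x)x$ for the linear--Sobolev case is also sound and needs no star-shapedness. Two small refinements:
\begin{itemize}
\item Define $\Phi^t$ by the \emph{linear} equation $\dot\Phi^t=F(S^t(x))\Phi^t$, $\Phi^0=I$. Then $F$ is only evaluated on $\mbb{X}$, and $\Phi^t x=S^t(x)$ follows by uniqueness, since both solve $\dot y=F(S^t(x))y$, $y(0)=x$.
\item Saying that the bounds ``transfer to $\rg H^s$'' requires the norm on $\rg H^s(\mbb{X})$ to be the quotient norm, i.e.\ the infimum over representations $h=\sum_k e_kg_k$. That is the norm making it the RKHS of $\rg\kappa$.
\end{itemize}

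\textbf{The gap is non-integer $s$.} Your first fallback, estimating the top-order term directly in the Slobodeckij seminorm, cannot succeed. For fractional $s$ a $C^s$ function need not lie in $H^s$. Hence the claim that $C^s$ multipliers act boundedly on $H^s$ fails: let the multiplier act on the constant $1\in H^s$. The composition step fails for the same reason, because $K^te_k=(S^t)_k$ is only guaranteed to be $C^s$.

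In fact the lemma is false as stated in this case. Take:
\begin{itemize}
\item $d_x=2$, $s=1+\theta$ with $\theta\in(0,1)$, and $\mbb{X}=\overline{\mbb{B}(0,2)}$;
\item $f(x)=\omega(|x|)Jx$, with $J$ the $90^\circ$ rotation matrix;
\item $\omega\in C^{1+\theta}$ supported in $(1/2,3/2)$ but not in $H^{1+\theta}$ near some radius, e.g.\ a lacunary Weierstrass-type series.
\end{itemize}
Then $f\in[\rg C^s(\mbb{X})]^2$ and the disc is invariant. The flow is $S^t(x)=R(t\omega(|x|))x$, with $R(\cdot)$ the rotation matrix, so $K^te_1+\mr{i}K^te_2=(x_1+\mr{i}x_2)\,\mr{e}^{\mr{i}t\omega(|x|)}$. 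Suppose both $K^te_1$ and $K^te_2$ were in $H^{1+\theta}$. Dividing by $x_1+\mr{i}x_2$ on an annulus and differentiating would force $\omega\in H^{1+\theta}$ there, a contradiction. So for every $t>0$, $K^t$ does not map $H^s(\mbb{X})$, or $\rg H^s(\mbb{X})$, into itself.

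Your second branch is therefore not optional: for fractional $s$ the hypothesis must be strengthened. One option is $f\in[C^{\lceil s\rceil}(\mbb{X})]^{d_x}$; then interpolate the $H^{\lfloor s\rfloor}$ and $H^{\lceil s\rceil}$ bounds, and obtain strong continuity from the dense subspace $H^{\lceil s\rceil}$. Another is $f\in C^{s'}$ for some $s'>s$, which makes $\mr{D}S^t$ and $\Phi^t$ genuine multipliers on the relevant Sobolev spaces.

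\textbf{A smaller point on the generator.} Your derivative identity, with a density argument, shows that the generator acts as $g\mapsto\mr{D}g\cdot f$ on the maximal domain $\{g\in H^s:\mr{D}g\cdot f\in H^s\}$. Your further claim, that it is the closure of $L$ restricted to smooth functions, needs a core argument you have not given. Smooth functions are not $K^t$-invariant, so the standard invariance criterion for cores does not apply. Either drop that claim or supply a Friedrichs-type commutator estimate.
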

Hence, to utilize the conclusion of the lemma, we make the following assumption. 
\begin{assumption}[Regularity of the system]
\label{assum:regularity}
    The system \eqref{eq:system} is invariant on a compact region $\mbb{X} \subset \mR^{d_x}$ with a Lipschitz boundary. The vector fields are sufficiently smooth: $f_0\in [\rg C^s(\mbb{X})]^{d_x}$ and $f_j\in [C^s(\mbb{X})]^{d_x}$ for every $j=1,\cdots,d_u$, with some $s>d_x/2$. 
\end{assumption}
\begin{corollary}[Koopman generators]
\label{cor:strong.continuity} 
    Under Assumption \ref{assum:regularity}, 
    \begin{enumerate}[nolistsep]
        \item the infinitesimal generator associated with $f_0$, $L_0: g\mapsto \mr{D}g\cdot f_0$ generates a strongly continuous Koopman semigroup $\{K_0^t\}_{t\geq 0}$ on $\rg{\mc{G}} = \rg H^s(\mbb X)$;  
        \item when the inputs are present, for every $j=1,\cdots,d_u$, $L_j: g\mapsto \mr{D}g\cdot f_j$ generates a strongly continuous Koopman semigroup $\{K_j^t\}_{t\geq 0}$ on $\mc{G} = H^s(\mbb X)$; 
        \item under any given $\rg C^s$-class policy $\pi = (\pi_1, \cdots, \pi_{d_u}) \in [\rg C^s(\mbb X)]^{d_u}$, the closed-loop system:
        \begin{equation}\label{eq:system.closed-loop}
            \dot{x}(t) = f_0(x(t)) + \sum_{j=1}^{d_u} \pi_j(x(t))f_j(x(t)) =: f_\pi(x(t)), 
        \end{equation}
        in which $f_\pi = f_0 + \sum_{j=1}^{d_u} \pi_jf_j \in [\rg C^s(\mbb{X})]^{d_x}$, has a corresponding strongly continuous Koopman semigroup $\{K_\pi^t\}_{t\geq 0}$ on $\rg{\mc{G}} = \rg H^s(\mbb X)$, with infinitesimal generator 
        $$ L_\pi: g\mapsto \mr{D}g\cdot f_\pi = \mr{D}g\cdot \bra{f_0 + \sum_{j=1}^{d_u} \pi_jf_j}. $$
    \end{enumerate}
\end{corollary}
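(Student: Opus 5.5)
The plan is to reduce each of the three items to the strong continuity lemma stated just before Assumption \ref{assum:regularity}. The only substantive work is checking that the relevant vector field lies in the right smoothness class, and that $\mbb X$ is invariant under the corresponding flow, so that the flow is global.

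For item 1, Assumption \ref{assum:regularity} gives $f_0\in[\rg C^s(\mbb X)]^{d_x}$ directly. Invariance of $\mbb X$ under \eqref{eq:system} with $u\equiv 0$ makes $S_{f_0}^t$ a global flow on the compact set $\mbb X$. The second part of the lemma then yields a strongly continuous semigroup $\{K_0^t\}$ on $\rg H^s(\mbb X)$ with generator $L_0$.

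For item 2, we have $f_j\in[C^s(\mbb X)]^{d_x}$, so the first part of the lemma gives strong continuity on $H^s(\mbb X)$. The one caveat is invariance of $\mbb X$ under $\dot x=f_j(x)$ alone. I would handle this by reading Assumption \ref{assum:regularity} as invariance of $\mbb X$ for each constituent field; for instance, this holds if the fields are cut off by the bump function of the footnote, since then every $f_j$ vanishes near $\partial\mbb X$. Equivalently, one may take the lemma to apply whenever $f$ generates a global flow on $\mbb X$.

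Item 3 is the main step, and the key point is an algebraic closure property: $f_\pi\in[\rg C^s(\mbb X)]^{d_x}$.
\begin{itemize}
\item Write $\pi_j=\sum_k e_k g_{jk}$ with $g_{jk}\in C^s(\mbb X)$.
\item Then $\pi_j f_j=\sum_k e_k(g_{jk}f_j)$, componentwise.
\item Since $C^s(\mbb X)$ is a Banach algebra on the compact Lipschitz domain, $g_{jk}f_j\in[C^s]^{d_x}$, with $\|g_{jk}f_j\|_{C^s}\lesssim\|g_{jk}\|_{C^s}\|f_j\|_{C^s}$. This follows from the Leibniz rule for integer $s$, and from interpolation, or directly from Hölder--Zygmund product estimates, for fractional $s$.
\end{itemize}
Hence $\pi_jf_j\in[\rg C^s]^{d_x}$. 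Because $\rg C^s$ is a vector space, adding $f_0$ gives $f_\pi\in[\rg C^s]^{d_x}$.

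Invariance of $\mbb X$ under the closed loop follows from invariance of \eqref{eq:system} for admissible inputs, taking $u(t)=\pi(x(t))$. Existence and uniqueness of closed-loop solutions come from $f_\pi$ being locally Lipschitz, which holds since $s>d_x/2$; in fact we need $s\ge1$ here, or at least Hölder continuity of order above one, which I would note. The second part of the lemma then gives the strongly continuous semigroup $\{K_\pi^t\}$ on $\rg H^s(\mbb X)$ with generator $g\mapsto \mr Dg\cdot f_\pi$. The displayed formula for $L_\pi$ is simply linearity of $\mr Dg\cdot(\cdot)$ in the vector field.

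I expect the main obstacle to be the fractional-$s$ product estimate in $C^s$ together with the invariance and flow-existence hypotheses. The smoothness bookkeeping itself is routine once the algebra property is invoked.
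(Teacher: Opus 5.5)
Your proposal is correct and follows the route the paper intends. The paper gives no separate proof: it applies the strong-continuity lemma to $f_0$, $f_j$, and $f_\pi$, and simply asserts the one nontrivial ingredient $f_\pi \in [\rg C^s(\mbb X)]^{d_x}$, which you justify correctly by writing $\pi_j f_j = \sum_k e_k (g_{jk} f_j)$ and using the algebra property of $C^s(\mbb X)$. Your remarks on invariance of $\mbb X$ under each $f_j$ separately (covered by the bump-function footnote) and on needing $s\geq 1$ for uniqueness of the flow point to hypotheses the paper leaves implicit; they are not gaps in your argument.
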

We note that due to the input affinity of system \eqref{eq:system}, the closed-loop Koopman generator $L_\pi$ has an affine dependence on the policy $\pi$. To be more precise, we introduce the multiplication operators.
\begin{definition}[Multiplication operator]
    Let $\varpi$ be a function that belongs to $\rg C^s(\mbb{X})$, where $\mbb{X}\subset \mR^{d_x}$ is a compact region. 
    The multiplication operator with factor $\varpi$ refers to $M_\varpi: g\mapsto \varpi g$, which is a bounded linear operator from $H^s(\mbb{X})$ to $\rg H^s(\mbb{X})$. 
\end{definition}
To migrate between the two Hilbert spaces introduced in the foregoing discussions, it is convenient to use the following notions. 
\begin{definition}[Inclusion map from linear--Sobolev space to Sobolev space]
    Denote $\rg \iota: \rg H^s(\mbb{X})\to H^s(\mbb{X})$, $g\mapsto g$ as the inclusion operator that maps any function to itself.
\end{definition}
Thus, we can formally write 
\begin{equation}\label{eq:Koopman.generator.closed-loop} 
    L_\pi = L_0 + \sum_{j=1}^{d_u} M_{\pi_j}L_j \rg\iota \ : \ \mc{D}(L_\pi) := \mc{D}(L_0) \cap \bra{\bigcap_{j=1}^{d_u} \rg\iota^{-1}\mc{D}(L_j)} \subset \rg H^s(\mbb{X}) \to \rg H^s(\mbb{X})
\end{equation} 
and due to the fact that the domains of all $L_j$ have an intersection that contains at least all functions that are $C^1$ (and therefore dense), the above relation is meaningful.

\subsection{Sobolev Kernel and Linear--Sobolev Kernel}
A useful perspective to investigate system \eqref{eq:system} is to view any state $x\in \mbb{X}$ through a feature map $\phi$ into an infinite-dimensional space. In the machine learning literature, the reproducing kernel Hilbert space (RKHS) theory is used commonly \cite{smola1998learning, steinwart2008support}. Specifically, by assigning a kernel and considering the closed span of kernel functions equipped with an inner product structure, the resulting function space, namely the RKHS, has a canonical feature map implied by the kernel. 
\begin{definition}[RKHS]
    A \emph{Mercer kernel} on a nonempty set $\mbb{X}$ is a bivariate continuous function $\kappa: \mbb{X}\times\mbb{X} \to\mR$ such that for any finite number of points $x_1, \cdots,x_n\in \mbb{X}$, the resulting matrix $\ms{G} \in \mR^{n\times n}$ whose $(i,j)$-th entry is $\kappa(x_i,x_j)$ ($i,j=1,\cdots,n$), is positive semidefinite. For any two points $x,x'\in \mbb{X}$, define the inner product $\ip{\kappa(x,\cdot)}{\kappa(x',\cdot)} = \kappa(x,x')$. Then the span of all $\kappa(x,\cdot)$ ($x\in \mbb{X}$) forms an inner product space, which can be completed to result in a Hilbert space:
    $$ \mc{H}_\kappa(\mbb{X}) = \overline{\operatorname{span}} \BRA{\kappa(x,\cdot) \ : \ x\in \mbb{X}}.$$
    This space is referred to as the \emph{reproducing kernel Hilbert space} with kernel $\kappa$ over $\mbb{X}$. 
\end{definition} 

In fact, with a given $\mbb{X}$, the RKHS and its kernel have a one-to-one correspondence \cite{steinwart2008support}. When the kernel is given, we refer to $\phi: \mbb{X}\to\mc{H}_\kappa(\mbb{X})$, $\phi(x)=\kappa(x,\cdot)$ as the \emph{canonical feature map}; hence $\kappa(x,x') = \ip{\phi(x)}{\phi(x')}$, $\forall x,x'\in \mbb{X}$. 
The most important property of an RKHS is the \emph{reproducing property}, which states that the inner product on the RKHS satisfies 
\begin{equation}
    \ip{g}{\phi(x)} = g(x), \ \forall g\in \mc{H}_\kappa(\mbb{X}), \ \forall x\in \mbb{X}.
\end{equation}
As we are considering the closed-loop system \eqref{eq:system.closed-loop} with a smoothness index $s$, the following discussions will be useful, as the Sobolev space $H^s(\mbb{X})$ and linear--Sobolev space $\rg H^s(\mbb{X})$ are known to be RKHSs. 

\begin{lemma}[Sobolev kernel for Sobolev space]
    If $\mbb{X}\subset \mR^{d_x}$ is a compact region with Lipschitz boundary, and $s>d_x/2$, then $H^s(\mbb{X})$ is an RKHS, whose reproducing kernel is 
    $$\kappa(x,x') = \rho(|x-x'|)$$
    where $\rho: \mR_+\to\mR_+$ is any function such that its Fourier transform (e.g., by even-extending $\rho$ to $\mR\to\mR_+$), $\hat\rho: \mC\to\mC$, satisfies $c_1(1+|\xi|^2)^{-s} \leq \hat\rho(\xi) \leq c_2(1+|\xi|^2)^{-s}$ with some constants $c_2\geq c_1> 0$, e.g., the so-called $\nu$-th Mat{\'{e}}rn kernel:
    $$\rho(|x-x'|)=\int_{\mR^{d_x}} \frac{\mr{e}^{\mr{i}\xi \cdot (x-x')}}{(1+|\xi|^2)^s}\xD{\xi} \propto \bra{ \sqrt{2\nu}|x-x'| }^{\nu} \mr{K}_\nu\bra{\sqrt{2\nu}|x-x'|}, \enspace \nu=s-\frac{d_x}{2}, $$
    $\mr{K}_\nu$ being the modified Bessel function of the second kind.\footnote{The kernel can be scaled by a length scale parameter $\ell>0$. That is, by replacing the function $\rho(\cdot)$ with $\rho(\cdot/\ell)$, the RKHS defined remains equivalent.}  
    Such a kernel function, due to its dependence on $|x-x'|$ only, is called a translation-invariant kernel or a radial kernel. When $s>d_x/2$, $\rho\in C^\alpha(\mR_+)$ for all $\alpha<s-d_x/2$. 
    We denote the canonical feature of $x$ in the Sobolev RKHS $\mc{H}_\kappa(\mbb{X})$ as $\phi(x)$. 
\end{lemma}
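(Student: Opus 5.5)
The plan is to establish the lemma in three steps: build the RKHS on all of $\mR^{d_x}$ via the Fourier transform, restrict it to $\mbb X$ using the extension theorem, and then treat the Matérn example and the smoothness of $\rho$ as direct Fourier computations.

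\emph{Step 1: whole space.} On $\mR^{d_x}$, the Bessel-potential characterization gives $\|g\|_{H^s(\mR^{d_x})}^2 \simeq \int |\hat g(\xi)|^2 (1+|\xi|^2)^s \xD{\xi}$. Given $\rho$ with $c_1(1+|\xi|^2)^{-s}\le \hat\rho(\xi)\le c_2(1+|\xi|^2)^{-s}$, I would define a new inner product
$$\ip{g}{h}_\rho=(2\pi)^{-d_x}\int \hat g(\xi)\overline{\hat h(\xi)}\,\hat\rho(\xi)^{-1}\,\xD{\xi}.$$
The two-sided bound on $\hat\rho$ makes this norm equivalent to the $H^s$ norm, so the space is unchanged. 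Next, check the reproducing property for $\kappa(x,\cdot)=\rho(|x-\cdot|)$. Its Fourier transform is $\mr e^{-\mr i\xi\cdot x}\hat\rho(\xi)$, which lies in the space because $\hat\rho\in L^1$. Indeed, $(1+|\xi|^2)^{-s}$ is integrable exactly when $s>d_x/2$, and this is where that hypothesis enters. Substituting into the inner product yields $\ip{g}{\kappa(x,\cdot)}_\rho=(2\pi)^{-d_x}\int \hat g(\xi)\mr e^{\mr i\xi\cdot x}\xD{\xi}=g(x)$ by Fourier inversion. Positive semidefiniteness follows from Bochner's theorem, since $\hat\rho\ge0$. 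Point evaluations are bounded by the Sobolev embedding into $C$, so $H^s(\mR^{d_x})$ with $\ip{\cdot}{\cdot}_\rho$ is the RKHS of $\kappa$.

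\emph{Step 2: restriction to $\mbb X$.} I would invoke the standard fact from Aronszajn's theory: the restriction of $\kappa$ to $\mbb X\times\mbb X$ generates the RKHS $\{g|_{\mbb X}\}$ with norm $\|h\|=\inf\{\|g\|_\rho: g|_{\mbb X}=h\}$. Since $\mbb X$ has a Lipschitz boundary, the Stein extension operator $E:H^s(\mbb X)\to H^s(\mR^{d_x})$ is bounded. Combined with the trivial bound $\|g|_{\mbb X}\|_{H^s(\mbb X)}\le\|g\|_{H^s(\mR^{d_x})}$, this shows the infimum norm is equivalent to $\|\cdot\|_{H^s(\mbb X)}$. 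Hence $H^s(\mbb X)$ equals $\mc H_\kappa(\mbb X)$ as a set, with equivalent norms. The lemma should be read in exactly this sense, which is the usual convention in \cite{wendland2004scattered}.

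\emph{Step 3: example and smoothness.} For the Matérn case, $\hat\rho=(1+|\xi|^2)^{-s}$ up to a constant, so $c_1=c_2$. The closed form $\propto r^{\nu}\mr K_\nu(r)$ with $\nu=s-d_x/2$ comes from the classical Hankel-transform formula for radial Fourier transforms. The length-scale rescaling only changes the constants $c_1,c_2$. For smoothness, the $C^\alpha$ regularity of $\rho$ follows from the decay of $\hat\rho$. The quantity $\int|\xi|^{2\alpha}\hat\rho\,\xD\xi$, together with its Hölder–Zygmund analogue, is finite when $2\alpha-2s<-d_x$, i.e.\ $\alpha<s-d_x/2$.

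I expect the main obstacle to be Step 2. Identifying the restricted RKHS with the intrinsic Sobolev norm on $\mbb X$ really requires the Lipschitz extension theorem, so some care is needed to state the result as norm equivalence rather than literal equality of norms. The remaining steps are routine Fourier analysis.
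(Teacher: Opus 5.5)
The paper gives no proof of this lemma; it quotes it as standard from \cite{wendland2004scattered}. Your proposal correctly reconstructs exactly that standard native-space argument: the Fourier characterization on $\mR^{d_x}$, followed by Aronszajn restriction together with a bounded extension operator for the Lipschitz region, including the right caveat that $H^s(\mbb X)=\mc H_\kappa(\mbb X)$ holds only with equivalent norms.

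One point to make explicit is that you take $\hat\rho$ to be the $d_x$-dimensional Fourier transform of $y\mapsto\rho(|y|)$, and this is the reading under which the lemma holds. The statement's parenthetical reading (the one-dimensional transform of the even extension) is off by $(d_x-1)/2$ in the decay exponent when $d_x>1$. For example, $\mr e^{-r}$ has one-dimensional transform $\propto(1+\xi^2)^{-1}$ but three-dimensional transform $\propto(1+|\xi|^2)^{-2}$.
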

\begin{remark}[Wendland kernel]
    An alternative construction of the Sobolev kernel is by Wendland \cite{wendland2004scattered}. The Wendland kernel is a compactly supported kernel, i.e., its $\rho$ function has a compact support of $[0, \ell]$ (where $\ell$ is a tunable length scale hyperparameter). 
    Its construction is based on recursively integrating the function $r\mapsto \max\{1-r/\ell, 0\}$. We omit the details here, but only point out that the resulting $s-d_x/2$ is equal to half odd integers ($1/2$, $3/2$, etc.). 
\end{remark}

\begin{lemma}[Linear--Sobolev kernel for linear--Sobolev space]
\label{lem:LR.kernel}
    If $\mbb{X}\subset \mR^{d_x}$ is a compact region with Lipschitz boundary, and $s>d_x/2$, then $\rg H^s(\mbb{X})$ is an RKHS, whose reproducing kernel is
    $$\rg \kappa(x,x') = (x\cdot x')\kappa(x,x') = (x\cdot x')\rho(|x-x'|), $$
    where $\kappa$ is the Sobolev kernel and $\rho$ is its defining radial function as in the previous lemma. We will refer to this kernel as a ``linear--Sobolev'' kernel. We denote the canonical feature of $x$ in the linear--Sobolev RKHS $\mc{H}_{\rg\kappa}(\mbb{X})$ as $\rg\phi(x)$. 
\end{lemma}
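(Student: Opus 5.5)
The plan is to use the general construction of an RKHS as the image of a feature map, in the Aronszajn/Steinwart form. Given a Hilbert space $\mc V$ and a map $\Phi:\mbb X\to\mc V$, the space $\{x\mapsto\ip{v}{\Phi(x)}_{\mc V}: v\in\mc V\}$, normed by the minimal norm of a representing $v$, is an RKHS with kernel $\ip{\Phi(x)}{\Phi(x')}_{\mc V}$.

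I take $\mc V=[H^s(\mbb X)]^{d_x}$, the orthogonal direct sum with the product inner product. By the previous lemma each factor is the RKHS with kernel $\kappa$ and feature $\phi$. Set $\Phi(x)=(x_1\phi(x),\cdots,x_{d_x}\phi(x))$. For $v=(g_1,\cdots,g_{d_x})$ the reproducing property in $H^s(\mbb X)$ gives
$$\ip{v}{\Phi(x)}_{\mc V}=\sum_{k=1}^{d_x}x_kg_k(x)=\Big(\sum_k e_kg_k\Big)(x).$$
So the image space is exactly $\rg H^s(\mbb X)$ as a set. The kernel is
$$\ip{\Phi(x)}{\Phi(x')}_{\mc V}=\sum_k x_kx'_k\,\kappa(x,x')=(x\cdot x')\rho(|x-x'|)=\rg\kappa(x,x').$$
Continuity and positive semidefiniteness of $\rg\kappa$ also follow directly from the Schur product theorem, since it is the product of the linear kernel and $\kappa$.

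The main obstacle is the norm. The feature-map theorem gives the quotient norm
$$\|g\|=\min\BRA{\Big(\sum_k\|g_k\|_{H^s}^2\Big)^{1/2} \ : \ g=\sum_k e_kg_k},$$
which is the norm of $\mc V$ restricted to $(\ker T)^\perp$, where $T:v\mapsto\sum_k e_kg_k$. The definition in the paper writes $\sum_k\|g_k\|^2_{H^s}$, which is ambiguous because representations are not unique; the footnote on $\rg C^s$ already flags this. I would resolve it by stating that the inner product on $\rg H^s(\mbb X)$ is understood through the minimal-norm, i.e.\ orthogonal-to-$\ker T$, representative, and then showing this is well defined:

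\begin{enumerate}[nolistsep]
\item $T$ is bounded from $\mc V$ into $C(\mbb X)$. This holds because $H^s\hookrightarrow C$ for $s>d_x/2$ and $\mbb X$ is bounded.
\item Hence $\ker T$ is closed, since point evaluations are continuous.
\item So $\mc V/\ker T\cong(\ker T)^\perp$ is a Hilbert space isometric to $\rg H^s(\mbb X)$ with the quotient norm.
\end{enumerate}

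Completeness and the reproducing property $\ip{g}{\rg\phi(x)}=g(x)$ then transfer directly. Here $\rg\phi(x)=T\Phi(x)=\rg\kappa(x,\cdot)$, noting that $\Phi(x)\perp\ker T$ automatically.

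Finally, invoke the one-to-one correspondence between RKHSs and kernels to conclude that $\mc H_{\rg\kappa}(\mbb X)=\rg H^s(\mbb X)$ with an equal norm. The Lipschitz boundary assumption enters only through the previous lemma, which identifies $H^s(\mbb X)$ with $\mc H_\kappa(\mbb X)$ up to equivalent norms.

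If instead one insists on the literal Sobolev norm on $H^s$ rather than the RKHS norm of $\kappa$, the same argument yields equality of $\rg H^s$ and $\mc H_{\rg\kappa}$ as sets with equivalent norms. This is the form in which I would state the conclusion.
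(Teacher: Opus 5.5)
The paper gives no proof of this lemma. It is stated as a known fact, carried over from the linear--radial product kernel of \cite{tang-ye2025koopman}, and then used. So there is no in-paper argument to compare against.

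\textbf{Your argument is correct and complete.} Taking $\mc V=[H^s(\mbb X)]^{d_x}$ and $\Phi(x)=(x_1\phi(x),\cdots,x_{d_x}\phi(x))$ is Aronszajn's product-kernel theorem specialized to $\kappa_{\mr{lin}}\cdot\kappa$. The RKHS of $\kappa_{\mr{lin}}$ on a region with nonempty interior is the space of linear functions $x\mapsto a\cdot x$, isometric to $\mR^{d_x}$. Restricting $\mR^{d_x}\otimes\mc H_\kappa(\mbb X)\cong[\mc H_\kappa(\mbb X)]^{d_x}$ to the diagonal is exactly your map $T$.

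The remaining steps are sound:
\begin{itemize}
\item closedness of $\ker T$;
\item the identification $\mc V/\ker T\cong(\ker T)^\perp$;
\item the observation that $\Phi(x)\perp\ker T$;
\item the appeal to uniqueness of the RKHS for a given kernel.
\end{itemize}
One small correction: continuity of $\rg\kappa$ comes from its being a product of continuous functions, not from the Schur product theorem. Positive semidefiniteness is already immediate from the Gram form $\rg\kappa(x,x')=\ip{\Phi(x)}{\Phi(x')}_{\mc V}$, so Schur is not needed at all.

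\textbf{What your route adds is a precise meaning for the statement.} For $d_x\geq 2$, the inner product $\sum_k\ip{g_k}{h_k}_{H^s}$ in the paper's definition of $\rg H^s(\mbb X)$ is not well defined, because $T$ has a nontrivial kernel. For instance, $(e_2h,-e_1h,0,\cdots,0)\in\ker T$ for any nonzero $h\in H^s(\mbb X)$. Adding $c$ times such an element leaves the function unchanged, while $\sum_k\norm{g_k}_{H^s}^2\to\infty$ as $c\to\infty$.

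Consequently, the supremum-over-representations convention proposed in the footnote for $\rg C^s(\mbb X)$ would give an infinite norm. The minimal-norm (quotient) convention you adopt is the one under which the lemma is true. It holds isometrically when $H^s(\mbb X)$ carries the restricted $\kappa$-RKHS norm, and up to equivalent norms with the literal Sobolev norm, as you state.

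This same convention is implicitly required later. In the proof of Proposition \ref{prop:error-sample}, the factorization $w_i=x\cdot\bar w_i$ with $\norm{w_i}_{\rg H^s}=1$ is used to bound $\norm{\bar w_i}_{C^\alpha}$ by a constant. That bound only holds if $\bar w_i$ is the minimal-norm representative.
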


\begin{proposition}[Geometry of two liftings]
    For any $x\in\mbb{X}$, we have 
    $$\|\phi(x)\|_{\mc{H}_\kappa(\mbb{X})} = \rho(0)^{1/2}=\mr{const.}>0, \enspace 
    \|\rg\phi(x)\|_{\mc{H}_{\rg\kappa}(\mbb{X})} = \rho(0)^{1/2} |x|\propto |x|.$$
\end{proposition}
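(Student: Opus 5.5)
The plan is to compute both norms directly from the reproducing property, with no further machinery. For any kernel, the canonical feature satisfies $\|\phi(x)\|^2 = \ip{\phi(x)}{\phi(x)} = \kappa(x,x)$. This holds either by the defining inner product $\ip{\kappa(x,\cdot)}{\kappa(x',\cdot)}=\kappa(x,x')$ in the definition of the RKHS, or by the reproducing property applied to $g=\phi(x)$. So the whole statement reduces to evaluating each kernel on the diagonal.

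First, for the Sobolev kernel of the lemma on Sobolev kernels, $\kappa(x,x)=\rho(|x-x|)=\rho(0)$. Hence $\|\phi(x)\|_{\mc H_\kappa(\mbb X)}=\rho(0)^{1/2}$, which is independent of $x$. Positivity needs $\rho(0)>0$. I would obtain it from the Fourier representation: after even extension, $\rho(0)$ equals, up to the normalizing constant, $\int \hat\rho(\xi)\,\xD{\xi}$, which is bounded below by $c_1\int(1+|\xi|^2)^{-s}\xD{\xi}>0$. This integral converges precisely because $s>d_x/2$. Alternatively, $\rho(0)=0$ would give $\|\phi(x)\|=0$ for every $x$, hence $g(x)=\ip{g}{\phi(x)}=0$ for all $g$, so the RKHS would be trivial, contradicting $H^s(\mbb X)\neq\{0\}$.

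Second, for the linear--Sobolev kernel of Lemma \ref{lem:LR.kernel}, $\rg\kappa(x,x)=(x\cdot x)\rho(0)=|x|^2\rho(0)$. Taking square roots gives $\|\rg\phi(x)\|_{\mc H_{\rg\kappa}(\mbb X)}=\rho(0)^{1/2}|x|$, which is proportional to $|x|$ with the positive constant from the first step.

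The only step needing any care is justifying $\rho(0)>0$, that is, showing the kernel is nondegenerate. The Fourier lower bound settles it, so I expect no genuine obstacle.
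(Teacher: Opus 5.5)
Your proposal is correct and takes the same approach as the paper's proof: both evaluate the kernels on the diagonal to get $\|\phi(x)\|^2=\kappa(x,x)=\rho(0)$ and $\|\rg\phi(x)\|^2=\rg\kappa(x,x)=|x|^2\rho(0)$. You also justify $\rho(0)>0$, via the Fourier lower bound or via the nontriviality of $H^s(\mbb{X})$, which the paper leaves implicit.
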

\begin{proof}
    Clearly, $\|\phi(x)\|_{\mc{H}_\kappa(\mbb{X})}^2 = \ip{\kappa(x,\cdot)}{\kappa(x,\cdot)} = \kappa(x,x) = \rho(0)$ for all $x\in\mbb{X}$. Similarly, we see that  $\|\rg\phi(x)\|_{\mc{H}_{\rg\kappa}(\mbb{X})} ^2 = \rg\kappa(x,x) = (x\cdot x)\kappa(x,x) = |x|^2\rho(0)$. 
\end{proof}
The above simple conclusion reveals a key difference between the kernels of the two RKHSs and their corresponding canonical feature maps. As a result, in $H^s(\mbb{X})$, any dynamics is mapped onto a dynamics on the unit sphere in the infinite dimensions, while in $\rg H^s(\mbb{X})$, dynamics around the origin as an equilibrium point is mapped onto a dynamics around the origin in the infinite dimensions as an equilibrium point. This conceptual distinction is illustrated by Figure \ref{fig:geometry}. 
In the next subsection, we shall see the usefulness of the linear--Sobolev feature in defining the kernel quadratic forms, which serves as a key instrument in controller synthesis. 
\begin{figure}[!t]
    \centering
    \begin{subfigure}[b]{0.45\textwidth}
         \centering
         \includegraphics[width=\textwidth]{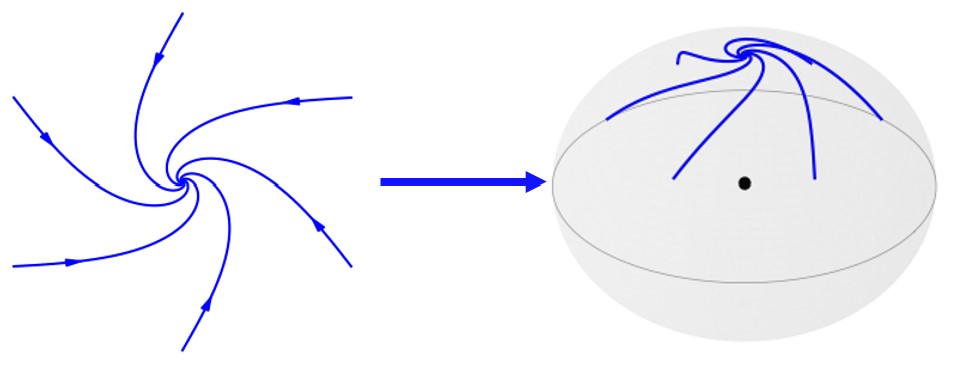}
         \caption{Lifting by $\phi$} 
     \end{subfigure}
     \hfill
     \begin{subfigure}[b]{0.45\textwidth}
         \centering
         \includegraphics[width=\textwidth]{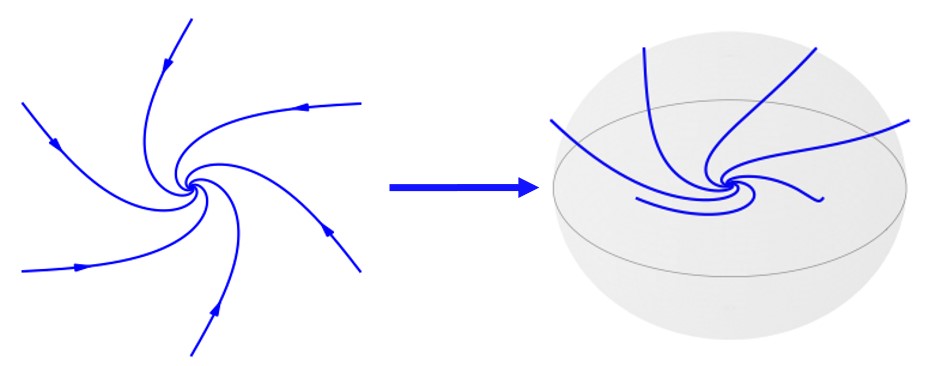}
         \caption{Lifting by $\rg\phi$}
     \end{subfigure}
    \caption{Different geometry of the canonical features in Sobolev RKHS and linear--Sobolev RKHS.}
    \label{fig:geometry}
\end{figure}

In an RKHS, it is more convenient to describe the evolution of the states by their canonical features, for which we need Perron--Frobenius semigroups, which are the adjoint semigroups of Koopman semigroups.\footnote{In the literature, Perron--Frobenius semigroups or operators are usually defined on the space of essentially bounded functions $L^\infty(\mbb{X})$ or the space of signed Borel measures $\mc{M}(\mbb{X})$. Here we instead consider defining the Perron--Frobenius semigroup on an RKHS for its computational usefulness.} 
\begin{definition}[Perron--Frobenius semigroups and generators]
    For a strongly continuous semigroup $\{K^t\}_{t\geq 0}$ on a Hilbert space $\mc{G}$ with inner product $\ip{\cdot}{\cdot}$, its adjoint semigroup $\{K^{t*}\}_{t\geq 0}$ is defined by $\ip{K^{t*}g}{h} = \ip{g}{K^t h}$ ($\forall g, h\in \mc{G}$). 
    Under Assumption \ref{assum:regularity}, we define 
    \begin{enumerate}[nolistsep]
        \item the adjoint of the drift Koopman semigroup $\{K_0^t\}_{t\geq 0}$ on $\rg H^s(\mbb{X})\equiv \mc{H}_{\rg\kappa}(\mbb{X})$ as the drift Perron--Frobenius semigroup $\{T_0^t\}_{t\geq 0} = \{(K_0^t)^*\}_{t\geq 0}$, 
        \item the adjoint of the actuation Koopman semigroups $\{K_j^t\}_{t\geq 0}$ on $H^s(\mbb{X})\equiv \mc{H}_{\kappa}(\mbb{X})$ as the actuation Perron--Frobenius semigroup $\{T_j^t\}_{t\geq 0} = \{(K_j^t)^*\}_{t\geq 0}$, for every $j=1,\cdots,d_u$, and 
        \item the adjoint of the closed-loop Koopman semigroup $\{K_\pi^t\}_{t\geq 0}$ (for a given $\pi\in [\rg C^s(\mbb{X})]^{d_u}$) on $\rg H^s(\mbb{X})\equiv \mc{H}_{\rg\kappa}(\mbb{X})$ as the closed-loop Perron--Frobenius semigroup $\{T_\pi^t\}_{t\geq 0} = \{(K_\pi^t)^*\}_{t\geq 0}$. 
    \end{enumerate}
    We denote their infinitesimal generators as $A_0$, $A_j$ ($j=1,\cdots,d_u$), and $A_\pi$, respectively. 
\end{definition}
Let us denote the flow of the autonomous system $\dot{x}(t)=f_0(x(t))$ by $\{S_{f_0}^t\}_{t\geq0}$, i.e., starting from any $x\in \mbb{X}$, after a time duration of $t$, the autonomous system ends up at the state $S_{f_0}^t(x)$. Similarly, denote the flow of each virtual autonomous system $\dot{x}(t) = f_j(x(t))$ as $\{S_{f_j}^t\}_{t\geq 0}$, and the flow under any feedback policy $\pi\in [\rg C^s(\mbb{X})]^{d_u}$ as $\{S_\pi^t\}_{t\geq 0}$. Then we have the following characterization. 
\begin{proposition}[Characterization of Perron--Frobenius semigroups] 
    Under Assumption \ref{assum:regularity}, for any $x\in\mbb{X}$ and $t\geq 0$, we have
    $$T_0^t\rg\phi(x) = \rg\phi(S_{f_0}^t(x)), \ T_j^t\phi(x) = \phi(S_{f_j}^t(x)), \ T_\pi^t\rg\phi(x) = \rg\phi(S_\pi^t(x)). $$ 
\end{proposition}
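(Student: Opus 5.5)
The argument is a direct duality computation that combines the reproducing property with the definition of the Perron--Frobenius semigroups as adjoints. Since the three identities have the same form, I would write it once for a generic strongly continuous Koopman semigroup $\{K^t\}_{t\geq0}$ of composition type, $K^tg = g\circ S^t$. It lives on an RKHS $\mc G$ whose canonical feature map is $\psi$. I would then specialize: $(\mc G,\psi,S^t)=(\mc H_{\rg\kappa}(\mbb X),\rg\phi,S_{f_0}^t)$ for the drift; $(\mc H_\kappa(\mbb X),\phi,S_{f_j}^t)$ for each actuation field; and $(\mc H_{\rg\kappa}(\mbb X),\rg\phi,S_\pi^t)$ for the closed loop.

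The first step is to justify the setting. By Corollary~\ref{cor:strong.continuity} and Assumption~\ref{assum:regularity}, each Koopman semigroup is strongly continuous, so each $K^t$ is a bounded operator on the corresponding space. We also need $H^s(\mbb X)\equiv\mc H_\kappa(\mbb X)$ and $\rg H^s(\mbb X)\equiv\mc H_{\rg\kappa}(\mbb X)$ (Lemma~\ref{lem:LR.kernel} and the preceding lemma). With these identifications the adjoint $T^t=(K^t)^*$ is a well-defined bounded operator, and point evaluations are continuous functionals represented by $\psi(x)$.

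The second step is the computation itself. Fix $x\in\mbb X$ and $t\geq0$, and test against an arbitrary $g\in\mc G$:
$$\ip{T^t\psi(x)}{g}=\ip{\psi(x)}{K^tg}=(K^tg)(x)=g(S^t(x))=\ip{\psi(S^t(x))}{g}.$$
The equalities use, in order, the definition of the adjoint, the reproducing property, the composition form of $K^t$, and the reproducing property again. The last step needs $S^t(x)\in\mbb X$, which holds by the invariance of $\mbb X$ in Assumption~\ref{assum:regularity}; in the closed-loop case this invariance must hold under $f_\pi$. Since $g$ is arbitrary, $T^t\psi(x)=\psi(S^t(x))$.

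The main obstacle is not this algebra but a point of identification. The paper defines $K^t$ as the semigroup generated by $L$, whereas the computation uses the formula $K^tg=g\circ S^t$. I would reconcile the two by noting that the composition family $g\mapsto g\circ S^t$ maps $\mc G$ into itself boundedly and forms a strongly continuous semigroup, which is the content of the cited strong-continuity lemma. On $C^1$ functions, differentiating $g(S^t(x))$ at $t=0$ gives $\mr Dg\cdot f$, so its generator agrees with $L$ on a dense core. Uniqueness of the semigroup generated by a closed operator then identifies it with $K^t$.

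For the closed-loop case there is one more point. We need $f_\pi\in[\rg C^s(\mbb X)]^{d_x}$ so that the linear--Sobolev space is invariant; this follows because products of $\rg C^s$ and $C^s$ functions lie in $\rg C^s$. Also, $S_\pi^t$ must stay in $\mbb X$. Here I would note that the feedback law is taken to preserve the invariance assumption, since the statement presupposes $S_\pi^t$ is a global flow on $\mbb X$.
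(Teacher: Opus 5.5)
Your proposal is correct and is the paper's argument: pair $T^t$ applied to the canonical feature with an arbitrary $g$, pass to $K^t$ by adjointness, and apply the reproducing property on both ends. The paper writes out only the drift case and treats the other two as analogous. The identification step you single out as the main obstacle is not needed, because the paper defines the Koopman semigroup directly as the composition family $g\mapsto g\circ S_{f_0}^t$ in \eqref{eq:Koopman.semigroup}. It then asserts $L_0$ to be the generator of that family, not the other way around.
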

\begin{proof}
    Without loss of generality it suffices to prove the first conclusion. For any $g\in \rg H^s(\mbb{X})$, 
    $$\ip{T_0^t\rg\phi(x)}{g}_{\mc{H}_{\rg\kappa}(\mbb{X})} = \ip{\rg\phi(x)}{K_0^tg}_{\mc{H}_{\rg\kappa}(\mbb{X})} = (K_0^t g)(x) = g(S_{f_0}^t(x)) = \ip{\rg\phi(S_{f_0}^t(x))}{g}_{\mc{H}_{\rg\kappa}(\mbb{X})}. $$ 
    Therefore, due to the arbitrariness of $g$, we have $T_0^t\rg\phi(x) = \rg\phi(S_{f_0}^t(x))$. 
\end{proof}

\begin{remark}[Short-time average of canonical features along the flow]
\label{rem:short-time-average}
    Although the conclusion of the above proposition holds for the Perron--Frobenius semigroup, we must be careful not to apply it na\"{i}vely to its infinitesimal generator $A_0$. Actually, $\rg\phi(x)$ ($\phi(x)$) is not guaranteed to reside in the generator's domain $\mc{D}(A_0)$ ($\mc{D}(A_j)$, respectively), which is an unbounded operator whose domain is only a dense subset of the RKHS. Hence, $A_0\rg\phi(x)$, $A_j\phi(x)$, or $A_\pi \rg\phi(x)$ is not meaningful. 
    Nevertheless, if $\rg\phi(x)$ is replaced by a short-time average with a given $\epsilon>0$: $\rg\phi(x)_{0,\epsilon} = \frac{1}{\epsilon}\int_0^\epsilon \rg\phi(S_{f_0}^t(x)) \xD{t}$, then $\rg\phi(x)_{0, \epsilon} \in \mc{D}(A_0)$ holds indeed:
    $$\begin{aligned}
        A_0\rg\phi(x)_{0, \epsilon} &= \lim_{t\downarrow 0} \frac{1}{t} \bra{T_0^t \rg\phi(x)_{0,\epsilon} - \rg\phi(x)_{0,\epsilon}} \\
        &= \lim_{t\downarrow 0} \frac{1}{t\epsilon } \bra{ \int_\epsilon^{\epsilon+t} \rg\phi(S_{f_0}^\tau(x))\xD{\tau} - \int_0^t \rg\phi(S_{f_0}^\tau(x))\xD{\tau}} = \frac{1}{\epsilon} \bra{\rg\phi(S_{f_0}^\epsilon(x)) - \rg\phi(x)}.
    \end{aligned}$$
    Analogously we can define $\phi(x)_{j, \epsilon}\in \mc{D}(A_j)$ for every $j=1,\cdots,d_u$ and $\rg\phi(x)_{\pi, \epsilon}\in \mc{D}(A_\pi)$.  
\end{remark}

By the affine relation \eqref{eq:Koopman.generator.closed-loop} among the Koopman generators, we have by duality the affine relation among the Perron--Frobenius generators:
\begin{equation}\label{eq:PF.generator.closed-loop} 
    A_\pi = A_0 + \sum_{j=1}^{d_u} \rg\iota^* A_j M_{\pi_j}^* \ : \ \mc{D}(A_\pi):= \mc{D}(A_0)\cap \bra{\bigcap_{j=1}^{d_u} (M_{\pi_j}^*)^{-1}\mc{D}(A_j)} \subset \rg H^s(\mbb{X}) \to \rg H^s(\mbb{X}). 
\end{equation} 
It is not difficult to check the following characterization of the adjoint of multiplication operator. 
\begin{proposition}[Adjoint of multiplication operator]
    Suppose that $\varpi \in \rg C^s(\mbb{X})$, where $\mbb{X}\subset \mR^{d_x}$ is a compact region. Then 
    $$M_\varpi^* : \rg H^s(\mbb{X})\to H^s(\mbb{X}), \ \rg\phi(x) \mapsto \varpi(x) \phi(x), \ \forall x\in \mbb{X}.$$
\end{proposition}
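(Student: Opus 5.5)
The plan is to identify $M_\varpi^*$ through the reproducing property, exactly as in the proof of the characterization of Perron--Frobenius semigroups. By the definition of the multiplication operator, $M_\varpi$ is a bounded linear map from $H^s(\mbb X)=\mc H_\kappa(\mbb X)$ into $\rg H^s(\mbb X)=\mc H_{\rg\kappa}(\mbb X)$. Hence its Hilbert-space adjoint $M_\varpi^*:\rg H^s(\mbb X)\to H^s(\mbb X)$ exists and is bounded, and it is characterized by
$$\ip{M_\varpi^* h}{g}_{\mc H_\kappa(\mbb X)} = \ip{h}{M_\varpi g}_{\mc H_{\rg\kappa}(\mbb X)}, \quad \forall g\in H^s(\mbb X),\ h\in \rg H^s(\mbb X).$$
Once this is in place, the only remaining task is to test the adjoint on canonical features.

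Fix $x\in\mbb X$ and take $h=\rg\phi(x)$. For an arbitrary $g\in H^s(\mbb X)$, the reproducing property in $\mc H_{\rg\kappa}(\mbb X)$ gives
$$\ip{\rg\phi(x)}{M_\varpi g}_{\mc H_{\rg\kappa}} = (\varpi g)(x) = \varpi(x)g(x).$$
This step is legitimate because $\varpi g\in\rg H^s(\mbb X)$. The reproducing property in $\mc H_\kappa(\mbb X)$ then rewrites the right-hand side as
$$\varpi(x)g(x) = \ip{\varpi(x)\phi(x)}{g}_{\mc H_\kappa}.$$
Since this identity holds for every $g$, we conclude $M_\varpi^*\rg\phi(x)=\varpi(x)\phi(x)$. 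Moreover, the span of $\{\rg\phi(x)\}_{x\in\mbb X}$ is dense in $\mc H_{\rg\kappa}(\mbb X)$ and $M_\varpi^*$ is bounded, so this action determines $M_\varpi^*$ uniquely.

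The only step needing care is the premise that $M_\varpi$ is well-defined and bounded from $H^s(\mbb X)$ into $\rg H^s(\mbb X)$, which the definition of the multiplication operator asserts. If one wants to verify it, the argument runs as follows. Write $\varpi=\sum_k e_k\varpi_k$ with $\varpi_k\in C^s(\mbb X)$, so that $\varpi g=\sum_k e_k(\varpi_k g)$. It then suffices that multiplication by a $C^s$ function is bounded on $H^s(\mbb X)$. For integer $s$ this follows from the Leibniz rule, and for fractional $s$ it follows by interpolation. Combining this with the definition of the $\rg H^s$ norm gives $\|\varpi g\|_{\rg H^s}\lesssim \max_k\|\varpi_k\|_{C^s}\|g\|_{H^s}$.
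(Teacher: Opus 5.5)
Your proposal is correct and follows the paper's proof essentially verbatim. You pair $M_\varpi^*\rg\phi(x)$ with an arbitrary $g\in H^s(\mbb{X})$, move the adjoint across to get $\ip{\rg\phi(x)}{\varpi g}$, and apply the reproducing property first in $\mc{H}_{\rg\kappa}(\mbb{X})$ and then in $\mc{H}_\kappa(\mbb{X})$.

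One caveat concerns only your optional boundedness check. For fractional $s$, interpolating a fixed multiplier between $H^{\lfloor s\rfloor}$ and $H^{\lceil s\rceil}$ would require $\varpi_k$ to act boundedly on $H^{\lceil s\rceil}(\mbb{X})$, and $C^s$ regularity alone does not give that. Since you, like the paper, take boundedness of $M_\varpi$ from its definition, the proof of the statement is unaffected.
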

\begin{proof}
    Given any $g\in H^s(\mbb{X})$, we have
    $$\ip{M_\varpi^* \rg\phi(x)}{g} = \ip{\rg\phi(x)}{M_\varpi g} = \ip{\rg\phi(x)}{\varpi g} = \varpi(x)g(x) = \varpi(x)\ip{\phi(x)}{g} = \ip{\varpi(x)\phi(x)}{g}. $$
    The arbitrariness of $g$ then implies the conclusion. 
\end{proof}
For the adjoint operator of the inclusion map, $\rg\iota^*: H^s(\mbb{X})\to \rg H^s(\mbb{X})$, we have the following characterization. 
\begin{proposition}[Adjoint of inclusion map]
\label{lem:inclusion}
    The inclusion $\rg\iota$ and its adjoint $\rg\iota^*$ are bounded linear operators, and 
    $$\rg\iota^*\phi(x)=\rg\phi(x), \ \forall x\in \mbb{X}. $$ 
\end{proposition}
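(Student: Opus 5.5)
The plan has two parts: first show that $\rg\iota$ is bounded, then identify $\rg\iota^*$ on canonical features through the reproducing property, exactly as in the proof of the adjoint of the multiplication operator.

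\emph{Boundedness.} Every $g\in\rg H^s(\mbb X)$ has a representation $g=\sum_{k=1}^{d_x} e_kg_k$ with $g_k\in H^s(\mbb X)$. Since $s>d_x/2$, $H^s(\mbb X)$ is a Banach algebra on the bounded Lipschitz domain $\mbb X$, so $\|e_kg_k\|_{H^s}\le C\|e_k\|_{C^{\lceil s\rceil}}\|g_k\|_{H^s}$; here $e_k$ is a smooth function on the compact set $\mbb X$. The triangle inequality and Cauchy--Schwarz then give
$$\|g\|_{H^s}\le C'\Bigl(\sum_k\|g_k\|_{H^s}^2\Bigr)^{1/2}$$
for every representation. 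Taking the infimum over representations (which is how the norm on the quotient-type space $\rg H^s$ is understood) yields $\|\rg\iota g\|_{H^s}\le C'\|g\|_{\rg H^s}$. So $\rg\iota$ is a bounded linear map between Hilbert spaces, and its adjoint $\rg\iota^*$ exists and is bounded with the same norm.

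Alternatively, boundedness follows from the closed graph theorem. Point evaluations are continuous on both RKHSs, so convergence in either norm implies pointwise convergence, and the graph of $\rg\iota$ is therefore closed. I would present this argument as the cleaner route, because it avoids the representation-dependent norm entirely.

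\emph{Action on features.} For any $x\in\mbb X$ and any $g\in\rg H^s(\mbb X)$, the reproducing properties of the two RKHSs give
$$\ip{\rg\iota^*\phi(x)}{g}_{\mc H_{\rg\kappa}} = \ip{\phi(x)}{\rg\iota g}_{\mc H_\kappa} = g(x) = \ip{\rg\phi(x)}{g}_{\mc H_{\rg\kappa}}.$$
Because $g$ is arbitrary, $\rg\iota^*\phi(x)=\rg\phi(x)$.

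The main obstacle is the boundedness step. It depends on how the norm of $\rg H^s$ handles non-unique representations and on the identification $\rg H^s(\mbb X)\equiv\mc H_{\rg\kappa}(\mbb X)$ from Lemma \ref{lem:LR.kernel}. The closed-graph argument sidesteps these subtleties, needing only that both spaces are RKHSs of genuine functions on $\mbb X$.
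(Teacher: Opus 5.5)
Your proposal is correct. The identification of $\rg\iota^*\phi(x)$ is exactly the paper's proof, which consists only of the chain $\ip{\rg\iota^*\phi(x)}{g}=\ip{\phi(x)}{\rg\iota g}=g(x)=\ip{\rg\phi(x)}{g}$ obtained from the two reproducing properties. The paper takes the boundedness of $\rg\iota$ for granted, so your closed-graph argument is a valid addition. Note that it still presupposes the set inclusion $\rg H^s(\mbb X)\subseteq H^s(\mbb X)$, i.e.\ that $e_kg_k\in H^s$; this is built into the paper's definition of $\rg\iota$. The argument therefore avoids only the quantitative multiplier estimate and the representation-dependent norm, not the multiplier fact itself.
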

\begin{proof}
    For any $g\in \rg H^s(\mbb{X})$, it is clear that $\ip{\rg\iota^*\phi(x)}{g} = \ip{\phi(x)}{\rg\iota g} = g(x)$ holds for all $x\in \mbb{X}$. 
\end{proof}

\subsection{Koopman Spectrum and Kernel Quadratic Form}\label{subsec:prelim-spectrum}
Our purpose to study the stabilization of a nonlinear system in the form of \eqref{eq:system} will inevitably invoke the concept of a \emph{control Lyapunov function}, which should be a nonnegative-valued function of the state $x$ with some regularity. Such a control Lyapunov function must satisfy some linear--quadratic form of inequalities in closed loop. 
To facilitate such discussions, we define the kernel quadratic form in this subsection. 

\begin{definition}[Trace operator and Hilbert--Schmidt operator]
    Let $\mc{G}$ be a separable Hilbert space with orthonormal basis $\{w_k\}_{k=1}^\infty$. By a trace operator $A \in \mc{B}_1(\mc{G})$, we refer to a bounded linear operator $A \in \mc{B}(\mc{G})$ such that its trace $\tr(A) = \sum_{k=1}^\infty \ipp{w_k}{A}{w_k}$ exists (i.e., the sequence $\BRA{ \ipp{w_k}{A}{w_k} }_{k=1}^\infty$ is absolutely summable, and if so, independent of the choice of orthonormal basis). 
    By a Hilbert--Schmidt operator $P\in \mc{B}_2(\mc{G})$, we refer to a bounded linear operator $P \in \mc{B}(\mc{G})$ such that $P^*P\in \mc{B}_1(\mc{G})$ (or equivalently $PP^*\in \mc{B}_1(\mc{G})$, due to the equality $\tr AB = \tr BA$). 
\end{definition}
In this work, we only consider self-adjoint operators $P=P^* \in \mc{B}_1(\mc{G})$ or $\mc{B}_2(\mc{G})$. For such operators, there is always a spectral decomposition $P = \sum_{k=1}^\infty p_kw_k\times w_k$, where $\{w_k\}_{k=1}^\infty$ forms an orthonormal basis of the Hilbert space and the eigenvalues $\{p_k\}_{k=1}^\infty$ are summable or square-summable (i.e., in $\ell^1(\mN)$ or $\ell^2(\mN)$, respectively). Moreover, the $\ell^1$-norm (or $\ell^2$-norm) of the eigenvalue sequence is a norm of $P\in \mc{B}_1(\mc{G})$ (or $\mc{B}_2(\mc{G})$, respectively), called its trace norm (or Hilbert--Schmidt norm, respectively): 
$$ \|P\|_{\mc B_1} = \sum_{k=1}^\infty |p_k|, \quad \|P\|_{\mc B_2} = \Bra{\sum_{k=1}^\infty |p_k|^2}^{1/2}.$$

\begin{definition}[Kernel quadratic form and kernel SOS form]
    A kernel quadratic form on $\mc{H}_\kappa(\mbb{X})$ refers to a function $v_P: x\mapsto \ipp{\phi(x)}{P}{\phi(x)}$ with a self-adjoint Hilbert--Schmidt operator $P\in \mc{B}_2(\mc{H}_\kappa(\mbb{X}))$. Similarly, a kernel quadratic form on $\mc{H}_{\rg\kappa}(\mbb{X})$ refers to a function $v_P: x\mapsto \ipp{\rg\phi(x)}{P}{\rg\phi(x)}$ with a self-adjoint Hilbert--Schmidt operator $P\in \mc{B}_2(\mc{H}_{\rg\kappa}(\mbb{X}))$. 
    By spectral decomposition of the operator $P = \sum_{k=1}^\infty p_kw_k\times w_k$, where $w_k\in \mc{H}_\kappa(\mbb{X})$ (or $\mc{H}_{\rg\kappa}(\mbb{X})$), we have $v_P(x)=\ip{\phi(x)}{P\phi(x)} = \sum_{k=1}^\infty p_k|w_k(x)|^2$. 
    In particular, when all $p_k\geq 0$, we call $v_P$ a kernel sum-of-squares (SOS) form. 
\end{definition} 
\begin{remark}[Typical kernel quadratic and SOS forms]
\label{rem:quadratic-form}
    Because of the conclusion of Proposition \ref{lem:inclusion}, any kernel quadratic form on $\rg H^s(\mbb{X})$: $v(x)=\ipp{\rg\phi(x)}{P}{\rg\phi(x)}$ can be equivalently rewritten as a kernel quadratic form on $H^s(\mbb{X})$ by $v(x)=\ip{\phi(x)}{\rg\iota P \rg\iota^* \phi(x)}$. 
    Kernel quadratic forms on $\mc{H}_\kappa(\mbb{X})$ for the Sobolev kernel $\kappa$ encode functions that can be expressed as sums of squares of $\mc{H}_\kappa(\mbb{X})$-class functions, namely a sum of squares with Sobolev regularity. Hence, kernel quadratic forms on $\mc{H}_{\rg\kappa}(\mbb{X})$ are sums of squared linear--Sobolev functions and can be interpreted as ``locally at least quadratic, globally Sobolev-regular'' functions. 
    For example, consider $v(x)=|x|^2=\sum_{k=1}^{d_x} e_k(x)^2$. Since $1\in H^s(\mbb{X})$, $e_k\in \rg{H}^s(\mbb{X})=\mc{H}_{\rg\kappa}(\mbb{X})$. We thus have 
    $$v(x) = \sum_{k=1}^{d_x} \abs{\ip{e_k}{\rg\phi(x)}}^2 = \ipp{\rg\phi(x)}{P}{\rg\phi(x)},\, P:=\sum_{k=1}^{d_x} e_k\times e_k. $$
    Similarly, if we let $v(x)=x^\top \ms{P}x$ where the matrix $\ms{P} \succeq 0$, then by finding the eigenvectors $\{\ms{w}_k\}_{k=1}^{d_x}$ and eigenvalues $\{p_k\}_{k=1}^{d_x}$ of $\ms{P}$, we have $P=\sum_{k=1}^{d_x} p_k w_k\times w_k$, where $w_k \in \rg H^s(\mbb{X}): x\mapsto \ms{w}_k^\top x$. 
\end{remark}

\par A natural question is whether, for the closed-loop system \eqref{eq:system.closed-loop} under any policy $\pi\in [\rg C^s(\mbb{X})]^{d_u}$, there exists a Lyapunov function that can be written as a kernel SOS form in $\rg H^s(\mbb{X})$. The answer is negative, as should be expected. 
Nevertheless, in the following case, where the policy $\pi$ is stabilizing and guarantees exponential stability locally around the origin, a kernel SOS form of control Lyapunov function can be found. 
\begin{definition}[Exponential stabilization in the homeomorphic sense]
    The policy $\pi\in [\rg C^s(\mbb{X})]^{d_u}$ is said to be \emph{exponentially stabilizing in the homeomorphic sense}, if for the closed-loop flow $\{S_{f_\pi}^t\}_{t\geq 0}$ on $\mbb{X}$, there is a bijective mapping $\psi: \mbb{X}\to \psi(\mbb{X}) \subset \mR^{d_x}$ such that $\psi \in [\rg C^s(\mbb{X})]^{d_x}$, $\psi^{-1} \in [\rg C^s(\psi(\mbb{X}))]^{d_x}$ (and hence $\psi$ can be called a $\rg C^s$-diffeomorphism), and $$\psi\circ S_{f_\pi}^t(x) = \exp(tJ_\pi)\psi(x),$$ 
    where $J_\pi = \mr{D}f_{\pi}(0)$ is the closed-loop Jacobian at the origin, and all the $d_x$ eigenvalues of $J_\pi$ belong to the open left half of the complex plane $\{\lambda\in \mC: \Re\lambda<0\}$. 
\end{definition}

\begin{theorem}[Koopman spectral radius bound, an extension of the discrete-time version in Tang \& Ye \cite{tang-ye2025koopman}]
    Under Assumption \ref{assum:regularity}, if $\pi\in [\rg C^s(\mbb{X})]^{d_u}$ is an exponentially stabilizing policy in the homeomorphic sense, then at any given $t\geq0$, the closed-loop Koopman operator $K_\pi^t$ and closed-loop Perron--Frobenius operator $T_\pi^t$ have spectral radii $$r(K_\pi^t) = r(T_\pi^t) \leq \exp(\lambda_1 t).$$ 
    Here $\lambda_1 = \max\{\Re\lambda: \lambda\in\sigma(J_\pi)\}$. 
\end{theorem}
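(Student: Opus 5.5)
The plan is to conjugate the closed-loop Koopman semigroup to the Koopman semigroup of the linear flow $y\mapsto \exp(tJ_\pi)y$ on $\mbb Y:=\psi(\mbb X)$, and then bound the spectral radius there through Gelfand's formula $r(K)=\lim_{n\to\infty}\|K^n\|^{1/n}$.

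First, the equality $r(K_\pi^t)=r(T_\pi^t)$ is immediate: $T_\pi^t=(K_\pi^t)^*$ is the Hilbert-space adjoint, so its spectrum is the complex conjugate of $\sigma(K_\pi^t)$ and the two spectral radii coincide. It therefore suffices to bound $r(K_\pi^t)$ on $\rg H^s(\mbb X)$.

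\emph{Conjugation step.} Define the composition operator $C_\psi: g\mapsto g\circ\psi$. Because $\psi$ and $\psi^{-1}$ are $\rg C^s$-diffeomorphisms fixing the origin (note $\psi(0)=0$ since $\psi\in[\rg C^s]^{d_x}$), $C_\psi$ should be a bounded isomorphism from $\rg H^s(\psi(\mbb X))$ onto $\rg H^s(\mbb X)$, with inverse $C_{\psi^{-1}}$. To justify this, I would invoke the chain rule and Sobolev multiplier estimates, using the same regularity mechanism as the strong continuity lemma; an extension argument will also be needed if $\psi(\mbb X)$ is only Lipschitz. The relation $\psi\circ S^t_{f_\pi}=\exp(tJ_\pi)\psi$ then gives
\[
K_\pi^t = C_\psi\, K_{\mr{lin}}^t\, C_\psi^{-1},
\qquad K_{\mr{lin}}^t h := h\circ \exp(tJ_\pi).
\]
Conjugate operators share the same spectrum, so it suffices to show $r(K_{\mr{lin}}^t)\le \exp(\lambda_1 t)$. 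If $\mbb Y$ is not forward invariant under the linear flow, one restricts to $\mbb Y$, which is invariant because it is the image of the invariant set $\mbb X$.

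\emph{Linear estimate, the main obstacle.} Take $h=\sum_k e_k h_k$ in $\rg H^s(\mbb Y)$ and write $E^n:=\exp(ntJ_\pi)$. Then
\[
h\circ E^n(y)=\sum_k (E^n y)_k\, h_k(E^n y)=\sum_{k,l}(E^n)_{kl}\, y_l\, h_k(E^n y).
\]
This puts $h\circ E^n$ back in the linear--Sobolev form with coefficients $g_l=\sum_k (E^n)_{kl}\,(h_k\circ E^n)$. The prefactor obeys $|(E^n)_{kl}|\lesssim n^{d_x}\exp(n\lambda_1 t)$. The difficulty is to show that the factors $\|h_k\circ E^n\|_{H^s}$ grow subexponentially in $n$. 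Since $E^n$ is contracting, derivatives pick up factors $\|E^n\|^{|\beta|}\le 1$ up to polynomial terms. Under the change of variables $z=E^n y$, the $L^2$ integrals gain a Jacobian factor $|\det E^{-n}|$, and this is exponentially large. So composing with a contraction alone does not give a uniform bound, and the linear structure has to be used more carefully.

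My first attempt would use the invariance $E^n\mbb Y\subset\mbb Y$ together with an equivalent norm: the RKHS norm on the star-shaped neighborhood $\mbb Y$, controlled via Taylor expansion around $0$ using the factor $y_l$. Failing that, I would refine the representation by extracting higher-order monomials. Write $h=\sum_{|\alpha|=m}y^\alpha h_\alpha$ with $m>s$ plus a polynomial part of degree less than $m$. The polynomial part is an invariant finite-dimensional subspace on which $K_{\mr{lin}}^t$ has eigenvalues $\exp(t\langle\alpha,\lambda\rangle)$, each of modulus at most $\exp(\lambda_1 t)$ when $|\alpha|\ge1$. On the remainder, the gained factor $\|E^n\|^{m}\approx\exp(nm\lambda_1 t)$ should dominate both the Jacobian loss and the derivative growth. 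Combining the two pieces, $\|(K_{\mr{lin}}^t)^n\|\lesssim \mathrm{poly}(n)\exp(n\lambda_1 t)$, and Gelfand's formula yields the claim. The step I expect to be hardest to make rigorous is the $H^s$ bound on the high-order remainder, which needs a Hardy-type inequality to control $h_\alpha$.
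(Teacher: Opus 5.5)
Your reductions are sound and follow the paper's own skeleton: $r(K_\pi^t)=r(T_\pi^t)$ by adjointness, similarity through $C_\psi$ (the paper's ``first assertion''), and Gelfand's formula. The gap is the linear estimate $\|(K^t_{\mr{lin}})^n\|\lesssim\mathrm{poly}(n)\,\mr{e}^{n\lambda_1 t}$, and it cannot be closed.

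The Jacobian loss you noticed is decisive, not technical. On the top-order seminorm, composing with a contraction $A$ costs about $\|A\|^s|\det A|^{-1/2}$. For $A=aI$ this equals $a^{s-d_x/2}\le 1$, but it grows exponentially when $\sigma(J_\pi)$ is spread out. The paper's proof skips exactly this point. It asserts $\|g(A\,\cdot)\|_{H^s}\lesssim\|g\|_{H^s}$ uniformly in $k$ from Fa\`a di Bruno's formula, dropping $|\det A|^{-1/2}$. Its own first assertion needed $\inf|\det\mr{D}\psi|\ge\epsilon$, which fails for $A=\exp(ktJ_\pi)$ as $k\to\infty$.

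Here is a counterexample. Take $d_x=2$, $s=2$, $\mbb X$ the closed unit disk, $f_0(x)=Jx$ with $J=\diag(-1,-100)$, $f_j\equiv 0$, $\pi=0$, $\psi=\mathrm{id}$, so $\lambda_1=-1$. Let $a=\mr{e}^{-t}$ and $b=\mr{e}^{-100t}$, so $\exp(ntJ)=\diag(a^n,b^n)=:A_n$. Pick a bump $\chi$ supported in the unit disk and $\omega\gg b^{-n}$, and set
\[
h_n(y)=y_1\,g_n(y),\qquad g_n(z)=\omega^{-2}(ab)^{-n/2}\cos(\omega z_1)\,\chi\bigl(a^{-n}z_1,\,b^{-n}z_2\bigr).
\]
Then $\|h_n\|_{\rg H^2}\le\|g_n\|_{H^2}\lesssim 1$ uniformly in $n$. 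On the other hand, $(K_\pi^t)^n h_n(y)=a^n y_1\,\omega^{-2}(ab)^{-n/2}\cos(\omega a^n y_1)\,\chi(y)$. Using $\rg H^2\hookrightarrow H^2$, its $\partial_1^2$-part alone gives $\|(K_\pi^t)^n h_n\|_{\rg H^2}\gtrsim a^{3n}(ab)^{-n/2}=\mr{e}^{47.5\,nt}$. Hence $r(K_\pi^t)\ge\mr{e}^{47.5\,t}>\mr{e}^{\lambda_1 t}$.

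So the bound is false under the stated hypotheses alone. A correct version needs either an extra condition tying $s$ to the spread of $\sigma(J_\pi)$, or a Jacobian-insensitive space. This construction shows the condition must be roughly $2s|\lambda_1|\ge|\tr J_\pi|$.

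Your proposed repair also fails on its own terms. With $m>s$ and $d_x\ge 2$, the splitting ``polynomial part $\oplus$ remainder vanishing to order $m$'' needs Taylor coefficients at $0$ of order $\ge s-d_x/2$. These are not bounded functionals on $H^s$, so no bounded invariant splitting exists. Writing the remainder as $\sum_{|\alpha|=m}y^\alpha h_\alpha$ also costs $m$ derivatives, since a Hardy inequality trades one power of $|y|$ for one derivative and no more. So $h_\alpha$ lies only in $H^{s-m}$, and the factor $\|E^n\|^m$ cannot be harvested in the $H^s$ norm. The $h_n$ above show the mechanism: they vanish only to first order, and their $H^2$ mass sits in high-frequency oscillation inside the thin box $A_n\mbb X$. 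No finite Taylor expansion can absorb that.
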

\begin{proof}
    To prove $r(K_\pi^t) \leq \exp(\lambda_1 t)$, by the Gel'fand formula for spectral radius \cite{lax2002functional}, it suffices to prove that there exists a constant $c>0$ such that $\|(K_\pi^t)^k\|\leq c\exp(\lambda_1 kt)$ for all $k\in \mN$. Note that $(K_\pi^t)^k = K_\pi^{kt}$, which maps any $g\in \rg H^s(\mbb{X})$ to $g\circ S_{f_\pi}^{kt}$. It thus remains to prove that $\norm{g\circ S_{f_\pi}^{kt}}_{\rg H^s} \lesssim \exp(\lambda_1 kt)\norm{g}_{\rg H^s}$. 
    Due to the homeomorphism hypothesis, we have
    $$ g\circ S_{f_\pi}^{kt}(\cdot) = g\bra{\psi^{-1}\bra{\exp(ktJ_\pi)\psi(\cdot)}}. $$
    We hence investigate the $\rg H^s$-norm of this function. 
    \par First, the following assertion is easy to prove. If $\psi: \psi^{-1}(\mbb{X})\to \mbb{X}$ is an injection belonging to the $C^s$-class, and $\inf_{x\in \mbb{X}} |\det \mr{D}\psi|\geq \epsilon$ for some constant $\epsilon>0$, then  
    $$\|g\circ \psi\|_{H^s} \lesssim \|g\|_{H^s}, \ \forall g\in H^s(\mbb{X}).$$ 
    To show this, we use Fa{\`a} di Bruno's formula: for any multi-index $\alpha$ with $|\alpha|<s$, we have 
    $$\partial^\alpha(g\circ\psi) = \sum_{1\leq|\beta|\leq|\alpha|} (\partial^\beta g)\circ\psi \cdot p_{\alpha,\beta}\big(\{\partial^\gamma\psi\}_{1\leq|\gamma|\leq|\alpha|-|\beta|+1}\big),$$
    where each $p_{\alpha, \beta}$ is a polynomial function, in which, because $\inf_{x\in \mbb{X}} |\det \mr{D}\psi|\geq \epsilon$, we have 
    $$\|(\partial^\beta g)\circ\psi\|_{L^2}\lesssim \|\partial^\beta g\|_{L^2}.$$
    The assertion thus follows. 
    Then the assertion can be extended by substituting $H^s$ and $C^s$ with $\rg H^s$ and $\rg C^s$, respectively, i.e., if $\psi$ is an injection belonging to the $\rg C^s$-class, and $\inf_{x\in \mbb{X}} |\det \mr{D}\psi|\geq \epsilon$ for some constant $\epsilon>0$, then  
    $$\|g\circ \psi\|_{\rg H^s} \lesssim \|g\|_{\rg H^s}, \ \forall g\in \rg H^s(\mbb{X}).$$ 
    
    \par Second, we have the following assertion. Given that $J_\pi\in \mR^{d_x\times d_x}$ is a Hurwitz matrix (i.e., all its eigenvalues reside in the open left half plane) and $t \geq 0$ is fixed, then the function $x\mapsto g(\exp(ktJ_\pi)x)$ is in $\rg H^s(\mbb{X})$, and 
    $$ \norm{g\bra{\exp(ktJ_\pi)(\cdot)}}_{\rg H^s} \lesssim \exp(\lambda_1kt)\|g\|_{\rg H^s}.$$
    Indeed, if we write $A = \exp(ktJ_\pi)$, then the eigenvalues of $A$ all have moduli smaller than or equal to $\exp(\lambda_1kt)$. Hence, for any $g\in H^s(\mbb{X})$
    $$ \partial^\alpha (g(Ax)) = \sum_{|\beta|=|\alpha|} a^{\alpha\beta} (\partial^\beta g)(Ax), $$
    in which the coefficient $a^{\alpha\beta}$ is a product of entries in $A$, all of which decay to $0$ as $k\to\infty$ at the rate of $\exp(\lambda_1kt)$. Therefore, $\|g(A(\cdot))\|_{H^s}\lesssim \|g\|_{H^s}$. 
    Now if we have $g = \sum_{\ell=1}^{d_x} e_\ell g_\ell(\cdot)$ with $g_1, \cdots, g_{d_x}\in H^s(\mbb{X})$, $g(A(\cdot)) = \sum_{\ell=1}^{d_x} (A\circ e)_\ell g_\ell(A(\cdot)) = \sum_{\ell, \ell'=1}^{d_x} e_{\ell'} a_{\ell\ell'}g_\ell(A(\cdot))$. Since $|a_{\ell\ell'}| \lesssim \exp(\lambda_1kt)$, clearly, $\|g(A(\cdot))\|_{\rg H^s}\lesssim \exp(\lambda_1kt) \|g\|_{\rg H^s}$. 

    \par Applying the above two assertions to the $\rg C^s$-homeomorphism $\psi$ that brings the flow of $f_\pi\in [\rg C^s(\mbb{X})]^{d_x}$ to its Jacobian exponent, we obtain
    $$\norm{g\bra{\psi^{-1}\bra{\exp(ktJ_\pi)\psi(\cdot)}} }_{\rg H^s} \lesssim \norm{g\bra{\psi^{-1}\bra{\exp(ktJ_\pi)(\cdot)}} }_{\rg H^s} \lesssim \exp(\lambda_1 kt)\norm{g\bra{\psi^{-1}(\cdot)}}_{\rg H^s} \lesssim \exp(\lambda_1 kt)\norm{g}_{\rg H^s},$$
    which is what needs to be proved. 
\end{proof}

\begin{corollary}[Control Lyapunov function as a kernel SOS form, an extension of the discrete-time version in Tang \& Ye \cite{tang-ye2026koopman}]
    Under Assumption \ref{assum:regularity}, if $\pi\in [\rg C^s(\mbb{X})]^{d_u}$ is an exponentially stabilizing policy in the homeomorphic sense, then for any $q(x) = \ipp{\rg\phi(x)}{Q}{\rg\phi(x)}$ where $0\preceq Q \in \mc{B}_2(\rg H^s(\mbb{X}))$ (or $\mc{B}_1(\rg H^s(\mbb{X}))$), there exists a $v(x) = \ipp{\rg\phi(x)}{P}{\rg\phi(x)}$ where $0\preceq P \in \mc{B}_2(\rg H^s(\mbb{X}))$ (respectively, $\mc{B}_1(\rg H^s(\mbb{X}))$), such that 
    $$v\bra{S_{f_\pi}^t(x)} - v(x) = -\int_0^t q\bra{S_{f_\pi}^\tau(x)} \xD{\tau}.$$
\end{corollary}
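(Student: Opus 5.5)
The plan is to construct $P$ explicitly as the Gramian-type integral
$$P = \int_0^\infty K_\pi^{\tau *}\, Q\, K_\pi^{\tau}\,\xD{\tau} = \int_0^\infty T_\pi^{\tau}\, Q\, K_\pi^{\tau}\,\xD{\tau},$$
which is the continuous-time analogue of the discrete Stein-sum construction in Tang \& Ye. The first step is to upgrade the spectral radius bound of the preceding theorem to a uniform exponential bound. Fix some $\lambda\in(\lambda_1,0)$. By the Gel'fand formula applied to $K_\pi^1$, we have $\|K_\pi^{k}\|\le c_\lambda \mr{e}^{\lambda k}$ for all $k\in\mN$. Strong continuity gives $\sup_{t\in[0,1]}\|K_\pi^t\|<\infty$ by the uniform boundedness principle, and combining the two yields $\|K_\pi^t\|=\|T_\pi^t\|\le M\mr{e}^{\lambda t}$ for all $t\ge0$. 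In fact, the proof of the theorem already gives $\|K_\pi^{t}\|\lesssim \mr{e}^{\lambda_1 t}$ up to polynomial factors, so either route works.

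The second step is to check that $P$ is well-defined in the right class. For $Q\in\mc B_2$ (resp.\ $\mc B_1$), the ideal property gives $\|T_\pi^\tau Q K_\pi^\tau\|_{\mc B_i}\le \|T_\pi^\tau\|\,\|Q\|_{\mc B_i}\,\|K_\pi^\tau\|\le M^2\mr{e}^{2\lambda\tau}\|Q\|_{\mc B_i}$. The integral therefore converges absolutely as a Bochner integral in $\mc B_i$, given strong measurability. I would obtain measurability by first defining the integral strongly, pointwise on vectors, and then noting that the resulting $P$ satisfies the same norm bound via duality, e.g.\ via $\tr(PB)$ against bounded $B$ for $\mc B_1$, and via lower semicontinuity of the Hilbert--Schmidt norm over finite-rank truncations for $\mc B_2$. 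Self-adjointness and $P\succeq0$ are immediate, since each integrand $K_\pi^{\tau*}QK_\pi^\tau$ is self-adjoint and nonnegative.

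The third step is the identity itself, using the reproducing property and the Perron--Frobenius characterization $T_\pi^\tau\rg\phi(x)=\rg\phi(S_{f_\pi}^\tau(x))$. We compute
$$v(x)=\int_0^\infty \ipp{T_\pi^\tau\rg\phi(x)}{Q}{T_\pi^\tau\rg\phi(x)}\xD\tau=\int_0^\infty q(S_{f_\pi}^\tau(x))\xD\tau.$$
Interchanging the inner product and the integral is justified by the absolute convergence from the second step. The semigroup property of the flow then gives
$$v(S_{f_\pi}^t(x))=\int_t^\infty q(S_{f_\pi}^\tau(x))\xD\tau,$$
and subtracting the expression for $v(x)$ yields the stated equality $v(S_{f_\pi}^t(x))-v(x)=-\int_0^t q(S_{f_\pi}^\tau(x))\xD\tau$.

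The main obstacle is the first step, namely passing from a spectral radius bound on each $K_\pi^t$ to the exponential stability $\|K_\pi^t\|\le M\mr e^{\lambda t}$ with $\lambda<0$. For general $C_0$-semigroups the spectrum of the generator does not control growth, but here the spectral radius of the time-$1$ operator is available, which suffices via Gel'fand. I would try that route first, and fall back on the direct estimate implicit in the theorem's proof if needed. The measurability and trace-class technicalities in the second step are routine.
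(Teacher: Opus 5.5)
Your argument is the paper's: define $P$ as the infinite-horizon Gramian of the closed-loop semigroup, use exponential decay of $\norm{T_\pi^\tau}$ to get $\norm{P}_{\mc{B}_2}\lesssim\norm{Q}_{\mc{B}_2}$ (resp.\ $\mc{B}_1$), and obtain the identity from $v(x)=\int_0^\infty q(S_{f_\pi}^\tau(x))\,\xD{\tau}$ and the semigroup property of the flow.

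One correction is needed: your displayed formula for $P$ puts the adjoint on the wrong side. Since $T_\pi^\tau=K_\pi^{\tau*}$, your $P=\int_0^\infty T_\pi^\tau QK_\pi^\tau\,\xD{\tau}$ gives $\ipp{\rg\phi(x)}{P}{\rg\phi(x)}=\int_0^\infty\ipp{K_\pi^\tau\rg\phi(x)}{Q}{K_\pi^\tau\rg\phi(x)}\,\xD{\tau}$. But $K_\pi^\tau\rg\phi(x)=\rg\kappa(x,S_{f_\pi}^\tau(\cdot))$ is not $\rg\phi(S_{f_\pi}^\tau(x))$, so this does not produce $\int_0^\infty q(S_{f_\pi}^\tau(x))\,\xD{\tau}$. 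The operator your third step actually uses is $P=\int_0^\infty T_\pi^{\tau*}QT_\pi^\tau\,\xD{\tau}=\int_0^\infty K_\pi^\tau QT_\pi^\tau\,\xD{\tau}$, which is the paper's. Your norm, positivity and measurability arguments carry over to it unchanged.

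Your first step is in fact more careful than the paper. The paper asserts $\norm{T_\pi^t}\lesssim\mr{e}^{\lambda_1 t}$ directly from $r(T_\pi^t)\le\mr{e}^{\lambda_1 t}$. That bound only gives growth bound at most $\lambda_1$, i.e.\ $\norm{T_\pi^t}\le M_\lambda\mr{e}^{\lambda t}$ for each $\lambda>\lambda_1$; a nontrivial Jordan block of $J_\pi$ at $\lambda_1$ can contribute a factor $t$. Your choice of $\lambda\in(\lambda_1,0)$ is exactly what the argument needs.
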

\begin{proof}
    Let us define 
    $$v(x) = \int_0^\infty q\bra{S_{f_\pi}^t(x)} \xD{t} 
    = \int_0^\infty \ipp{\rg\phi \bra{S_{f_\pi}^t(x)}}{Q}{\rg\phi \bra{S_{f_\pi}^t(x)}} \xD{t} 
    = \int_0^\infty \ipp{\rg\phi (x)}{T_\pi^{t*}QT_\pi^t}{\rg\phi(x)} \xD{t}.$$
    It thus remains to prove that 
    $$ P = \int_0^\infty T_\pi^{t*}QT_\pi^t \xD{t} \in \mc{B}_2(\rg H^s(\mbb{X}))$$
    is well-defined. By the hypothesis of exponential stabilization in the homeomorphic sense, we have $r(T_\pi^t)\leq \exp(\lambda_1 t)$, and due to the fact that $\{T_\pi^t\}_{t\geq 0}$ is a strongly continuous semigroup, $\|T_\pi^t\|\lesssim \exp(\lambda_1 t)$. 
    With $\lambda_1<0$, this implies that $\norm{T_\pi^{t*}QT_\pi^t}_{\mc B_2} \lesssim  \mr{e}^{2\lambda_1 t}\norm{Q}_{\mc B_2}$, and hence $$\|P\|_{\mc{B}_2} \leq \int_0^\infty \mr{e}^{2\lambda_1 t}\norm{Q}_{\mc B_2} \xD{t} \lesssim \norm{Q}_{\mc B_2} <\infty.$$
    The proof for the $\mc{B}_1$ case is similar. 
\end{proof}

\subsection{Yosida Approximation and Its Error}
The Perron--Frobenius generators $A_0$, $A_1, \cdots, A_{d_u}$ are difficult to use directly in a data-driven context, since these infinitesimal generators are unbounded operators and each of their domains is only a dense subspace of the RKHS rather than the entire space. 
As already pointed out in Remark \ref{rem:short-time-average}, even the canonical features $\rg\phi(x)$ and $\phi(x)$ are not in their domains. To resolve this difficulty, the concept of Yosida approximation of generators is naturally recalled. 
\begin{definition}[Yosida approximation]
    For any infinitesimal generator $A$ of a strongly continuous semigroup $\{T^t\}_{t\geq 0}$ defined on a Banach space $\mc{G}$, its Yosida approximation refers to the following bounded operator
    $$ A_\lambda = \lambda A(\lambda I - A)^{-1} = \lambda^2(\lambda I-A)^{-1} - \lambda I =: \lambda^2 R_\lambda - \lambda I.$$
    Here $R_\lambda$ is the resolvent of $A$. The Yosida approximation is well-defined for any sufficiently large $\lambda>0$, where it can be expressed by the Laplace transform:
    $$ A_\lambda = \lambda^2\int_0^\infty \mr{e}^{-\lambda t}T^t \xD{t} - \lambda I.$$
\end{definition}
\par In this paper, under Assumption \ref{assum:regularity}, Yosida approximations can be defined for unbounded operators $A_0: \mc{D}(A_0)\subset \rg H^s(\mbb{X})\to \rg H^s(\mbb{X})$ and $A_j: \mc{D}(A_j)\subset H^s(\mbb{X})\to H^s(\mbb{X})$. We write $A_{0, \lambda}$ and $A_{j, \lambda}$ for them. 
Define the semigroups $\{T_{0,\lambda}^t\}_{t\geq 0}$ and $\{T_{j,\lambda}^t\}_{t\geq 0}$ respectively by:
$$T_{0,\lambda}^t = \exp(tA_{0,\lambda}), \enspace T_{j,\lambda}^t = \exp(tA_{j,\lambda}), $$
where for a bounded operator $A$, $\exp(A) = I + A + \frac{1}{2!}A^2 + \frac{1}{3!}A^3 + \cdots= \sum_{k=0}^\infty \frac{1}{k!}A^k$. 
Clearly, for any $x\in \mbb{X}$, the Yosida-approximated generators can act on its canonical feature $\phi(x)$ by:
$$A_{j,\lambda}\phi(x) = \lambda^2 \int_0^\infty \mr{e}^{-\lambda t} \phi\bra{S_{f_j}^t(x)} \xD{t} - \lambda\phi(x).$$

\par However, the infinite-time integral hampers computability. We hence introduce a further approximation, referred to as the \emph{finite-time Yosida approximation}, introduced in the literature by Meng et al. \cite{meng2024koopman, meng2026resolvent}. 
\begin{definition}[Finite-time Yosida approximation]
    For any infinitesimal generator $A$ of a strongly continuous semigroup $\{T^t\}_{t\geq 0}$ defined on a Banach space $\mc{G}$, define for sufficiently large $\lambda>0$ and large $\theta>0$: 
    $$ A_{\lambda, \theta} = \lambda^2\int_0^\theta \mr{e}^{-\lambda t}T^t \xD{t} - \lambda I =: \lambda^2 R_{\lambda,\theta} - \lambda I.$$
    It is a bounded operator. 
\end{definition}
We hence correspondingly introduce $A_{0,\lambda,\theta}$, $A_{j, \lambda,\theta}$, and the exponential semigroups that they generate: $\{T_{0,\lambda,\theta}^t\}_{t\geq 0}$ and $\{T_{j,\lambda,\theta}^t\}_{t\geq 0}$ ($j=1,\cdots,d_u$). Similarly, we have for any $x\in \mbb{X}$:
$$A_{j,\lambda,\theta}\phi(x) = \lambda^2 \int_0^\theta \mr{e}^{-\lambda t} \phi\bra{S_{f_j}^t(x)} \xD{t} - \lambda\phi(x).$$

\begin{theorem}[Error estimate of finite-time Yosida approximation]
\label{th:Yosida.DD}
    Suppose that $A$ is the infinitesimal generator of a strongly continuous semigroup $\{T^t\}_{t\geq 0}$ defined on a Banach space $\mc{G}$. Let $\omega\in\mR$ and $c>0$ be such that $\|T^t\|\leq c\mr{e}^{\omega t}$.\footnote{The fact that any strongly continuous semigroup $\{T_t\}_{t\geq0}$ on a Banach space $\mc{G}$ must satisfy $\|T^t\|\leq c\mr{e}^{\omega t}$ for some $c\geq 0$ and $\omega\in \mR$ is well known in functional analysis \cite{lax2002functional}. }
    If $\lambda>\omega$, then 
    $$ \|A_{\lambda,\theta}z - A_{\lambda} z\|_{\mc{G}} \lesssim \frac{\lambda^2}{\lambda-\omega} \mr{e}^{-(\lambda-\omega)\theta}\|z\|_{\mc{G}_1}, \enspace \forall z\in \mc{G}_1 := \mc{D}(A), $$
    where $\|z\|_{\mc{G}_1} = \|z\| + \|Az\|$.\footnote{This was shown in Meng et al. \cite{meng2026resolvent} in the context of $\mc{G} = C(\mbb{X})$. Transferring the conclusion to $\mc{G} = \rg H^s(\mbb{X})$ does not pose any difference.} 
    Moreover, for $z\in \mc{G}_2:= \mc{D}(A^2)$, it holds that
    \begin{equation}
        \|T_{\lambda,\theta}^t z - T^t z\| \lesssim  \frac{1}{\lambda-\omega} t\mr{e}^{\max(\omega, \omega_\lambda) t} \|z\|_{\mc{G}_2}, 
    \end{equation} 
    where $\|z\|_{\mc{G}_2} = \|z\|+\|A^2z\|$, $\omega_\lambda = \lambda\omega/(\lambda-\omega)$. 
    Hence, for any $z\in \mc{G}$ and $t \geq 0$, we have $\lim_{\lambda\to\infty} T_{\lambda,\theta}^t z = T^t z$. 
\end{theorem}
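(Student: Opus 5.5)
The plan has three parts: bound $A_{\lambda,\theta}-A_\lambda$ by the Laplace-transform tail, compare the two semigroups via a Duhamel formula, and finish by density.

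\emph{First claim.} Subtracting the definitions gives
\[
A_{\lambda,\theta}-A_\lambda=-\lambda^2\int_\theta^\infty \mr{e}^{-\lambda t}T^t\,\xD{t}.
\]
Applying this to $z$ and using $\|T^t\|\le c\mr{e}^{\omega t}$,
\[
\|(A_{\lambda,\theta}-A_\lambda)z\|\le c\lambda^2\int_\theta^\infty \mr{e}^{-(\lambda-\omega)t}\,\xD{t}\,\|z\| = \frac{c\lambda^2}{\lambda-\omega}\,\mr{e}^{-(\lambda-\omega)\theta}\|z\|.
\]
This already gives the bound with $\|z\|$, hence with $\|z\|_{\mc{G}_1}$. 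If a sharper $\lambda$-dependence is wanted, integrate by parts using $\frac{\xD{}}{\xD{t}}T^tz=T^tAz$ for $z\in\mc{D}(A)$. This produces a boundary term $\mr{e}^{-\lambda\theta}T^\theta z$ and an integral of $T^tAz$, which is where $\|z\|_{\mc G_1}$ enters.

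\emph{Second claim.} I split
\[
T^t_{\lambda,\theta}z-T^tz=(T^t_{\lambda,\theta}-T^t_\lambda)z+(T^t_\lambda-T^t)z,
\]
where $T^t_\lambda=\exp(tA_\lambda)$.

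\begin{itemize}
\item \emph{Growth of $T^t_\lambda$.} Writing $\exp(tA_\lambda)=\mr{e}^{-\lambda t}\exp(t\lambda^2R_\lambda)$ and using the Hille--Yosida-type bound $\|R_\lambda^k\|\le c(\lambda-\omega)^{-k}$, I get $\|T^t_\lambda\|\le c\,\mr{e}^{\omega_\lambda t}$ with $\omega_\lambda=\lambda\omega/(\lambda-\omega)$.
\item \emph{Yosida term.} Since $A$, $A_\lambda$, $T^s$ and $T^{t-s}_\lambda$ commute on $\mc{D}(A)$, the standard Duhamel formula gives
\[
(T^t_\lambda-T^t)z=\int_0^t T_\lambda^{t-s}T^s(A_\lambda-A)z\,\xD{s}.
\]
Moreover $A_\lambda z-Az=(\lambda R_\lambda-I)Az=R_\lambda A^2z$. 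Hence $\|(A_\lambda-A)z\|\le c\|A^2z\|/(\lambda-\omega)$ for $z\in\mc{G}_2$. Combined with the growth bounds this gives $\lesssim \frac{t}{\lambda-\omega}\mr{e}^{\max(\omega,\omega_\lambda)t}\|z\|_{\mc{G}_2}$.
\item \emph{Truncation term.} Apply the same Duhamel formula to $A_{\lambda,\theta}$ versus $A_\lambda$. These are bounded operators, commuting since both are functions of the semigroup. Use the first claim for the difference, which carries the factor $\mr{e}^{-(\lambda-\omega)\theta}\lambda^2/(\lambda-\omega)$.
\end{itemize}

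The main obstacle is controlling $\|T^t_{\lambda,\theta}\|$ uniformly. The natural route is to view $R_{\lambda,\theta}$ as a perturbation of $R_\lambda$ of norm $\le c\,\mr{e}^{-(\lambda-\omega)\theta}/(\lambda-\omega)$. Then
\[
\|T^t_{\lambda,\theta}\|\le c\,\mr{e}^{(\omega_\lambda+O(\lambda^2\mr{e}^{-(\lambda-\omega)\theta}))t}.
\]
For $\theta$ large (say $\theta\gtrsim 2\log\lambda/(\lambda-\omega)$, absorbed into ``large $\theta$''), the truncation term is dominated by the Yosida term. It is in this sense, with constants hidden in $\lesssim$, that I would state the bound.

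\emph{Convergence for general $z$.} For fixed $\theta$ (or $\theta$ large), the bound gives $T^t_{\lambda,\theta}z\to T^tz$ as $\lambda\to\infty$ for $z\in\mc{D}(A^2)$, since $\omega_\lambda\to\omega$. The domain $\mc{D}(A^2)$ is dense, and $\|T^t_{\lambda,\theta}\|$ is uniformly bounded in $\lambda$ for $t$ fixed. An $\varepsilon/3$ density argument then extends the limit to all $z\in\mc{G}$.
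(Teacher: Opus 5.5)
Your first claim, the identity $A_\lambda z-Az=R_\lambda A^2z$, the Duhamel comparison and the closing density argument all match the paper. The paper uses a single Duhamel formula between $\mr{e}^{tA_{\lambda,\theta}}$ and $T^t$, based on $(A-A_{\lambda,\theta})z=(A-A_\lambda)z+Dz$ with $D=\lambda^2\int_\theta^\infty \mr{e}^{-\lambda t}T^t\,\xD{t}$. You instead split through $T^t_\lambda$, which is only a cosmetic difference.

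The gap is in the step you call the main obstacle. Your perturbation bound $\|T^t_{\lambda,\theta}\|\le c\,\mr{e}^{(\omega_\lambda+\delta)t}$ carries an excess $\delta>0$ of order $\frac{\lambda^2}{\lambda-\omega}\mr{e}^{-(\lambda-\omega)\theta}$. What your argument actually proves is therefore
\[
\|T^t_{\lambda,\theta}z-T^tz\|\lesssim \frac{t}{\lambda-\omega}\,\mr{e}^{(\max(\omega,\omega_\lambda)+\delta)t}\,\|z\|_{\mc{G}_2}.
\]
The factor $\mr{e}^{\delta t}$ is unbounded on $t\ge 0$, so it cannot be hidden in $\lesssim$. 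Enlarging $\theta$ shrinks $\delta$ but never removes it. The exact rate matters downstream, where $\omega<0$ alone is used to get $\omega_\lambda<0$ and integrate over $[0,\infty)$, with no extra condition on $\theta$.

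The missing observation is that $R_{\lambda,\theta}$ satisfies the same power bound as $R_\lambda$ directly, so no perturbation argument is needed. By the semigroup property,
\[
R_{\lambda,\theta}^n=\int_{[0,\theta]^n}\mr{e}^{-\lambda(s_1+\cdots+s_n)}\,T^{s_1+\cdots+s_n}\,\xD{s_1}\cdots\xD{s_n}.
\]
Hence $\|R_{\lambda,\theta}^n\|\le c\bra{\int_0^\theta \mr{e}^{-(\lambda-\omega)s}\,\xD{s}}^n\le c\,(\lambda-\omega)^{-n}$. The constant is a single $c$, not $c^n$, because $T^{s_1}\cdots T^{s_n}$ collapses to the one operator $T^{s_1+\cdots+s_n}$.

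Summing $\mr{e}^{tA_{\lambda,\theta}}=\mr{e}^{-\lambda t}\sum_{n\ge 0}\frac{(t\lambda^2)^n}{n!}R_{\lambda,\theta}^n$ then gives $\|T^t_{\lambda,\theta}\|\le c\,\mr{e}^{\omega_\lambda t}$ for every $\theta>0$; this is the paper's argument. With it, your truncation term is bounded by $c^2t\,\mr{e}^{\omega_\lambda t}\|D\|\,\|z\|\lesssim \frac{t}{\lambda-\omega}\,\mr{e}^{\omega_\lambda t}\,\lambda^2\mr{e}^{-(\lambda-\omega)\theta}\,\|z\|$. The factor $\lambda^2\mr{e}^{-(\lambda-\omega)\theta}$ is absorbed into the constant as in the paper: it is bounded for fixed $\theta$, and at most $1$ when $\theta\ge 2\log\lambda/(\lambda-\omega)$. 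The stated estimate then follows. Your convergence argument for general $z$ is fine either way, since for fixed $\theta$ your $\delta\to 0$ as $\lambda\to\infty$.
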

\begin{proof}
    First, we have $A_{\lambda,\theta} = A_\lambda - D$, with $D:=\lambda^2 \int_\theta^\infty \mr{e}^{-\lambda t}T^t \xD{t}$. We can check that $\|D\|\lesssim \frac{\lambda^2}{\lambda-\omega} \mr{e}^{-(\lambda-\omega)\theta}$, and hence the first conclusion follows. 
    \par For the finite-time resolvent $R_{\lambda,\theta} = \int_0^\theta \mr{e}^{-\lambda t}T^t\xD{t}$, we have $R_{\lambda,\theta}^n=\int_{[0,\theta]^n} \mr{e}^{-\lambda(s_1+\cdots+s_n)}\,T^{s_1+\cdots+s_n} \xD{s_1}\cdots \xD{s_n}$ and thus $\|R_{\lambda,\theta}^n\| \lesssim \bra{\lambda-\omega}^{-n}$. Thus
    $$\norm{ \mr{e}^{tA_{\lambda,\theta}} } = \norm{ \mr{e}^{-\lambda t} \sum_{n\geq0} \frac{(t\lambda^2)^n}{n!}R_{\lambda,\theta}^n } \lesssim \mr{e}^{(\frac{\lambda^2}{\lambda-\omega}-\lambda)t} = \mr{e}^{\omega_\lambda t}. $$ 
    Now fix $z\in \mc{D}(A)$ and set $\varphi(s) = \mr{e}^{(t-s)A_{\lambda,\theta}}T^s z$ for $s\in[0,t]$. Then 
    $$\varphi'(s) = \mr{e}^{(t-s)A_{\lambda,\theta}} \bra{T^s Az - A_{\lambda,\theta}T^s z} = \mr{e}^{(t-s)A_{\lambda,\theta}}T^s(A-A_{\lambda,\theta})z .$$
    Integrating from $0$ to $t$, 
    $$\norm{T^t z - \mr{e}^{tA_{\lambda,\theta}}z } \lesssim \int_0^t \mr{e}^{\omega_\lambda(t-s)}\, \mr{e}^{\omega s}\xD{s} \cdot \norm{(A-A_{\lambda,\theta}) z} \leq t\,\mr{e}^{\max(\omega,\omega_\lambda)t} \norm{(A-A_{\lambda,\theta}) z}.$$
    For $z\in \mc{D}(A^2)$, $A_\lambda z-Az = \lambda R_\lambda Az - Az = R_\lambda A^2 z$, and hence $\|A_\lambda z-Az\|\lesssim \frac{1}{\lambda-\omega}\|A^2z\|$. With $(A-A_{\lambda,\theta})z = (A-A_\lambda)z+Dz$, 
    $$\|(A-A_{\lambda,\theta})z\| \lesssim \frac{1}{\lambda-\omega}\bra{ \|A^2z\| + \lambda^2 \mr{e}^{-(\lambda-\omega)\theta}\|z\| }.$$
    Therefore
    $$ \norm{T^t z - \mr{e}^{tA_{\lambda,\theta}}z } \lesssim \frac{1}{\lambda-\omega} t\,\mr{e}^{\max(\omega,\omega_\lambda)t} \bra{ \|A^2z\| + \lambda^2 \mr{e}^{-(\lambda-\omega)\theta}\|z\| } \lesssim \frac{1}{\lambda-\omega} t\,\mr{e}^{\max(\omega,\omega_\lambda)t} \|z\|_{\mc{G}_2}. $$
    \par At any given $t$, upon $\lambda\to\infty$, the above conclusion implies $\norm{T^t z - \mr{e}^{tA_{\lambda,\theta}}z } \to 0$ for all $z\in \mc{G}_2$. Because $\mc{G}_2\subset \mc{G}$ densely, $T^t z - \mr{e}^{tA_{\lambda,\theta}}z \to 0$ for all $z\in \mc{G}$ when $\lambda\to\infty$. 
\end{proof}

\begin{remark}[Non-uniform convergence of Yosida-approximated semigroup]
    The Yosida-approximated semigroup only warrants \emph{strong convergence} to the original semigroup, i.e., when acting on an arbitrarily \emph{given} element $z\in \mc{G}$. The convergence speed is not uniform over the space and cannot be expressed only in terms of $\lambda, \theta$, and $\|z\|$. 
    In fact, if $z$ belongs to an interpolation space between $\mc{G}$ and $\mc{D}(A)$ (as a Banach space endowed with the graph norm $\|z\|_{\mc{G}_1} = \|z\|_{\mc{G}}+ \|Az\|_{\mc{G}}$), then it is possible to characterize the convergence rate according to the interpolation index. 
    However, here we are mainly concerned with members of the form $z=\phi(x) \in \mc{G}$, which does not appear to have a higher regularity than the regularity of the kernel function used to define $\mc{G}$ itself; hence the interpolation argument is not applicable. 
\end{remark}

For the closed-loop system \eqref{eq:system.closed-loop}, we know that the open-loop Perron--Frobenius generators $A_0, A_1, \cdots, A_{d_u}$ and the closed-loop Perron--Frobenius generator satisfy the affine equality \eqref{eq:PF.generator.closed-loop}. 
In a data-driven method, it is reasonable to conceive that the open-loop generators will be approximated via the finite-time Yosida formulation (which is related to open-loop simulation for data collection), while the closed-loop Yosida approximation will be built from the affine equality: 
\begin{equation}\label{eq:PF.generator.closed-loop.Yosida}
    A_{\pi,\lambda,\theta} = A_{0,\lambda,\theta} + \sum_{j=1}^{d_u} \rg\iota^* A_{j,\lambda,\theta} M_{\pi_j}^*. 
\end{equation}
In other words, $A_{\pi,\lambda,\theta}$ will not be the Yosida approximation of $A_{\pi}$. Naturally, one would be interested in knowing the difference of the flows generated by $A_{\pi,\lambda,\theta}$, namely $\{T_{\pi,\lambda,\theta}^t\}_{t\geq 0} = \{\exp(tA_{\pi,\lambda,\theta})\}_{t\geq 0}$, and the actual closed-loop flow $\{T_\pi^t\}_{t\geq 0}$. 
\begin{corollary}[Error estimate of Yosida-approximated closed-loop prediction] 
\label{cor:Yosida.DD}
    Let $\pi \in [\rg C^s(\mbb{X})]^{d_u}$ be given, and suppose that Assumption \ref{assum:regularity} holds. Denote 
    $$\mc{G}_1 := \mc{D}(A_0)\cap \bra{\bigcap_{j=1}^{d_u} (M_{\pi_j}^*)^{-1} \mc{D}(A_j)}, \ \mc{G}_2 := \mc{D}(A_0^2)\cap \bra{\bigcap_{j=1}^{d_u} (M_{\pi_j}^*)^{-1} \mc{D}(A_j^2)}, $$
    with norms $\|z\|_{\mc{G}_1} := \|z\| + \|A_0z\| + \sum_{j=1}^{d_u} \|A_jM_{\pi_j}^*z\|$, and $\|z\|_{\mc{G}_2} := \|z\| + \|A_0^2z\| + \sum_{j=1}^{d_u} \|A_j^2M_{\pi_j}^*z\|$, 
    and suppose that for any $t\geq 0$, $T_\pi^t$ boundedly maps $\mc{G}_1$ into $\mc{G}_1$ and $\mc{G}_2$ into $\mc{G}_2$ with an operator norm 
    $$\|T_\pi^t\|_{\mc{G}_1\to \mc{G}_1}, \|T_\pi^t\|_{\mc{G}_2\to \mc{G}_2} \lesssim \mr{e}^{\omega_\pi t}.$$
    Denote $\omega_0$ the growth rate of $\{T_0^t\}_{t\geq 0}$, $\omega_j$ the growth rate of $\{T_j^t\}_{t\geq 0}$ ($j=1,\cdots,d_u$), $\omega = \max\{\omega_0, \omega_1, \cdots, \omega_{d_u}\}$, and $\omega_{\pi,\lambda} = \lambda\omega_\pi/(\lambda-\omega_\pi)$. 
    Then, for any $z\in \mc{G}_2$, 
    \begin{equation}\label{eq:Yosida.error.bound}
        \norm{ T_\pi^tz - \mr{e}^{tA_{\pi, \lambda,\theta}}z } \lesssim \frac{1}{\lambda-\omega}t\mr{e}^{\max(\omega_\pi, \omega_{\pi,\lambda}) t} \|z\|_{\mc{G}_2}. 
    \end{equation} 
\end{corollary}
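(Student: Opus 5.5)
The proof follows the template of Theorem \ref{th:Yosida.DD}, adapted to the affine structure \eqref{eq:PF.generator.closed-loop} and \eqref{eq:PF.generator.closed-loop.Yosida}. The plan has three steps.

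\emph{Step 1: bound the approximating semigroup.} We first control $\|\mr{e}^{tA_{\pi,\lambda,\theta}}\|$. Write $A_{\pi,\lambda,\theta} = A_{0,\lambda,\theta} + \sum_j \rg\iota^* A_{j,\lambda,\theta}M_{\pi_j}^*$. Each summand has the form $\lambda^2 R_{\cdot,\lambda,\theta}-\lambda I$, and its resolvent part is bounded by $\lambda^2/(\lambda-\omega)$. We then expand the exponential as in the proof of Theorem \ref{th:Yosida.DD}. We aim for a bound $\|\mr{e}^{tA_{\pi,\lambda,\theta}}\|\lesssim \mr{e}^{\omega_{\pi,\lambda}t}$, possibly with $\omega_\pi$ replaced by a rate that tends to $\omega_\pi$ as $\lambda\to\infty$.

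This is the main obstacle. A naive sum of norms gives a rate of order $\lambda\|M_{\pi_j}^*\|$, which blows up as $\lambda\to\infty$. To avoid this, I would first try to show that $A_{\pi,\lambda,\theta}$ is close to the genuine Yosida approximation $A_{\pi,\lambda}$ on $\mc{G}_1$. I would then use the Trotter--Kato/Hille--Yosida stability bound for $A_{\pi,\lambda}$, which gives $\|\mr{e}^{tA_{\pi,\lambda}}\|\lesssim\mr{e}^{\omega_{\pi,\lambda}t}$. If that fails, I would accept the bound as an implicit hypothesis, in the same spirit as the assumed $\mc{G}_1,\mc{G}_2$ invariance of $T_\pi^t$, and state it clearly.

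\emph{Step 2: Duhamel comparison.} Fix $z\in\mc{G}_2$ and set $\varphi(s)=\mr{e}^{(t-s)A_{\pi,\lambda,\theta}}T_\pi^s z$. Since $T_\pi^s z\in\mc{G}_1\subset\mc{D}(A_\pi)$, differentiation gives
$$\varphi'(s)=\mr{e}^{(t-s)A_{\pi,\lambda,\theta}}(A_\pi-A_{\pi,\lambda,\theta})T_\pi^sz.$$
Integrating over $[0,t]$ and using Step 1, we get
$$\|T_\pi^tz-\mr{e}^{tA_{\pi,\lambda,\theta}}z\|\lesssim \int_0^t \mr{e}^{\omega_{\pi,\lambda}(t-s)}\|(A_\pi-A_{\pi,\lambda,\theta})T_\pi^s z\|\,\xD{s}.$$

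\emph{Step 3: termwise generator error.} For $w=T_\pi^sz\in\mc{G}_2$, the affine relations give
$$(A_\pi-A_{\pi,\lambda,\theta})w=(A_0-A_{0,\lambda,\theta})w+\sum_j\rg\iota^*(A_j-A_{j,\lambda,\theta})M_{\pi_j}^*w.$$
Each term is bounded exactly as in the proof of Theorem \ref{th:Yosida.DD}, namely
$$\|(A_j-A_{j,\lambda,\theta})y\|\lesssim\frac{1}{\lambda-\omega_j}\bigl(\|A_j^2y\|+\lambda^2\mr{e}^{-(\lambda-\omega_j)\theta}\|y\|\bigr),$$
with $y=M_{\pi_j}^*w$. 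The factor $\|\rg\iota^*\|$ is bounded by Proposition \ref{lem:inclusion}, and $\|M_{\pi_j}^*\|$ is bounded since $M_{\pi_j}$ is bounded. We absorb the tail term by taking $\theta$ large, and use $\lambda-\omega_j\geq\lambda-\omega$. Summing the terms gives
$$\|(A_\pi-A_{\pi,\lambda,\theta})w\|\lesssim\frac{1}{\lambda-\omega}\|w\|_{\mc{G}_2}.$$
The hypothesis then gives $\|T_\pi^sz\|_{\mc{G}_2}\lesssim\mr{e}^{\omega_\pi s}\|z\|_{\mc{G}_2}$.

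Substituting into Step 2, the integrand is at most $\mr{e}^{\max(\omega_\pi,\omega_{\pi,\lambda})t}$. The integral over $[0,t]$ therefore produces the factor $t$, which yields \eqref{eq:Yosida.error.bound}.
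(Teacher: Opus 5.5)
Your Steps 2 and 3 are the paper's proof: the same Duhamel function $\varphi(s)=\mr{e}^{(t-s)A_{\pi,\lambda,\theta}}T_\pi^s z$, and the same termwise splitting of $A_\pi-A_{\pi,\lambda,\theta}$ through \eqref{eq:PF.generator.closed-loop} and \eqref{eq:PF.generator.closed-loop.Yosida}. Both also apply the generator estimate of Theorem~\ref{th:Yosida.DD} to $A_0$ and to each $A_j$ at $M_{\pi_j}^*T_\pi^sz$, and use the assumed $\mc{G}_2$-growth of $T_\pi^s$. The only difference is that you isolate Step~1. The paper just writes $\|\mr{e}^{(t-s)A_{\pi,\lambda,\theta}}\|\lesssim\mr{e}^{\omega_{\pi,\lambda}(t-s)}$ ``as Theorem~\ref{th:Yosida.DD} reveals''. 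That theorem only controls the exponential of the finite-time Yosida approximation of a \emph{single} generator, and the paper itself stresses that $A_{\pi,\lambda,\theta}$ is not the Yosida approximation of $A_\pi$. So you have located the one step that neither the paper nor your proposal proves; it is a genuine gap in both.

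Your first repair does not close it. Closeness of $A_{\pi,\lambda,\theta}$ to $A_{\pi,\lambda}=\lambda^2(\lambda I-A_\pi)^{-1}-\lambda I$ on $\mc{G}_1$ cannot carry over the Hille--Yosida bound. The variation-of-constants identity $\mr{e}^{tA_{\pi,\lambda,\theta}}=\mr{e}^{tA_{\pi,\lambda}}+\int_0^t\mr{e}^{(t-s)A_{\pi,\lambda}}(A_{\pi,\lambda,\theta}-A_{\pi,\lambda})\mr{e}^{sA_{\pi,\lambda,\theta}}\,\xD{s}$ needs the difference in operator norm on all of $\rg H^s(\mbb{X})$. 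The alternative, $\mc{G}_1$-control of $\mr{e}^{sA_{\pi,\lambda,\theta}}$, is circular. In operator norm the difference is only $O(\lambda)$; already the $-\lambda$ parts differ by $\lambda\sum_j\rg\iota^*M_{\pi_j}^*$. Phillips' theorem then gives a rate $\omega_{\pi,\lambda}+O(\lambda)$, which is useless as $\lambda\to\infty$.

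There is also reason to doubt that any argument gives the bound. For the translation generator $A=\mr{d}/\mr{d}x$ on $L^2(\mR)$, $A_\lambda$ has symbol $\lambda\mr{i}\xi/(\lambda-\mr{i}\xi)$ with real part $-\lambda\xi^2/(\lambda^2+\xi^2)$. Hence $\|\mr{e}^{-tA_\lambda}\|=\mr{e}^{\lambda t}$, even though $-A$ generates an isometry group: a Yosida approximation multiplied by a negative coefficient loses its stability. In $A_{\pi,\lambda,\theta}$ each $A_{j,\lambda,\theta}$ is weighted, through $M_{\pi_j}^*$, by $\pi_j$, which vanishes at the origin and generically changes sign. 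So growth at a rate of order $\lambda$ is the natural expectation, not $\omega_{\pi,\lambda}$.

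Your fallback, stating a $\lambda$-uniform bound $\|\mr{e}^{tA_{\pi,\lambda,\theta}}\|\lesssim\mr{e}^{\omega_{\pi,\lambda}t}$ as an explicit hypothesis, is therefore the honest way to complete your argument and the paper's. Be aware that it is a substantive extra assumption, not a consequence of the stated ones.
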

\begin{proof}
    Consider function $\varphi(s) = \mr{e}^{(t-s)A_{\pi,\lambda,\theta}} T_\pi^sz$ defined for $s\in [0, t]$ when $t$ is fixed and $z\in \mc{G}_2$. By differentiation, we find that 
    $$\varphi'(s) = \mr{e}^{(t-s)A_{\pi,\lambda,\theta}} \bra{A_{\pi,\lambda,\theta} - A_\pi} T_\pi^sz = \mr{e}^{(t-s)A_{\pi,\lambda,\theta}} \Bra{ \bra{A_{0,\lambda,\theta} - A_0} + \sum_{j=1}^{d_u} \rg\iota^* \bra{A_{j,\lambda,\theta} - A_j} M_{\pi_j}^* }T_\pi^sz. $$
    By the hypothesis on $T_\pi^s$ restricted from $\mc{G}_2$ to $\mc{G}_2$, we see that $T_\pi^sz \in \mc{G}_2$ and $\|T_\pi^sz\|_{\mc{G}_2}\lesssim \mr{e}^{\omega_\pi s}\|z\|_{\mc{G}_2}$. 
    As Theorem \ref{th:Yosida.DD} reveals, 
    $$\|\varphi'(s)\|\lesssim \mr{e}^{\omega_{\pi,\lambda}(t-s)} \Bra{\frac{1}{\lambda-\omega_0} + \sum_{j=1}^{d_u} \frac{1}{\lambda-\omega_j} \norm{\rg\iota^*}\norm{M_{\pi_j}^*} } \norm{T_\pi^sz}_{\mc{G}_2} \lesssim \mr{e}^{\omega_{\pi,\lambda}(t-s)} \mr{e}^{\omega_\pi s} \frac{1}{\lambda-\omega}\|z\|_{\mc{G}_2}. $$
    Integrating over the interval $s\in [0, t]$, the conclusion immediately follows. 
\end{proof}

A sufficient condition for $\|T_\pi^t\|_{\mc{G}_1\to \mc{G}_1}$ and $\|T_\pi^t\|_{\mc{G}_2\to \mc{G}_2}$ to be both bounded by a constant multiple of $\mr{e}^{\omega_\pi t}$ is sufficient smoothness of the system: $s\geq2$. 
\begin{proposition}[Growth bound on the generator domain]
    If $s\geq 2$ and $\|T_\pi^t\|\lesssim \mr{e}^{\omega_\pi t}$, then under Assumption \ref{assum:regularity}, any given $\pi \in [\rg C^s(\mbb{X})]^{d_u}$ satisfies $\|T_\pi^t\|_{\mc{G}_1\to \mc{G}_1}, \|T_\pi^t\|_{\mc{G}_2\to \mc{G}_2} \lesssim \mr{e}^{\omega_\pi t}$, and hence implies the conclusion of Corollary \ref{cor:Yosida.DD}. 
\end{proposition}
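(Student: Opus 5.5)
We prove the bound on $\mc{G}_1$ first and then repeat the argument on $\mc{G}_2$. The plan is to show that the graph-type norms $\|\cdot\|_{\mc{G}_1}$ and $\|\cdot\|_{\mc{G}_2}$ are controlled by, and control, norms in which $T_\pi^t$ commutes with the relevant generator. The key observation is that on $\mc{G}_1$ we have the identity
$$A_0 z + \sum_{j=1}^{d_u}\rg\iota^* A_j M_{\pi_j}^* z = A_\pi z$$
from \eqref{eq:PF.generator.closed-loop}. Since $A_\pi$ commutes with $T_\pi^t$ on $\mc{D}(A_\pi)$, we get
$$\|A_\pi T_\pi^t z\| = \|T_\pi^t A_\pi z\| \lesssim \mr{e}^{\omega_\pi t}\|A_\pi z\|.$$
The difficulty is that $\|\cdot\|_{\mc{G}_1}$ controls the individual terms $A_0 z$ and $A_j M_{\pi_j}^* z$ separately, not only their sum. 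So the bulk of the work is to bound each piece of $T_\pi^t z$ individually.

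To do this we pass to the Koopman (predual) side, where everything is an explicit differential operator. With $s\geq 2$ and the vector fields in $\rg C^s$ and $C^s$, the operators $L_0$, $L_j$ and $M_{\pi_j}$ are first-order differential operators (respectively multiplication operators) with $C^s$ coefficients. Their domains in $\rg H^s$ and $H^s$ therefore contain $\rg H^{s+1}$ and $H^{s+1}$, respectively. By duality, membership of $z$ in $\mc{D}(A_0)$ and in $(M_{\pi_j}^*)^{-1}\mc{D}(A_j)$ amounts to the functional $\ip{z}{\cdot}$ extending continuously with respect to a one-derivative-weaker norm. Accordingly, I would identify $\mc{G}_1$, up to norm equivalence, with a space $\mc{W}_1$: the space of $z$ for which $g\mapsto\ip{z}{g}$ is bounded in the $\rg H^{s-1}$-norm. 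Similarly, I would identify $\mc{G}_2$ with $\mc{W}_2$, defined the same way with $\rg H^{s-2}$; this is where $s\geq 2$ enters. The operator $T_\pi^t$ is the adjoint of the composition operator $K_\pi^t: g\mapsto g\circ S_{f_\pi}^t$. Because the flow $S_{f_\pi}^t$ is $C^s$, the composition argument (Faà di Bruno) from the proof of the spectral radius theorem shows that $K_\pi^t$ is bounded on $\rg H^{s-1}$ and $\rg H^{s-2}$ as well. Dualizing then gives boundedness of $T_\pi^t$ on $\mc{W}_1$ and $\mc{W}_2$.

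It remains to obtain the rate $\mr{e}^{\omega_\pi t}$ rather than just some exponential rate. For this I would use the commutation trick. For $z\in\mc{G}_1$, the quantity $\|z\|+\|A_\pi z\|$ is equivalent to $\|z\|_{\mc{G}_1}$ on $\mc{G}_1$; equivalently, the graph norm of $A_\pi$ is equivalent to the $\mc{W}_1$-norm. The lower bound follows from the triangle inequality and boundedness of $\rg\iota^*$. The upper bound is an elliptic-type estimate: $L_\pi$ controls all derivatives in the flow direction only. This means pure commutation gives just $\|T_\pi^t z\|_{\mc{D}(A_\pi)}\lesssim \mr{e}^{\omega_\pi t}\|z\|_{\mc{D}(A_\pi)}$, and the transverse components must be handled separately. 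For those I would absorb the growth into the implicit constant: the homeomorphic linearization and the Hurwitz property of $J_\pi$ make the derivatives of $S_{f_\pi}^t$ decay, so they do not increase the rate beyond $\mr{e}^{\omega_\pi t}$. The same scheme applied with $A_\pi^2$ and $\mc{W}_2$ gives the $\mc{G}_2$ bound. The conclusion of Corollary \ref{cor:Yosida.DD} then follows directly.

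\textbf{Main obstacle.} I expect the hardest step to be the norm equivalence between $\|\cdot\|_{\mc{G}_k}$ and the dual negative-order Sobolev norms, that is, showing that the separate terms $A_0 z$ and $A_j M_{\pi_j}^* z$ are controlled uniformly in $t$ with the exact rate $\omega_\pi$. If the full equivalence proves too hard, I would settle for showing that the derivatives of the flow remain bounded uniformly in $t$, which suffices for the constant in $\lesssim$.
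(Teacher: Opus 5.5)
\textbf{Gap 1: the norm identification.} Your whole plan rests on identifying $\mc{G}_1,\mc{G}_2$ (with their norms) with the spaces $\mc{W}_1,\mc{W}_2$ defined through $\rg H^{s-1},\rg H^{s-2}$. This identification is not merely hard; it is false in general. Only one direction holds. Since $L_0$ and $M_{\pi_j}L_j$ map $\rg H^s$ (resp.\ $H^s$) boundedly into $\rg H^{s-1}$, you get $\mc{W}_1\hookrightarrow\mc{G}_1$. The converse fails:
\begin{itemize}
\item $z\in\mc{D}(A_0)$ only says that $g\mapsto\ip{z}{L_0g}$ is $\rg H^s$-bounded, i.e.\ regularity of $z$ along $f_0$;
\item $M_{\pi_j}^*z\in\mc{D}(A_j)$ is regularity along $\pi_jf_j$;
\item these fields vanish at the origin and need not span $\mathbb{R}^{d_x}$ anywhere (for $\pi=0$, $\mc{G}_1$ is just $\mc{D}(A_0)$ with its graph norm).
\end{itemize}
So the $\mc{W}_1$-norm is in general strictly stronger. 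Boundedness of $T_\pi^t$ on $\mc{W}_k$ obtained via Fa\`a di Bruno therefore says nothing about $\mc{G}_k$. The same objection rules out your claimed equivalence of the $A_\pi$-graph norm with the $\mc{W}_1$-norm.

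The genuine obstruction is a commutator. One has $K_\pi^tL_0=L_{h_t}K_\pi^t$ with $h_t=(\mr{D}S_{f_\pi}^t)^{-1}(f_0\circ S_{f_\pi}^t)$ and $L_hg:=\mr{D}g\cdot h$. Hence $\|A_0T_\pi^tz\|$ is the norm of the functional $g\mapsto\ip{L_0g}{T_\pi^tz}=\ip{L_{h_t}K_\pi^tg}{z}$. Only $f_\pi$ itself is transported invariantly ($\mr{D}S_{f_\pi}^t\,f_\pi=f_\pi\circ S_{f_\pi}^t$). That is why the $A_\pi$-graph norm commutes with $T_\pi^t$ while the individual terms of $\|\cdot\|_{\mc{G}_1}$ do not. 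To close the argument you would need to express $h_t$ through $f_0,\pi_1f_1,\dots,\pi_{d_u}f_{d_u}$ with regular coefficients of controlled growth in $t$. The same is needed for the analogues for $\pi_jf_j$ and the second-order versions for $\mc{G}_2$. The proposal never addresses this.

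\textbf{Gap 2: the rate step.} This gap is independent of the first.
\begin{itemize}
\item The statement assumes only $\|T_\pi^t\|\lesssim\mr{e}^{\omega_\pi t}$ with $\omega_\pi$ of any sign. Corollary \ref{cor:Yosida.DD} is used with $\max(\omega_\pi,\omega_{\pi,\lambda})$, not for a stable loop. So the homeomorphic linearization and a Hurwitz $J_\pi$ are simply not available.
\item Even granting them, what enters through $h_t$ is the \emph{inverse} Jacobian, which grows along a contracting flow. Take $f_0(x)=(0,x_1)$ and a linear closed loop $S_{f_\pi}^t=\exp(tJ_\pi)$ with $J_\pi=\mathrm{diag}(-1,-10)$. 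Then $h_t=\mr{e}^{9t}f_0$, hence $A_0T_\pi^t=\mr{e}^{9t}T_\pi^tA_0$ on $\mc{D}(A_0)$. Commuting past $A_0$ costs a factor that is unbounded in $t$ and set by the spread of $\sigma(J_\pi)$, not by $\omega_\pi$, even though the forward flow derivatives decay. So your fallback of uniformly bounded flow derivatives does not suffice.
\item Growth bounds for $K_\pi^t$ on $\rg H^{s-1}$ and $\rg H^{s-2}$ likewise do not follow from the hypothesis on $\rg H^s$.
\end{itemize}

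\textbf{Comparison with the paper.} The paper never leaves the $\rg H^s$ pairing. It writes $\ip{g_0}{A_0^2T_\pi^tz}=\ip{K_\pi^tL_0^2g_0}{z}$ and $\ip{g_j}{A_j^2M_{\pi_j}^*T_\pi^tz}=\ip{K_\pi^tM_{\pi_j}L_j^2g_j}{z}$. It then invokes only $\|K_\pi^t\|=\|T_\pi^t\|\lesssim\mr{e}^{\omega_\pi t}$, with no Sobolev scale or linearization. Be aware, however, that it normalizes the test functions by $\|L_0^2g\|+\sum_j\|L_j^2\rg\iota g\|$ rather than by $\|g\|$. Those suprema are then bounded by $\|T_\pi^tz\|$ and do not reproduce $\|A_0^2T_\pi^tz\|$; at $t=0$ the chain would yield $\|z\|_{\mc{G}_2}\lesssim\|z\|$. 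So the commutator above is not really dealt with there either. Your sense that the separate terms are the crux is right, but the proposal does not supply the missing estimate.
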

\begin{proof}
    We define 
    $$\mc{G}_{-1} = \mc{D}(L_0) \cap \bra{\bigcap_{j=1}^{d_u} \rg\iota^{-1} \mc{D}(L_j)}, \ \mc{G}_{-2} = \mc{D}(L_0^2) \cap \bra{\bigcap_{j=1}^{d_u} \rg\iota^{-1} \mc{D}(L_j^2)} $$
    which we endow with the norms $\|g\|_{\mc{G}_{-1}} = \|L_0g\| + \sum_{j=1}^{d_u} \|L_j \rg\iota g\|$ and $\|g\|_{\mc{G}_{-2}} = \|L_0^2g\| + \sum_{j=1}^{d_u} \|L_j^2 \rg\iota g\|$. 
    When $s\geq 2$, the $\mc{G}_{-1}$ space indeed contains all $g\in \rg H^s(\mbb{X}) \cap C^1(\mbb{X})$ and $\mc{G}_{-2}$ contains all $g\in \rg H^s(\mbb{X}) \cap C^2(\mbb{X})$. In fact, $\mc{G}_{-1}$ and $\mc{G}_{-2}$ are the dual spaces of $\mc{G}_1$ and $\mc{G}_2$ with respect to the pivot space $\mc{G} = \rg H^s(\mbb{X})$, respectively \cite{tucsnak2009observation}. Clearly, $\mc{G}_{-1}$ and $\mc{G}_{-2}$ are dense subspaces of $\mc{G}$. Moreover, $\|g\|_{\mc{G}_{-2}}$ defined here is equivalent to the norm by duality: $\|g\|_{\mc{G}_2^*} = \sup_{z\in \mc{G}_2} |\ip{g}{z}_{\mc{G}}|$. 
    \par Now we have, for any given $z\in \mc{G}_2$ and fixed $t\geq 0$, 
    \begin{equation*}
        \begin{aligned}
            \norm{T_\pi^t z}_{\mc{G}_2} &= \norm{T_\pi^tz} 
            + \sup_{g_0\in \mc{G}_{-2}, \|g_0\|_{\mc{G}_{-2}}=1} \ip{g_0}{A_0^2T_\pi^tz} 
            + \sum_{j=1}^{d_u} \sup_{g_j\in \mc{G}_{-2}, \|g_j\|_{\mc{G}_{-2}}=1} \ip{g_j}{A_j^2M_{\pi_j}^*T_\pi^tz} \\
            &= \norm{T_\pi^tz} + \sup_{g_0\in \mc{G}_{-2}, \|g_0\|_{\mc{G}_{-2}}=1} \ip{K_\pi^t L_0^2g_0}{z} + \sum_{j=1}^{d_u} \sup_{g_j\in \mc{G}_{-2}, \|g_j\|_{\mc{G}_{-2}}=1} \ip{K_\pi^t M_{\pi_j}L_j^2g_j}{z} \\
            &\lesssim \norm{T_\pi^t}\|z\| + \norm{K_\pi^t} \|z\| \lesssim \mr{e}^{\omega_\pi t}\|z\|_{\mc{G}_2}. 
        \end{aligned}
    \end{equation*}
    The proof is completed. 
\end{proof}

\section{Synthesis of Stabilizing Controllers: A Koopman--Lyapunov Method}
After all the necessary mathematical preparations in the previous section, now let us consider the first problem of interest --- the synthesis of a stabilizing control law of $\rg C^s$ type. As we saw in Subsection \ref{subsec:prelim-spectrum}, when there is a $\rg C^s$-class control law $\pi$ that is exponentially stabilizing in the homeomorphic sense, the search for a control Lyapunov function in the kernel SOS form should be feasible.  
\begin{problem}[Stabilizing control]\label{prob:stabilizing}
    Given a kernel quadratic form $q(x) = \ipp{\rg\phi(x)}{Q}{\rg\phi(x)}$, referred to as the \emph{descent rate}, where $Q \in \mc{B}_1(\mc{H}_{\rg\kappa}(\mbb{X}))$ (or $\mc{B}_2(\mc{H}_{\rg\kappa}(\mbb{X}))$) is positive semidefinite and guarantees that $q(x)=0$ only when $x=0$, determine a positive semidefinite kernel quadratic form $v(x) = \ipp{\rg\phi(x)}{P}{\rg\phi(x)}$, called the \emph{(control) Lyapunov function}, along with a \emph{feedback control policy} $\pi\in [\rg C^s(\mbb{X})]^{d_u}$, such that for any $x\in \mbb{X}$ and $t\geq 0$:
    \begin{equation}\label{eq:stabilizing.0}
        v(S_{f_\pi}^t(x)) - v(x) \leq -\int_0^t q(S_{f_\pi}^\tau(x)) \, \xD{\tau}. 
    \end{equation}
\end{problem}

\subsection{Lyapunov Descent Condition and Its Differentiation}
We first express the Lyapunov descent condition \eqref{eq:stabilizing.0} of Problem \ref{prob:stabilizing} in a kernel quadratic form of the canonical feature of states. 
\begin{proposition}[Lyapunov descent in kernel quadratic form]
    Policy $\pi\in [\rg C^s(\mbb{X})]^{d_u}$ along with the positive semidefinite Hilbert--Schmidt operator $P\in \mc{B}_2(\rg H^s(\mbb{X}))$ solves Problem \ref{prob:stabilizing} if and only if
    \begin{equation}\label{eq:stabilizing.1}
        \ipp{\rg\phi(x)}{ T_\pi^{t*}PT_\pi^t - P + \int_0^t T_\pi^{\tau *} QT_\pi^\tau \xD{\tau} }{\rg\phi(x)} \leq 0, \quad \forall x\in \mbb{X},\ \forall t\geq 0,
    \end{equation}
    where $\{T_\pi^t\}_{t\geq 0}$ is the closed-loop Perron--Frobenius semigroup defined on $\rg H^s(\mbb{X}) \equiv \mc{H}_{\rg\kappa}(\mbb{X})$. 
\end{proposition}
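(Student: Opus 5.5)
The plan is to rewrite each term of \eqref{eq:stabilizing.0} as a quadratic form in $\rg\phi(x)$. The tools are the characterization of the Perron--Frobenius semigroups, $T_\pi^t\rg\phi(x) = \rg\phi(S_{f_\pi}^t(x))$, and the definition $v(x)=\ipp{\rg\phi(x)}{P}{\rg\phi(x)}$. Once every term is a quadratic form, the equivalence is a pointwise identity, so both directions follow at once.

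\textbf{First, the Lyapunov terms.} For fixed $x\in\mbb X$ and $t\geq0$,
$$v(S_{f_\pi}^t(x)) = \ipp{\rg\phi(S_{f_\pi}^t(x))}{P}{\rg\phi(S_{f_\pi}^t(x))} = \ip{T_\pi^t\rg\phi(x)}{PT_\pi^t\rg\phi(x)} = \ipp{\rg\phi(x)}{T_\pi^{t*}PT_\pi^t}{\rg\phi(x)}.$$
This uses only that $T_\pi^t$ is bounded, so its adjoint $T_\pi^{t*}=K_\pi^t$ is well defined. In the same way, for each $\tau\in[0,t]$,
$$q(S_{f_\pi}^\tau(x)) = \ipp{\rg\phi(x)}{T_\pi^{\tau*}QT_\pi^\tau}{\rg\phi(x)}.$$

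\textbf{Second, the integral term, which is the only nontrivial step.} We must justify
$$\int_0^t \ipp{\rg\phi(x)}{T_\pi^{\tau*}QT_\pi^\tau}{\rg\phi(x)}\xD{\tau} = \ipp{\rg\phi(x)}{\int_0^t T_\pi^{\tau*}QT_\pi^\tau\xD{\tau}}{\rg\phi(x)}.$$
The operator integral is understood in the strong (Bochner) sense. Strong continuity of $\{T_\pi^t\}$ from Corollary \ref{cor:strong.continuity}, together with boundedness of $Q$, makes $\tau\mapsto QT_\pi^\tau z$ continuous. Its adjoint semigroup $K_\pi^\tau$ is also strongly continuous, since the adjoint of a $C_0$-semigroup on a Hilbert space is again $C_0$. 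Hence $\tau\mapsto T_\pi^{\tau*}QT_\pi^\tau z$ is continuous, and on $[0,t]$ its norm is bounded by $c^2\mr e^{2|\omega_\pi|t}\|Q\|\|z\|$. So the integral exists as a bounded self-adjoint operator. Pairing with $\rg\phi(x)$ is a continuous linear functional, so it commutes with the Bochner integral, which gives the identity. Alternatively, one can simply note that $\tau\mapsto q(S_{f_\pi}^\tau(x))$ is continuous, because $q$ is continuous and the flow is continuous.

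\textbf{Conclusion.} Subtracting $v(x)=\ipp{\rg\phi(x)}{P}{\rg\phi(x)}$ and moving the integral term to the left, \eqref{eq:stabilizing.0} at $(x,t)$ becomes exactly the left side of \eqref{eq:stabilizing.1} being nonpositive at the same $(x,t)$. Since the two inequalities agree pointwise for every $x\in\mbb X$ and $t\geq0$, each holds for all $(x,t)$ exactly when the other does, which is the claimed equivalence. Positive semidefiniteness of $P$ and the regularity of $\pi$ are the same on both sides, so they need no separate argument. I expect no real obstacle beyond the measurability and continuity check in the second step.
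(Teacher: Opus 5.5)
Your proof is correct and is the straightforward argument the paper has in mind. The paper omits the proof as straightforward, and in the kernel-SOS Lyapunov corollary it performs the same rewriting, $v(S_{f_\pi}^t(x)) = \ipp{\rg\phi(x)}{T_\pi^{t*}PT_\pi^t}{\rg\phi(x)}$ via $T_\pi^t\rg\phi(x)=\rg\phi(S_{f_\pi}^t(x))$; your Bochner-integral step supplies the one detail the paper leaves implicit. One small slip: Corollary \ref{cor:strong.continuity} gives strong continuity of $K_\pi^t = T_\pi^{t*}$ directly, and it is $T_\pi^t$ whose strong continuity follows by reflexivity, not the other way round. Since you use both facts, this does not affect the argument.
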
 

The proof is straightforward. The condition \eqref{eq:stabilizing.1}, however, is difficult to compute because it involves all $t\geq0$. Such an integral-time condition must be converted into a differential counterpart, which is complicated by the fact that $\rg\phi(x)$ is not in the domain of the infinitesimal generator $A_\pi$. 
Now let us denote 
\begin{equation}\label{eq:violation.matrix.W}
    W_\pi^t = T_\pi^{t*}PT_\pi^t - P + \int_0^t T_\pi^{\tau *} QT_\pi^\tau \xD{\tau}. 
\end{equation} 
The condition \eqref{eq:stabilizing.1} is now written as $\ipp{\rg\phi(x)}{W_\pi^t}{\rg\phi(x)}\leq 0$ for all $x\in \mbb{X}$ and $t\geq 0$. To perform differentiation over $t$ on $W_\pi^t$, we have to approximate $\rg\phi(x)$ using a member of $\mc{D}(A_\pi)$. 
To this end, we let 
$$\rg\phi(x)_{\pi,\epsilon} = \frac{1}{\epsilon}\int_0^\epsilon T_\pi^t \rg\phi(x) \xD{t} = \frac{1}{\epsilon}\int_0^\epsilon \rg\phi\bra{S_{f_\pi}^t(x)} \xD{t}$$ 
for a sufficiently small $\epsilon>0$. 
In $\rg H^s(\mbb{X})$, it is easy to verify that as $\epsilon\to 0$, $\rg\phi(x)_{\pi,\epsilon} \to \rg\phi(x)$. 
Thus, we approximate condition \eqref{eq:stabilizing.1} by
\begin{equation}\label{eq:stabilizing.2}
    \ipp{\rg\phi(x)_{\pi,\epsilon}}{W_\pi^t}{\rg\phi(x)_{\pi,\epsilon}} \leq 0, 
    \enspace\forall x\in \mbb{X}, \, t\geq0, \, \epsilon>0,  
\end{equation}
at the expense of an error as follows. 
\begin{lemma}[Error on Lyapunov descent due to short-time average]
    Let $\pi\in [\rg C^s(\mbb{X})]^{d_u}$, $P, Q \in \mc{B}_2(\rg H^s(\mbb{X}))$, $P, Q \succeq 0$, $W_\pi^t$ be as defined in \eqref{eq:violation.matrix.W}, and Assumption \ref{assum:regularity} hold.  
    There exists a function $\eta: \mR_+\to\mR_+$ with $\eta(0)=0$, dependent on the choice of $Q$ and $\pi$,  such that $$\abs{ \ipp{\rg\phi(x)_{\pi,\epsilon}}{W_\pi^t}{\rg\phi(x)_{\pi,\epsilon}} - \ipp{\rg\phi(x)}{W_\pi^t}{\rg\phi(x)} } \lesssim (1+t)\eta(\epsilon).$$ 
    In particular, if \eqref{eq:stabilizing.2} is satisfied, then \eqref{eq:stabilizing.1} holds with a violation bounded by a multiple of $(1+t)\eta(\epsilon)$ on its right-hand side. 
\end{lemma}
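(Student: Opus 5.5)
The plan is to write the difference as a bilinear-form perturbation and bound it by $\|W_\pi^t\|\,\|\rg\phi(x)_{\pi,\epsilon}-\rg\phi(x)\|$ times a bounded factor, then control each piece separately and uniformly in $x$. With $z=\rg\phi(x)$ and $z_\epsilon=\rg\phi(x)_{\pi,\epsilon}$, and using that $W_\pi^t$ is self-adjoint, we have
$$\ipp{z_\epsilon}{W_\pi^t}{z_\epsilon}-\ipp{z}{W_\pi^t}{z} = \ipp{z_\epsilon-z}{W_\pi^t}{z_\epsilon+z},$$
so the difference is at most $\|W_\pi^t\|\,\|z_\epsilon-z\|\,(\|z_\epsilon\|+\|z\|)$.

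\textbf{Step 1: bound $\|W_\pi^t\|$ linearly in $t$.} By the Characterization of Perron--Frobenius semigroups and the growth bound $\|T_\pi^t\|\le c\,\mr{e}^{\omega_\pi t}$ for a strongly continuous semigroup, we get
$$\|W_\pi^t\|\le c^2\mr{e}^{2\omega_\pi t}\|P\|+\|P\|+\|Q\|\int_0^t c^2\mr{e}^{2\omega_\pi\tau}\xD{\tau}.$$
To obtain the stated $(1+t)$ factor, we will work in the relevant regime where $\pi$ makes $\{T_\pi^t\}$ uniformly bounded, i.e.\ $\omega_\pi\le 0$. This holds, for instance, for exponentially stabilizing policies by the spectral radius bound theorem above. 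Then $\|W_\pi^t\|\lesssim \|P\|+t\|Q\|\lesssim 1+t$. If $\pi$ is merely given, we let the implicit constant absorb the growth on bounded time intervals, or restate the bound with $\mr{e}^{2\omega_\pi t}$. I will present the bounded-semigroup case, which is the one that matters in Problem \ref{prob:stabilizing}.

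\textbf{Step 2: the bounded factor.} We have $\|z\|=\rho(0)^{1/2}|x|\le \rho(0)^{1/2}\operatorname{diam}$ by the geometry proposition and the compactness of $\mbb X$. Likewise $\|z_\epsilon\|\le \sup_{t\le\epsilon}\|T_\pi^t\|\,\|z\|$, which is bounded uniformly in $x$.

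\textbf{Step 3 (main obstacle): uniform smallness of $\|z_\epsilon-z\|$.} Set
$$\eta(\epsilon)=\sup_{x\in\mbb X}\sup_{0\le\tau\le\epsilon}\norm{\rg\phi(S_{f_\pi}^\tau(x))-\rg\phi(x)}.$$
Then $\|z_\epsilon-z\|\le\frac1\epsilon\int_0^\epsilon\|\rg\phi(S_{f_\pi}^\tau x)-\rg\phi(x)\|\xD{\tau}\le\eta(\epsilon)$. It remains to show $\eta(\epsilon)\to0$, and strong continuity alone gives only pointwise convergence in $x$, so uniformity is the real issue. I plan to use the identity
$$\|\rg\phi(y)-\rg\phi(x)\|^2=\rg\kappa(y,y)-2\rg\kappa(x,y)+\rg\kappa(x,x).$$
Since $\rg\kappa$ is continuous on the compact set $\mbb X\times\mbb X$, it is uniformly continuous, so this quantity is bounded by a modulus of continuity $\varsigma(|y-x|)$ with $\varsigma(0)=0$. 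Moreover $|S_{f_\pi}^\tau(x)-x|\le \tau\sup_{\mbb X}|f_\pi|$ because $f_\pi\in[\rg C^s(\mbb X)]^{d_x}$ is continuous on the compact invariant region. Hence $\eta(\epsilon)\le\varsigma(\epsilon\sup|f_\pi|)^{1/2}\to0$. This also makes $\eta$ depend only on $\pi$ (through $f_\pi$), while $Q$ enters through the constant in Step 1. We redefine $\eta$ as a nondecreasing majorant if needed, so that $\eta(0)=0$ and $\eta:\mR_+\to\mR_+$.

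\textbf{Conclusion.} Combining the three steps gives the stated bound $\lesssim(1+t)\eta(\epsilon)$. The final claim then follows directly: if \eqref{eq:stabilizing.2} holds, then $\ipp{\rg\phi(x)}{W_\pi^t}{\rg\phi(x)}\le \ipp{z_\epsilon}{W_\pi^t}{z_\epsilon}+C(1+t)\eta(\epsilon)\le C(1+t)\eta(\epsilon)$.
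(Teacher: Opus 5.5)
Your Steps 2 and 3 are sound. The explicit modulus in Step 3, obtained from uniform continuity of $\rg\kappa$ on $\mbb X\times\mbb X$ and $|S_{f_\pi}^\tau(x)-x|\le\tau\max_{\mbb X}|f_\pi|$, is in fact more concrete than the paper's equicontinuity remark. Step 1, however, leaves a genuine gap.

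The lemma is asserted for an arbitrary $\pi\in[\rg C^s(\mbb X)]^{d_u}$, with a constant independent of $t$. Because you pass through the operator norm $\norm{W_\pi^t}$, you need $\sup_{t\ge 0}\norm{T_\pi^t}<\infty$, and that is not among the hypotheses. Your fallbacks prove a strictly weaker statement: absorbing the growth on bounded time intervals loses uniformity in $t$, and carrying $\mr{e}^{2\omega_\pi t}$ changes the bound. Restricting to exponentially stabilizing policies is not harmless either. The lemma is stated with no growth assumption on the closed-loop semigroup, and the paper stresses that such stability is not automatically obtained for synthesized policies.

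The missing idea is that you never need $\norm{T_\pi^\tau}$, only the action of $T_\pi^\tau$ on $z_\epsilon\pm z$, and that action is explicit. We have $T_\pi^\tau\rg\phi(x)=\rg\phi(S_{f_\pi}^\tau(x))$. Since a bounded operator commutes with the Bochner integral and the flow has the semigroup property, also $T_\pi^\tau\rg\phi(x)_{\pi,\epsilon}=\rg\phi(S_{f_\pi}^\tau(x))_{\pi,\epsilon}$. Keeping your symmetric identity but expanding $W_\pi^t$ gives
\begin{equation*}
\ipp{z_\epsilon}{W_\pi^t}{z_\epsilon}-\ipp{z}{W_\pi^t}{z}
=\ip{T_\pi^t(z_\epsilon-z)}{PT_\pi^t(z_\epsilon+z)}-\ip{z_\epsilon-z}{P(z_\epsilon+z)}
+\int_0^t\ip{T_\pi^\tau(z_\epsilon-z)}{QT_\pi^\tau(z_\epsilon+z)}\xD{\tau}.
\end{equation*}
Because $\mbb X$ is invariant, $S_{f_\pi}^\tau(x)\in\mbb X$ for every $\tau\ge0$. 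Your $\eta$, uniform over $\mbb X$, therefore gives $\norm{T_\pi^\tau(z_\epsilon-z)}\le\eta(\epsilon)$ and $\norm{T_\pi^\tau(z_\epsilon+z)}\le 2\rho(0)^{1/2}\max_{y\in\mbb X}|y|$. This yields the bound $\lesssim(2\norm{P}+t\norm{Q})\,\eta(\epsilon)$ with no assumption on the growth of the semigroup.

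This is exactly the paper's route. It moves each comparison to the states $S_{f_\pi}^t(x)$ and $S_{f_\pi}^\tau(x)$, applies the modulus there, and obtains $2\eta_P(\epsilon)+t\eta_Q(\epsilon)$.
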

\begin{proof}
    It is not difficult to see that $T_\pi^t\rg\phi(x)_{\pi,\epsilon} = \rg\phi(S_{f_\pi}^t(x))_{\pi,\epsilon}$. Hence, 
    $$
    \begin{aligned}
    & \left| \ipp{\rg\phi(x)_{\pi,\epsilon}}{W_\pi^t}{\rg\phi(x)_{\pi,\epsilon}} - \ipp{\rg\phi(x)}{W_\pi^t}{\rg\phi(x)} \right| \\ 
    \leq & 
    \left| \ipp{\rg\phi(S_{f_\pi}^t(x))_{\pi,\epsilon}}{P}{\rg\phi(S_{f_\pi}^t(x))_{\pi,\epsilon}} - \ipp{\rg\phi(S_{f_\pi}^t(x))}{P}{\rg\phi(S_{f_\pi}^t(x))} \right| \\
    & + \left| \ipp{\rg\phi(x)_{\pi,\epsilon}}{P}{\rg\phi(x)_{\pi, \epsilon}} - \ipp{\rg\phi(x)}{P}{\rg\phi(x)} \right| \\
    & + \int_0^t \left| \ipp{\rg\phi(S_{f_\pi}^\tau(x))_{\pi,\epsilon}}{Q}{\rg\phi(S_{f_\pi}^\tau(x))_{\pi,\epsilon}} - \ipp{\rg\phi(S_{f_\pi}^\tau(x))}{Q}{\rg\phi(S_{f_\pi}^\tau(x))} \right| \, \xD{\tau}.
    \end{aligned}$$
    Since $P$ is a bounded operator, the second term can be bounded by a constant multiple of the distance $\|\rg\phi(x)_{\pi,\epsilon} - \rg\phi(x)\|$ and the squared distance $\|\rg\phi(x)_{\pi,\epsilon} - \rg\phi(x)\|^2$ (which, at small $\epsilon$ is dominated by the linear distance). 
    Due to the compactness of $\mbb{X}$, the collection of canonical features $\{\rg\phi(x): x\in \mbb{X}\}$ is also a compact set. Hence, the strong convergence of $\rg\phi(x)_{\pi,\epsilon}$ pointwise to $\rg\phi(x)$ as $\epsilon\to 0$ results in equicontinuity. Thus, there exists a modulus function of $\epsilon$, say $\eta_P(\epsilon)$, such that 
    $$\left| \ipp{\rg\phi(x)_{\pi,\epsilon}}{P}{\rg\phi(x)_{\pi, \epsilon}} - \ipp{\rg\phi(x)}{P}{\rg\phi(x)} \right| \lesssim \eta_P(\epsilon)$$
    and the same holds for the first term, and each integrand in the third term with another modulus $\eta_Q(\epsilon)$. Therefore, 
    $$\left| \ipp{\rg\phi(x)_{\pi,\epsilon}}{W_\pi^t}{\rg\phi(x)_{\pi,\epsilon}} - \ipp{\rg\phi(x)}{W_\pi^t}{\rg\phi(x)} \right| \lesssim 2\eta_P(\epsilon) + t\eta_Q(\epsilon). $$
    The conclusion is thus proved. 
\end{proof}

Now we can convert \eqref{eq:stabilizing.2} into a differential condition. The left-hand side is now differentiable with respect to $t$, and equal to $0$ at $t=0$. 
\begin{lemma}[Differential Lyapunov descent condition]
    Condition \eqref{eq:stabilizing.2} holds if and only if 
    \begin{equation}\label{eq:stabilizing.3}
    \ipp{\rg\phi(x)_{\pi,\epsilon}}{ P A_\pi+ A_\pi^* P + Q}{\rg\phi(x)_{\pi,\epsilon}} \leq 0, \enspace \forall x\in \mbb{X}, \,\epsilon>0.
    \end{equation}
\end{lemma}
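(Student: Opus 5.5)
The plan is to study the scalar function
$$h(t) := \ipp{\rg\phi(x)_{\pi,\epsilon}}{W_\pi^t}{\rg\phi(x)_{\pi,\epsilon}}$$
for fixed $x$ and $\epsilon$, and to use the semigroup property to turn its derivative at an arbitrary time into its derivative at $t=0$ evaluated at a shifted state.

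\textbf{Step 1: differentiability and the derivative of $h$.} Write $z=\rg\phi(x)_{\pi,\epsilon}$. By Remark \ref{rem:short-time-average}, $z\in\mc{D}(A_\pi)$. Since $T_\pi^t z\in\mc{D}(A_\pi)$ and $\frac{\mr d}{\mr dt}T_\pi^tz = A_\pi T_\pi^t z$ in norm, and $P$ is bounded and self-adjoint, the product rule gives
$$h'(t) = 2\ip{PT_\pi^tz}{A_\pi T_\pi^tz} + \ip{T_\pi^tz}{QT_\pi^tz}.$$
Here the $Q$-term comes from the fundamental theorem of calculus, using that $\tau\mapsto T_\pi^{\tau}z$ is continuous. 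This expression is exactly $\ipp{T_\pi^tz}{PA_\pi+A_\pi^*P+Q}{T_\pi^tz}$, read as a form on $\mc{D}(A_\pi)$; no domain condition on $A_\pi^*$ is needed because the form is written via $\ip{Pw}{A_\pi w}$. Also $h(0)=0$, since $W_\pi^0=0$.

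\textbf{Step 2: shifting time into the state.} The proof of the earlier lemma on short-time averages already noted the identity $T_\pi^t\rg\phi(x)_{\pi,\epsilon}=\rg\phi(S_{f_\pi}^t(x))_{\pi,\epsilon}$. This follows by commuting $T_\pi^t$ with the integral and applying the characterization proposition for Perron--Frobenius semigroups. Hence
$$h'(t) = \ipp{\rg\phi(y)_{\pi,\epsilon}}{PA_\pi+A_\pi^*P+Q}{\rg\phi(y)_{\pi,\epsilon}}, \qquad y=S_{f_\pi}^t(x)\in\mbb{X},$$
where $y\in\mbb{X}$ by the invariance in Assumption \ref{assum:regularity}.

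\textbf{Step 3: the two implications.}
\begin{itemize}
\item ($\Leftarrow$) If \eqref{eq:stabilizing.3} holds for every point of $\mbb{X}$, then $h'(t)\le 0$ for all $t\ge0$. Together with $h(0)=0$, this gives $h(t)\le 0$, which is \eqref{eq:stabilizing.2}.
\item ($\Rightarrow$) If \eqref{eq:stabilizing.2} holds, then $h(t)\le 0=h(0)$ for all $t\ge0$. Therefore the right derivative satisfies $h'(0^+)=\lim_{t\downarrow0}h(t)/t\le 0$, and $h'(0)$ is precisely the left side of \eqref{eq:stabilizing.3} at $x$. Since $x$ and $\epsilon$ are arbitrary, this yields \eqref{eq:stabilizing.3}.
\end{itemize}

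The main obstacle I expect is the rigor of Step 1. One must justify differentiating the quadratic form $\ip{T_\pi^tz}{PT_\pi^tz}$ when $A_\pi$ is unbounded and $PA_\pi+A_\pi^*P$ is only meaningful as a sesquilinear form on $\mc{D}(A_\pi)$. I would handle this with a difference-quotient argument,
$$\frac{1}{\delta}\Bigl(\ip{w_\delta}{Pw_\delta}-\ip{w}{Pw}\Bigr)=\ip{\tfrac{w_\delta-w}{\delta}}{P(w_\delta+w)},$$
where $w=T_\pi^tz$ and $w_\delta=T_\pi^{t+\delta}z$. Strong convergence of the quotient to $A_\pi w$, together with continuity of $w_\delta$, then gives the limit. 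Continuity of $h'$ in $t$ is not actually needed, since the mean value theorem for a differentiable function suffices for the ($\Leftarrow$) direction.
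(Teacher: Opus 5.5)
Your proposal is correct and follows essentially the same route as the paper. Both arguments differentiate $t\mapsto \ipp{\rg\phi(x)_{\pi,\epsilon}}{W_\pi^t}{\rg\phi(x)_{\pi,\epsilon}}$ and use $T_\pi^t\rg\phi(x)_{\pi,\epsilon}=\rg\phi(S_{f_\pi}^t(x))_{\pi,\epsilon}$ to identify the derivative at time $t$ with the left side of \eqref{eq:stabilizing.3} at a point of the orbit; they then take $t=0$ for one direction and integrate for the other. Your difference-quotient justification and your reading of $PA_\pi+A_\pi^*P$ as a form on $\mc{D}(A_\pi)$ make rigorous what the paper does formally.
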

\begin{proof}
    Since $\rg\phi(x)_{\pi,\epsilon} \in \mc{D}(A_\pi)$, we have $\frac{\xD{}}{\xD{t}} T_\pi^t \rg\phi(x)_{\pi,\epsilon} = T_\pi^t A_\pi \rg\phi(x)_{\pi,\epsilon} = A_\pi T_\pi^t \rg\phi(x)_{\pi,\epsilon}$. Hence, 
    $$ \begin{aligned}
        \xDD{}{t} \ipp{\rg\phi(x)_{\pi,\epsilon}}{W_\pi^t}{\rg\phi(x)_{\pi,\epsilon}} &= \ipp{\rg\phi(x)_{\pi,\epsilon}}{(A_\pi T_\pi^t)^* P T_\pi^t + T_\pi^{t*} PA_\pi T_\pi^t + T_\pi^{t*}Q T_\pi^t}{\rg\phi(x)_{\pi,\epsilon}} \\
        &= \ipp{\rg\phi(x)_{\pi,\epsilon}}{T_\pi^{t*} \bra{A_\pi^*P+PA_\pi+Q} T_\pi^t}{\rg\phi(x)_{\pi,\epsilon}} \\
        &= \ipp{T_\pi^t\rg\phi(x)_{\pi,\epsilon}}{A_\pi^*P+PA_\pi+Q}{T_\pi^t\rg\phi(x)_{\pi,\epsilon}}
    \end{aligned}$$
    Note that $T_\pi^t\rg\phi(x)_{\pi,\epsilon} = \rg\phi\bra{S_{f_\pi}^t(x)}_{\pi, \epsilon}$, where $S_{f_\pi}^t(x) \in \mbb{X}$. If \eqref{eq:stabilizing.2} holds for $x\in \mbb{X}$ at all $t\geq 0$ and $\epsilon>0$, then 
    $$\xDD{}{t} \ipp{\rg\phi(x)_{\pi,\epsilon}}{W_\pi^t}{\rg\phi(x)_{\pi,\epsilon}} \bigg|_{t=0} = \ipp{\rg\phi(x)_{\pi,\epsilon}}{ P A_\pi+ A_\pi^* P + Q}{\rg\phi(x)_{\pi,\epsilon}} \leq 0, $$
    namely \eqref{eq:stabilizing.3} holds. Vice versa, if \eqref{eq:stabilizing.3} holds, then $\xDD{}{t} \ipp{\rg\phi(x)_{\pi,\epsilon}}{W_\pi^t}{\rg\phi(x)_{\pi,\epsilon}}$ must be non-positive at all times, which implies \eqref{eq:stabilizing.2} by integration over time on $[0,t]$. 
\end{proof}

In \eqref{eq:stabilizing.3}, $A_\pi$ is still an unbounded operator, which is inconvenient for data-based approximation, where the hypothetical short-time average $\rg\phi(x)_{\pi,\epsilon}$ would be undesirably involved in the computation. 
The Yosida approximation provides an instrument to avoid such a limitation, by replacing the semigroup $\{T_{\pi}^t\}_{t\geq0}$ with $\{T_{\pi,\lambda,\theta}^t\}_{t\geq0}$.

\subsection{Yosida Approximation and Data-Based Constraints}
With the Yosida approximation, we rather directly approximate the $W_\pi^t$ matrix by its Yosida-like surrogate $W_{\pi,\lambda,\theta}^t$, in which we define 
\begin{equation}\label{eq:violation.matrix.W.Yosida}
    W_{\pi,\lambda,\theta}^t = T_{\pi,\lambda,\theta}^{t*} P T_{\pi,\lambda,\theta}^t - P + \int_0^t T_{\pi,\lambda,\theta}^{\tau *} Q T_{\pi,\lambda,\theta}^\tau \xD{\tau}, 
\end{equation} 
where the semigroup $\{T_{\pi,\lambda,\theta}^t\}_{t\geq0}$ is generated by the bounded operator $A_{\pi,\lambda,\theta}$ constructed by the finite-time Yosida approximations of $A_0, A_1, \cdots, A_{d_u}$ according to the affine relation \eqref{eq:PF.generator.closed-loop.Yosida}. 
Hence we approximate the condition \eqref{eq:stabilizing.1} as 
\begin{equation}\label{eq:stabilizing.4}
    \ipp{\rg\phi(x)}{W_{\pi,\lambda,\theta}^t}{\rg\phi(x)} \leq 0, 
    \enspace\forall x\in \mbb{X}, \, t\geq0,  
\end{equation}
at the expense of an error to be analyzed later. Intuitively, this error is associated with the difference between $W_\pi^t$ and $W_{\pi,\lambda,\theta}^t$. 

Similarly, we derive a differential condition to replace the integral condition:
\begin{equation}\label{eq:stabilizing.5}
    \ipp{\rg\phi(x)}{PA_{\pi,\lambda,\theta} + A_{\pi,\lambda,\theta}^*P + Q}{\rg\phi(x)} \leq 0, 
    \enspace\forall x\in \mbb{X}. 
\end{equation}
Since $A_{\pi,\lambda,\theta}$ is a bounded linear operator, its domain is the entire $\rg H^s(\mbb{X})$ and hence there is no need to explicitly invoke a short-time average $\rg\phi(x)_{\pi,\epsilon}$. This is convenient for numerical computation. 
On the other hand, there is no longer bidirectional implication between \eqref{eq:stabilizing.4} and \eqref{eq:stabilizing.5}. Clearly \eqref{eq:stabilizing.4} implies \eqref{eq:stabilizing.5} by taking the differentiation at $t=0$; but \eqref{eq:stabilizing.5} does not give \eqref{eq:stabilizing.4}. In fact, \eqref{eq:stabilizing.5} implies that
\begin{equation*}
    \ipp{T_\pi^t\rg\phi(x)}{PA_{\pi,\lambda,\theta} + A_{\pi,\lambda,\theta}^*P + Q}{T_\pi^t \rg\phi(x)} \leq 0, 
    \enspace\forall x\in \mbb{X}, \, t\geq0, 
\end{equation*}
as $T_\pi^t \rg\phi(x)$ is the $\rg\phi$ of another state in $\mbb{X}$. However, to restore \eqref{eq:stabilizing.4} we need the inequality to hold for  
\begin{equation*}
    \ipp{T_{\pi,\lambda,\theta}^t \rg\phi(x)}{PA_{\pi,\lambda,\theta} + A_{\pi,\lambda,\theta}^*P + Q}{T_{\pi,\lambda,\theta}^t \rg\phi(x)} \leq 0, 
    \enspace\forall x\in \mbb{X}, \, t\geq0. 
\end{equation*}
In other words, since the Yosida-approximated flow is not a flow in the original state space, but a virtual flow in the RKHS with bounded generators, the differential condition cannot be truly integrated on the state-space flow. 
Therefore, if we impose \eqref{eq:stabilizing.5} as the constraint on the controller, there is an error associated with the difference between $T_{\pi,\lambda,\theta}^t$ and $T_\pi^t$. 

\par Finally, the condition \eqref{eq:stabilizing.5} cannot be made to hold on all $x\in \mbb{X}$ computationally. Instead, for a data-driven solution, assume that we have a sample $\mbb{S} = \{x_1,\cdots,x_n\}\subset \mbb{X}$, obtained by random sampling or on a lattice grid. We impose the differential condition on the sample points only:
\begin{equation}\label{eq:stabilizing.6}
    \ipp{\rg\phi(x)}{PA_{\pi,\lambda,\theta} + A_{\pi,\lambda,\theta}^*P + Q}{\rg\phi(x)} \leq 0, 
    \enspace\forall x\in \mbb{S}=\{x_i\}_{i=1}^n. 
\end{equation}
For this, we need to characterize the possible violation of \eqref{eq:stabilizing.5} over the state space when the inequalities are satisfied only on $\mbb{S}$ as in \eqref{eq:stabilizing.6}. 
This bound depends on the fill distance 
$$r_\mbb{S} = \max_{x\in \mbb{X}} \min_{i=1,\cdots,n} |x-x_i|.$$
The fill distance under uniform random sampling is related to the sample size $n$. 
\begin{lemma}[Fill distance of uniform random sample]
    Suppose that $\mbb{X}\subset \mR^{d_x}$ is compact and Lebesgue measurable, and $\mbb{S} =\{x_i\}_{i=1}^n$ are independent and identically distributed sample points drawn from the uniform distribution on $\mbb{X}$. Then with probability $1-\delta$ over the draw, we have $$r_{\mbb{S}} \lesssim \bra{\frac{1}{n}\log\frac{n}{\delta}}^{1/d_x}. $$
\end{lemma}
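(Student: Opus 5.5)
The plan is a covering argument combined with a union bound. Fix a radius $r>0$. We will show that if every ball of radius $r$ centred at a point of $\mbb{X}$ contains a sample point, then $r_\mbb{S}\le r$. We then bound the probability that some such ball is empty.

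\textbf{Discretization.} Since the events are indexed by a continuum of centres, we first reduce to finitely many. Take a maximal $r/2$-separated subset $\{z_1,\cdots,z_N\}\subset\mbb{X}$, i.e.\ an $r/2$-net. Every $x\in\mbb{X}$ lies within $r/2$ of some $z_m$. If each ball $\mbb{B}(z_m,r/2)\cap\mbb{X}$ contains a sample point, then every $x$ is within distance $r$ of a sample point, so $r_\mbb{S}\le r$. The size of the net is controlled by a volume-packing count: the balls $\mbb{B}(z_m,r/4)$ are disjoint and lie in an $r/4$-neighbourhood of the bounded set $\mbb{X}$, so $N\lesssim r^{-d_x}$ for $r$ below the diameter of $\mbb{X}$.

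\textbf{Per-ball probability (main obstacle).} We need a lower bound on the uniform mass of each small ball,
$$p_m=\frac{|\mbb{B}(z_m,r/2)\cap\mbb{X}|}{|\mbb{X}|}\ge c\,r^{d_x},$$
with a constant $c>0$ that does not depend on $m$ or $r$. Mere compactness and measurability do not give this, since $\mbb{X}$ could have cusps or thin pieces. I would therefore invoke the standing assumption (Assumption~\ref{assum:regularity}) that $\mbb{X}$ has a Lipschitz boundary. This implies an interior cone condition, from which $|\mbb{B}(x,\rho)\cap\mbb{X}|\ge c\rho^{d_x}$ follows for all $x\in\mbb{X}$ and all $\rho$ up to a fixed $\rho_0$. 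This is the standard density property used in scattered-data approximation~\cite{wendland2004scattered}, and I will state it explicitly as the regularity being used.

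\textbf{Union bound and choice of $r$.} By independence, a given ball is missed by all $n$ samples with probability $(1-p_m)^n\le \exp(-c n r^{d_x})$. The union bound gives
$$\Pr(r_\mbb{S}>r)\le N\exp(-cnr^{d_x})\lesssim r^{-d_x}\exp(-cnr^{d_x}).$$
Set $r^{d_x}=\frac{C}{n}\log\frac{n}{\delta}$ with $C$ large. Then $r^{-d_x}\le n/(C\log(n/\delta))\le n$, and $\exp(-cnr^{d_x})=(\delta/n)^{cC}$. Taking $cC\ge1$ makes the right-hand side at most $\delta$ up to constants, and the constant can be absorbed by adjusting $C$. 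Hence, with probability at least $1-\delta$, $r_\mbb{S}\lesssim\bra{\frac1n\log\frac n\delta}^{1/d_x}$.

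For small $n$ the chosen $r$ may exceed $\rho_0$ or the diameter of $\mbb{X}$. In that case the bound holds trivially, because $r_\mbb{S}\le\operatorname{diam}\mbb{X}$ always, and this is absorbed into the implicit constant in $\lesssim$.
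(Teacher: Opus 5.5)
The paper states this lemma without proof, treating it as a standard scattered-data fact, so there is no argument of its own to compare against. Your covering-plus-union-bound proof is the standard one, and it is correct.

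Its most important feature is a point you raise yourself. The hypotheses as written (compact and Lebesgue measurable) are not enough. The Lipschitz boundary from Assumption~\ref{assum:regularity}, through the interior cone bound $|\mbb{B}(x,\rho)\cap\mbb{X}|\ge c\rho^{d_x}$ for all $x\in\mbb{X}$ and $\rho\le\rho_0$, is genuinely needed, not merely convenient. Two examples show this:
\begin{itemize}
\item The set $\mbb{X}=[0,1]^2\cup\{(t,0):1\le t\le 2\}$ is compact with positive measure, but the segment is Lebesgue-null. So almost surely every sample lies in $[0,1]^2$, at distance at least $1$ from $(2,0)$, and $r_{\mbb{S}}\ge 1$ for every $n$.
\item A planar outward cusp with $|\mbb{B}(p,\rho)\cap\mbb{X}|\asymp\rho^{k+1}$ at its tip $p$, where $k>1$, forces the $(1-\delta)$-quantile of $r_{\mbb{S}}$ to be $\gtrsim(\log(1/\delta)/n)^{1/(k+1)}$. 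For fixed $\delta$ this decays more slowly than the claimed $(\log(n/\delta)/n)^{1/2}$.
\end{itemize}
So making the regularity explicit, as you do, is a necessary correction to the statement, not an extra assumption of convenience.

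One edge case should be handled explicitly. Your step $r^{-d_x}\le n$ uses $C\log(n/\delta)\ge 1$. Absorbing the union-bound constant into $C$ uses that $n/\delta$ is bounded away from $1$. Both fail for $n=1$ and $\delta\uparrow 1$, and there the lemma itself is false: with one sample, $r_{\mbb{S}}\ge\frac12\operatorname{diam}\mbb{X}$, while $(\log(1/\delta))^{1/d_x}\to 0$.

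Your fallback (``$r_{\mbb{S}}\le\operatorname{diam}\mbb{X}$, absorbed into the constant'') only covers the regime where the chosen $r$ is large, not this one. Restricting to $n\ge 2$ removes the problem, because then $\log(n/\delta)\ge\log 2$ and $(n/\delta)^{cC-1}\ge 2^{cC-1}$. With that restriction your argument is complete.
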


However, since we are concerned with a stabilization problem where convergence to the origin is needed, instead of relying on a small fill distance on $\mbb{X}$, we require that the sample points be ``proportionally close'' to any point in the state space. Specifically, we shall assume the following sampling pattern. 
\begin{assumption}[Proportionally dense sampling]\label{assum:sampling.strategy}
    For any $x\in \mbb{X}$, there exists a sampled point $x'\in \mbb{S}$ such that $|x-x'|\leq r_{\mbb{S}}|x|$ where $r_{\mbb{S}}$ is a positive constant. 
\end{assumption}
\begin{remark}[Condition for sampling denseness]
\label{rem:sampling.strategy}
    The above assumption on sampling denseness can be satisfied with $r_{\mbb{S}}=1$ if $0\in \mbb{S}$, i.e., if the origin point is contained in the sample. In the sample-based formulation of stabilizing control problem \eqref{eq:stabilizing.6} and optimal control problems in the later section, it can be verified that this sample point at the origin always leads to a vacuously correct ``$0\leq0$'' or ``$0=0$'' constraint and has no effect on the problem. 
    \par If we need a small enough $r_{\mbb{S}} \ll 1$, then including $0\in \mbb{S}$ is not enough. This sampling pattern can be achieved by first randomly sampling the unit ball $\mbb{B}(0,1)\subset \mR^{d_x}$ to create a sample $\mbb{S}^\circ$. Suppose that $a_{\mbb{X}} = \max\{|x|:x\in \mbb{X}\}$. Then we let $\mbb{S}_0 = a_{\mbb{X}}\mbb{S}^\circ \cap \mbb{X}$ (by multiplying each sample point in the unit ball by the region size), $\mbb{S}_1 = 2^{-1}a_{\mbb{X}}\mbb{S}^\circ \cap \mbb{X}$, $\mbb{S}_2 = 2^{-2}a_{\mbb{X}}\mbb{S}^\circ \cap \mbb{X}$, etc., and put $\mbb{S} = \cup_{k=0}^\infty \mbb{S}_k$. The $r_{\mbb{S}}$ in Assumption \ref{assum:sampling.strategy} is then twice the fill distance in $\mbb{S}^\circ$. 
    Theoretically, this would require an infinite number of points, and hence the union over $k\in \mbb{N}$ would have to be truncated to a finite index in practical use. At large $k$, the sample $\mbb{S}_k$ would be extremely close to $0$, and hence the constraint \eqref{eq:stabilizing.6} on its points is below numerical precision. We can therefore ignore the effect of discarding these sample points on practical computation. 
    \par Therefore, a more precise characterization of the sampling denseness can be stated as: $\forall x\in \mbb{X}$, $\exists x'\in \mbb{S}$, such that $|x-x'|\leq r_{\mbb{S}}(|x|)|x|$, where $r_{\mbb{S}}(|x|)$ depends on $|x|$, bounded above by $1$ when $|x|$ is small, and by a smaller constant when $|x|$ is large. All later error analyses can be revised by this characterization. For computation, there is no need to actually use any different strategy than uniform random sampling. 
\end{remark} 

\par Let us now postpone the computational solution of the \emph{operator Lyapunov inequality} \eqref{eq:stabilizing.6} to the next subsection, before which we shall analyze the errors arising from the approximations \eqref{eq:stabilizing.1}--\eqref{eq:stabilizing.4}--\eqref{eq:stabilizing.5}--\eqref{eq:stabilizing.6} and thus understand what control performance the condition \eqref{eq:stabilizing.6} can guarantee. 
In particular, we will establish a bound on the violation of the Lyapunov condition \eqref{eq:stabilizing.1} in three steps, by backtracking the errors in the three-step approximations.

\subsection{Error Analysis and Control Performance Bound}
Below let us denote $\Delta_\pi = \Delta_\pi(P,Q) =  PA_{\pi} + A_\pi^* P + Q$ and $\Delta_{\pi, \lambda,\theta} = \Delta_{\pi, \lambda,\theta}(P,Q) = PA_{\pi, \lambda,\theta} + A_{\pi, \lambda,\theta}^* P + Q$. The difference between \eqref{eq:stabilizing.5} and \eqref{eq:stabilizing.6} lies in that the former requires that $\ipp{\rg\phi(x)}{\Delta_{\pi, \lambda,\theta}}{\rg\phi(x)}\leq 0$ hold for all $x\in \mbb{X}$, while the latter requires only for $x\in \mbb{S}$. 
The matter is then to characterize how the function $x \mapsto \ipp{\rg\phi(x)}{\Delta_{\pi, \lambda,\theta}}{\rg\phi(x)}$ can vary between the sampled and non-sampled points. 
\begin{proposition}[Error on Yosida-approximated differential Lyapunov descent due to sampling]
\label{prop:error-sample}
    Suppose that $0 \preceq P, Q\in \mc{B}_1(\rg H^s(\mbb{X}))$. Let the sampling strategy satisfy Assumption \ref{assum:sampling.strategy}. There exists a constant $c_{\mbb{S}}$ such that \eqref{eq:stabilizing.6} implies the following relaxed version of \eqref{eq:stabilizing.5}, with a violation bounded by a quadratic function:
    \begin{equation}
        \ipp{\rg\phi(x)}{\Delta_{\pi, \lambda,\theta}}{\rg\phi(x)}\leq c_{\mbb{S}}r_{\mbb{S}}^{\min(1,\alpha)} |x|^2, \ \forall x\in \mbb{X} .
    \end{equation}
    Here $\alpha$ is any positive number in $(0, s-d_x/2)$. 
    Hence if $Q$ is such that $\ipp{\rg\phi(x)}{Q}{\rg\phi(x)} \geq c_Q|x|^2$ for large enough $c_Q>0$ (see Remark \ref{rem:quadratic-form}), then there is a small $\varepsilon_{Q}$, such that 
    \begin{equation}
        \ipp{\rg\phi(x)}{PA_{\pi, \lambda,\theta} + A_{\pi, \lambda,\theta}^* P + (1-\varepsilon_Q)Q}{\rg\phi(x)}\leq 0, \ \forall x\in \mbb{X}.    
    \end{equation}
\end{proposition}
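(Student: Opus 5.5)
We write $h(x) := \ipp{\rg\phi(x)}{\Delta_{\pi,\lambda,\theta}}{\rg\phi(x)}$ and first bound how much $h$ can vary between a point and a nearby sample point, relative to $|x|^2$. Since $A_{\pi,\lambda,\theta}$ is bounded and $P,Q\in\mc B_1$, the operator $\Delta := \Delta_{\pi,\lambda,\theta}$ is a self-adjoint trace-class operator on $\rg H^s(\mbb X)$. Its spectral decomposition $\Delta=\sum_k \delta_k w_k\times w_k$ with $\sum_k|\delta_k|<\infty$ and $w_k\in \rg H^s(\mbb X)$ gives $h(x)=\sum_k \delta_k w_k(x)^2$. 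Each $w_k$ can be written as $w_k=\sum_{\ell} e_\ell g_{k\ell}$ with $g_{k\ell}\in H^s(\mbb X)$ and $\sum_\ell \|g_{k\ell}\|_{H^s}^2=1$. Hence $h(x)=x^\top \ms{G}(x) x$, where $\ms G(x)_{\ell\ell'}=\sum_k\delta_k g_{k\ell}(x)g_{k\ell'}(x)$. This is the key structural step, exhibiting $h$ as a ``locally quadratic'' form with a matrix-valued coefficient.

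Next I would establish Hölder regularity of $\ms G$. By the Sobolev embedding $H^s(\mbb X)\hookrightarrow C^{\alpha}(\mbb X)$ for $\alpha<s-d_x/2$ (Lipschitz boundary), each $g_{k\ell}$ satisfies $\|g_{k\ell}\|_{C^{\alpha}}\lesssim\|g_{k\ell}\|_{H^s}\le 1$. Summing against $|\delta_k|$, trace-class summability gives $\|\ms G\|_{C^{\min(1,\alpha)}}\lesssim \|\Delta\|_{\mc B_1}$. Here the exponent is capped at $1$ because, for $\alpha>1$, we only need Lipschitz continuity, and the product of bounded Hölder functions remains Hölder with the minimal exponent.

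Then fix $x\in\mbb X$ and choose $x'\in\mbb S$ with $|x-x'|\le r_{\mbb S}|x|$ via Assumption \ref{assum:sampling.strategy}. We may assume $r_{\mbb S}\le 1$, so $|x'|\le 2|x|$. Using $h(x')\le 0$ from \eqref{eq:stabilizing.6}, we write
\[
h(x)-h(x') = x^\top(\ms G(x)-\ms G(x'))x + (x-x')^\top \ms G(x') x + x'^\top \ms G(x')(x-x').
\]
The first term is bounded by $\|\ms G\|_{C^{\min(1,\alpha)}}|x-x'|^{\min(1,\alpha)}|x|^2$. Since $|x-x'|\le r_{\mbb S}|x|\le r_{\mbb S}a_{\mbb X}$, this is $\lesssim r_{\mbb S}^{\min(1,\alpha)}|x|^2$. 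The remaining two terms are bounded by $\sup\|\ms G\|\cdot 3r_{\mbb S}|x|^2\lesssim r_{\mbb S}^{\min(1,\alpha)}|x|^2$, again using $r_{\mbb S}\le1$. Collecting the constants into $c_{\mbb S}$ (which depends on $\|\Delta\|_{\mc B_1}$, the embedding constant, and $a_{\mbb X}$) yields the first claim.

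For the second claim, the assumption $q(x)\ge c_Q|x|^2$ gives $c_{\mbb S}r_{\mbb S}^{\min(1,\alpha)}|x|^2\le \varepsilon_Q q(x)$ with $\varepsilon_Q:=c_{\mbb S}r_{\mbb S}^{\min(1,\alpha)}/c_Q$, which is small when $c_Q$ is large or the sampling is dense. Subtracting $\varepsilon_Q\ipp{\rg\phi(x)}{Q}{\rg\phi(x)}$ from both sides then gives the stated inequality. The main obstacle I expect is the second step: making rigorous that the factorization $w_k=\sum_\ell e_\ell g_{k\ell}$ can be chosen with norm control (using the quotient/representation norm on $\rg H^s$), and that the resulting series defining $\ms G$ converges in $C^{\min(1,\alpha)}$. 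If the nonunique representation causes trouble, I would instead work directly with the kernel, bounding $|\rg\phi(x)-\rg\phi(x')|$-type quantities via the Hölder smoothness of $\rho$ and the explicit factor $x\cdot x'$ in $\rg\kappa$.
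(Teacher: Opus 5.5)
Your proposal is correct and is essentially the paper's own argument. Both use the spectral decomposition of the trace-class operator $\Delta_{\pi,\lambda,\theta}$, the factorization $w_k(x)=x\cdot\bar w_k(x)$ with $\bar w_k\in[H^s(\mbb X)]^{d_x}$, the Sobolev--H\"older embedding, and Assumption~\ref{assum:sampling.strategy}; the only difference is that you first sum the expansion into the matrix coefficient $\ms G(x)$ and split $h(x)-h(x')$ into three terms, whereas the paper bounds $|w_k(x)^2-w_k(x')^2|$ eigenfunction by eigenfunction and then sums against $\|\Delta_{\pi,\lambda,\theta}\|_{\mc B_1}$.

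The obstacle you anticipate is harmless: take for each $w_k$ its minimal-norm representation, which exists because $\mc H_{\rg\kappa}(\mbb X)$ carries the infimum-over-representations norm of a product kernel. Your capping of the H\"older exponent at $\min(1,\alpha)$, and your reduction to $r_{\mbb S}\le 1$, simply make explicit two points the paper's proof uses tacitly. The reduction is justified because $|h(x)|\le\rho(0)\|\Delta_{\pi,\lambda,\theta}\|\,|x|^2$ makes the case $r_{\mbb S}>1$ trivial.
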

\begin{proof}
    For the symmetric operator $\Delta_{\pi, \lambda,\theta}$, a spectral decomposition results in 
    $$ \Delta_{\pi, \lambda,\theta} = \sum_{i=1}^\infty \mu_i w_i\times w_i, \enspace \Rightarrow \enspace 
    \ipp{\rg\phi(x)}{\Delta_{\pi, \lambda,\theta}}{\rg\phi(x)} = \sum_{i=1}^\infty \mu_i |w_i(x)|^2 $$
    where $\{w_i\}_{i=1}^\infty$ is an orthonormal basis of $\rg H^s(\mbb{X})$ and thus can be further written as $w_i(x) = x\cdot \bar w_i(x)$ where $\bar w_i\in [H^s(\mbb{X})]^{d_x}$. 
    By the sampling strategy that satisfies Assumption \ref{assum:sampling.strategy}, for all $x\in \mbb{X}$, there must be an $x'\in \mbb{S}$ that has a distance not exceeding $r_{\mbb{S}}|x|$ from $x$. Consider the difference:
    $$\begin{aligned}
        & \abs{ \ipp{\rg\phi(x)}{\Delta_{\pi, \lambda,\theta}}{\rg\phi(x)} - \ipp{\rg\phi(x')}{\Delta_{\pi, \lambda,\theta}}{\rg\phi(x')}} \\ 
        & \quad \leq \sum_{i=1}^\infty |\mu_i| \abs{|w_i(x)|^2 - |w_i(x')|^2} 
        \lesssim \sum_{i=1}^\infty |\mu_i| \abs{w_i(x)-w_i(x')} \bra{|w_i(x)|+|w_i(x')|}. 
    \end{aligned}$$
    The difference between $w_i(x)$ and $w_i(x')$ is thus bounded by:
    $$ \abs{w_i(x)-w_i(x')} \leq \abs{(x-x')\cdot \bar w_i(x)} + \abs{x'\cdot \bra{\bar w_i(x)- \bar w_i(x')}} . $$
    Due to the continuous embedding of Sobolev space $H^s(\mbb{X})$ into $C^{\alpha}(\mbb{X})$ and hence further into $C^0(\mbb{X})$, we have
    $$ \abs{w_i(x)-w_i(x')} \lesssim r_{\mbb{S}}|x| \|\bar{w}_i\|_{C^0} + |x'| \|\bar{w}_i\|_{C^\alpha} |x-x'|^\alpha \lesssim r_{\mbb{S}}|x| + (1+r_{\mbb{S}}) r_{\mbb{S}}^\alpha|x|^{\alpha+1}.$$
    We used the fact that $|x'|\leq |x| + r_{\mbb{S}}|x| = (1+r_{\mbb{S}}) |x|$. We also then have $|w_i(x)| \lesssim \|w_i\|_{\rg H^s} |x|= |x|$ and similarly $|w_i(x')|\lesssim |x'| \lesssim (1+r_{\mbb{S}})|x|$. Hence, 
    $$ \begin{aligned}
        & \abs{ \ipp{\rg\phi(x)}{\Delta_{\pi, \lambda,\theta}}{\rg\phi(x)} - \ipp{\rg\phi(x')}{\Delta_{\pi, \lambda,\theta}}{\rg\phi(x')}}  \\
        & \quad \lesssim \sum_{i=1}^\infty |\mu_i| (2+r_{\mbb{S}}) \bra{ r_{\mbb{S}}|x|^2 + (1+r_{\mbb{S}})r_{\mbb{S}}^\alpha |x|^{\alpha+2}}
        \lesssim \norm{\Delta_{\pi,\lambda,\theta}}_{\mc{B}_1}\bra{ r_{\mbb{S}} + r_{\mbb{S}}^\alpha }|x|^2 \lesssim r_{\mbb{S}}^{\min(1,\alpha)} |x|^2.
    \end{aligned}$$
    Therefore, when $c_Q$ is large enough vis-\`{a}-vis $r_{\mbb{S}}$, the error above is bounded by a small proportion of $\ipp{\rg\phi(x)}{Q}{\rg\phi(x)}$, and hence the conclusion to be proved holds true. 
\end{proof}

The above proposition shows that under the dense sampling strategy that satisfies Assumption \ref{assum:sampling.strategy}, there is no essential loss of the differential Lyapunov descent condition, except that $\Delta_{\pi,\lambda,\theta}(P,Q)$ should be replaced by $\Delta_{\pi,\lambda,\theta}(P,(1-\varepsilon_Q) Q)$, or equivalently, by replacing $P$ with $(1-\varepsilon_Q)^{-1}P$. Denoting $\Delta_{\pi,\lambda,\theta}(P,(1-\varepsilon_Q) Q) := \tilde\Delta_{\pi,\lambda,\theta}$, we now have 
$$ \ipp{T_{\pi}^t\rg\phi(x)}{\tilde\Delta_{\pi,\lambda,\theta}}{T_{\pi}^t\rg\phi(x)}\leq 0, \ \forall x\in \mbb{X}, t\geq 0.$$

To recover an integral Lyapunov descent condition, we should consider the error of replacing $T_{\pi}^t$ with $T_{\pi,\lambda,\theta}^t$. This is unfortunately impossible directly with $\rg\phi(x)\notin \mc{D}(A_\pi^2)$. Following the thoughts in Remark \ref{rem:short-time-average}, we define a ``twice short-time average'' by 
$$\rg\phi(x)_{\pi,\epsilon,\epsilon} = \frac{1}{\epsilon}\int_0^\epsilon \rg\phi \bra{S_{f_\pi}^t(x)}_{\pi,\epsilon} \xD{t}, \text{ where } \rg\phi(x)_{\pi,\epsilon} = \frac{1}{\epsilon}\int_0^\epsilon \rg\phi \bra{S_{f_\pi}^t(x)} \xD{t}.$$
It is then straightforward to verify the following properties. 
\begin{lemma}[Error in twice short-time average]
    Under Assumption \ref{assum:regularity}, for $\pi\in [\rg C^s(\mbb{X})]^{d_u}$, we have $\rg\phi(x)_{\pi,\epsilon,\epsilon}\in \mc{D}(A_\pi^2)$. In fact, $A_\pi^2 \rg\phi(x)_{\pi,\epsilon,\epsilon} = \frac{1}{\epsilon^2} \bra{T_\pi^{2\epsilon} - 2T_\pi^\epsilon + I} \rg\phi(x)$. Moreover, $\norm{\rg\phi(x)_{\pi,\epsilon,\epsilon} - \rg\phi(x)}\lesssim \epsilon|x|$.
\end{lemma}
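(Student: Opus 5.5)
The plan is to write the twice short-time average in terms of the semigroup and then apply the standard identity $A\int_0^\epsilon T^t z\,\xD{t} = (T^\epsilon - I)z$, valid for every $z$ in a strongly continuous semigroup. By the Characterization of Perron--Frobenius semigroups we have $\rg\phi(S_{f_\pi}^t(x)) = T_\pi^t\rg\phi(x)$. Hence $\rg\phi(x)_{\pi,\epsilon} = \frac{1}{\epsilon}\int_0^\epsilon T_\pi^t\rg\phi(x)\,\xD{t} =: \frac1\epsilon R^\epsilon\rg\phi(x)$. As in the proof of the earlier lemma, $T_\pi^t\rg\phi(x)_{\pi,\epsilon} = \rg\phi(S_{f_\pi}^t(x))_{\pi,\epsilon}$, and therefore
\[
\rg\phi(x)_{\pi,\epsilon,\epsilon} = \frac{1}{\epsilon^2} R^\epsilon R^\epsilon \rg\phi(x).
\]
Remark \ref{rem:short-time-average} gives $R^\epsilon z\in\mc{D}(A_\pi)$ with $A_\pi R^\epsilon z = (T_\pi^\epsilon - I)z$ for any $z$. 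Apply this first to $z = R^\epsilon\rg\phi(x)$. This yields $A_\pi \rg\phi(x)_{\pi,\epsilon,\epsilon} = \frac{1}{\epsilon^2}(T_\pi^\epsilon - I)R^\epsilon\rg\phi(x)$. Since $T_\pi^\epsilon - I$ commutes with $R^\epsilon$, this vector equals $\frac{1}{\epsilon^2} R^\epsilon (T_\pi^\epsilon-I)\rg\phi(x)$, which again lies in $\mc{D}(A_\pi)$. Applying $A_\pi$ once more gives
\[
A_\pi^2\rg\phi(x)_{\pi,\epsilon,\epsilon} = \frac{1}{\epsilon^2}(T_\pi^\epsilon - I)^2\rg\phi(x) = \frac{1}{\epsilon^2}\bra{T_\pi^{2\epsilon}-2T_\pi^\epsilon+I}\rg\phi(x),
\]
and membership in $\mc{D}(A_\pi^2)$ follows at the same time.

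For the norm estimate, write the difference as a double average:
\[
\rg\phi(x)_{\pi,\epsilon,\epsilon}-\rg\phi(x) = \frac{1}{\epsilon^2}\int_0^\epsilon\!\int_0^\epsilon \bra{\rg\phi(S_{f_\pi}^{t+\tau}(x)) - \rg\phi(x)}\xD{\tau}\,\xD{t}.
\]
It then suffices to show $\|\rg\phi(S_{f_\pi}^{h}(x)) - \rg\phi(x)\|\lesssim h|x|$ for $h\in[0,2\epsilon]$. Using the reproducing property, the squared norm equals $\rg\kappa(y,y)-2\rg\kappa(x,y)+\rg\kappa(x,x)$ with $y=S_{f_\pi}^h(x)$. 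Flow regularity gives $|y-x|\le h\sup|f_\pi|$ along the segment. Moreover, $f_\pi\in[\rg C^s]^{d_x}$ with $f_\pi(0)=0$ yields $|f_\pi(z)|\lesssim|z|$, and Gr\"onwall gives $|S_{f_\pi}^\tau(x)|\lesssim|x|$ on bounded times. Together these give $|y-x|\lesssim h|x|$.

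The kernel expression then needs a Lipschitz-type bound of the form $\|\rg\phi(y)-\rg\phi(x)\|\lesssim |y-x|$, uniformly near the origin. To get it, split $\rg\phi(y)-\rg\phi(x)$ as in the linear--Sobolev structure: the kernel is $(x\cdot x')\rho(|x-x'|)$, so the difference expands into terms in $(y-x)$ times bounded quantities plus terms $|x|^2(2\rho(0)-2\rho(|x-y|))$.

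The main obstacle is this last term. If $\rho$ is only $C^\alpha$ with $\alpha<1$, it gives $|x|^2|x-y|^\alpha$ rather than a linear bound. I would first try to assume enough smoothness ($s-d_x/2\ge 1$, so that $\rho$ is even and $C^2$ near $0$, i.e.\ $\rho(0)-\rho(r)=O(r^2)$). That yields $\|\rg\phi(y)-\rg\phi(x)\|^2\lesssim |x-y|^2 + |x|^2|x-y|^2\lesssim h^2|x|^2$. Failing that, I would state the bound with the H\"older exponent and note it in the write-up.
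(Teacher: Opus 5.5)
Your proof is correct, and for the domain part it is the paper's computation in a tidier form. The paper moves $A_\pi$ inside the outer average and applies Remark~\ref{rem:short-time-average} to each $\rg\phi(S_{f_\pi}^t(x))_{\pi,\epsilon}$. This gives $A_\pi\rg\phi(x)_{\pi,\epsilon,\epsilon}=\frac{1}{\epsilon}\bra{\rg\phi(S_{f_\pi}^\epsilon(x))_{\pi,\epsilon}-\rg\phi(x)_{\pi,\epsilon}}$, and the paper then applies $A_\pi$ once more. Your factorization $\rg\phi(x)_{\pi,\epsilon,\epsilon}=\epsilon^{-2}R^\epsilon R^\epsilon\rg\phi(x)$, together with the commutation of $R^\epsilon$ and $T_\pi^\epsilon$, yields the same identity. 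It also avoids the tacit interchange of the closed operator $A_\pi$ with the integral.

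For the norm estimate you make the same reduction as the paper, to $\norm{\rg\phi(S_{f_\pi}^h(x))-\rg\phi(x)}\lesssim h|x|$. At this step you are more careful than the paper. The paper derives the bound from $|S_{f_\pi}^t(x)-x|\lesssim t|x|$ with a bare ``hence'', which silently assumes that the feature map $\rg\phi$ is Lipschitz.

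Your identity $\norm{\rg\phi(y)-\rg\phi(x)}^2=\rho(0)|x-y|^2+2(x\cdot y)\bra{\rho(0)-\rho(|x-y|)}$ shows exactly what is needed, namely $\rho(0)-\rho(r)=O(r^2)$. Since $\rho(0)-\rho(r)=\int\bra{1-\cos(r\xi_1)}\hat\rho(\xi)\,\xD{\xi}$, this holds if and only if $\int|\xi|^2\hat\rho(\xi)\,\xD{\xi}<\infty$, i.e., $s>d_x/2+1$ strictly. So fix your threshold: at $s-d_x/2=1$ the Mat\'ern kernel has $\rho(0)-\rho(r)$ of order $r^2\log(1/r)$, and $\rho$ is not $C^2$ at $0$.

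Below the threshold the linear rate genuinely fails. For the Mat\'ern kernel with $\nu=s-d_x/2<1$, one has $\rho(0)-\rho(r)\sim c\,r^{2\nu}$. The double average does not remove this, and the norm is of order $|x|\,|f_\pi(x)|^{\nu}\epsilon^{\nu}$ as $\epsilon\to0$. Hence the lemma as stated under Assumption~\ref{assum:regularity} needs your extra smoothness hypothesis, or else your H\"older-rate version.

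Note also that for $d_x\geq 2$, the condition $s\geq2$ imposed in the later propositions that invoke this lemma does not supply the needed smoothness.
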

\begin{proof}
    With the definitions, we have
    $$A_\pi \rg\phi(x)_{\pi,\epsilon,\epsilon} = \frac{1}{\epsilon}\int_0^\epsilon A_\pi \rg\phi \bra{S_{f_\pi}^t(x)}_{\pi,\epsilon} \xD{t}.$$
    As shown in Remark \ref{rem:short-time-average}, 
    $$A_\pi\rg\phi \bra{S_{f_\pi}^t(x)}_{\pi,\epsilon} = \frac{1}{\epsilon}\bra{ T_\pi^\epsilon - I} \rg\phi \bra{S_{f_\pi}^t(x)}. $$
    Hence we have
    $$\begin{aligned}
        A_\pi \rg\phi(x)_{\pi,\epsilon,\epsilon} &= \frac{1}{\epsilon}\int_0^\epsilon \frac{1}{\epsilon}\bra{ T_\pi^\epsilon - I} \rg\phi \bra{S_{f_\pi}^t(x)} \xD{t} 
        = \frac{1}{\epsilon^2}\int_0^\epsilon \Bra{ \rg\phi\bra{S_{f_\pi}^{t+\epsilon}(x)} - \rg\phi\bra{S_{f_\pi}^{t}(x)} } \xD{t} \\
        &= \frac{1}{\epsilon}\Bra{ \frac{1}{\epsilon}\int_0^\epsilon T_\pi^t \rg\phi\bra{S_{f_\pi}^{\epsilon}(x)} \xD{t} - \frac{1}{\epsilon}\int_0^\epsilon T_\pi^t \rg\phi(x) \xD{t} } = \frac{1}{\epsilon}\Bra{ \rg\phi\bra{S_{f_\pi}^\epsilon(x)}_{\pi,\epsilon} - \rg\phi\bra{x}_{\pi,\epsilon}}.
    \end{aligned}$$ 
    By applying $A_\pi$ again, we obtain 
    $$\begin{aligned}
    A_\pi^2 \rg\phi(x)_{\pi,\epsilon,\epsilon} &= \frac{1}{\epsilon}\Bra{ A_\pi \rg\phi\bra{S_{f_\pi}^\epsilon(x)}_{\pi,\epsilon} - A_\pi \rg\phi\bra{x}_{\pi,\epsilon}}
    = \frac{1}{\epsilon}\Bra{ \frac{1}{\epsilon}(T_\pi^\epsilon-I)\rg\phi\bra{S_{f_\pi}^\epsilon(x)} - \frac{1}{\epsilon}(T_\pi^\epsilon-I)\rg\phi(x) } \\
    &= \frac{1}{\epsilon^2}\Bra{ T_\pi^\epsilon(T_\pi^\epsilon-I) - (T_\pi^\epsilon-I)} \rg\phi(x) = \frac{1}{\epsilon^2} \bra{T_\pi^{2\epsilon} - 2T_\pi^\epsilon + I}\rg\phi(x).
    \end{aligned}$$
    Let us verify from the definition of short-time average that 
    $$T_\pi^s \rg\phi(x)_{\pi,\epsilon} = \frac{1}{\epsilon}\int_0^\epsilon T_\pi^s \rg\phi \bra{S_{f_\pi}^t(x)} \xD{t} 
    = \frac{1}{\epsilon}\int_0^\epsilon \rg\phi \bra{S_{f_\pi}^s S_{f_\pi}^t(x)} \xD{t} 
    = \frac{1}{\epsilon}\int_0^\epsilon \rg\phi \bra{S_{f_\pi}^{s+t}(x)} \xD{t} = \rg\phi\bra{S_{f_\pi}^s(x)}_{\pi,\epsilon}.$$
    \par To bound the difference between $\rg\phi(x)_{\pi,\epsilon,\epsilon}$ and $\rg\phi(x)$, we first note that $\abs{S_{f_{\pi}}^t (x) - x} \lesssim t|x|$ at small $t$ due to the standing regularity assumption. Hence, 
    $$\norm{\rg\phi\bra{S_{f_{\pi}}^t (x)} - \rg\phi(x)} \lesssim t |x|, $$
    which implies 
    $$ \norm{\rg\phi(x)_{\pi,\epsilon} - \rg\phi(x)} \leq \frac{1}{\epsilon}\int_0^\epsilon \norm{\rg\phi\bra{S_{f_{\pi}}^t (x)} - \rg\phi(x)} \xD{t} \lesssim \frac{1}{\epsilon}\int_0^\epsilon t|x| \xD{t} \lesssim \epsilon|x|, $$
    and subsequently 
    $$ \begin{aligned}
        \norm{\rg\phi(x)_{\pi,\epsilon,\epsilon} - \rg\phi(x)} 
        &\leq \frac{1}{\epsilon}\int_0^\epsilon \norm{\rg\phi\bra{S_{f_{\pi}}^t (x)}_{\pi,\epsilon} - \rg\phi(x)} \xD{t} \\
        &\leq \frac{1}{\epsilon}\int_0^\epsilon \Bra{ \norm{\rg\phi\bra{S_{f_{\pi}}^t(x)}_{\pi,\epsilon} - \rg\phi\bra{S_{f_{\pi}}^t(x)}} + \norm{\rg\phi\bra{S_{f_{\pi}}^t(x)} - \rg\phi(x)} } \xD{t} \\
        &\lesssim \frac{1}{\epsilon}\int_0^\epsilon \bra{\epsilon|S_{f_\pi}^t(x)| + t|x|} \xD{t} \lesssim \epsilon|x|. 
    \end{aligned}$$
    The lemma is thus proved. 
\end{proof}

Now we can prove the error bound on \eqref{eq:stabilizing.5}. 
\begin{proposition}[Error in differential Lyapunov descent for the virtual Yosida-approximated flow]
\label{prop:error-pre-integral}
    Suppose that $0 \preceq P, Q\in \mc{B}_1(\rg H^s(\mbb{X}))$, and let the sampling strategy satisfy Assumption \ref{assum:sampling.strategy}. Suppose also
    \begin{equation}\label{eq:stabilizing.error-pre-integral.condition}
        \ipp{\rg\phi(x)}{\tilde\Delta_{\pi,\lambda,\theta}}{\rg\phi(x)}\leq 0, \ \forall x\in \mbb{X},
    \end{equation} 
    where $\tilde\Delta_{\pi,\lambda,\theta} = PA_{\pi, \lambda,\theta} + A_{\pi, \lambda,\theta}^* P + (1-\varepsilon_Q)Q$ with some $\varepsilon_Q\in(0, 1)$, and $s\geq2$. Then for all $x\in \mbb{X}$ and $t\geq 0$, we have
    \begin{equation}\label{eq:stabilizing.error-pre-integral}
        \ipp{T_{\pi,\lambda,\theta}^t\rg\phi(x)}{\tilde\Delta_{\pi,\lambda,\theta}}{T_{\pi,\lambda,\theta}^t\rg\phi(x)} 
    \lesssim \frac{t^{1/3} \mr{e}^{2\max(\omega_\pi, \omega_{\pi,\lambda})t}  }{(\lambda-\omega_\pi)^{1/3}}\norm{\tilde\Delta_{\pi,\lambda,\theta}} |x|^2.
    \end{equation}
\end{proposition}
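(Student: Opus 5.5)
The plan is to compare the virtual Yosida flow $T_{\pi,\lambda,\theta}^t\rg\phi(x)$ with the true flow $T_\pi^t\rg\phi(x)$. The true flow is harmless: by \eqref{eq:stabilizing.error-pre-integral.condition} and the fact that $T_\pi^t\rg\phi(x)=\rg\phi(S_{f_\pi}^t(x))$ with $S_{f_\pi}^t(x)\in\mbb X$, we have $\ipp{T_\pi^t\rg\phi(x)}{\tilde\Delta_{\pi,\lambda,\theta}}{T_\pi^t\rg\phi(x)}\le 0$. The mismatch between the two flows cannot be controlled directly by Corollary \ref{cor:Yosida.DD}, since $\rg\phi(x)\notin\mc G_2$. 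The workaround is to insert the twice short-time average $z_\epsilon:=\rg\phi(x)_{\pi,\epsilon,\epsilon}$, bound three errors in terms of $\epsilon$ and $\lambda$, and then optimise $\epsilon$. The exponent $1/3$ in the target suggests this optimisation.

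Write $u=T_{\pi,\lambda,\theta}^t\rg\phi(x)$ and $w=T_\pi^t\rg\phi(x)$. Since $\tilde\Delta:=\tilde\Delta_{\pi,\lambda,\theta}$ is bounded, we have
$$\ipp{u}{\tilde\Delta}{u}\le \ipp{w}{\tilde\Delta}{w}+\norm{\tilde\Delta}\,\norm{u-w}\,(\norm{u}+\norm{w}),$$
and the first term is $\le 0$. For the norms, use $\norm{w}\lesssim \mr e^{\omega_\pi t}|x|$ (the growth bound together with $\norm{\rg\phi(x)}\propto|x|$) and $\norm{u}\lesssim \mr e^{\omega_{\pi,\lambda}t}|x|$. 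The latter follows from the same resolvent-power bound as in the proof of Theorem \ref{th:Yosida.DD}, applied to the affine combination \eqref{eq:PF.generator.closed-loop.Yosida}; I would simply assume this growth bound for the closed-loop Yosida semigroup, as Corollary \ref{cor:Yosida.DD} implicitly does. Then split
$$u-w = T_{\pi,\lambda,\theta}^t(\rg\phi(x)-z_\epsilon) + \bigl(T_{\pi,\lambda,\theta}^t-T_\pi^t\bigr)z_\epsilon + T_\pi^t(z_\epsilon-\rg\phi(x)).$$
By the twice-average lemma, the first and third pieces are $\lesssim \mr e^{\max(\omega_\pi,\omega_{\pi,\lambda})t}\,\epsilon|x|$.

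For the middle piece, $z_\epsilon\in\mc D(A_\pi^2)$, and $s\ge 2$ with the growth-bound proposition makes Corollary \ref{cor:Yosida.DD} applicable. This gives the bound $\frac{t}{\lambda-\omega}\mr e^{\max(\omega_\pi,\omega_{\pi,\lambda})t}\norm{z_\epsilon}_{\mc G_2}$. The key estimate is then $\norm{z_\epsilon}_{\mc G_2}\lesssim \epsilon^{-2}|x|$. For the closed-loop part this follows from the explicit formula $A_\pi^2 z_\epsilon=\epsilon^{-2}(T_\pi^{2\epsilon}-2T_\pi^\epsilon+I)\rg\phi(x)$, whose norm is at most $\epsilon^{-2}\cdot 4\sup\norm{T_\pi^\tau}\,\norm{\rg\phi(x)}$. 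For the components $A_0^2z_\epsilon$ and $A_j^2M_{\pi_j}^*z_\epsilon$ appearing in the $\mc G_2$ norm, I expect the analogous $\epsilon^{-2}$ scaling, via the duality characterisation of $\mc G_2$ used in the proof of the growth-bound proposition. This is the \textbf{main obstacle}: the averaging is along the closed-loop flow, not along $f_0$ or $f_j$, so I would argue through $\mc G_{-2}$ test functions and the smoothness $s\ge2$, accepting a constant depending on $\pi$.

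Combining the pieces gives
$$\norm{u-w}\lesssim \mr e^{\max(\omega_\pi,\omega_{\pi,\lambda})t}|x|\Bigl(\epsilon+\frac{t}{(\lambda-\omega_\pi)\epsilon^2}\Bigr).$$
Choosing $\epsilon=\bigl(t/(\lambda-\omega_\pi)\bigr)^{1/3}$ balances the two terms and yields $\norm{u-w}\lesssim t^{1/3}(\lambda-\omega_\pi)^{-1/3}\mr e^{\max(\omega_\pi,\omega_{\pi,\lambda})t}|x|$. Here $\omega$ is replaced by $\omega_\pi$ up to constants, or $\lambda$ is taken large. Multiplying by $\norm{\tilde\Delta}(\norm u+\norm w)\lesssim \norm{\tilde\Delta}\mr e^{\max(\omega_\pi,\omega_{\pi,\lambda})t}|x|$ gives \eqref{eq:stabilizing.error-pre-integral}.
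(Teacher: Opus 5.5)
Your proposal follows the paper's proof essentially step for step: the sign $\ipp{T_\pi^t\rg\phi(x)}{\tilde\Delta_{\pi,\lambda,\theta}}{T_\pi^t\rg\phi(x)}\le 0$ along the true flow, the insertion of $\rg\phi(x)_{\pi,\epsilon,\epsilon}$, two $O(\epsilon)$ substitution errors plus a mismatch term of order $t\,\epsilon^{-2}(\lambda-\omega_\pi)^{-1}$ from Corollary~\ref{cor:Yosida.DD}, and the balancing $\epsilon\propto(t/(\lambda-\omega_\pi))^{1/3}$; the only difference is that you do the bookkeeping on the vector $T_{\pi,\lambda,\theta}^t\rg\phi(x)-T_\pi^t\rg\phi(x)$ rather than on three quadratic-form differences, which is immaterial. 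The step you flag, bounding the $A_0^2$ and $A_j^2M_{\pi_j}^*$ parts of $\norm{\rg\phi(x)_{\pi,\epsilon,\epsilon}}_{\mc G_2}$ by $\epsilon^{-2}|x|$, is exactly what the paper also merely asserts, deducing it from the formula for $A_\pi^2\rg\phi(x)_{\pi,\epsilon,\epsilon}$ alone; the same holds for the growth bound on $T_{\pi,\lambda,\theta}^t$, which is assumed rather than derived from the resolvent-power argument because $A_{\pi,\lambda,\theta}$ is not the Yosida approximation of $A_\pi$. So you miss nothing the paper supplies, but your duality repair cannot work with $s\geq 2$ alone: averaging along the closed-loop orbit smooths only in the direction $f_\pi$, and the transverse second derivatives hidden in $A_0^2$ and $A_j^2M_{\pi_j}^*$ need a trace estimate on the orbit, i.e.\ extra smoothness (roughly $s>(d_x+3)/2$).
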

\begin{proof}
    Since $T_\pi^t \rg\phi(x) = \rg\phi\bra{S_{f_\pi}^t(x)}$, we have 
    $$\ipp{T_\pi^t \rg\phi(x)}{\tilde\Delta_{\pi,\lambda,\theta}}{T_\pi^t \rg\phi(x)}\leq 0, \ \forall x\in \mbb{X}.$$
    Considering the error induced by replacing $\rg\phi(x)$ with $\rg\phi(x)_{\pi,\epsilon,\epsilon} \in \mc{D}(A_\pi^2)$, we have
    $$ \begin{aligned}
         &\abs{ \ipp{T_\pi^t \rg\phi(x)_{\pi,\epsilon,\epsilon}}{\tilde\Delta_{\pi,\lambda,\theta}}{T_\pi^t \rg\phi(x)_{\pi,\epsilon,\epsilon}} - \ipp{T_\pi^t \rg\phi(x)}{\tilde\Delta_{\pi,\lambda,\theta}}{T_\pi^t \rg\phi(x)}} \\
        & \quad \leq \norm{T_\pi^t}^2 \norm{\tilde\Delta_{\pi,\lambda,\theta}} \bra{ 2\norm{\rg\phi(x)_{\pi,\epsilon,\epsilon} - \rg\phi(x)} \norm{\rg\phi(x)} + \norm{\rg\phi(x)_{\pi,\epsilon,\epsilon} - \rg\phi(x)}^2} 
        \lesssim \mr{e}^{2\omega_\pi t} \norm{\tilde\Delta_{\pi,\lambda,\theta}} \bra{2\epsilon + \epsilon^2}|x|^2.
    \end{aligned}$$
    Similarly we have
    $$ \begin{aligned}
         &\abs{ \ipp{T_{\pi,\lambda,\theta}^t \rg\phi(x)_{\pi,\epsilon,\epsilon}}{\tilde\Delta_{\pi,\lambda,\theta}}{T_{\pi,\lambda,\theta}^t \rg\phi(x)_{\pi,\epsilon,\epsilon}} - \ipp{T_{\pi,\lambda,\theta}^t \rg\phi(x)}{\tilde\Delta_{\pi,\lambda,\theta}}{T_{\pi,\lambda,\theta}^t \rg\phi(x)}} \\
        & \quad \leq \norm{T_{\pi,\lambda,\theta}^t}^2 \norm{\tilde\Delta_{\pi,\lambda,\theta}} \bra{ 2\norm{\rg\phi(x)_{\pi,\epsilon,\epsilon} - \rg\phi(x)} \norm{\rg\phi(x)} + \norm{\rg\phi(x)_{\pi,\epsilon,\epsilon} - \rg\phi(x)}^2} 
        \lesssim \mr{e}^{2\omega_{\pi,\lambda} t} \norm{\tilde\Delta_{\pi,\lambda,\theta}} \bra{2\epsilon + \epsilon^2}|x|^2.
    \end{aligned}$$
    Because $A_\pi^2 \rg\phi(x)_{\pi,\epsilon,\epsilon} = \frac{1}{\epsilon^2} \bra{T_\pi^{2\epsilon} - 2T_\pi^\epsilon + I} \rg\phi(x)$, we have $\norm{\rg\phi(x)_{\pi,\epsilon,\epsilon}}_{\mc{G}_2} \lesssim \epsilon^{-2} \norm{\rg\phi(x)}$, and hence by Corollary \ref{cor:Yosida.DD}, 
    $$ \norm{ T_\pi^t \rg\phi(x)_{\pi,\epsilon,\epsilon} - T_{\pi,\lambda,\theta}^t \rg\phi(x)_{\pi,\epsilon,\epsilon}} \lesssim \frac{1}{\lambda-\omega_\pi} t\mr{e}^{\max(\omega_\pi, \omega_{\pi,\lambda})t} \norm{\rg\phi(x)_{\pi,\epsilon,\epsilon}}_{\mc{G}_2} \lesssim \frac{1}{\lambda-\omega_\pi} \frac{1}{\epsilon^2} t\mr{e}^{\max(\omega_\pi, \omega_{\pi,\lambda})t} \norm{\rg\phi(x)}. $$
    This implies 
    $$ \begin{aligned}
        &\abs{ \ipp{T_{\pi,\lambda,\theta}^t \rg\phi(x)_{\pi,\epsilon,\epsilon}}{\tilde\Delta_{\pi,\lambda,\theta}}{T_{\pi,\lambda,\theta}^t \rg\phi(x)_{\pi,\epsilon,\epsilon}} - \ipp{T_{\pi}^t \rg\phi(x)_{\pi,\epsilon,\epsilon}}{\tilde\Delta_{\pi,\lambda,\theta}}{T_{\pi}^t \rg\phi(x)_{\pi,\epsilon,\epsilon}} } \\
        &\quad \lesssim \frac{1}{\lambda-\omega_\pi} \frac{1}{\epsilon^2} t\mr{e}^{\max(\omega_\pi, \omega_{\pi,\lambda})t} \norm{\rg\phi(x)} \norm{\tilde\Delta_{\pi,\lambda,\theta}} \max\bra{ \norm{T_{\pi,\lambda,\theta}^t \rg\phi(x)_{\pi,\epsilon,\epsilon}}, \norm{T_{\pi}^t \rg\phi(x)_{\pi,\epsilon,\epsilon}}} \\ 
        &\quad \lesssim \frac{1}{\lambda-\omega_\pi} \frac{1}{\epsilon^2} t\mr{e}^{2\max(\omega_\pi, \omega_{\pi,\lambda})t}  \norm{\tilde\Delta_{\pi,\lambda,\theta}} |x|^2
    \end{aligned} $$
    Putting the three estimations together, we have
    $$ \ipp{T_{\pi,\lambda,\theta}^t\rg\phi(x)}{\tilde\Delta_{\pi,\lambda,\theta}}{T_{\pi,\lambda,\theta}^t\rg\phi(x)} 
    \lesssim \bra{\epsilon + \frac{t}{\epsilon^2(\lambda-\omega_\pi)}} \mr{e}^{2\max(\omega_\pi, \omega_{\pi,\lambda})t}  \norm{\tilde\Delta_{\pi,\lambda,\theta}} |x|^2. $$
    Since $\epsilon$ can be chosen on any $x\in \mbb{X}$ arbitrarily, we let $\epsilon$ be the one that minimizes the right-hand side, arriving at 
    $$ \ipp{T_{\pi,\lambda,\theta}^t\rg\phi(x)}{\tilde\Delta_{\pi,\lambda,\theta}}{T_{\pi,\lambda,\theta}^t\rg\phi(x)} 
    \lesssim \frac{t^{1/3} \mr{e}^{2\max(\omega_\pi, \omega_{\pi,\lambda})t}  }{(\lambda-\omega_\pi)^{1/3}}\norm{\tilde\Delta_{\pi,\lambda,\theta}} |x|^2.$$
    This concludes the proof. 
\end{proof}

Now we can consider 
$$\tilde{W}_{\pi,\lambda,\theta}^t = T_{\pi,\lambda,\theta}^{t*} P T_{\pi,\lambda,\theta}^t - P + \int_0^t T_{\pi,\lambda,\theta}^{\tau*}(1-\varepsilon_Q)Q T_{\pi,\lambda,\theta}^\tau \xD{\tau}. $$
Utilizing the conclusion of the above proposition, we have the following error bound on \eqref{eq:stabilizing.4}. 
\begin{proposition}[Error on Lyapunov descent for the virtual Yosida-approximated flow]
\label{prop:error-integral}
    Suppose that $0 \preceq P, Q\in \mc{B}_1(\rg H^s(\mbb{X}))$, and let the sampling strategy satisfy Assumption \ref{assum:sampling.strategy}. Suppose also $s\geq2$ and \eqref{eq:stabilizing.error-pre-integral.condition} holds. Then for all $x\in \mbb{X}$ and $t\geq 0$, we have
    \begin{equation}
        \ipp{\rg\phi(x)}{\tilde{W}_{\pi,\lambda,\theta}^t}{\rg\phi(x)} \lesssim \frac{t^{4/3} \mr{e}^{2\max(\omega_\pi, \omega_{\pi,\lambda})t}  }{(\lambda-\omega_\pi)^{1/3}}\norm{\tilde\Delta_{\pi,\lambda,\theta}} |x|^2. 
    \end{equation}
\end{proposition}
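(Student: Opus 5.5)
The plan is to obtain the integral statement from the differential one (Proposition \ref{prop:error-pre-integral}) by differentiating $\tilde W^t_{\pi,\lambda,\theta}$ in $t$ and integrating back. Because $A_{\pi,\lambda,\theta}$ is bounded, $T^t_{\pi,\lambda,\theta}=\exp(tA_{\pi,\lambda,\theta})$ is norm-differentiable on all of $\rg H^s(\mbb X)$. Hence, unlike in the proof of the differential Lyapunov descent lemma, no short-time averaging of $\rg\phi(x)$ is needed here. The map $t\mapsto \ipp{\rg\phi(x)}{\tilde W^t_{\pi,\lambda,\theta}}{\rg\phi(x)}$ is $C^1$ and vanishes at $t=0$, since $T^0_{\pi,\lambda,\theta}=I$ and the integral term is empty.

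First I compute the derivative exactly as in that lemma:
$$\xDD{}{t}\tilde W^t_{\pi,\lambda,\theta} = T^{t*}_{\pi,\lambda,\theta}\bra{A_{\pi,\lambda,\theta}^*P + PA_{\pi,\lambda,\theta} + (1-\varepsilon_Q)Q}T^t_{\pi,\lambda,\theta} = T^{t*}_{\pi,\lambda,\theta}\tilde\Delta_{\pi,\lambda,\theta}T^t_{\pi,\lambda,\theta}.$$
Pairing with $\rg\phi(x)$ gives
$$\xDD{}{t}\ipp{\rg\phi(x)}{\tilde W^t_{\pi,\lambda,\theta}}{\rg\phi(x)} = \ipp{T^t_{\pi,\lambda,\theta}\rg\phi(x)}{\tilde\Delta_{\pi,\lambda,\theta}}{T^t_{\pi,\lambda,\theta}\rg\phi(x)}.$$
Integrating over $[0,t]$ yields
$$\ipp{\rg\phi(x)}{\tilde W^t_{\pi,\lambda,\theta}}{\rg\phi(x)} = \int_0^t \ipp{T^\tau_{\pi,\lambda,\theta}\rg\phi(x)}{\tilde\Delta_{\pi,\lambda,\theta}}{T^\tau_{\pi,\lambda,\theta}\rg\phi(x)}\xD{\tau}.$$

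Second, I bound the integrand at each $\tau\in[0,t]$ by Proposition \ref{prop:error-pre-integral}, which applies under the same hypotheses (condition \eqref{eq:stabilizing.error-pre-integral.condition} and $s\geq 2$). This gives the bound $(\lambda-\omega_\pi)^{-1/3}\tau^{1/3}\mr{e}^{2\max(\omega_\pi,\omega_{\pi,\lambda})\tau}\norm{\tilde\Delta_{\pi,\lambda,\theta}}|x|^2$. Only an upper bound is needed, and a one-sided bound on the integrand suffices for a one-sided bound on the integral. The implicit constant does not depend on $\tau$ or $x$.

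Third, I bound the resulting time integral. When $\max(\omega_\pi,\omega_{\pi,\lambda})\geq 0$, the exponential is monotone, so
$$\int_0^t \tau^{1/3}\mr{e}^{2\max(\omega_\pi,\omega_{\pi,\lambda})\tau}\xD{\tau} \leq \mr{e}^{2\max(\omega_\pi,\omega_{\pi,\lambda})t}\int_0^t\tau^{1/3}\xD{\tau} = \tfrac34 t^{4/3}\mr{e}^{2\max(\omega_\pi,\omega_{\pi,\lambda})t}.$$
This is the claimed bound, with the $3/4$ absorbed into $\lesssim$.

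The only delicate point is the sign of the growth rate. If $\max(\omega_\pi,\omega_{\pi,\lambda})<0$, which is the stabilizing regime of interest, the exponential is decreasing and the monotonicity step fails as written. In that case I would first note that any growth bound $\|T^t\|\lesssim \mr{e}^{\omega t}$ remains valid after enlarging $\omega$. The constants in Proposition \ref{prop:error-pre-integral} are therefore unaffected if $\max(\omega_\pi,\omega_{\pi,\lambda})$ is read as its positive part. With that reading the estimate follows in the same way. I expect no other obstacle: all the analytic difficulty (the domain issues and the optimization over $\epsilon$) was already handled in Proposition \ref{prop:error-pre-integral}.
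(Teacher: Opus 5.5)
Your argument is the paper's own proof. Set $\varphi(t)=\ipp{\rg\phi(x)}{\tilde W^t_{\pi,\lambda,\theta}}{\rg\phi(x)}$, note that $\varphi(0)=0$ and $\varphi'(t)=\ipp{T^t_{\pi,\lambda,\theta}\rg\phi(x)}{\tilde\Delta_{\pi,\lambda,\theta}}{T^t_{\pi,\lambda,\theta}\rg\phi(x)}$, and integrate the bound of Proposition~\ref{prop:error-pre-integral} over $[0,t]$. The derivative step needs no averaging because $A_{\pi,\lambda,\theta}$ is bounded. With $m:=\max(\omega_\pi,\omega_{\pi,\lambda})\ge 0$, your monotonicity estimate completes the proof.

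The issue is in your last paragraph. Reading $m$ as its positive part is not a harmless reinterpretation. For $m<0$ it gives only $\varphi(t)\lesssim t^{4/3}(\lambda-\omega_\pi)^{-1/3}\norm{\tilde\Delta_{\pi,\lambda,\theta}}|x|^2$, with no decaying factor. That is strictly weaker than the displayed claim, so you have not proved the statement in the stabilizing regime.

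This route cannot recover the decay. For $m<0$ we have $\int_0^t\tau^{1/3}\mr{e}^{2m\tau}\xD{\tau}\to\Gamma(4/3)(2|m|)^{-4/3}>0$, while $t^{4/3}\mr{e}^{2mt}\to 0$. So integrating the pointwise bound yields at best $\min\{\tfrac34 t^{4/3},\,\Gamma(4/3)(2|m|)^{-4/3}\}$. Alternatively, it yields the stated form on a finite horizon $t\le t_0$, with a constant that grows like $\mr{e}^{2|m|t_0}$.

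The paper's one-line proof ("integrating \dots\ results in the change of the $t^{1/3}$ term into $t^{4/3}$") passes over exactly this point, so your proposal is no weaker than the paper's. Still, you should state the weakened conclusion explicitly rather than say the estimate ``follows in the same way''. Note also that the decaying factor is what Proposition~\ref{prop:error-Lyapunov} and Theorem~\ref{th:stabilization} later rely on to conclude $S_{f_\pi}^t(x)\to 0$.
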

\begin{proof}
    Let $\varphi(t)$ be the $t$-dependent function on the left-hand side of the formula to be proved. We can easily check that 
    $$\varphi'(t) = \ipp{T_{\pi,\lambda,\theta}^t \rg\phi(x)}{\tilde{\Delta}_{\pi,\lambda,\theta}}{T_{\pi,\lambda,\theta}^t \rg\phi(x)}  .$$
    The conclusion to be proved thus follows from integrating the inequality of the previous proposition from $0$ to $t$, which results in the change of the $t^{1/3}$ term into $t^{4/3}$. 
\end{proof}

This allows us to establish a bound on the violation of the Lyapunov descent condition \eqref{eq:stabilizing.1}. Let us denote 
$$ \tilde{W}_{\pi}^t = T_\pi^{t*}PT_\pi^t - P + \int_0^t T_{\pi}^{\tau*} (1-\varepsilon_Q)Q T_{\pi}^{\tau} \xD{\tau}.$$
\begin{proposition}[Error on Lyapunov descent]
\label{prop:error-Lyapunov}
    Suppose that $0 \preceq P, Q\in \mc{B}_1(\rg H^s(\mbb{X}))$, and let the sampling strategy satisfy Assumption \ref{assum:sampling.strategy}. Suppose also $s\geq2$ and
    $$\ipp{\rg\phi(x)}{\tilde\Delta_{\pi,\lambda,\theta}}{\rg\phi(x)}\leq 0, \ \forall x\in \mbb{X}.$$ Then for all $x\in \mbb{X}$ and $t\geq 0$, we have 
    $$ \ipp{\rg\phi(x)}{\tilde{W}_\pi^t}{\rg\phi(x)} \lesssim \frac{t^{4/3}}{(\lambda-\omega_\pi)^{1/3}} \max\bra{\mr{e}^{(2/3) \max(\omega_\pi, \omega_{\pi,\lambda})t}, \mr{e}^{2\max(\omega_\pi, \omega_{\pi,\lambda})t}} |x|^2 . $$
\end{proposition}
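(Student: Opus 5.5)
We bound the gap between $\tilde W_\pi^t$ and $\tilde W_{\pi,\lambda,\theta}^t$ on $\rg\phi(x)$, and then add the bound of Proposition \ref{prop:error-integral}:
$$\ipp{\rg\phi(x)}{\tilde W_\pi^t}{\rg\phi(x)} = \ipp{\rg\phi(x)}{\tilde W_{\pi,\lambda,\theta}^t}{\rg\phi(x)} + \ipp{\rg\phi(x)}{\tilde W_\pi^t - \tilde W_{\pi,\lambda,\theta}^t}{\rg\phi(x)}.$$
The first term is already controlled by Proposition \ref{prop:error-integral}, with rate $t^{4/3}(\lambda-\omega_\pi)^{-1/3}\mr{e}^{2\max(\omega_\pi,\omega_{\pi,\lambda})t}|x|^2$. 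Here $\|\tilde\Delta_{\pi,\lambda,\theta}\|$ is absorbed into the constant.

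For the difference, the operator $P$ cancels. What remains are two kinds of terms:
$$\ipp{T_\pi^t\rg\phi(x)}{P}{T_\pi^t\rg\phi(x)} - \ipp{T_{\pi,\lambda,\theta}^t\rg\phi(x)}{P}{T_{\pi,\lambda,\theta}^t\rg\phi(x)}$$
and the analogous difference of the $Q$-integrals over $\tau\in[0,t]$.

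\begin{itemize}
\item \emph{Smoothing step.} As in the proof of Proposition \ref{prop:error-pre-integral}, replace $\rg\phi(x)$ by the twice short-time average $\rg\phi(x)_{\pi,\epsilon,\epsilon}\in\mc D(A_\pi^2)$. The lemma on twice short-time averages gives an error $\lesssim \epsilon|x|$ in the feature. After applying the semigroups this costs $\lesssim \epsilon\,\mr{e}^{2\max(\omega_\pi,\omega_{\pi,\lambda})t}|x|^2$.
\item \emph{Yosida step.} Apply Corollary \ref{cor:Yosida.DD} to the smoothed feature, using $\|\rg\phi(x)_{\pi,\epsilon,\epsilon}\|_{\mc G_2}\lesssim\epsilon^{-2}|x|$. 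This gives
$$\|T_\pi^t z - T_{\pi,\lambda,\theta}^t z\|\lesssim \frac{t\,\mr{e}^{\max(\omega_\pi,\omega_{\pi,\lambda})t}}{\epsilon^2(\lambda-\omega_\pi)}|x|.$$
\item \emph{Quadratic-form step.} For each quadratic-form difference, use $|\langle a,Pa\rangle-\langle b,Pb\rangle|\le\|P\|\,\|a-b\|(\|a\|+\|b\|)$.
\end{itemize}

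This yields, for the $P$ part, a bound of the form
$$\bra{\epsilon + \frac{t}{\epsilon^2(\lambda-\omega_\pi)}}\mr{e}^{2\max(\omega_\pi,\omega_{\pi,\lambda})t}|x|^2.$$
For the $Q$ part, integrating the same estimate over $\tau\in[0,t]$ gives the same form with an extra factor of $t$.

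Optimizing in $\epsilon$ gives $\epsilon\sim (t/(\lambda-\omega_\pi))^{1/3}$, hence a rate of $t^{1/3}(\lambda-\omega_\pi)^{-1/3}$, and $t^{4/3}(\lambda-\omega_\pi)^{-1/3}$ after the $\tau$-integration.

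The main obstacle is the exponential factor. For the statement's $\max(\mr{e}^{(2/3)\max(\cdot)t},\mr{e}^{2\max(\cdot)t})$ form, I would keep the exponential growth inside the $\epsilon$-optimization rather than factoring it out. Only the Yosida term carries one power $\mr{e}^{\max(\omega_\pi,\omega_{\pi,\lambda})t}$ in $\|a-b\|$. Balancing
$$\epsilon\, \mr{e}^{2\omega t} \quad\text{against}\quad \frac{t\,\mr{e}^{\omega t}\cdot\mr{e}^{\omega t}}{\epsilon^2(\lambda-\omega_\pi)}$$
with a $t$-dependent choice of $\epsilon$ produces powers $\mr{e}^{(2/3)\omega t}$ or $\mr{e}^{2\omega t}$ depending on the sign of $\max(\omega_\pi,\omega_{\pi,\lambda})$. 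When the growth rate is negative, the larger of the two is the $2/3$-power one; hence the maximum in the statement.

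I expect this bookkeeping of exponents, together with checking that the sign cases combine to exactly the stated maximum, to be the delicate part. Everything else is a repetition of the estimates already used in Propositions \ref{prop:error-pre-integral} and \ref{prop:error-integral}.

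Finally, add the two contributions. The $t^{1/3}$ piece coming from the $P$-difference is dominated by $t^{4/3}$ for $t\gtrsim1$; for small $t$ it is absorbed by adjusting constants, or by noting that both sides vanish at $t=0$. This yields the claimed bound.
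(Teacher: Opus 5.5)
Your skeleton is the paper's: split off Proposition~\ref{prop:error-integral}, smooth with $\rg\phi(x)_{\pi,\epsilon,\epsilon}$, apply Corollary~\ref{cor:Yosida.DD} with $\|\rg\phi(x)_{\pi,\epsilon,\epsilon}\|_{\mc G_2}\lesssim\epsilon^{-2}|x|$, and optimize in $\epsilon$. The paper uses one $\epsilon$ for the whole difference $\tilde W_\pi^t-\tilde W_{\pi,\lambda,\theta}^t$; splitting the $P$ and $Q$ parts is harmless.

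The gap is in the exponent bookkeeping you flagged, and the mechanism you propose does not work. Write $\bar\omega=\max(\omega_\pi,\omega_{\pi,\lambda})$. In your estimates the smoothing term and the Yosida term carry the same factor $\mr{e}^{2\bar\omega t}$. Minimizing $\epsilon\,\mr{e}^{2\bar\omega t}+t\,\mr{e}^{2\bar\omega t}/(\epsilon^2(\lambda-\omega_\pi))$ therefore gives $t^{1/3}\mr{e}^{2\bar\omega t}(\lambda-\omega_\pi)^{-1/3}$ for either sign of $\bar\omega$. The exponential simply factors out, and no $\mr{e}^{(2/3)\bar\omega t}$ can arise.

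In the paper, the $2/3$ comes from a non-decaying part of the smoothing error. It uses $\|\tilde W_\pi^t\|\lesssim \mr{e}^{2\omega_\pi t}+1$, so the smoothing error is $\epsilon(\mr{e}^{2\bar\omega t}+1)|x|^2$. Balancing $\epsilon(1+\mr{e}^{2\bar\omega t})$ against $t^2\mr{e}^{2\bar\omega t}/(\epsilon^2(\lambda-\omega_\pi))$ gives $(1+\mr{e}^{2\bar\omega t})^{2/3}\,t^{2/3}\mr{e}^{(2/3)\bar\omega t}(\lambda-\omega_\pi)^{-1/3}$, which is the stated maximum up to constants.

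In your difference the $-P$ cancels, but the $Q$-integral still contributes $\epsilon\int_0^t\mr{e}^{2\bar\omega\tau}\,\mathrm{d}\tau$. Your claim that it gives ``the same form with an extra factor of $t$'' requires $\int_0^t\mr{e}^{2\bar\omega\tau}\,\mathrm{d}\tau\lesssim t\,\mr{e}^{2\bar\omega t}$. This is false for $\bar\omega<0$: the left side tends to $1/(2|\bar\omega|)$ and the right side to $0$. So the purely $\mr{e}^{2\bar\omega t}$ bound your bookkeeping would produce, which is stronger than the statement, is not actually established.

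The final combination also fails as written. On $(0,1)$ the ratio $t^{1/3}/t^{4/3}=1/t$ is unbounded, so the $t^{1/3}$ piece from the $P$-difference cannot be absorbed into $t^{4/3}$ by adjusting constants. That both sides vanish at $t=0$ says nothing about rates. The paper's proof has the same mismatch (its difference term is of order $t^{2/3}$) and simply asserts the combination. What either argument honestly delivers is the bound with $t^{4/3}$ replaced by $t^{1/3}+t^{4/3}$ (yours) or $t^{2/3}+t^{4/3}$ (the paper's), i.e.\ the statement only for $t\gtrsim1$.

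Be aware, too, of a defect the paper shares with your ``extra factor of $t$''. It integrates the Yosida term as $\int_0^t\tau\,\mr{e}^{2\bar\omega\tau}\,\mathrm{d}\tau\lesssim t^2\mr{e}^{2\bar\omega t}$, and similarly passes from $t^{1/3}$ to $t^{4/3}$ in Proposition~\ref{prop:error-integral}. Both steps fail precisely when $\bar\omega<0$, which is the stabilizing case. Done carefully, the time-integrated contributions are bounded uniformly in $t$ by a multiple of $(\lambda-\omega_\pi)^{-1/3}|x|^2$, but they do not decay. So restoring the paper's $+1$ repairs your derivation of the maximum formally, but not the decay in $t$ itself.
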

\begin{proof}
    As before, we will use the twice short-time average $\rg\phi(x)_{\pi,\epsilon,\epsilon} \in \mc{D}(A_\pi^2)$. We first check the error caused by substituting $\rg\phi(x)$ with $\rg\phi(x)_{\pi,\epsilon,\epsilon}$: 
    $$ \abs{ \ipp{\rg\phi(x)_{\pi,\epsilon,\epsilon}}{\tilde{W}_\pi^t}{\rg\phi(x)_{\pi,\epsilon,\epsilon}} - \ipp{\rg\phi(x)}{\tilde{W}_\pi^t}{\rg\phi(x)}} \lesssim \epsilon|x|^2 \|\tilde{W}_\pi^t\|, $$
    where the norm of $\tilde{W}_\pi^t$ is bounded by 
    $$ \norm{\tilde{W}_\pi^t} \leq \bra{\norm{T_\pi^t}^2 +1}\|P\| + \int_0^t \norm{T_\pi^\tau}^2 \|(1-\varepsilon_Q)Q\|\xD{\tau} \lesssim \mr{e}^{2\omega_\pi t}+1.$$
    That is, 
    $$ \abs{ \ipp{\rg\phi(x)_{\pi,\epsilon,\epsilon}}{\tilde{W}_\pi^t}{\rg\phi(x)_{\pi,\epsilon,\epsilon}} - \ipp{\rg\phi(x)}{\tilde{W}_\pi^t}{\rg\phi(x)}} \lesssim \epsilon\bra{\mr{e}^{2\omega_\pi t}+1}|x|^2.$$
    Similarly, 
    $$ \abs{ \ipp{\rg\phi(x)_{\pi,\epsilon,\epsilon}}{\tilde{W}_{\pi,\lambda,\theta}^t}{\rg\phi(x)_{\pi,\epsilon,\epsilon}} - \ipp{\rg\phi(x)}{\tilde{W}_{\pi,\lambda,\theta}^t}{\rg\phi(x)}} \lesssim \epsilon\bra{\mr{e}^{2\omega_{\pi,\lambda} t}+1}|x|^2.$$
    Also, by the same reasoning as in the previous proposition, invoking Corollary \ref{cor:Yosida.DD}, we have 
    $$\begin{aligned}
        & \abs{ \ipp{\rg\phi(x)_{\pi,\epsilon,\epsilon}}{\tilde{W}_\pi^t}{\rg\phi(x)_{\pi,\epsilon,\epsilon}} - \ipp{\rg\phi(x)_{\pi,\epsilon,\epsilon}}{\tilde{W}_{\pi,\lambda,\theta}^t}{\rg\phi(x)_{\pi,\epsilon,\epsilon}} } \\
        & \quad = \abs{\ipp{T_\pi^t \rg\phi(x)_{\pi,\epsilon,\epsilon}}{P}{T_\pi^t \rg\phi(x)_{\pi,\epsilon,\epsilon}} - \ipp{T_{\pi,\lambda,\theta}^t \rg\phi(x)_{\pi,\epsilon,\epsilon}}{P}{T_{\pi,\lambda,\theta}^t \rg\phi(x)_{\pi,\epsilon,\epsilon}}} \\
        & \qquad + \int_0^t \abs{\ipp{T_\pi^\tau \rg\phi(x)_{\pi,\epsilon,\epsilon}}{(1-\varepsilon_Q)Q}{T_\pi^\tau \rg\phi(x)_{\pi,\epsilon,\epsilon}} - \ipp{T_{\pi,\lambda,\theta}^\tau \rg\phi(x)_{\pi,\epsilon,\epsilon}}{(1-\varepsilon_Q)Q}{T_{\pi,\lambda,\theta}^\tau \rg\phi(x)_{\pi,\epsilon,\epsilon}}} \xD{\tau} \\
        & \quad \lesssim \frac{t\mr{e}^{2\max(\omega_\pi, \omega_{\pi,\lambda})t}}{\lambda-\omega_\pi} \frac{|x|^2}{\epsilon^2} \|P\| + \int_0^t \frac{\tau\mr{e}^{2\max(\omega_\pi, \omega_{\pi,\lambda})\tau}}{\lambda-\omega_\pi} \frac{|x|^2}{\epsilon^2} (1-\varepsilon_Q)\|Q\| \xD{\tau} 
        \lesssim \frac{t^2 \mr{e}^{2\max(\omega_\pi, \omega_{\pi,\lambda})t}}{\epsilon^2 (\lambda-\omega_\pi)} |x|^2.
    \end{aligned}$$
    Therefore, combining the above three estimates, we have
    $$ \abs{ \ipp{\rg\phi(x)}{\tilde{W}_\pi^t}{\rg\phi(x)} - \ipp{\rg\phi(x)}{\tilde{W}_{\pi,\lambda,\theta}^t}{\rg\phi(x)} } \lesssim \epsilon\bra{\mr{e}^{2\max(\omega_\pi, \omega_{\pi,\lambda})t}+1}|x|^2 + \frac{t^2 \mr{e}^{2\max(\omega_\pi, \omega_{\pi,\lambda})t}}{\epsilon^2 (\lambda-\omega_\pi)} |x|^2,$$
    where by optimizing the choice of $\epsilon$ we obtain
    $$ \abs{ \ipp{\rg\phi(x)}{\tilde{W}_\pi^t}{\rg\phi(x)} - \ipp{\rg\phi(x)}{\tilde{W}_{\pi,\lambda,\theta}^t}{\rg\phi(x)} } \lesssim \frac{t^{2/3}}{(\lambda-\omega_\pi)^{1/3}} \max\bra{\mr{e}^{(2/3)\max(\omega_\pi, \omega_{\pi,\lambda})t}, \mr{e}^{2\max(\omega_\pi, \omega_{\pi,\lambda})t}} |x|^2. $$
    Combining with the conclusion of Proposition \ref{prop:error-integral}, we obtain the desired result. 
\end{proof}

Finally, we have the following conclusion that summarizes the above analysis in this subsection. 
\begin{theorem}[Data-driven stabilization]
\label{th:stabilization}
    Suppose that 
    \begin{enumerate}[nolistsep]
        \item the system \eqref{eq:system} satisfies Assumption \ref{assum:regularity} and $s\geq2$, 
        \item the sample $\mbb{S}$ satisfies Assumption \ref{assum:sampling.strategy}, 
        \item $q(x) \gtrsim c_Q|x|^2$ for $x\in \mbb{X}$, where $c_Q$ is sufficiently large vis-\`{a}-vis $r_{\mbb{S}}$, 
        \item $0 \preceq P, Q\in \mc{B}_1(\rg H^s(\mbb{X}))$ and policy $\pi\in [\rg C^s(\mbb{X})]^{d_u}$ together satisfy the data-based differential Lyapunov descent condition \eqref{eq:stabilizing.6}, and moreover,
        \item $\|T_\pi^t\|\lesssim \mr{e}^{\omega_\pi t}$ where $\omega_\pi<0$. 
    \end{enumerate}
    Then there exists an $\varepsilon_Q\in(0,1)$, such that for all $x\in \mbb{X}$ and $t\geq 0$, we have
    \begin{equation}
        v\bra{S_{f_\pi}^t(x)} \leq v(x) - \int_0^t q\bra{S_{f_\pi}^\tau(x)} \xD{\tau} + \mr{const}\cdot t^{4/3} \exp\bra{\frac{2}{3} \frac{\lambda\omega_\pi}{\lambda-\omega_\pi} t} |x|^2, 
    \end{equation}
    and hence $\lim_{t\to \infty} S_{f_\pi}^t(x) = 0$. 
\end{theorem}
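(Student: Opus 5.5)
The argument chains the three error estimates of this subsection in the reverse order of the approximations \eqref{eq:stabilizing.1}--\eqref{eq:stabilizing.4}--\eqref{eq:stabilizing.5}--\eqref{eq:stabilizing.6}.

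First, hypotheses 2--4 let us invoke Proposition \ref{prop:error-sample}. The data-based condition \eqref{eq:stabilizing.6} then yields $\ipp{\rg\phi(x)}{\tilde\Delta_{\pi,\lambda,\theta}}{\rg\phi(x)}\le 0$ for all $x\in\mbb X$, with $\tilde\Delta_{\pi,\lambda,\theta}=\Delta_{\pi,\lambda,\theta}(P,(1-\varepsilon_Q)Q)$ for some $\varepsilon_Q\in(0,1)$. Here "$c_Q$ large vis-\`a-vis $r_{\mbb S}$" is exactly what makes $c_{\mbb S}r_{\mbb S}^{\min(1,\alpha)}|x|^2\le \varepsilon_Q q(x)$. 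Note that $\|\tilde\Delta_{\pi,\lambda,\theta}\|$ is finite, since $A_{\pi,\lambda,\theta}$ is bounded and $P,Q\in\mc B_1$; it is absorbed into the constant.

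Second, since $s\ge 2$, Proposition \ref{prop:error-Lyapunov} gives, for all $x$ and $t$,
\[
v(S_{f_\pi}^t(x)) - v(x) + (1-\varepsilon_Q)\int_0^t q(S_{f_\pi}^\tau(x))\,\xD{\tau} \lesssim \frac{t^{4/3}}{(\lambda-\omega_\pi)^{1/3}}\max\bra{\mr e^{(2/3)\bar\omega t},\mr e^{2\bar\omega t}}|x|^2 ,
\]
with $\bar\omega=\max(\omega_\pi,\omega_{\pi,\lambda})$. This uses $\ipp{\rg\phi(x)}{T_\pi^{t*}PT_\pi^t}{\rg\phi(x)}=v(S_{f_\pi}^t(x))$ and the analogous identity for $Q$. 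Now use $\omega_\pi<0$. For $\lambda>0$ we have $\omega_{\pi,\lambda}=\lambda\omega_\pi/(\lambda-\omega_\pi)\in(\omega_\pi,0)$, so $\bar\omega=\omega_{\pi,\lambda}<0$. The maximum of the two exponentials is then $\mr e^{(2/3)\omega_{\pi,\lambda}t}$, which is the stated exponent, and the factor $(\lambda-\omega_\pi)^{-1/3}$ goes into the constant. To obtain the inequality with $q$ rather than $(1-\varepsilon_Q)q$, redistribute $\varepsilon_Q$: rescale $P$ to $(1-\varepsilon_Q)^{-1}P$, as remarked after Proposition \ref{prop:error-sample}, and apply the estimate to the rescaled pair. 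Only the constant changes, so the displayed inequality holds with $v$ defined by that rescaled $P$.

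Third, asymptotic convergence. The right-hand remainder $\rho(t,x)=\mr{const}\cdot t^{4/3}\mr e^{(2/3)\omega_{\pi,\lambda}t}|x|^2$ is bounded uniformly in $t$, and $\rho(t,x)\to 0$ as $t\to\infty$. Because $v\ge 0$, the inequality gives $\int_0^\infty q(S_{f_\pi}^\tau(x))\,\xD{\tau}\le v(x)+\sup_t\rho(t,x)<\infty$. Since $q\gtrsim c_Q|x|^2$, we get $\int_0^\infty|S_{f_\pi}^\tau(x)|^2\,\xD\tau<\infty$. On compact $\mbb X$ the closed-loop velocity $f_\pi$ is bounded, so $\tau\mapsto|S_{f_\pi}^\tau(x)|^2$ is uniformly Lipschitz. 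A Barbalat-type argument then gives $S_{f_\pi}^t(x)\to0$.

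Alternatively, hypothesis 5 gives the result directly: $|S^t_{f_\pi}(x)|\propto\|\rg\phi(S^t_{f_\pi}(x))\|=\|T_\pi^t\rg\phi(x)\|\lesssim \mr e^{\omega_\pi t}|x|\to 0$, by the geometry proposition. I would present the Lyapunov/Barbalat route as the main argument and note this one as immediate confirmation.

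The main obstacle I expect is the bookkeeping of $\varepsilon_Q$: making sure the final inequality is stated with the full $q$ and a correspondingly rescaled $v$, and checking that every constant stays independent of $t$. Once the sign of $\omega_{\pi,\lambda}$ is settled, the exponent simplification is routine.
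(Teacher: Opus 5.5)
Your main line is the paper's own argument. The theorem is stated without a separate proof, as a summary of Proposition~\ref{prop:error-sample} followed by Propositions~\ref{prop:error-pre-integral}--\ref{prop:error-Lyapunov}. Your simplification via $\omega_{\pi,\lambda}\in(\omega_\pi,0)$ is what the paper intends, and so is your handling of $\varepsilon_Q$ by rescaling $P$. Equivalently, you can keep the original $v$ and replace $q$ by $(1-\varepsilon_Q)q$; this is how the otherwise dangling $\varepsilon_Q$ in the statement has to be read.

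The real difference is in the convergence claim, which the paper leaves as a bare ``hence''. Your Barbalat argument needs only a remainder bounded uniformly in $t$, not a decaying one, and this matters. The decaying factor $t^{4/3}\mr{e}^{(2/3)\omega_{\pi,\lambda}t}$ you import from Proposition~\ref{prop:error-Lyapunov} rests on steps such as $\int_0^t\tau^{1/3}\mr{e}^{2\bar\omega\tau}\,\xD{\tau}\lesssim t^{4/3}\mr{e}^{2\bar\omega t}$ in Proposition~\ref{prop:error-integral}. That bound fails for large $t$ when $\bar\omega<0$, since the left side tends to a positive constant. Redoing these integrals correctly still leaves a remainder of order $(\lambda-\omega_\pi)^{-1/3}|x|^2$, bounded uniformly in $t$, which is all your argument uses. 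The displayed decaying remainder itself, however, is not actually justified by this chain, in the paper or in your write-up.

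Your second route is more elementary still. Hypothesis~5, together with $T_\pi^t\rg\phi(x)=\rg\phi(S_{f_\pi}^t(x))$ and $\|\rg\phi(x)\|=\rho(0)^{1/2}|x|$, gives $|S_{f_\pi}^t(x)|\lesssim\mr{e}^{\omega_\pi t}|x|$. That is exponential convergence with no use of the Lyapunov inequality at all. The paper's framing presents the limit as a consequence of the data-based certificate \eqref{eq:stabilizing.6}. Your direct route shows that, as the theorem is stated, the limit is carried entirely by assumption~5, consistent with Remark~\ref{rem:spectrum}. It therefore deserves to be the primary argument for that part rather than a confirmation.
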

\begin{remark}[Dependence of data-driven stabilization performance on closed-loop spectrum]\label{rem:spectrum}
    The conclusion on asymptotic stability hinges on a negative growth rate of the closed-loop semigroup: $\omega_\pi<0$, which then implies that $\omega_{\pi,\lambda} = \frac{\lambda\omega_\pi}{\lambda-\omega_\pi} < 0$. 
    In fact, the exponential stability of $\{T_\pi^t\}_{t\geq 0}$ is a stronger requirement than the Lyapunov condition \eqref{eq:stabilizing.0} or \eqref{eq:stabilizing.1} itself. Hence, by solving \eqref{eq:stabilizing.6}, it is not automatically guaranteed that the closed-loop semigroup becomes exponentially stable. 
    This is indeed where the complexity of the nonlinear system manifests itself, and the author suggests that it entails a fundamental limitation of the operator method --- the verification of operator-based conditions based on operator semigroups would involve the spectrum in infinite dimensions. 
    Although it is possible to \emph{a posteriori} study the closed-loop spectrum, imposing a spectrum constraint on finite data at the synthesis stage does not appear to be easy. 
    \par On the other hand, the spectrum of the drift generator $A_0$ usually contains only a finite number of discrete points (i.e., eigenvalues) to the right of a vertical line $\{\lambda\in \mC: \Re \lambda = -\alpha\}$ for some $\alpha>0$. Hence, by considering these modes, it may be theoretically possible to synthesize a controller $\pi$ to make the closed-loop semigroup an exponentially stable one. Readers may refer to, e.g., Curtain and Zwart \cite{curtain2020introduction} for the case of a fully linear infinite-dimensional system ($\dot{x} = Ax + Bu$). However, the data-driven synthesis would invoke the problem of whether the spectrum of the unbounded infinitesimal generators can be indeed well approximated under data. 
    The author leaves this problem open. 
\end{remark}

\subsection{Data-Driven Solution} 
Now we consider how to convert the condition \eqref{eq:stabilizing.6}, expressed in an operator form and imposed as inequalities on $n$ sample points, into a matrix form that can be handled by \emph{convex optimization} solvers. We aim to find operators $P\in \mc{B}_1(\mc{H}_{\rg\kappa}(\mbb{X}))$ and operators $M_{\pi_j}^*$ ($j=1,\cdots,d_u$) with each $\pi_j \in \rg C^s(\mbb{X})$, such that 
\begin{equation}\label{eq:stabilizing.7}
    \ipp{\rg\phi(x_i)}{P A_{0,\lambda,\theta} + A_{0,\lambda,\theta}^*P +Q}{\rg\phi(x_i)} + \sum_{j=1}^{d_u} \ipp{\rg\phi(x_i)}{ P \rg\iota^* A_{j,\lambda,\theta}M_{\pi_j}^* + M_{\pi_j}A_{j,\lambda,\theta}^* \rg\iota P}{\rg\phi(x_i)} \leq 0
\end{equation}
holds for all $i=1,\cdots,n$. To this end, since there are only a finite number of constraints regarding $\rg\phi(x_i)$ ($i=1,\cdots,n$), it suffices to solve for $P$ and $\{M_{\pi_j}\}_{j=1}^{d_u}$ such that $P A_{0,\lambda,\theta} + A_{0,\lambda,\theta}^*P +Q$ as well as $P \rg\iota^* A_{j,\lambda,\theta}M_{\pi_j}^*$ (for $j=1,\cdots,d_u$) can be restricted to an operator on $\mc{H}_{\rg\kappa}(\mbb{X})_n := \overline{\mr{span}} \BRA{\rg\phi(x_i)}_{i=1}^n$. 
Hence, we may set 
$$P = \sum_{i=1}^n \sum_{i'=1}^n \ms{P}_{ii'} \rg\phi(x_i)\times \rg\phi(x_{i'}). $$

\par For $A_{0,\lambda,\theta}$, we verify that for all $i,i'=1,\cdots,n$:
\begin{equation}\label{eq:stabilizing.synthesis.kernel.1}
    \ip{\rg\phi(x_i)}{A_{0,\lambda,\theta} \rg\phi(x_{i'})} = \lambda^2\int_0^\theta \mr{e}^{-\lambda t} \rg\kappa\bra{x_i, S_{f_0}^t(x_{i'})} \xD{t} - \lambda \rg\kappa(x_i, x_{i'}) =: \ms{A}^0_{ii'}. 
\end{equation} 
As for $M_{\pi_j}^*$, we note that $M_{\pi_j}^*\rg\phi(x_i) = \pi_j(x_i) \phi(x_i)$. In addition, $\rg\iota^*\phi(x) = \rg\phi(x)$. Then we have 
\begin{equation}\label{eq:stabilizing.synthesis.kernel.2}
\begin{aligned}
    \ip{\rg\phi(x_i)}{\rg\iota^* A_{j,\lambda,\theta}M_{\pi_j}^* \rg\phi(x_{i'})} = \pi_j(x_{i'}) \Bra{ \lambda^2\int_0^\theta \mr{e}^{-\lambda t} \rg\kappa\bra{x_i, S_{f_j}^t(x_{i'})} \xD{t} - \lambda \rg\kappa(x_i, x_{i'}) } =: \pi_j(x_{i'}) \ms{A}^j_{ii'}
\end{aligned}
\end{equation} 
Hence, by introducing the kernel matrix $\ms{G} = [\rg\kappa(x_i, x_{i'})]_{i,i'=1}^n$, denoting its $i$-th column as $\ms{g}_i$ (and hence its $i$-th row is $\ms{g}_i^\top$), and writing $\ms{q} = [\ms{q}_i]_{i=1}^n = [q(x_i)]_{i=1}^n$, we convert \eqref{eq:stabilizing.7} to 
\begin{equation*}
    \Bra{ \bra{\ms{G}\ms{P}\ms{A}^0 + \ms{A}^{0\top}\ms{P}\ms{G}} + \sum_{j=1}^{d_u} \pi_j(x_i) \bra{\ms{G}\ms{P}\ms{A}^j + \ms{A}^{j\top}\ms{P}\ms{G} } }_{ii} + \ms{q}_i \leq 0, \enspace i=1,\cdots,n.
\end{equation*}
Let us write $\ms{u}_j = (\pi_j(x_1), \cdots, \pi_j(x_n)) \in \mR^n$ and $\Diag \ms{u}_j$ the corresponding diagonal matrix. Then the condition is written as 
\begin{equation}\label{eq:stabilizing.bilinear}
    \Bra{ \bra{\ms{G}\ms{P}\ms{A}^0 + \ms{A}^{0\top}\ms{P}\ms{G}} + \sum_{j=1}^{d_u} \bra{\ms{G}\ms{P}\ms{A}^j (\Diag \ms{u}_j) + (\Diag \ms{u}_j) \ms{A}^{j\top}\ms{P}\ms{G} } }_{ii} + \ms{q}_i \leq 0, \enspace i=1,\cdots,n.
\end{equation}
At this point, the condition \eqref{eq:stabilizing.bilinear} contains bilinearity, since both $\ms{P}$ and $\ms{u}_j$ ($j=1,\cdots,d_u$) are decision variables. 

\begin{remark}[Removal of bilinearity in classical linear control]
\label{rem:bilinear-to-linear}
    It is instrumental to recall the convexification routine used in stabilizing controller synthesis for finite-dimensional linear systems. For a finite-dimensional linear system $\dot{x} = Ax + Bu$, to obtain a feedback gain $F$ such that the closed-loop system $\dot{x} = (A+BF) x$ is stabilized, we use a descent rate $q(x) = x^\top Qx$ (where $Q$ is positive definite) and formulate the condition for the quadratic Lyapunov function candidate $v(x) = x^\top Px$:
    $$P(A+BF) + (A+BF)^\top P + Q \preceq 0. $$
    The trick to remove bilinearity is to left- and right-multiply by $P^{-1}$ and then regard $P^{-1}:=X$ and $FP^{-1} = U$ as the decision variables \cite{caverly2019lmi}:
    $$(AX + BU) + (AX+BU)^\top + XQX \preceq 0,$$
    which is, by a standard Schur complement argument, equivalent to a linear matrix inequality (LMI): 
    $$\begin{bmatrix}
        -(AX + BU) - (AX+BU)^\top & XQ^{1/2} \\ Q^{1/2}X & I
    \end{bmatrix} \succeq 0.$$
    It then only boils down to solving this LMI and recovering $P = X^{-1}$ and $F = UP$. 
\end{remark}

The difficulty that we are encountering is the use of $\Diag$. Because of this, we cannot simply left- and right-multiply by the inverse Lyapunov-related term $(\ms{GPG})^{-1}$ and then combine that term with $\Diag \ms{u}_j$. Indeed, generally for a matrix $\ms{B}$, vector $\ms{u}$, and a symmetric matrix $\ms{X}$, we must admit that $\ms{B}(\Diag \ms{u})\ms{X} \not\equiv \ms{B} \Diag \ms{Xu}$. 

Now let us note that the following statement is not difficult to verify: for any symmetric matrix whose diagonal elements are all nonnegative, there is another symmetric matrix with diagonal elements all equal to zero that, when added to the first matrix, results in a positive semidefinite matrix. 
Let us denote by $\operatorname{diag}(\ms M)$ the vector formed by the diagonal elements of a matrix $\ms M$, and $\Diag \ms{M}$ the diagonal matrix that contains the diagonal of $\ms{M}$, namely the matrix obtained by deleting all off-diagonal elements of $\ms{M}$. 
Thus, under Assumption \ref{assum:kernel.matrix.PSD}, we can create (i) a fixed symmetric matrix $\ms{Q} \in \mR^{n\times n}$ such that $\diag \ms{Q} = \ms{q}$, as well as (ii) for every $j=1,\cdots,d_u$, a free matrix variable $\ms{F}^j \in \mbb{R}^{n\times n}$ that replaces the diagonal matrix $\Diag \ms{u}_j$, with the constraint that the diagonal of the summation over $j$ does not change upon left-multiplication by $\ms{G}\ms{P}\ms{A}^j$ in each of the summands. In particular, if $q(x) = \sum_{k=1}^{n_q} q_k(x)^2$ with $q_1, \cdots, q_{n_q} \in \rg H^s(\mbb{X})$, then we can let $\ms{Q}$ be the matrix with its $(i,i')$-th entry equal to $\ipp{\rg\phi(x_i)}{Q}{\rg\phi(x_{i'})}$. For example, for $q(x) = |x|^2$, we can make $\ms{Q}_{ii'} = x_i \cdot x_{i'}$ ($i,i'=1,\cdots,n$). 

That is, we convert the condition \eqref{eq:stabilizing.bilinear} into 
\begin{equation*}
\begin{aligned}
    & \bra{\ms{G}\ms{P}\ms{A}^0 + \ms{A}^{0\top}\ms{P}\ms{G}} + \sum_{j=1}^{d_u} \bra{\ms{G}\ms{P}\ms{A}^j \ms{F}^j + \ms{F}^{j\top} \ms{A}^{j\top}\ms{P}\ms{G} } + \ms{Q} \preceq 0, 
    \\
    & \sum_{j=1}^{d_u} \Diag \ms{GPA}^j \ms{F}^j = \sum_{j=1}^{d_u}  \Diag \bra{\ms{GPA}^j \Diag \ms{u}_j} = \sum_{j=1}^{d_u} \bra{\Diag \ms{GPA}^j} \Diag \ms{u}_j. 
\end{aligned}
\end{equation*}

\begin{assumption}[Characteristic kernel]
\label{assum:kernel.matrix.PSD}
    The kernel matrix $\ms{G}$ is strictly positive definite. 
\end{assumption}
This is the case as long as $0\notin\mbb{S}$ (in the computation phase, although in theoretical error analysis we can supplement $0$ to $\mbb{S}$), since the Sobolev kernel is a characteristic kernel \cite{steinwart2008support}. 
At this point, we are ready to apply the conventional trick in Remark \ref{rem:bilinear-to-linear}. 
We left- and right-multiply the matrix inequality by $(\ms{G}\ms{P}\ms{G})^{-1}$, do the variable substitution: 
$$\ms{X}:= (\ms{G}\ms{P}\ms{G})^{-1}, \quad 
\ms{F}^j \ms{X} = \ms{U}^j, \, j=1,\cdots,d_u, $$
and denote 
\begin{equation}\label{eq:stabilizing.AB}
    \ms G^{-1} \ms A^0 =: \ms{A}, \ 
    \ms G^{-1} \ms A^j =: \ms{B}^j, \, j=1,\cdots,d_u.
\end{equation}
Then the matrix inequality can be expressed as 
\begin{equation*}
\begin{aligned}
    & \bra{\ms{A}\ms{X} + \ms{X}\ms{A}^\top} + \sum_{j=1}^{d_u} \bra{\ms{B}^j\ms{U}^j + \ms{U}^{j\top}\ms{B}^{j\top} } + \ms{XQX} \preceq 0.  
\end{aligned}
\end{equation*}
By using the Schur complement technique again, the matrix inequality becomes a linear matrix inequality: 
\begin{equation}\label{eq:stabilizing.convex}
    \begin{bmatrix}
    -\bra{\ms{A}\ms{X} + \ms{X}\ms{A}^\top} - \sum_{j=1}^{d_u} \bra{\ms{B}^j\ms{U}^j + \ms{U}^{j\top}\ms{B}^{j\top} } & \ms{X}\ms{Q}^{1/2} \\ \ms{Q}^{1/2}\ms{X} & \ms{I}
\end{bmatrix} \succeq 0 .
\end{equation}
Solving \eqref{eq:stabilizing.convex} requires only a semidefinite program (SDP) solver.\footnote{In the Schur complement reformulation, the $Q^{1/2}$ can be replaced by $L$ in the bottom-right block and by $L^\top$ in the lower-left block, where $L$ is specified by the Cholesky decomposition $Q=LL^\top$. } 

To facilitate the solution, we also supply an objective to form a complete SDP problem and conditioning constraints to avoid the trivial solution ($\ms{X}=0$, which is an artifact resulting from inverting $\ms{GPG}$) or an unbounded solution: 
\begin{equation}\label{eq:stabilizing.SDP}
    \min -\log \det \ms{X} \quad \mr{s.t.} \, \eqref{eq:stabilizing.convex}, \, \alpha_{\ms{X}}\ms{I} \preceq \ms{X} \preceq \beta_{\ms{X}}\ms{I}, \, \ms{GXG} \preceq \gamma_{\ms{X}}\ms{P}_0^{-1}.
\end{equation}
The objective of maximizing $\log\det \ms{X}$ is motivated by the fact that the diagonal of $\ms{GPG}$ stands for the Lyapunov function values at the sample points. As we set an ``at least'' decay rate, we hope for an ``at most'' Lyapunov function to prevent the synthesis of an over-aggressive controller. Here $\ms{P}_0$ represents a reference Lyapunov function, e.g., that of the linearized LQR, for which $\ms{G}\ms{P}_0\ms{G} = [x_i^\top P_\mr{LQR}\, x_{i'}]_{i,i'=1}^n$. The last constraint is therefore read as $\ms{P} \succeq (1/\gamma_{\ms{X}})\ms{P}_0$, namely that the Lyapunov functions should still be ``large enough''. 
Once this optimization problem is solved, we recover 
\begin{equation}\label{eq:stabilizing.recovery.1}
    \ms{P} = (\ms{GXG})^{-1} 
\end{equation}
and $\ms{u}_j$ ($j=1,\cdots,d_u$) from the following equations ($n$ in total): 
\begin{equation*}
    \sum_{j=1}^{d_u} \Diag \ms{GPG}\ms{B}^j \ms{U}^j\ms{GPG} = \sum_{j=1}^{d_u}  \bra{\Diag \ms{GPG}\ms{B}^j} \Diag \ms{u}_j.
\end{equation*}
For the above equation on $\ms{u}_j$ ($j=1,\cdots,d_u$) to be solvable, we need to make the following \textit{ad hoc} assumption on the solution $\ms{P}$ corresponding to the Lyapunov function, and thus $\ms{GPA}^j$ regarding the Yosida-approximated rate of change caused by the actuation in each input component. 
\begin{assumption}[Actuatability of Lyapunov function at sample points]\label{assum:recoverability}
    For each $i=1,\cdots,n$, there must be at least one among $(\ms{GPA}^j)_{ii}$ ($j=1,\cdots,d_u$) that is nonzero. 
\end{assumption}
Thus, the recovery of $\ms{u}_j$ ($j=1,\cdots,d_u$) can follow such a way that resembles a constructive formula of Lyapunov-based control law:
\begin{equation}\label{eq:stabilizing.recovery.2}
     (\ms{u}_j)_i = \bra{\ms{GPG}\ms{B}^j}_{ii}  \frac{\sum_{k=1}^{d_u} \bra{\ms{GPG}\ms{B}^k\ms{U}^k \ms{GPG}}_{ii}}{\sum_{k=1}^{d_u} \bra{\ms{GPG}\ms{B}^k}_{ii}^2} , 
     \quad i=1,\cdots,n.
\end{equation}
Nevertheless, if Assumption \ref{assum:recoverability} is not satisfied, we can still recover by finding any feasible solution for $\ms{u}_j$ ($j=1,\cdots,d_u$) that satisfies \eqref{eq:stabilizing.bilinear}. 

Let us summarize the steps as below.
\begin{enumerate}[nolistsep]
    \item Compute the kernel matrix $\ms{G}$, the ``kernel velocity'' matrices $\ms{A}^0$ by \eqref{eq:stabilizing.synthesis.kernel.1} and $\ms{A}^j$ ($j=1,\cdots,d_u$) by \eqref{eq:stabilizing.synthesis.kernel.2}, and thus $\ms{A}$ and $\ms{B}^j$ ($j=1,\cdots,d_u$) by \eqref{eq:stabilizing.AB}. 
    \item Calculate decay rates at all sample points $\ms{q}$, and create matrix $\ms{Q}$.
    \item Solve the SDP \eqref{eq:stabilizing.SDP} for $(\ms{X}, \ms{U}^1, \cdots, \ms{U}^{d_u})$. 
    \item Obtain $\ms{P}$ from \eqref{eq:stabilizing.recovery.1} and $\ms{u}_j$ ($j=1,\cdots,d_u$) from  \eqref{eq:stabilizing.recovery.2}. 
    \item Thus, the control Lyapunov function is 
    $$v(x) = \ipp{\rg\phi(x)}{\sum_{i=1}^n \sum_{i'=1}^n \ms{P}_{ii'}\rg\phi(x_i)\times \rg\phi(x_{i'})}{\rg\phi(x)} = \sum_{i=1}^n \sum_{i'=1}^n \ms{P}_{ii'}\rg\kappa(x, x_i) \rg\kappa(x, x_{i'}) = \ms{g}(x)^\top \ms{P}\ms{g}(x), $$
    where $\ms{g}(x) = \Bra{\rg\kappa(x,x_i)}_{i=1}^n$, and the control input at each sample point is determined: $\pi_j(x_i) = (\ms{u}_j)_i$ ($i=1,\cdots,n$, $j=1,\cdots,d_u$). 
\end{enumerate}

\begin{remark}[Removability of bilinearity due to linear--Sobolev RKHS being an algebra]
\label{rem:algebra}
    The formulation of a fully convexified problem \eqref{eq:stabilizing.convex} is essentially due to the merit of infinite-dimensional Sobolev and linear--Sobolev RKHSs. 
    Superficially, in the RKHS, the actions of the adjoints of multiplication operators ($M_{\pi_j}^*$) on the canonical features $\rg\phi(x)$ result only in a multiplication of the policy evaluated at the state ($\pi_j(x)\phi(x)$). Thus, the trick of merging $P^{-1}$ and the policy, as used for finite-dimensional linear systems (see Remark \ref{rem:bilinear-to-linear}), can be applied in a formally similar way. 
    The ``bilinearity'' issue encountered in the literature (e.g., in Goswami et al. \cite{goswami2021bilinearization} or Str{\"a}sser et al. \cite{strasser2024koopman}) is avoided because $\pi_1, \cdots, \pi_{d_u}$ do not need to be written as a linear form of $\rg\phi(x)$ (e.g., $\pi(x) = F\rg\phi(x)$ for some feedback gain operator $F$), and even if they are written in this way, \emph{a bilinear form of $\rg\phi(x)$, as a $\rg H^s$-class function of $x$, is still a linear form of $\rg\phi(x)$}. In other words, the linear--Sobolev function space itself is a \emph{vector space and also an algebra under multiplication}.  
    In contrast, this algebra property does not hold under any finite-dimensional feature. That is, where a finite dictionary of basis functions or Koopman eigenfunctions ($\{\psi_i\}_i$) is used, the resulting bilinearity in features $\pi_j(x) L_j\psi_i(x) = F_j \psi(x) L_j\psi_i(x)$ tends to be no longer reducible to linear combinations of $\{\psi_i\}_i$. The use of kernel quadratic and SOS forms to write out the closed-loop Lyapunov descent conditions is also largely impossible in general. 
\end{remark}

\begin{remark}[Recovery of policy from sample points]
\label{rem:policy.recovery}
    We note that the solution of the data-dependent constraints \eqref{eq:stabilizing.convex} in fact only determines the policy values at the sampled points, instead of the policy as a $d_u$-dimensional vector-valued function over $\mbb{X}$. Since we only require that $\pi\in [\rg C^s(\mbb{X})]^{d_u}$, we can use any interpolation methods on each of $\pi_j$ ($j=1,\cdots,d_u$) with the given values. 
    For example, we can use a ``linear--Gaussian kernel'' (where the Gaussian kernel generates an RKHS of $C^\infty$ functions, and the linear part provides the component functions as factors): $\rg\kappa_{\mr{Gauss}} (x,x') = (x\cdot x')\exp\bra{-|x-x'|^2/\ell^2}$, or a linear--Sobolev kernel with a higher smoothness index $s'=s+d_x/2$, and perform a kernel interpolation. In other words, we let 
    $$ \pi_j(x) = \sum_{i=1}^n \ms{c}_{ij} \rg\kappa_{\mr{Gauss}} (x,x_i), $$
    and force $\pi_j(x_i)$ to equal the determined values from \eqref{eq:stabilizing.convex}.  
\end{remark}

\section{Synthesis of Optimal Controllers: A Koopman--Riccati Method}
Naturally, one needs more than stabilization in controller synthesis. When a cost metric is associated with the policy, we have an optimal control problem, which is what we consider in this section. 
\begin{problem}[Optimal control]\label{prob:optimal}
    Find a control policy $\pi \in [\rg C^s(\mbb{X})]^{d_u}$, such that for any $x\in \mbb{X}$, the closed-loop orbit issued from $x$ achieves the minimum accumulated cost ($l(x, u) = q(x)+|u|^2$) over the infinite horizon:
    \begin{equation}\label{eq:optimal.0}
            v(x) = \inf_{\pi \in [\rg C^s(\mbb{X})]^{d_u}} \, v_\pi(x), \quad v_\pi(x) = \int_0^\infty \bra{ q\bra{S_{f_\pi}^t(x)} + 
            \sum_{j=1}^{d_u} \pi_j\bra{S_{f_\pi}^t(x)}^2} \,\xD{t}, 
    \end{equation}
    in which the state-dependent \emph{stage cost} $q(x)$ is a kernel quadratic form $q(x) = \ipp{\rg\phi(x)}{Q}{\rg\phi(x)}$, $Q\succeq 0$. 
\end{problem}
Even though the infimum is not necessarily attainable in the class of $\pi \in [\rg C^s(\mbb{X})]^{d_u}$, in a data-driven setting, we are only interested in reaching a small bounded error from the infimum. 

\subsection{Optimality Condition and Operator Riccati Equation}
For a given control policy $\pi \in [\rg C^s(\mbb{X})]^{d_u}$, since $\rg C^s(\mbb{X}) \subseteq \rg H^s(\mbb{X})$, the \emph{policy evaluation} gives:
$$v_\pi(x) = \int_0^\infty \Bra{ \ipp{T_\pi^t \rg\phi(x)}{Q}{T_\pi^t \rg\phi(x)} + \sum_{j=1}^{d_u} \abs{\ip{\pi_j}{T_\pi^t \rg\phi(x)}}^2} \xD{t} = \int_0^\infty \ipp{T_\pi^t \rg\phi(x)}{Q + \sum_{j=1}^{d_u}\pi_j \times \pi_j}{T_\pi^t \rg\phi(x)} \xD{t} $$
if the integral on the right-hand side is indeed finite. In this case, $v_\pi(x)$ is a kernel SOS form, which can be represented by
\begin{equation}\label{eq:policy.evaluation}
    v_\pi(x) = \ipp{\rg\phi(x)}{P_\pi}{\rg\phi(x)}, \ \text{ where } P_\pi = \int_0^\infty T_\pi^{t*} \bra{Q + \sum_{j=1}^{d_u}\pi_j \times \pi_j} T_\pi^t \xD{t}.
\end{equation} 
To deduce an optimality condition, we should first examine the structure of the set of admissible policies. We let $Q\in \mc{B}_1(\rg H^s(\mbb{X}))$ be a fixed symmetric positive semidefinite operator, and denote the collection of all ``kernel SOS stabilizing'' policies as  
$$ \mc{S} = \BRA{\pi \in [\rg C^s(\mbb{X})]^{d_u} : \int_0^\infty T_\pi^{t*} \bra{Q + \sum_{j=1}^{d_u}\pi_j \times \pi_j} T_\pi^t \xD{t} \in \mc{B}_1(\rg H^s(\mbb X))}, $$
and the collection of all ``exponentially stabilizing'' policies as 
$$ \mc{S}_\mr{e} = \BRA{\pi \in [\rg C^s(\mbb{X})]^{d_u} : \|T_\pi^t\| \lesssim \mr{e}^{\omega t} \text{ for some } \omega<0} \subseteq \mc{S}.$$

\begin{proposition}[Geometry of the classes of kernel SOS stabilizing and exponentially stabilizing policies] \label{prop:optimal.geometry}
    Suppose that there exists a constant $c_Q>0$ such that $q(x) \geq c_Q|x|^2$ for all $x\in \mbb{X}$. 
    Then $\mc{S}$ is a connected set in the $\rg C^s$ topology, and $\mc{S}_\mr{e} \subseteq \mc{S}$ is an open set in which $P_\pi$ is continuous with respect to $\pi$. 
\end{proposition}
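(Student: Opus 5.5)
The plan is to treat the three claims separately: openness of $\mc{S}_\mr{e}$ with continuity of $P_\pi$ on it, and then connectedness of $\mc{S}$. All of them rest on perturbation theory for the closed-loop Perron--Frobenius generator. By \eqref{eq:PF.generator.closed-loop}, for $\pi,\pi'\in[\rg C^s(\mbb X)]^{d_u}$ we have
$$A_{\pi'} = A_\pi + \sum_{j=1}^{d_u}\rg\iota^* A_j M^*_{\pi'_j-\pi_j}.$$
The perturbation is unbounded, so the bounded perturbation theorem cannot be applied directly. I would instead work on the Koopman side, where $L_{\pi'}-L_\pi = \sum_j M_{\pi'_j-\pi_j}L_j\rg\iota$ is a first-order differential operator with $\rg C^s$-small coefficients.

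\textbf{Openness of $\mc{S}_\mr{e}$.} The key step is a uniform estimate: for $\pi'$ close to $\pi$ in $\rg C^s$, the flows satisfy $\|K_{\pi'}^t-K_\pi^t\|\le c\,t\,\mr{e}^{\omega' t}\|\pi'-\pi\|_{\rg C^s}$ on a fixed finite interval $[0,t_0]$. I would obtain this from the Fa\`a di Bruno argument used in the Koopman spectral radius bound. The flows $S_{f_{\pi'}}^t$ depend $C^s$-continuously on the vector field uniformly on compact time intervals. The $\rg{}$-factorization is preserved because $f_{\pi'}$ vanishes at the origin.

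Given $\pi\in\mc S_\mr e$, choose $t_0$ with $\|T_\pi^{t_0}\|\le 1/2$. For $\pi'$ close enough, this gives $\|T_{\pi'}^{t_0}\|\le 3/4$. The semigroup property then yields $\|T_{\pi'}^t\|\lesssim \mr e^{\omega' t}$ with $\omega'=\log(3/4)/t_0<0$, so a $\rg C^s$-neighbourhood of $\pi$ lies in $\mc S_\mr e$. The inclusion $\mc S_\mr e\subseteq\mc S$ follows as in the proof of the corollary on control Lyapunov functions. Here $Q+\sum_j\pi_j\times\pi_j\in\mc B_1$, since each $\pi_j\in\rg C^s\subseteq\rg H^s$ gives a rank-one term, and the integrand decays like $\mr e^{2\omega t}$ in trace norm.

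\textbf{Continuity of $P_\pi$.} I would write $P_{\pi'}-P_\pi$ as an integral of the difference of integrands. On a neighbourhood of $\pi$ the decay rate $\omega'$ is uniform, so the tail $\int_{t_1}^\infty$ is small uniformly. On $[0,t_1]$ the difference is controlled by the strong, and in fact norm on this compact interval, continuity established above, together with $\|\pi'_j\times\pi'_j-\pi_j\times\pi_j\|_{\mc B_1}\lesssim\|\pi'-\pi\|$. Dominated convergence in $\mc B_1$ then closes the argument.

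\textbf{Connectedness of $\mc S$.} Here I would use the hypothesis $q\ge c_Q|x|^2$. For $\pi\in\mc S$, the function $v_\pi$ is finite and bounded above by a multiple of $|x|^2$. Together with $q\ge c_Q|x|^2$, this makes $v_\pi$ a strict Lyapunov function, with $\dot v_\pi=-q-|\pi|^2\le -(c_Q/\sup v_\pi/|x|^2)\,v_\pi$. This yields exponential decay of trajectories, and I expect from it exponential stability of the linearization $J_\pi$. My first attempt at connecting a point to a reference is a path through the dilated family $\pi^{(\sigma)}$ that converges to the linear feedback $\pi_0(x)=\mr D\pi(0)x$ as $\sigma\to0$, to be composed with a path within the linear Hurwitz gains. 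The set of stabilizing linear gains is not convex in general, however. As a fallback I would show path-connectedness of each policy to a fixed reference by deforming along $\pi_\tau=\pi+\tau\, g$ with $g$ chosen to increase damping, verifying membership in $\mc S$ via the Lyapunov argument.

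\textbf{Main obstacle.} I expect connectedness to be the hardest part. The uniform flow-perturbation estimate in the $\rg H^s$ operator norm is also delicate, because the perturbation is unbounded and the $\rg{}$-structure near the origin must be tracked.
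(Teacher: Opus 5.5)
\paragraph{Openness and continuity.} You are right that $A_{\pi'}-A_\pi=\sum_j\rg\iota^*A_jM^*_{\pi'_j-\pi_j}$ is unbounded. This is also why the Phillips bounded-perturbation step in the paper's own proof, which asserts $\|A_{\pi+\varpi}-A_\pi\|\le b$, does not go through as written. However, the estimate you put in its place is false: $\|K^t_{\pi'}-K^t_\pi\|\lesssim t\,\mr{e}^{\omega't}\|\pi'-\pi\|_{\rg C^s}$ in the $\rg H^s$ operator norm does not hold, and neither does the ``in fact norm'' continuity you use later.

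Composition operators are not norm-continuous in the underlying map. The difference $g\circ S^t_{\pi'}-g\circ S^t_\pi\approx\nabla g\cdot(S^t_{\pi'}-S^t_\pi)$ costs one derivative of $g$. To see the failure, take $g(y)=|\xi|^{-s}\chi(y)\sin(\xi\cdot y)$, with $\chi$ a bump near $S^t_\pi(x_0)$, where $x_0\neq0$ is a point with $S^t_{\pi'}(x_0)\neq S^t_\pi(x_0)$. Then $\|K^t_{\pi'}g-K^t_\pi g\|$ stays bounded below as $|\xi|\to\infty$, however small $\pi'-\pi$ is. Fa\`a di Bruno gives uniform boundedness of $K^t_{\pi'}$, not Lipschitz dependence on $\pi'$. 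Since the operator norm is only lower semicontinuous under strong convergence, $\|T^{t_0}_\pi\|\le 1/2$ does not yield $\|T^{t_0}_{\pi'}\|\le 3/4$ by subtraction.

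Factor instead of subtracting. Write $S^{t_0}_{\pi'}=S^{t_0}_\pi\circ\Psi$ with $\Psi=(S^{t_0}_\pi)^{-1}\circ S^{t_0}_{\pi'}\to\mr{id}$ in $C^s$. Then $K^{t_0}_{\pi'}=C_\Psi K^{t_0}_\pi$ with $C_\Psi g=g\circ\Psi$. The chain rule plus a change of variables gives $\|C_\Psi\|\le 1+o(1)$ in an explicit equivalent $\rg H^s$ norm, and exponential stability does not depend on which equivalent norm is used.

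For continuity of $P_\pi$, strong convergence suffices. Write $Q+\sum_j\pi_j\times\pi_j=\sum_k q_k w_k\times w_k$; the integrand is then $\sum_k q_k (K^t_\pi w_k)\times(K^t_\pi w_k)$. Strong convergence $K^t_{\pi'}\to K^t_\pi$, together with $\|\pi'_j-\pi_j\|_{\rg H^s}\to0$ and the (now uniform) exponential bound, gives $\mc B_1$-convergence of the integrands. Dominated convergence in $t$ then finishes the argument.

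\paragraph{Connectedness.} This part has no working argument yet. With $q\ge c_Q|x|^2$, membership in $\mc S$ is a global condition: every trajectory in $\mbb X$ must satisfy $\int_0^\infty|S^t_{f_\pi}(x)|^2\xD{t}<\infty$. Your deformations only control the linearization.
\begin{itemize}
\item The dilation $\pi^{(\sigma)}(x)=\pi(\sigma x)/\sigma$ rescales the policy but not $f_0,f_j$. So $\mr{D}f_{\pi^{(\sigma)}}(0)=J_\pi$ for every $\sigma$, while nothing prevents trajectories far from the origin from failing to converge along the path.
\item The linear endpoint $x\mapsto\mr{D}\pi(0)x$ need not lie in $\mc S$ at all.
\item ``A direction that increases damping'' has no general meaning for input-affine systems.
\end{itemize}

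The linear obstruction you raise is not actually an obstruction. The set of stabilizing state-feedback gains is nonconvex but connected, because it is the image of the convex set $\{(X,U): X\succ0,\ AX+XA^\top+BU+U^\top B^\top\prec0\}$ under $(X,U)\mapsto UX^{-1}$.

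This change of variables is exactly the idea the paper uses, as the operator analogue of Remark \ref{rem:bilinear-to-linear}. It first uses $q\ge c_Q|x|^2$ to show that the input cost is dominated by the state cost. Hence $\pi\in\mc S$ iff some $0\preceq S_\pi\in\mc B_1$ satisfies $S_\pi A_\pi+A_\pi^*S_\pi+Q\preceq0$ on $\mc D(A_\pi)$. It then substitutes $\tilde S_\pi=S_\pi^{-1}$ and $\tilde M_j=M^*_{\pi_j}\tilde S_\pi$, which makes the condition convex in $(\tilde S_\pi,\tilde M_j)$.

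Be aware that this step is itself incomplete as written. Recovering a policy requires $\tilde M_j\tilde S_\pi^{-1}$ to be the adjoint of a multiplication operator. That constraint is not convex in $(\tilde S_\pi,\tilde M_j)$; it is the same $\Diag$ obstruction noted after \eqref{eq:stabilizing.bilinear}. Moreover, the conclusion needs $\mc S$ to be a continuous image, not a pre-image, of a convex set. So connectedness needs an idea beyond both your sketch and the paper's.
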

\begin{proof}
    Note that for any $x\in \mbb{X}$, $\ipp{\rg\phi(x)}{\pi_j\times \pi_j}{\rg\phi(x)} = |\pi_j(x)|^2$, and hence for $\pi_j \in \rg H^s(\mbb{X})$, $|\pi_j(x)|^2 \lesssim |x|^2$. We assert that for sufficiently small $a>0$, $Q - a\pi_j\times \pi_j \in \mc{B}_1$ and is positive semidefinite. To show this, we let $Q_0$ be the finite-rank (and hence trace-class) operator such that $\ipp{\rg\phi(x)}{Q_0}{\rg\phi(x)} = |x|^2$. 
    For notational simplicity, consider $d_x=1$. By writing $\pi_j(x) = x\bar\pi_j(x)$ with $\bar\pi_j \in H^s(\mbb{X})$, we get $|x|^2 - a|\pi_j(x)|^2 = |x|^2 - a|x \bar\pi_j(x)|^2 = |x|^2 (1-a\bar\pi_j(x)^2)$. To see that $1-a\bar\pi_j(x)^2$ is a kernel SOS form under the Sobolev kernel, we choose $c_j$ as any constant, which is between $(0, 1)$ such that $1-a\bar\pi_j(x)^2 > c_j$ for all $x\in \mbb{X}$, and rewrite 
    $$ \frac{1-a\bar\pi_j(x)^2}{c_j} = \frac{1}{1-\omega_j(x)^2}, \, \text{ where } \omega_j(x) = \sqrt{1-\frac{c_j}{1-a^2\bar\pi_j(x)^2}}.$$ 
    It then can be seen that $\omega_j \in H^s(\mbb{X})$ and therefore the right-hand side can be expressed as $1+\omega_j^2 + \omega_j^4 + \cdots$, as a kernel SOS form. Therefore, $|x|^2 - a|\pi_j(x)|^2$ is a kernel SOS form under the linear--Sobolev kernel and hence $Q_0 - a\pi_j\times \pi_j$ is a kernel SOS form. The same argument applies to $Q-aQ_0$ when $a$ is sufficiently small. Thus, the assertion is proved, which implies that 
    $$\mc{S} = \BRA{\pi \in [\rg C^s(\mbb{X})]^{d_u} : \int_0^\infty T_\pi^{t*}Q T_\pi^t \xD{t}\in \mc{B}_1(\rg H^s(\mbb X))}. $$
    In other words, the kernel SOS stabilizing policies are such that in closed loop, the state cost is finite, as the input cost is automatically dominated by the former. 
    \par Then we see that $\pi\in \mc{S}$ if and only if there exists a $0\preceq S_\pi\in \mc{B}_1$ such that the Lyapunov inequality 
    $$S_\pi A_\pi + A_\pi^*S_\pi + Q \preceq 0$$
    holds when restricted to $\mc{D}(A_\pi)$, i.e., for all $z\in \mc{D}(A_\pi)$, $\ipp{z}{S_\pi A_\pi + A_\pi^*S_\pi + Q}{z}\leq 0$. By defining the unbounded operators $\tilde{S}_\pi = S_\pi^{-1}$ and $\tilde{Q} = Q^{-1}$, the above inequality can be rewritten as $A_\pi \tilde{S}_\pi + \tilde{S}_\pi A_\pi^* + \tilde{S}_\pi \tilde{Q} \tilde{S}_\pi \preceq 0$. Recalling the affine relation \eqref{eq:PF.generator.closed-loop} and by introducing $\tilde{M}_j = M_{\pi_j}^*\tilde{S}_\pi$, we have the following equivalent Lyapunov inequality
    $$\begin{bmatrix}
        A_0\tilde{S}_\pi + \sum_{j=1}^{d_u} \rg\iota^* A_j\tilde{M}_j & \tilde{S}_\pi \\ \tilde{S}_\pi & \tilde{Q}
    \end{bmatrix} \preceq 0. $$
    Now we see that the set of solutions for $(\tilde{S}_\pi, \tilde{M}_j)$, if they exist, must form a convex set (and hence a connected one). The relation between $(\pi, S_\pi)$ and $(\tilde{S}_\pi, \tilde{M}_1, \cdots, \tilde{M}_{d_u})$ is continuous. Thus $\mc{S}$ is the continuous pre-image of a convex set. 
    \par Suppose that $\pi\in \mc{S}_\mr{e}$ and hence for some $\omega_\pi<0$, $\|T_\pi^t\| \leq c_\pi\mr{e}^{\omega_\pi t}$. Consider a small $\rg C^s$-perturbation $\varpi$ of the policy $\pi$, which results in an $\rg H^s$-norm-bounded perturbation of $A_\pi$. Let $\|A_{\pi+\varpi} - A_\pi \|\leq b$. According to the bounded perturbation theorem (Phillips' theorem) of strongly continuous semigroups, we have $\|T_{\pi+\varpi}^t\| \leq c_\pi\mr{e}^{(\omega_\pi + c_\pi b)t}$ and hence when $b$ is small enough, $\omega_\pi + c_\pi b$ is still negative. Thus $\pi+\varpi \in \mc{S}_\mr{e}$. 
    Upon this perturbation, 
    $$\|T_{\pi+\varpi}^t-T_\pi^t\| \leq c_\pi \mr{e}^{\omega_\pi t}\bra{\mr{e}^{c_\pi bt}-1} \lesssim \mr{e}^{(\omega_\pi+c_\pi b)t} \to 0, \ t\to\infty.$$
    Hence, it can be shown that $\|P_{\pi+\varpi} - P_\pi\|_{\mc{B}_1} \leq b$. This implies the continuity of $P_\pi$ with respect to $\pi$. 
\end{proof} 

The above geometric properties give hope for differentiating the policy evaluation $P_\pi$ with respect to $\pi$, when $\pi$ is an interior point of $\mc{S}$. Since we only proved that $\mc{S}_\mr{e}$ is an open subset of $\mc{S}$, we shall focus on the class of exponentially stabilizing policies, in order to derive an optimality condition by differentiation.  
\begin{proposition}[Differentiability of cost evaluation]
    On $\mc{S}_\mr{e}$, $P_\pi$ is Fr{\'e}chet differentiable with respect to $\pi$, i.e., the Fr{\'e}chet derivative exists in strong operator topology on $\mc{D}(A_\pi)$:
    $$\mr{D}P_\pi(\varpi) = \lim_{\varepsilon\downarrow 0} \frac{1}{\varepsilon}\bra{ P_{\pi+\varepsilon\varpi} - P_\pi}. $$
    Precisely, $\forall z\in \mc{D}(A_\pi)$, $\lim_{\varepsilon\downarrow 0} \frac{1}{\varepsilon}\bra{ P_{\pi+\varepsilon\varpi}z - P_\pi z}$ exists in $\rg H^s(\mbb{X})$. 
    It satisfies the following Lyapunov equation of unbounded operators:
    \begin{equation}\label{eq:Frechet.derivative}
        \mr{D}P_\pi(\varpi)A_\pi + A_\pi^* \mr{D}P_\pi(\varpi) + \sum_{j=1}^{d_u} \bra{P_\pi \rg\iota^* A_jM_{\varpi_j}^* + M_{\varpi_j} A_j^\ast \rg\iota P_\pi + \varpi_j\times\pi_j + \pi_j\times\varpi_j} = 0 \text{ on } \mc{D}(A_\pi). 
    \end{equation}
\end{proposition}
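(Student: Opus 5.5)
The plan is to start from the Lyapunov characterization of $P_\pi$ and perturb it, rather than differentiating the integral formula \eqref{eq:policy.evaluation} directly. For $\pi\in\mc S_\mr{e}$, write $Q_\pi := Q+\sum_j \pi_j\times\pi_j$. Since $\|T_\pi^t\|\lesssim \mr{e}^{\omega_\pi t}$ with $\omega_\pi<0$, differentiating $\ipp{z}{T_\pi^{t*}P_\pi T_\pi^t}{z}$ for $z\in\mc D(A_\pi)$, exactly as in the proof of the differential Lyapunov descent lemma, gives the identity $\ipp{z}{P_\pi A_\pi + A_\pi^*P_\pi + Q_\pi}{z}=0$. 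By polarization this is the operator Lyapunov equation $P_\pi A_\pi + A_\pi^*P_\pi + Q_\pi = 0$ on $\mc D(A_\pi)$, understood in the weak sense. By Proposition \ref{prop:optimal.geometry}, $\pi+\varepsilon\varpi\in\mc S_\mr{e}$ for small $\varepsilon$, so the same equation holds for $\pi_\varepsilon:=\pi+\varepsilon\varpi$. Moreover, by \eqref{eq:PF.generator.closed-loop}, $A_{\pi_\varepsilon} = A_\pi + \varepsilon\sum_j \rg\iota^*A_jM_{\varpi_j}^*$, and this perturbation is only relatively bounded. It does, however, act on the common core of $C^1$-type elements, and at least on $\mc G_1$ as defined in Corollary \ref{cor:Yosida.DD}.

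Subtracting the two Lyapunov equations and dividing by $\varepsilon$, the difference quotient $D_\varepsilon := (P_{\pi_\varepsilon}-P_\pi)/\varepsilon$ satisfies
$$D_\varepsilon A_{\pi_\varepsilon} + A_{\pi_\varepsilon}^* D_\varepsilon + \sum_j\bigl(P_\pi \rg\iota^*A_jM_{\varpi_j}^* + M_{\varpi_j}A_j^*\rg\iota P_\pi + \varpi_j\times\pi_j+\pi_j\times\varpi_j\bigr) + \varepsilon\sum_j \varpi_j\times\varpi_j = 0 .$$
Since $A_{\pi_\varepsilon}$ generates an exponentially stable semigroup, this Lyapunov equation has the unique solution
$$D_\varepsilon = \int_0^\infty T_{\pi_\varepsilon}^{t*} R_\varepsilon T_{\pi_\varepsilon}^t\,\xD{t},$$
where $R_\varepsilon$ denotes the inhomogeneous term. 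The candidate derivative is then $D_0 := \int_0^\infty T_\pi^{t*}R_0T_\pi^t\,\xD{t}$, and by construction it satisfies \eqref{eq:Frechet.derivative}.

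The convergence $D_\varepsilon z\to D_0 z$ for $z\in\mc D(A_\pi)$ would be shown by combining two ingredients. The first is the uniform exponential bound $\|T_{\pi_\varepsilon}^t\|\le c_\pi\mr{e}^{(\omega_\pi+c_\pi b\varepsilon)t}$ from the perturbation step of Proposition \ref{prop:optimal.geometry}. The second is the strong convergence $T_{\pi_\varepsilon}^t\to T_\pi^t$, which follows from a Duhamel formula of the same type as the $\varphi(s)$ argument in Corollary \ref{cor:Yosida.DD}. Dominated convergence in $t$ then yields the limit.

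The main obstacle is that $R_0$ contains $P_\pi\rg\iota^*A_jM_{\varpi_j}^*$, an unbounded term, so neither the integral nor the weak formulation is obviously meaningful. I would handle this by pairing with $z\in\mc D(A_\pi)$ and moving $A_j$ onto the Koopman side: $\ip{T^t_\pi z'}{P_\pi\rg\iota^*A_jM_{\varpi_j}^*T^t_\pi z} = \ip{M_{\varpi_j}L_j\rg\iota P_\pi T_\pi^t z'}{T_\pi^t z}$. This requires $P_\pi$ to map into the domain of $L_j$, that is, into functions with one more derivative. I would justify that by a smoothing property of kernel SOS forms, or else simply restrict $z$ to $\mc G_1$ and invoke the domain-invariance growth bound from the proposition on the growth bound on the generator domain. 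If this regularity fails, the fallback is to prove the statement in the weak (sesquilinear form) sense on $\mc G_1$ and extend it by density.
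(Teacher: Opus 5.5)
Your skeleton is the paper's: write the Lyapunov equations for $P_\pi$ and $P_{\pi+\varepsilon\varpi}$, subtract, divide by $\varepsilon$, and let $\varepsilon\downarrow 0$. The paper finishes by simply asserting that the difference quotient converges because the inhomogeneous term and $A_{\pi+\varepsilon\varpi}$ converge. You correctly see that this step is the real content, so you are not missing an idea the paper supplies. But the argument you sketch for that step does not close, and it breaks exactly at the obstacle you name.

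Every step of your convergence argument needs quantitative control of the unbounded operator $B_\varpi:=\sum_j\rg\iota^*A_jM_{\varpi_j}^*$. This is the adjoint of the first-order differential operator $g\mapsto\sum_j\varpi_j\,\mr{D}g\cdot f_j$, and you need it controlled along the \emph{perturbed} orbits $T_{\pi_\varepsilon}^tz$, uniformly in $\varepsilon$ and decaying in $t$. None of the tools you invoke provides this.

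First, the bound $\|T_{\pi_\varepsilon}^t\|\le c_\pi\mr{e}^{(\omega_\pi+c_\pi b\varepsilon)t}$ is Phillips' bounded-perturbation estimate with $b=\|B_\varpi\|$. As you note yourself, $B_\varpi$ is unbounded, so $b=\infty$ and the argument you cite does not give this bound. You would need a Miyadera--Voigt-type estimate or a direct estimate on the flows of $f_{\pi_\varepsilon}$.

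Second, the smoothing route requires $\rg\iota P_\pi$ to map into $\mc{D}(L_j)$, but kernel SOS forms gain no derivative. Indeed, $P_\pi z'=\int_0^\infty\bigl(K_\pi^tQT_\pi^tz'+\sum_j\ip{\pi_j}{T_\pi^tz'}\,\pi_j\circ S_{f_\pi}^t\bigr)\xD{t}$ is built from the functions $\pi_j\circ S_{f_\pi}^t$ and $K_\pi^t$ applied to eigenfunctions of $Q$. These carry only $\rg C^s$ or $\rg H^s$ regularity, so applying $L_j$ to them generally leaves $H^s$.

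Third, the space $\mc{G}_1$ of Corollary \ref{cor:Yosida.DD} controls $A_jM_{\pi_j}^*z$, not $A_jM_{\varpi_j}^*z$. Its invariance bound also concerns $T_\pi^t$, not $T_{\pi_\varepsilon}^t$ uniformly in $\varepsilon$.

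Consequently, three of your steps presuppose the missing estimate: the integral formula for $D_\varepsilon$ with source $R_\varepsilon$, the claim that $D_0$ satisfies \eqref{eq:Frechet.derivative} ``by construction'', and the dominating function for the convergence step.

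The fallback does not rescue the statement either. A form identity on a dense subspace, extended by density, gives at best weak convergence of $D_\varepsilon z$. Even that requires a uniform bound $\sup_\varepsilon\|D_\varepsilon\|<\infty$, which is again what the unbounded source threatens. It does not give the strong limit for every $z\in\mc{D}(A_\pi)$ that the proposition asserts.

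To make your scheme rigorous, you have two options. One is a hypothesis yielding a regularity class that $T_{\pi_\varepsilon}^t$ preserves with an $\varepsilon$-uniform $\mr{e}^{\omega t}$ bound in the graph norm of $B_\varpi$, for instance one extra order of smoothness on $\pi$, $\varpi$, $f_j$ and the factors of $q$. The other is to settle for the derivative in a weaker topology. The loss of regularity is visible at the level of functions: formally, $\frac{\xD{}}{\xD{\varepsilon}}v_{\pi+\varepsilon\varpi}(x)\big|_{\varepsilon=0}=\int_0^\infty\bigl(\nabla v_\pi\cdot\sum_j\varpi_jf_j+2\pi\cdot\varpi\bigr)(S_{f_\pi}^t(x))\,\xD{t}$. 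This involves $\nabla v_\pi$ and hence has one derivative fewer than $v_\pi$.
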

\begin{proof}
    For $P_\pi$ we have the following Lyapunov equation
    $$P_\pi A_\pi + A_\pi^*P_\pi + \bra{Q + \sum_{j=1}^{d_u}\pi_j \times\pi_j} = 0 \text{ on } \mc{D}(A_\pi).$$
    This is derived from its definition \eqref{eq:policy.evaluation} by left-multiplying $A_\pi^*$ and right-multiplying $A_\pi$, using the relation $\xDD{}{t}T_\pi^t = T_\pi^tA_\pi = A_\pi T_\pi^t$, and integrating. 
    Then, imposing a perturbation from $\pi$ to $\pi + \varepsilon \varpi$ with $\varepsilon>0$ and $\varpi\in [\rg C^s(\mbb{X})]^{d_u}$, we have the Lyapunov equation corresponding to $P_{\pi+\ve\varpi}$. Subtracting the two equations, we obtain
    $$\begin{aligned}
        \bra{P_{\pi+\ve\varpi} - P_\pi} A_{\pi+\ve\varpi} + A_{\pi+\ve\varpi}^* \bra{P_{\pi+\ve\varpi}-P_\pi} + P_\pi \bra{A_{\pi+\ve\varpi} - A_\pi} + \bra{A_{\pi+\ve\varpi}^* - A_\pi^*}P_\pi & \\
        + \sum_{j=1}^{d_u}\bra{\ve \varpi_j \times\pi_j + \ve \pi_j\times \varpi_j + \ve^2 \varpi_j\times\varpi_j} &= 0 \text{ on } \mc{D}(A_\pi).
    \end{aligned} $$
    With $A_{\pi+\ve\varpi} - A_\pi = \ve \sum_{j=1}^{d_u} \rg\iota^* A_j M_{\varpi_j}^*$ on $\mc{D}(A_\pi)$, dividing by $\ve$, we have
    $$\begin{aligned}
        \frac{1}{\ve}\bra{P_{\pi+\ve\varpi} - P_\pi} A_{\pi+\ve\varpi} + A_{\pi+\ve\varpi}^* \frac{1}{\ve}\bra{P_{\pi+\ve\varpi}-P_\pi} 
        + P_\pi \sum_{j=1}^{d_u} \rg\iota^* A_j M_{\varpi_j}^* + \sum_{j=1}^{d_u} M_{\varpi_j} A_j^\ast \rg\iota P_\pi & \\
        + \sum_{j=1}^{d_u}\bra{\varpi_j \times\pi_j + \pi_j\times \varpi_j + \ve\varpi_j\times\varpi_j} &= 0 \text{ on } \mc{D}(A_\pi).
    \end{aligned} $$
    When $\varepsilon\to0$, combining the summations gives a finite-rank operator on $\mc{D}(A_\pi)$, and $A_{\pi+\ve\varpi}$ converges to $A_\pi$ strongly on $\mc{D}(A_\pi)$. Thus, $\frac{1}{\ve}\bra{P_{\pi+\ve\varpi} - P_\pi}$ has a limit in strong operator topology as $\ve\to 0$, and the Lyapunov equality that it satisfies is as in the conclusion. This completes the proof. 
\end{proof}

\begin{corollary}[Stationarity condition]
    Suppose that $\pi\in \mc{S}_\mr{e}$. Then $\mr{D}P_\pi(\varpi) = 0$ for all $\varpi\in [\rg C^s(\mbb{X})]^{d_u}$ if and only if 
    \begin{equation}\label{eq:optimal.stationary}
        P_\pi \rg\iota^* A_jM_{\varpi_j}^* + M_{\varpi_j}A_j^* \rg\iota P_\pi + \varpi_j\times \pi_j + \pi_j\times\varpi_j = 0 \text{ on } \mc{D}(A_\pi), \ \forall \varpi_j\in \rg C^s(\mbb{X}), j=1,\cdots,d_u. 
    \end{equation}
    That is, $(P_\pi \rg\iota^* A_jM_{\varpi_j}^* + M_{\varpi_j}A_j^* \rg\iota P_\pi + \varpi_j\times \pi_j + \pi_j\times\varpi_j)z = 0$ for all $z\in \mc{D}(A_\pi)$, or equivalently, $\ipp{z}{P_\pi \rg\iota^* A_jM_{\varpi_j}^* + M_{\varpi_j}A_j^* \rg\iota P_\pi + \varpi_j\times \pi_j + \pi_j\times\varpi_j}{z'}=0$ for all $z,z'\in \mc{D}(A_\pi)$. 
\end{corollary}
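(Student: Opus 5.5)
The plan is to read the Lyapunov equation \eqref{eq:Frechet.derivative} from the preceding proposition as a linear equation $X A_\pi + A_\pi^* X + F_\varpi = 0$ on $\mc{D}(A_\pi)$ for the unknown $X=\mr{D}P_\pi(\varpi)$. Here the forcing term is
$$F_\varpi := \sum_{j=1}^{d_u} \bra{P_\pi \rg\iota^* A_jM_{\varpi_j}^* + M_{\varpi_j} A_j^\ast \rg\iota P_\pi + \varpi_j\times\pi_j + \pi_j\times\varpi_j}.$$
The whole argument reduces to two facts. First, because $\pi\in\mc{S}_\mr{e}$, this Lyapunov equation has a unique solution, which is given by the integral representation. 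Second, the direction $\varpi$ can be chosen to activate one component $j$ at a time.

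For uniqueness, I will show that whenever $\|T_\pi^t\|\lesssim \mr{e}^{\omega_\pi t}$ with $\omega_\pi<0$, any solution $X$ (in the weak sense $\ipp{z}{\cdot}{z'}$ with $z,z'\in\mc{D}(A_\pi)$) must equal $\int_0^\infty T_\pi^{t*}F_\varpi T_\pi^t\,\xD{t}$. To do this, I take $z,z'\in\mc{D}(A_\pi)$ and differentiate $t\mapsto \ipp{T_\pi^t z}{X}{T_\pi^t z'}$, exactly as in the proof of the differential Lyapunov descent lemma. This uses $\frac{\xD{}}{\xD{t}}T_\pi^t z = A_\pi T_\pi^t z$ and the invariance $T_\pi^t\mc{D}(A_\pi)\subset \mc{D}(A_\pi)$. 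The derivative equals $-\ipp{T_\pi^t z}{F_\varpi}{T_\pi^t z'}$. I then integrate over $[0,\infty)$ and use the exponential decay to kill the boundary term at infinity. One caveat: the terms $P_\pi\rg\iota^*A_jM_{\varpi_j}^*$ are unbounded, so $\ipp{T^t z}{F_\varpi}{T^t z'}$ has to be controlled by graph norms. I would control these by the bound $\|T_\pi^t\|_{\mc{G}_1\to\mc{G}_1}\lesssim \mr{e}^{\omega_\pi t}$ from the growth-bound proposition (assuming $s\geq 2$), so that the integral converges. The consequence is that $\mr{D}P_\pi(\varpi)=0$ on $\mc{D}(A_\pi)$ if and only if $\ipp{T_\pi^t z}{F_\varpi}{T_\pi^t z'}$ integrates to zero for all $z,z'$.

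For the direction ``if'', suppose \eqref{eq:optimal.stationary} holds for every $j$ and every $\varpi_j$. Summing over $j$ gives $F_\varpi=0$ on $\mc{D}(A_\pi)$. Then $X=0$ solves the Lyapunov equation, and uniqueness gives $\mr{D}P_\pi(\varpi)=0$.

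For the direction ``only if'', suppose $\mr{D}P_\pi(\varpi)=0$ for all $\varpi$. Plugging $X=0$ into \eqref{eq:Frechet.derivative} immediately yields $F_\varpi=0$ on $\mc{D}(A_\pi)$, with no integral argument needed. It remains to split the sum into individual $j$. I choose $\varpi=(0,\dots,0,\varpi_j,0,\dots,0)$, which is admissible since $\varpi$ ranges over all of $[\rg C^s(\mbb{X})]^{d_u}$. With this choice $F_\varpi$ reduces to exactly the $j$-th summand, which is \eqref{eq:optimal.stationary}. The equivalence of the three stated forms is then the polarization identity for the symmetric operator in the bracket, together with the density of $\mc{D}(A_\pi)$.

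The main obstacle is the uniqueness step. It needs the convergence of the improper integral of the unbounded forcing terms along $T_\pi^t z$, and the vanishing of $\ipp{T^t z}{X}{T^t z'}$ as $t\to\infty$ for a possibly unbounded-on-$\mc{D}(A_\pi)$ candidate $X$. I would try to handle both by working on $\mc{G}_1$ with the graph-norm growth bound. Notably, only the ``if'' direction needs this step, while the ``only if'' direction is purely algebraic.
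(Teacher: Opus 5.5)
Your argument is the one the paper has in mind. The corollary is stated without proof, as a direct reading of \eqref{eq:Frechet.derivative}. The ``only if'' direction comes from substituting $\mr{D}P_\pi(\varpi)=0$ and isolating one component of $\varpi$; you do this correctly, and it is purely algebraic. The ``if'' direction comes from uniqueness of the solution of the Lyapunov equation under exponential stability of $\{T_\pi^t\}_{t\geq0}$. That uniqueness step is the only one that is not yet a proof, and the paper also takes it for granted.

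As you state it (``any solution $X$ in the weak sense''), uniqueness needs an a priori bound on $X$. Without one, the homogeneous equation imposed weakly on a core can have nonzero solutions even for exponentially stable semigroups. For example, with $T^tf(x)=\mr{e}^{-t}f(x+t)$ on $L^2(\mR)$, the form $\int_{\mR}\mr{e}^{-2x}w_1(x)w_2(x)\,\xD{x}$ on $C_c^\infty(\mR)$ is invariant under $T^t$. Your proposed remedy, the bound $\|T_\pi^t\|_{\mc{G}_1\to\mc{G}_1}\lesssim\mr{e}^{\omega_\pi t}$, controls $T_\pi^tz'$ but says nothing about how $X$ acts on it. So the boundary term $\ipp{T_\pi^tz}{X}{T_\pi^tz'}$ as $t\to\infty$ is still uncontrolled. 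The remedy also imports $s\geq2$, which the corollary does not assume.

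The missing idea is to use how $X=\mr{D}P_\pi(\varpi)$ arises. By the preceding proposition, it is the pointwise limit on $\mc{D}(A_\pi)$, along any sequence $\ve_n\downarrow0$, of the bounded operators $\ve_n^{-1}(P_{\pi+\ve_n\varpi}-P_\pi)$; these are bounded because $\mc{S}_\mr{e}$ is open. Apply the Banach--Steinhaus theorem on the Banach space $\mc{D}(A_\pi)$ with graph norm $\|z\|_{A_\pi}=\|z\|+\|A_\pi z\|$. This gives $\|Xz\|\leq C\|z\|_{A_\pi}$. Since $A_\pi T_\pi^t=T_\pi^tA_\pi$ on $\mc{D}(A_\pi)$, you also get $\|T_\pi^tz\|_{A_\pi}\lesssim\mr{e}^{\omega_\pi t}\|z\|_{A_\pi}$, with no assumption beyond $\pi\in\mc{S}_\mr{e}$.

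Now take $z\in\mc{D}(A_\pi)$ and $z'\in\mc{D}(A_\pi^2)$. The function $t\mapsto\ipp{T_\pi^tz}{X}{T_\pi^tz'}$ is differentiable with derivative $-\ipp{T_\pi^tz}{F_\varpi}{T_\pi^tz'}=0$. It is also bounded by $C\mr{e}^{2\omega_\pi t}\|z\|\,\|z'\|_{A_\pi}\to0$, so $\ipp{z}{X}{z'}=0$. Three facts then give $X=0$ on $\mc{D}(A_\pi)$: density of $\mc{D}(A_\pi)$, $\mc{D}(A_\pi^2)$ being a core, and graph-continuity of $X$.

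In this direction $F_\varpi=0$. So the convergence of the forcing integral, which you list as part of the obstacle, never arises, and the full representation formula is not needed. Also, polarization plays no role in the equivalence of the three stated forms; density of $\mc{D}(A_\pi)$ alone suffices.
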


To computationally impose the stationarity condition \eqref{eq:optimal.stationary} as a matrix algebraic Riccati equation, we will again adopt the Yosida approximation and its data-based version. The Yosida approximation aims to remove the difficulty of unbounded operators that were obtained as a by-product of differentiation, and the data-based approximation reduces the operator condition to one on a matrix.

\subsection{Yosida Approximation and Its Data-Based Approximation}
First, by finite-time Yosida approximations, we replace the $A_j$ ($j=1,\cdots, d_u$) by $A_{j,\lambda,\theta}$, and hence arrive at the following approximate form of the stationarity condition:
$$ P_\pi \rg\iota^* A_{j, \lambda,\theta}M_{\varpi_j}^* + M_{\varpi_j} A_{j, \lambda,\theta}^* \rg\iota P_\pi + \varpi_j\times \pi_j + \pi_j\times\varpi_j = 0,\quad\forall \varpi_j\in \rg C^s(\mbb{X}), j=1,\cdots,d_u. $$
The condition is no longer restricted to $\mc{D}(A_\pi)$ as a dense subspace of $\rg H^s(\mbb{X})$, since all operators involved in the approximation are bounded. 
In other words, the operator that is set to zero in the above formula is interpreted as $\ipp{z}{\cdot}{z'} = 0$ for all $z,z'\in \rg H^s(\mbb X)$. Since $\rg H^s(\mbb{X})$ is the closed span of all $\rg\phi(x)$, $x\in \mbb{X}$, we can write 
\begin{equation}\label{eq:optimal.2}
    \ipp{\rg\phi(x)}{P_\pi \rg\iota^* A_{j, \lambda,\theta} M_{\varpi_j}^* + M_{\varpi_j} A_{j, \lambda,\theta}^* \rg\iota P_\pi + \varpi_j\times \pi_j + \pi_j\times\varpi_j}{\rg\phi(x')} = 0, \ \forall x,x'\in \mbb{X}, \ \forall \varpi_j\in \rg C^s(\mbb{X}), j=1,\cdots,d_u. 
\end{equation}
In fact, the operator that is set to zero belongs to the trace class, since $P_\pi\in \mc{B}_1(\rg H^s(\mbb{X}))$ when $\pi\in \mc{S}_\mr{e}$. 
It thus determines a kernel quadratic form as a bivariate function of $x$ and $x'$ that equals $0$ everywhere on $\mbb{X} \times \mbb{X}$. 

\par By condition \eqref{eq:optimal.2}, if we let $x'=x$, then by calculating $\ipp{\rg\phi(x)}{\pi_j\times\varpi_j}{\rg\phi(x)} = \pi_j(x)\varpi_j(x)$ and $M_{\varpi_j}^*\rg\phi(x) = \varpi_j(x)\phi(x)$, we arrive at
\begin{equation*}
    \varpi_j(x) \Bra{ 2\pi_j(x) + 2\ip{\rg\phi(x)}{P_\pi \rg\iota^*A_{j,\lambda,\theta} \phi(x)} } = 0, \ \forall x\in \mbb{X}, \ \forall \varpi_j\in \rg C^s(\mbb{X}), j=1,\cdots,d_u. 
\end{equation*}
Due to the arbitrariness of the choice of $\varpi_j$, the policy that meets the Yosida-approximated stationarity condition is linked to the cost operator $P_\pi$ according to:
\begin{equation}\label{eq:optimal.policy.Yosida}
    \pi_j(x) = -\ip{\rg\phi(x)}{P_\pi \rg\iota^*A_{j,\lambda,\theta} \phi(x)}, \, \forall x\in \mbb{X}, \ j=1,\cdots,d_u. 
\end{equation}
\begin{remark}[Lower regularity than required after Yosida approximation]
    The right-hand side determined above may not be a $\rg C^s(\mbb{X})$-policy, but instead only a $\rg H^s(\mbb{X})$ one. The regularity is deteriorated due to the Yosida approximations, which allows the flow in the RKHS to be well-defined even without a $\rg C^s(\mbb{X})$-policy; indeed, since $A_{0,\lambda,\theta}$ and $A_{j,\lambda,\theta}$ ($j=1,\cdots,d_u$) are all bounded operators, the constructed $A_{\pi,\lambda,\theta}$ is a bounded operator on $\rg H^s(\mbb{X})$ as long as $\pi\in [\rg H^s(\mbb{X})]^{d_u}$. This is, however, not allowed by the original constraint \eqref{eq:optimal.stationary}. 
\end{remark}

We know from earlier discussions that the policy evaluation gives the Lyapunov equality that relates $P_\pi$ to the policy $\pi$: $P_\pi A_\pi + A_\pi^*P_\pi + \bra{Q + \sum_{j=1}^{d_u}\pi_j \times\pi_j} = 0$ on $\mc{D}(A_\pi)$. By Yosida approximation, it is approximated as $P_\pi A_{\pi,\lambda,\theta} + A_{\pi,\lambda,\theta}^* P_\pi + \bra{Q + \sum_{j=1}^{d_u}\pi_j \times\pi_j} = 0$. That is, 
$$ \ipp{\rg\phi(x)}{P_\pi A_{\pi,\lambda,\theta} + A_{\pi,\lambda,\theta}^* P_\pi + Q + \sum_{j=1}^{d_u}\pi_j \times\pi_j}{\rg\phi(x')} = 0, \ \forall x,x'\in \mbb{X}.$$
By plugging in the policy $\pi$ as determined in \eqref{eq:optimal.policy.Yosida}, we have the following equality as the approximate stationarity condition pointwise on $x\in \mbb{X}$:
\begin{equation}\label{eq:optimal.quartic}
    \begin{aligned}
        & \ipp{\rg\phi(x)}{P_\pi A_{0,\lambda,\theta} + A_{0,\lambda,\theta}^*P_\pi + Q}{\rg\phi(x')} \\
        &\quad - \sum_{j=1}^{d_u} \ip{\rg\phi(x')}{P_\pi \rg\iota^* A_{j,\lambda,\theta}\phi(x)} \ip{\rg\phi(x)}{P_\pi \rg\iota^* A_{j,\lambda,\theta}\phi(x)} \\
        & \quad - \sum_{j=1}^{d_u} \ip{\rg\phi(x)}{P_\pi \rg\iota^* A_{j,\lambda,\theta}\phi(x')} \ip{\rg\phi(x')}{P_\pi \rg\iota^* A_{j,\lambda,\theta}\phi(x')} \\
        & \quad + \sum_{j=1}^{d_u} \ip{\rg\phi(x)}{P_\pi \rg\iota^* A_{j,\lambda,\theta}\phi(x)} \ip{\rg\phi(x')}{P_\pi \rg\iota^* A_{j,\lambda,\theta}\phi(x')} = 0, \ \forall x, x'\in \mbb{X}. 
    \end{aligned}
\end{equation}
At first glance, each term under the summation symbol involves a canonical feature of $x$ twice and a canonical feature of $x'$ twice, thus appearing to be a ``quartic'' form. However, we shall again realize that bilinearity in infinite-dimensional RKHSs is only an artifact, as already pointed out in Remark \ref{rem:algebra}. For example, the bilinear term $\ip{\rg\phi(x)}{P_\pi \rg\iota^* A_{j,\lambda,\theta}\phi(x)}$ is clearly a function of $x$ that belongs to the linear--Sobolev space $\rg H^s(\mbb{X})$; hence there must be a unique vector in $\rg H^s(\mbb{X})$ such that this bilinear term is the inner product of such a vector and $\rg\phi(x)$; as such, a bilinear form is a linear form. Even when it encounters $\rg\phi(x)$ again, the trilinear form that has been reduced to a bilinear form is further reduced to a linear form. 

\par The following notions conceptually explain the procedure to reduce \eqref{eq:optimal.quartic} to a bilinear form in $\rg\phi(x)$ and $\rg\phi(x')$. 
\begin{definition}[The trick of $\flat$-transform]\label{def:musical-flat}
    For any given operator $\Omega \in \mc{B}(H^s(\mbb{X}), \rg H^s(\mbb{X}))$, $x\mapsto \ip{\rg\phi(x)}{\Omega \phi(x)}$ is a function in $\rg H^s(\mbb{X})$ and hence (by the Riesz--Fr\'echet theorem) there exists a unique $\omega\in \rg H^s(\mbb{X})$ such that 
    $$\ip{\omega}{\rg\phi(x)} = \ip{\rg\phi(x)}{\Omega\phi(x)}, \, \forall x\in \mbb{X}.$$
    This $\omega$ determined by $\Omega$ is denoted as $\Omega^\flat$. (We use the flat symbol to indicate the ``flattening'' of an ``operator'' to a ``vector''.) 
\end{definition}
It becomes obvious that the flattening mapping $\flat$ maps any operator of the form $g\times h$ ($g\in \rg H^s(\mbb{X})$, $h\in H^s(\mbb{X})$) into the element $gh$ in $\rg H^s(\mbb{X})$, i.e., $(g\times h)^\flat = gh$. Hence, $$\ip{\rg\phi(x)}{P_\pi \rg\iota^* A_{j,\lambda,\theta}\phi(x)} = \ip{\rg\phi(x)}{\bra{P_\pi \rg\iota^* A_{j,\lambda,\theta}}^\flat}. $$
Then we can also reduce the ``kernel cubic form'' $\Omega \phi(x)\times \rg\phi(x) \Omega\phi(x)$ into a quadratic form, and subsequently into a linear form. 
\begin{definition}[Tensor product of a vector and an operator, and its $\flat$-transform]\label{def:tensor}
    For any given $\psi\in \rg H^s(\mbb{X})$ and $\Omega \in \mc{B}(H^s(\mbb{X}), \rg H^s(\mbb{X}))$, define $\psi\times \Omega$ as the linear mapping that satisfies $(\psi\times\Omega)g = \psi\times \Omega g \in \mc{B}(\rg H^s(\mbb{X}), \rg H^s(\mbb{X}))$ for all $g\in H^s(\mbb{X})$, called a \emph{third-order tensor}. 
    Thus, $\ip{\rg\phi(x)}{\bra{(\psi\times \Omega)\phi(x)}h}$ defines a bounded linear functional on $h\in \rg H^s(\mbb{X})$ that is also a $\rg H^s$-function of $x$, which we write as $\ip{h}{(\psi\times\Omega)^\flat \rg\phi(x)}$. 
\end{definition}

Now, $\Omega\phi(x)\times \rg\phi(x)\Omega\phi(x) = \Omega \phi(x) \ip{\omega}{\rg\phi(x)}$, which brings any $g$ to $\ip{\rg\phi(x)}{\omega} \ip{\Omega \phi(x)}{g} = \ip{\rg\phi(x)}{(\omega\times \Omega\phi(x))g }$. Hence, 
$$\Omega\phi(x)\times \rg\phi(x)\Omega\phi(x) = ((\omega\times \Omega)\phi(x))^* \rg\phi(x) = (\omega\times\Omega)^\flat \rg\phi(x) = (\Omega^\flat \times\Omega)^\flat \rg\phi(x).$$ 
Consequently, the kernel quartic forms in \eqref{eq:optimal.quartic} boil down to kernel quadratic forms, which result in the following \emph{operator Riccati equation}:
\begin{equation}\label{eq:optimal.Riccati}
\begin{aligned}
    \ipp{\rg\phi(x)}{P_\pi A_{0,\lambda,\theta} + A_{0,\lambda,\theta}^*P_\pi + \sum_{j=1}^{d_u} \Bra{ 
    \Omega_j^\flat \times \Omega_j^\flat - 
    (\Omega_j^\flat \times\Omega_j)^\flat - 
    (\Omega_j^\flat \times\Omega_j)^{\flat *} 
    } + Q }{\rg\phi(x')}= 0, \ \\
    \qquad \forall x,x'\in \mbb{X}, \ \text{ where } \Omega_j = P_\pi \rg\iota^* A_{j,\lambda,\theta}, \, j=1,\cdots,d_u. 
\end{aligned}
\end{equation}
The condition is still called a Riccati equation for $P_\pi$, since for the operators in the middle, the first two terms depend on $P_\pi$ linearly like in a Lyapunov equality, and the term under summation depends on $\Omega_j$ bilinearly, even though in our case, it can no longer be conveniently expressed as $-P_\pi(\star)P_\pi$ for some symmetric operator $(\star)$ as in the usual finite-dimensional algebraic Riccati equation. 

\par Finally, given a finite sample $\mbb{S} = \{x_i\}_{i=1}^n \subset \mbb{X}$ that is assumed to be obtained in a way satisfying Assumption \ref{assum:sampling.strategy}, we impose the operator Riccati equation as a condition on pairs of points in $\mbb{S}$, i.e., for $(x,x') = (x_i, x_{i'})$, $i,i'=1,\cdots,n$, thus approximating it as a matrix Riccati equation. 
This is to be discussed in Subsection \ref{subsec:optimal.solution}.

\subsection{Error Analysis and Control Performance Bound}
Suppose that the operator Riccati equation \eqref{eq:optimal.Riccati} is satisfied by all pairs of points in $\mbb{S}\times \mbb{S}$. Its violation over all pairs in $\mbb{X}\times \mbb{X}$ should then be bounded according to the fill distance parameter $r_{\mbb{S}}$ as laid out in Assumption \ref{assum:sampling.strategy}. 
To see this, let us write
$$ V_{\lambda,\theta}(P_\pi) := P_\pi A_{0,\lambda,\theta} + A_{0,\lambda,\theta}^*P_\pi + \sum_{j=1}^{d_u} \Bra{ \Omega_j^\flat \times \Omega_j^\flat - (\Omega_j^\flat \times\Omega_j)^\flat - (\Omega_j^\flat \times\Omega_j)^{\flat *} } + Q, $$
which is a symmetric trace-class operator on $\rg H^s(\mbb{X})$. 

\begin{proposition}[Error bound on the pointwise violation of the operator Riccati equation]
    Suppose that Assumption \ref{assum:sampling.strategy} holds and \eqref{eq:optimal.Riccati} is satisfied by all $(x_i, x_{i'})\in \mbb{S}\times \mbb{S}$ ($i,i'=1,\cdots,n$). Then for all $x, x'\in \mbb{X}$, 
    \begin{equation} 
        \abs{ \ipp{\rg\phi(x)}{V_{\lambda,\theta}(P_\pi)}{\rg\phi(x')} } \lesssim r_{\mbb{S}}^{\min(1,\alpha)} \| V_{\lambda,\theta}(P_\pi) \|_{\mc{B}_1}  |x||x'|, 
    \end{equation}
    where $\alpha \in (0, s-d_x/2)$ is a constant. 
\end{proposition}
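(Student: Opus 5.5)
We follow the strategy of Proposition \ref{prop:error-sample}, but now for a bivariate (off-diagonal) kernel quadratic form. Write $V := V_{\lambda,\theta}(P_\pi)$. Since $V$ is a symmetric trace-class operator on $\rg H^s(\mbb{X})$, its spectral decomposition is $V = \sum_{k=1}^\infty \mu_k w_k\times w_k$, with $\{w_k\}$ orthonormal in $\rg H^s(\mbb{X})$ and $\sum_k|\mu_k| = \|V\|_{\mc{B}_1}$. Hence the function to be controlled is
$$F(x,x') := \ipp{\rg\phi(x)}{V}{\rg\phi(x')} = \sum_{k=1}^\infty \mu_k w_k(x)w_k(x').$$
Each $w_k$ factors as $w_k(x) = x\cdot \bar w_k(x)$ with $\bar w_k\in[H^s(\mbb{X})]^{d_x}$ and $\|\bar w_k\|\lesssim \|w_k\|_{\rg H^s}=1$. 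By the Sobolev embedding $H^s(\mbb{X})\hookrightarrow C^\alpha(\mbb{X})$ for $\alpha<s-d_x/2$, this gives the uniform bounds $|w_k(x)|\lesssim |x|$ and $\|\bar w_k\|_{C^\alpha}\lesssim 1$.

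Fix $x,x'\in\mbb{X}$. By Assumption \ref{assum:sampling.strategy}, choose $x_i,x_{i'}\in\mbb{S}$ with $|x-x_i|\le r_{\mbb{S}}|x|$ and $|x'-x_{i'}|\le r_{\mbb{S}}|x'|$. The hypothesis gives $F(x_i,x_{i'})=0$, so it suffices to bound $|F(x,x')-F(x_i,x_{i'})|$. We telescope this difference:
$$F(x,x')-F(x_i,x_{i'}) = \sum_k \mu_k\Bra{(w_k(x)-w_k(x_i))w_k(x') + w_k(x_i)(w_k(x')-w_k(x_{i'}))}.$$
Exactly as in the proof of Proposition \ref{prop:error-sample},
$$|w_k(x)-w_k(x_i)| \le |x-x_i|\,\|\bar w_k\|_{C^0} + |x_i|\,\|\bar w_k\|_{C^\alpha}|x-x_i|^\alpha \lesssim (r_{\mbb{S}} + r_{\mbb{S}}^\alpha)|x|,$$
where we used $|x_i|\le(1+r_{\mbb{S}})|x|$ and the boundedness of $\mbb{X}$ to absorb the extra factor $|x|^\alpha$. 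Combining this with $|w_k(x')|\lesssim|x'|$ and $|w_k(x_i)|\lesssim(1+r_{\mbb{S}})|x|$, each bracket is $\lesssim (r_{\mbb{S}}+r_{\mbb{S}}^\alpha)|x||x'|$. Summing against $|\mu_k|$ then yields $|F(x,x')|\lesssim r_{\mbb{S}}^{\min(1,\alpha)}\|V\|_{\mc{B}_1}|x||x'|$, taking $r_{\mbb{S}}\lesssim 1$ so that $r_{\mbb{S}}+r_{\mbb{S}}^\alpha\lesssim r_{\mbb{S}}^{\min(1,\alpha)}$.

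The main obstacle is justifying the uniform control of $\|\bar w_k\|_{C^\alpha}$ by $\|w_k\|_{\rg H^s}$. The factorization $w=\sum_\ell e_\ell g_\ell$ is non-unique, so we take the minimal-norm representation that defines the quotient-type norm on $\rg H^s(\mbb{X})$; this gives $\sum_\ell\|g_\ell\|_{H^s}\lesssim\|w\|_{\rg H^s}$, after which the embedding constant applies uniformly in $k$. A secondary point is that the trace-class hypothesis is what makes the series converge absolutely. It also allows the pointwise limits to be exchanged with the sum, which is legitimate because the partial sums converge in $\mc{B}_1$ and point evaluation of kernel forms is continuous with respect to that norm.
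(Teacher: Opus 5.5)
Your proof is correct and follows the paper's own argument: spectral decomposition of $V_{\lambda,\theta}(P_\pi)$, the telescoping split of $w_k(x)w_k(x')-w_k(x_i)w_k(x_{i'})$, the $C^0$/$C^\alpha$ estimate on the factor $\bar w_k$ using proportionally dense sampling, and summation against $\sum_k|\mu_k|=\|V_{\lambda,\theta}(P_\pi)\|_{\mc{B}_1}$. Your two additions, the minimal-norm factorization (which makes $\|\bar w_k\|_{C^\alpha}$ uniformly bounded in $k$) and the justification of absolute convergence of the series, supply details that the paper leaves implicit.
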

\begin{proof}
    The proof uses the same ideas as in Proposition \ref{prop:error-sample}. By spectral decomposition, we write $V_{\lambda,\theta}(P_\pi) = \sum_{k=1}^\infty \mu_k v_k\times v_k$, where $\{v_k\}_{k=1}^\infty$ is an orthonormal basis in $\rg H^s(\mbb{X})$. By a data-driven solution, we find a $P_\pi$ (and an associated $\pi$ in \eqref{eq:optimal.policy.Yosida}) such that $\sum_{k=1}^\infty \mu_k v_k(x_i)v_k(x_{i'}) = 0$ for all $i,i'=1,\cdots,n$. 
    For $x$ and similarly for $x'$, there must exist a point $x_i$ and respectively a point $x_{i'}$ in $\mbb{S}$ satisfying $|x-x_i|\leq r_\mbb{S}|x|$ and $|x'-x_{i'}| \leq r_{\mbb{S}}|x'|$. Hence $|v_k(x) - v_k(x_i)|\lesssim r_\mbb{S}|x| + (1+r_\mbb{S})r_\mbb{S}^\alpha |x|^{\alpha+1}$, and similarly $|v_k(x') - v_k(x_{i'})|\lesssim r_\mbb{S}|x'| + (1+r_\mbb{S})r_\mbb{S}^\alpha |x'|^{\alpha+1}$. 
    Hence, 
    $$\abs{ v_k(x)v_k(x') - v_k(x_i)v_k(x_{i'}) } \lesssim 
    |v_k(x')| \bra{r_\mbb{S}|x| + (1+r_\mbb{S})r_\mbb{S}^\alpha |x|^{\alpha+1}} + 
    |v_k(x_i)| \bra{r_\mbb{S}|x'| + (1+r_\mbb{S})r_\mbb{S}^\alpha |x'|^{\alpha+1}}. $$
    Relaxing $|v_k(x')|$ by $|x'|$ and $v_k(x_i)$ by $|x|$ is permissible, since $|x_i| \lesssim |x|$ and $v_k$ is a $\rg H^s(\mbb{X})$-class function with norm $1$. Therefore, $\abs{ v_k(x)v_k(x') - v_k(x_i)v_k(x_{i'}) } \lesssim  r_{\mbb{S}}^{\min(1,\alpha)} |x||x'|$. Summing up over $k=1,\cdots,\infty$ results in a constant coefficient of $\sum_{k=1}^\infty |\mu_k| = \| V_{\lambda,\theta}(P_\pi) \|_{\mc{B}_1}$. 
\end{proof}

The foregoing proposition determines a bound, proportional to the state norms, on the violation of the Yosida-approximated Riccati equation. This is hence a suboptimality guarantee on a virtual Yosida-approximated optimal control problem, in which the state evolves under the approximate semigroup $\{T_{\pi,\lambda,\theta}^t\}_{t\geq 0}$, generated by the approximate infinitesimal generator $A_{\pi,\lambda,\theta} = A_{0,\lambda,\theta} + \sum_{j=1}^{d_u} \rg\iota^*A_{j,\lambda, \theta} M_{\pi_j}^*$. 
In other words, suppose that, instead of considering the true optimal control problem \eqref{eq:optimal.0}, we consider a ``Yosida-approximated cost'':
\begin{equation}\label{eq:optimal.problem.Yosida}
    v_{\lambda,\theta}^\star(x) = \inf_{\pi \in [\rg H^s(\mbb{X})]^{d_u}} v_{\pi,\lambda,\theta}(x) = \inf_{\pi \in [\rg H^s(\mbb{X})]^{d_u}}  \int_0^\infty \bra{ \ipp{T_{\pi,\lambda,\theta}^t\rg\phi(x)}{Q}{T_{\pi,\lambda,\theta}^t\rg\phi(x)} + 
    \sum_{j=1}^{d_u} \abs{\ip{\pi_j}{T_{\pi,\lambda,\theta}^t\rg\phi(x)}}^2 } \,\xD{t}.  
\end{equation}
Then, following the derivation process in the previous subsection, we see that for the approximate optimal control problem \eqref{eq:optimal.problem.Yosida}, the optimal cost $P^\star$ verifies $V_{\lambda,\theta}(P^\star)=0$ and that the optimal policy is determined in a similar way to \eqref{eq:optimal.policy.Yosida}: $\pi^\star_j = -\ip{\rg\phi(x)}{P^\star \rg\iota^* A_{j,\lambda,\theta} \phi(x)}$, $j=1,\cdots,d_u$. 

\begin{proposition}[Performance bound on the Yosida-approximated cost]
    Suppose that Assumption \ref{assum:sampling.strategy} holds and \eqref{eq:optimal.Riccati} is satisfied by all $(x_i, x_{i'})\in \mbb{S}\times \mbb{S}$ ($i,i'=1,\cdots,n$). In addition, suppose that $\|T_\pi^t\|\lesssim \mr{e}^{\omega_\pi t}$ for some $\omega_\pi < 0$ and $s\geq2$. Then for all $x\in \mbb{X}$, the Yosida-approximated cost of the policy $\pi$, $v_{\pi,\lambda,\theta}(x)$, satisfies
    \begin{equation} 
        \abs{ v_{\pi,\lambda,\theta}(x) - \ipp{\rg\phi(x)}{P_\pi}{\rg\phi(x)} }
        \lesssim \bra{(\lambda-\omega_\pi)^{-1/3} + r_{\mbb{S}}^{\min(1,\alpha)}} \| V_{\lambda,\theta}(P_\pi) \|_{\mc{B}_1} |x|^2, 
    \end{equation} 
    and consequently,
    \begin{equation} 
        v_{\lambda,\theta}^\star(x) - \ipp{\rg\phi(x)}{P_\pi}{\rg\phi(x)}
        \lesssim \bra{(\lambda-\omega_\pi)^{-1/3} + r_{\mbb{S}}^{\min(1,\alpha)}} \| V_{\lambda,\theta}(P_\pi) \|_{\mc{B}_1} |x|^2. 
    \end{equation} 
\end{proposition}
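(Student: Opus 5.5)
The plan mirrors the three-step error analysis of the stabilization section (Propositions \ref{prop:error-sample}, \ref{prop:error-pre-integral} and \ref{prop:error-integral}), with the Lyapunov inequality replaced by the Lyapunov \emph{equation} that $P_\pi$ satisfies up to the residual $V_{\lambda,\theta}(P_\pi)$.

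\textbf{Algebraic identity.} With $\pi$ given by \eqref{eq:optimal.policy.Yosida} and $\Omega_j = P_\pi\rg\iota^*A_{j,\lambda,\theta}$, we have $\pi_j = -\Omega_j^\flat$. Using the $\flat$-identities of Definitions \ref{def:musical-flat}--\ref{def:tensor}, the bracket in $V_{\lambda,\theta}(P_\pi)$ equals $P_\pi \rg\iota^*A_{j,\lambda,\theta}M_{\pi_j}^* + M_{\pi_j}A_{j,\lambda,\theta}^*\rg\iota P_\pi + \pi_j\times\pi_j$ on quadratic forms, since the quartic terms in \eqref{eq:optimal.quartic} reduce this way. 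Hence
\[
\Lambda_\pi := P_\pi A_{\pi,\lambda,\theta} + A_{\pi,\lambda,\theta}^*P_\pi + Q + \textstyle\sum_{j}\pi_j\times\pi_j = V_{\lambda,\theta}(P_\pi).
\]

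\textbf{Integration along the virtual flow.} Because $A_{\pi,\lambda,\theta}$ is bounded, differentiating $t\mapsto \ipp{T^t_{\pi,\lambda,\theta}\rg\phi(x)}{P_\pi}{T^t_{\pi,\lambda,\theta}\rg\phi(x)}$ is legitimate. Integrating over $[0,\infty)$ gives
\[
v_{\pi,\lambda,\theta}(x) - \ipp{\rg\phi(x)}{P_\pi}{\rg\phi(x)} = \int_0^\infty \ipp{T^t_{\pi,\lambda,\theta}\rg\phi(x)}{\Lambda_\pi}{T^t_{\pi,\lambda,\theta}\rg\phi(x)}\,\xD{t},
\]
provided the boundary term at infinity vanishes. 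It does: by the proof of Theorem \ref{th:Yosida.DD}, $\|T^t_{\pi,\lambda,\theta}\|\lesssim \mr{e}^{\omega_{\pi,\lambda}t}$ with $\omega_{\pi,\lambda}<0$. The task is therefore to bound the integrand pointwise in $t$ and integrate. The exponential factor $\mr{e}^{2\max(\omega_\pi,\omega_{\pi,\lambda})t}$ makes every polynomial-in-$t$ prefactor integrable, so all time-dependence collapses into constants.

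\textbf{Pointwise bound on the integrand.} I would split $T^t_{\pi,\lambda,\theta}\rg\phi(x)$ as $T^t_\pi\rg\phi(x)$ plus a correction. The true-flow term is $\ipp{\rg\phi(S^t_{f_\pi}(x))}{\Lambda_\pi}{\rg\phi(S^t_{f_\pi}(x))}$. By the preceding proposition it is bounded by $r_{\mbb S}^{\min(1,\alpha)}\|V_{\lambda,\theta}(P_\pi)\|_{\mc B_1}|S^t_{f_\pi}(x)|^2$. Since $|S^t_{f_\pi}(x)|^2 \propto \|T^t_\pi\rg\phi(x)\|^2\lesssim \mr{e}^{2\omega_\pi t}|x|^2$, integration yields the $r_{\mbb S}^{\min(1,\alpha)}$ contribution. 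The correction term is handled exactly as in Proposition \ref{prop:error-pre-integral}: replace $\rg\phi(x)$ by the twice short-time average $\rg\phi(x)_{\pi,\epsilon,\epsilon}\in\mc D(A_\pi^2)$, at cost $\epsilon|x|^2$. Apply Corollary \ref{cor:Yosida.DD} with $\|\rg\phi(x)_{\pi,\epsilon,\epsilon}\|_{\mc G_2}\lesssim\epsilon^{-2}|x|$; its hypotheses hold by the growth-bound proposition, using $s\geq2$. Then optimize over $\epsilon$. This gives $t^{1/3}(\lambda-\omega_\pi)^{-1/3}\mr{e}^{2\max(\omega_\pi,\omega_{\pi,\lambda})t}\|\Lambda_\pi\||x|^2$, and $\|\Lambda_\pi\|\le\|V_{\lambda,\theta}(P_\pi)\|_{\mc B_1}$. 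Integrating over $t$ gives the $(\lambda-\omega_\pi)^{-1/3}$ term, which proves the first inequality.

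\textbf{Consequence and main obstacle.} The second inequality follows immediately from $v^\star_{\lambda,\theta}(x)\le v_{\pi,\lambda,\theta}(x)$, since $\pi\in[\rg H^s(\mbb X)]^{d_u}$ is admissible for \eqref{eq:optimal.problem.Yosida}. I expect the delicate step to be the identity $\Lambda_\pi=V_{\lambda,\theta}(P_\pi)$. The $\flat$-reduction is only asserted for diagonal kernel evaluations, so I would verify it by checking $\ipp{\rg\phi(x)}{\cdot}{\rg\phi(x')}$ for all $x,x'$ against \eqref{eq:optimal.quartic}. A secondary point is that $\omega_{\pi,\lambda}<0$ requires $\omega_\pi<0$ and $\lambda>\omega_\pi$, which the hypotheses provide.
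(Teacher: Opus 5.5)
Your proposal follows the paper's proof essentially step for step: the identity $V_{\lambda,\theta}(P_\pi)=P_\pi A_{\pi,\lambda,\theta}+A_{\pi,\lambda,\theta}^*P_\pi+Q+\sum_j\pi_j\times\pi_j$, then integrating $\frac{\mr{d}}{\mr{d}t}\ipp{T^t_{\pi,\lambda,\theta}\rg\phi(x)}{P_\pi}{T^t_{\pi,\lambda,\theta}\rg\phi(x)}$ over $[0,\infty)$, then a pointwise bound on the integrand (the sampled-violation proposition, plus the twice short-time average with Corollary~\ref{cor:Yosida.DD} and an optimized $\epsilon$), and finally admissibility of $\pi$ for the second inequality. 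One caveat: the decay $\|T^t_{\pi,\lambda,\theta}\|\lesssim\mr{e}^{\omega_{\pi,\lambda}t}$ that you use for the boundary term does not follow from the proof of Theorem~\ref{th:Yosida.DD}, since $A_{\pi,\lambda,\theta}$ is built from the open-loop Yosida approximants and is not the Yosida approximation of $A_\pi$; however, the paper relies on the same unproven bound (in Corollary~\ref{cor:Yosida.DD} and in this very proof), so your argument is as complete as the paper's.
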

\begin{proof}
    By the conclusion of the previous proposition, we have
    $$\abs{ \ipp{T_\pi^t \rg\phi(x)}{V_{\lambda,\theta}(P_\pi)}{ T_\pi^t  \rg\phi(x)} } \lesssim r_{\mbb{S}}^{\min(1,\alpha)} \| V_{\lambda,\theta}(P_\pi) \|_{\mc{B}_1} \mr{e}^{2\omega_{\pi} t} |x|^2, \ \forall x\in \mbb{X}, \ t\geq 0, $$
    since $T_\pi^t \rg\phi(x) = \rg\phi\bra{S_{f_\pi}^t (x)}$ holds and $\|T_\pi^t\|\lesssim \mr{e}^{\omega_\pi t}$. 
    
    \par In view of the expression of $V_{\lambda,\theta}(P_\pi)$, we would like to replace $T_\pi^t$ with $T_{\pi,\lambda,\theta}^t$, which, however, is complicated by the fact that $\rg\phi(x) \notin \mc{D}(A_\pi^2)$. Analogous to the proof procedure used in Proposition \ref{prop:error-pre-integral}, we use $\rg\phi(x)_{\pi,\epsilon,\epsilon}$ (the twice short-time average) as an intermediate to build the desired bound. It is thus analogously deduced that
    $$\abs{ \ipp{T_\pi^t \rg\phi(x)_{\pi,\epsilon,\epsilon}}{V_{\lambda,\theta}(P_\pi)}{T_\pi^t \rg\phi(x)_{\pi,\epsilon,\epsilon}} - \ipp{T_\pi^t \rg\phi(x)}{V_{\lambda,\theta}(P_\pi)}{T_\pi^t \rg\phi(x)}} 
    \lesssim \mr{e}^{2\omega_\pi t} \norm{{V_{\lambda,\theta}(P_\pi)}} \bra{2\epsilon + \epsilon^2}|x|^2, $$
    $$ \begin{aligned} 
    & \abs{ \ipp{T_{\pi,\lambda,\theta}^t \rg\phi(x)_{\pi,\epsilon,\epsilon}}{V_{\lambda,\theta}(P_\pi)}{T_{\pi,\lambda,\theta}^t \rg\phi(x)_{\pi,\epsilon,\epsilon}} - \ipp{T_{\pi,\lambda,\theta}^t \rg\phi(x)}{V_{\lambda,\theta}(P_\pi)}{T_{\pi,\lambda,\theta}^t \rg\phi(x)}} \\
    & \qquad \lesssim \mr{e}^{2\omega_{\pi,\lambda} t} \norm{V_{\lambda,\theta}(P_\pi)} \bra{2\epsilon + \epsilon^2}|x|^2, 
    \end{aligned}$$
    and, because $\norm{\rg\phi(x)_{\pi,\epsilon,\epsilon}}_{\mc{G}_2} \lesssim \epsilon^{-2} \norm{\rg\phi(x)}$, by Corollary \ref{cor:Yosida.DD}, 
    $$\begin{aligned}
        & \abs{ \ipp{T_{\pi,\lambda,\theta}^t \rg\phi(x)_{\pi,\epsilon,\epsilon}}{V_{\lambda,\theta}(P_\pi)}{T_{\pi,\lambda,\theta}^t \rg\phi(x)_{\pi,\epsilon,\epsilon}} - \ipp{T_{\pi}^t \rg\phi(x)_{\pi,\epsilon,\epsilon}}{V_{\lambda,\theta}(P_\pi)}{T_{\pi}^t \rg\phi(x)_{\pi,\epsilon,\epsilon}} } \\ 
        &\qquad \lesssim \frac{1}{\lambda-\omega_\pi} \frac{1}{\epsilon^2} t\mr{e}^{2\max(\omega_\pi, \omega_{\pi,\lambda})t}  \norm{{V_{\lambda,\theta}(P_\pi)}} |x|^2. 
    \end{aligned}$$
    Combining the above four bounds, we have
    $$\begin{aligned}
    & \abs{ \ipp{T_{\pi,\lambda,\theta}^t \rg\phi(x)}{V_{\lambda,\theta}(P_\pi)}{T_{\pi,\lambda,\theta}^t \rg\phi(x)} } 
    \lesssim \frac{1}{\lambda-\omega_\pi} \frac{1}{\epsilon^2} t\mr{e}^{2\max(\omega_\pi, \omega_{\pi,\lambda})t}  \norm{V_{\lambda,\theta}(P_\pi)} |x|^2  \\
    & \qquad + \epsilon \mr{e}^{2\max(\omega_\pi, \omega_{\pi,\lambda})t}  \norm{{V_{\lambda,\theta}(P_\pi)}} |x|^2 + r_{\mbb{S}}^{\min(1,\alpha)} \| V_{\lambda,\theta}(P_\pi) \|_{\mc{B}_1} \mr{e}^{2\omega_{\pi} t} |x|^2.
    \end{aligned} $$
    By optimizing the choice of the arbitrary constant $\epsilon$, we obtain
    $$\abs{ \ipp{T_{\pi,\lambda,\theta}^t \rg\phi(x)}{V_{\lambda,\theta}(P_\pi)}{T_{\pi,\lambda,\theta}^t \rg\phi(x)} } 
    \lesssim \frac{t^{1/3} \mr{e}^{2\max(\omega_\pi, \omega_{\pi,\lambda})t}}{(\lambda-\omega_\pi)^{1/3}}\norm{V_{\lambda,\theta}(P_\pi)} |x|^2 + r_{\mbb{S}}^{\min(1,\alpha)} \| V_{\lambda,\theta}(P_\pi) \|_{\mc{B}_1} \mr{e}^{2\omega_{\pi} t} |x|^2. $$

    \par We show that the bound above leads to a bound on the discrepancy between the kernel SOS form specified by $P_\pi$ and the Yosida-approximated cost of the policy $\pi$, $ v_{\pi,\lambda,\theta}(x)$. We note that 
    $$ V_{\lambda,\theta}(P_\pi) = A_{\pi,\lambda,\theta}^* P_\pi + P_\pi A_{\pi,\lambda,\theta} + Q + \sum_{j=1}^{d_u} \pi_j \times \pi_j . $$ 
    Therefore, along the approximate semigroup generated by the bounded operator $A_{\pi,\lambda,\theta}$,
    $$ \xDD{}{t} \ipp{T_{\pi,\lambda,\theta}^t \rg\phi(x)}{P_\pi}{T_{\pi,\lambda,\theta}^t \rg\phi(x)}
    = \ipp{T_{\pi,\lambda,\theta}^t \rg\phi(x)}{V_{\lambda,\theta}(P_\pi) - Q - \sum_{j=1}^{d_u} \pi_j \times \pi_j}{T_{\pi,\lambda,\theta}^t \rg\phi(x)}. $$
    Integrating the above equality over $t\in [0,\infty)$ yields
    $$ v_{\pi,\lambda,\theta}(x) - \ipp{\rg\phi(x)}{P_\pi}{\rg\phi(x)} = \int_0^\infty \ipp{T_{\pi,\lambda,\theta}^t\rg\phi(x)}{V_{\lambda,\theta}(P_\pi)}{T_{\pi,\lambda,\theta}^t\rg\phi(x)} \xD{t}. $$
    Therefore,
    $$ \begin{aligned}
        \abs{ v_{\pi,\lambda,\theta}(x) - \ipp{\rg\phi(x)}{P_\pi}{\rg\phi(x)} }
        & \leq \int_0^\infty \abs{ \ipp{T_{\pi,\lambda,\theta}^t \rg\phi(x)}{V_{\lambda,\theta}(P_\pi)}{T_{\pi,\lambda,\theta}^t \rg\phi(x)} } \xD{t} \\
        &\lesssim \int_0^\infty \Bra{ \frac{t^{1/3} \mr{e}^{2\max(\omega_\pi, \omega_{\pi,\lambda})t}}{(\lambda-\omega_\pi)^{1/3}}\norm{V_{\lambda,\theta}(P_\pi)} |x|^2 + r_{\mbb{S}}^{\min(1,\alpha)} \| V_{\lambda,\theta}(P_\pi) \|_{\mc{B}_1} \mr{e}^{2\omega_{\pi} t} |x|^2 }\xD{t} \\
        & \lesssim \bra{(\lambda-\omega_\pi)^{-1/3} + r_{\mbb{S}}^{\min(1,\alpha)}} \| V_{\lambda,\theta}(P_\pi) \|_{\mc{B}_1} |x|^2.
    \end{aligned}$$
    This proves the first inequality. The second inequality follows, since $\pi \in [\rg H^s(\mbb{X})]^{d_u}$ is admissible in \eqref{eq:optimal.problem.Yosida}, and hence $v_{\lambda,\theta}^\star(x) \leq v_{\pi,\lambda,\theta}(x)$.
\end{proof}

Subsequently, we should compare the cost evaluated under the actual flow, $v_{\pi^\star}(x)$, to the Yosida-approximated cost $v_{\lambda,\theta}^\star(x)$, under the same policy $\pi^\star$. 
\begin{proposition}[Discrepancy between the Yosida-approximated cost and true cost]
    Suppose that $\|T_{\pi^\star}^t\|\lesssim \mr{e}^{\omega_{\pi^\star} t}$ for some $\omega_{\pi^\star}<0$ and $s\geq 2$. Then for all $x\in \mbb{X}$, we have
    \begin{equation}
        \abs{v_{\pi^\star}(x) -v_{\lambda,\theta}^\star(x)} \lesssim 
    \frac{1}{(\lambda-\omega_{\pi^\star})^{1/3}} \bra{\norm{Q} + \sum_{j=1}^{d_u} \norm{\pi^\star_j}^2} |x|^2 . 
    \end{equation}
\end{proposition}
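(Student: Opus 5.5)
We compare the two costs under the same policy $\pi^\star$. Write $Q_\star := Q + \sum_{j=1}^{d_u}\pi^\star_j\times\pi^\star_j$, and interpret $v_{\lambda,\theta}^\star(x)$ as $v_{\pi^\star,\lambda,\theta}(x)$, the Yosida-approximated cost of $\pi^\star$. Then
$$ v_{\pi^\star}(x) - v_{\lambda,\theta}^\star(x) = \int_0^\infty \Bra{ \ipp{T_{\pi^\star}^t\rg\phi(x)}{Q_\star}{T_{\pi^\star}^t\rg\phi(x)} - \ipp{T_{\pi^\star,\lambda,\theta}^t\rg\phi(x)}{Q_\star}{T_{\pi^\star,\lambda,\theta}^t\rg\phi(x)} } \xD{t}. $$
Because $\|T_{\pi^\star}^t\|\lesssim \mr{e}^{\omega_{\pi^\star}t}$ with $\omega_{\pi^\star}<0$, we also have $\omega_{\pi^\star,\lambda}<0$. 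Both integrands therefore decay exponentially, and the integral is finite. Note that $\|Q_\star\|\le \|Q\|+\sum_j\|\pi^\star_j\|^2$.

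The pointwise-in-time integrand is handled with the three-term argument of Proposition \ref{prop:error-pre-integral}, with $\tilde\Delta_{\pi,\lambda,\theta}$ replaced by $Q_\star$.

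1. Replace $\rg\phi(x)$ by the twice short-time average $\rg\phi(x)_{\pi^\star,\epsilon,\epsilon}\in\mc{D}(A_{\pi^\star}^2)$ in both quadratic forms. By the lemma on twice short-time averages, $\|\rg\phi(x)_{\pi^\star,\epsilon,\epsilon}-\rg\phi(x)\|\lesssim\epsilon|x|$. This costs $\lesssim \epsilon\,\mr{e}^{2\omega_{\pi^\star}t}\|Q_\star\||x|^2$ for the true flow and $\lesssim \epsilon\,\mr{e}^{2\omega_{\pi^\star,\lambda}t}\|Q_\star\||x|^2$ for the approximate flow.
2. Compare the two flows acting on the smoothed feature. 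Since $\|\rg\phi(x)_{\pi^\star,\epsilon,\epsilon}\|_{\mc{G}_2}\lesssim\epsilon^{-2}|x|$, Corollary \ref{cor:Yosida.DD} applies; its hypotheses hold because $s\ge2$, by the growth-bound proposition. This gives a discrepancy $\lesssim \frac{t}{\epsilon^2(\lambda-\omega_{\pi^\star})}\mr{e}^{2\max(\omega_{\pi^\star},\omega_{\pi^\star,\lambda})t}\|Q_\star\||x|^2$.

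Adding these bounds and writing $\bar\omega:=\max(\omega_{\pi^\star},\omega_{\pi^\star,\lambda})<0$, the integrand at time $t$ is bounded by
$$ \Bra{\epsilon + \frac{t}{\epsilon^2(\lambda-\omega_{\pi^\star})}}\mr{e}^{2\bar\omega t}\|Q_\star\||x|^2 .$$

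Next, $\epsilon$ is optimized. Since $\epsilon$ may depend on $t$, the pointwise choice $\epsilon\propto (t/(\lambda-\omega_{\pi^\star}))^{1/3}$ yields the integrand bound $\frac{t^{1/3}}{(\lambda-\omega_{\pi^\star})^{1/3}}\mr{e}^{2\bar\omega t}\|Q_\star\||x|^2$. Integrating over $t\in[0,\infty)$ gives $\int_0^\infty t^{1/3}\mr{e}^{2\bar\omega t}\xD{t}=\Gamma(4/3)(2|\bar\omega|)^{-4/3}$. This is a constant, and it stays bounded as $\lambda\to\infty$ because $\omega_{\pi^\star,\lambda}\to\omega_{\pi^\star}$. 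Absorbing it into $\lesssim$ gives the claimed bound $(\lambda-\omega_{\pi^\star})^{-1/3}\bigl(\|Q\|+\sum_j\|\pi^\star_j\|^2\bigr)|x|^2$.

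The main obstacle is the uniformity of the constants. Corollary \ref{cor:Yosida.DD} has to be applied with the closed-loop generator built from $\pi^\star$, and the $\mc{G}_2$-growth bound of $T_{\pi^\star}^t$ must hold with rate $\omega_{\pi^\star}$. A further subtlety is the factor $\mr{e}^{2\bar\omega t}$ in step 2, which comes from bounding $\max(\|T^t_{\pi^\star,\lambda,\theta}z\|,\|T^t_{\pi^\star}z\|)$. For this one needs $\|T_{\pi^\star,\lambda,\theta}^t\|\lesssim \mr{e}^{\omega_{\pi^\star,\lambda}t}$, which is taken from the semigroup estimate in the proof of Theorem \ref{th:Yosida.DD} as transported to the affine closed-loop construction. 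If that fails, I would fall back on a finite-horizon truncation: apply the above estimate on $[0,\Theta]$, bound the tails of both costs by $\mr{e}^{2\bar\omega\Theta}$, and choose $\Theta\sim\log(\lambda-\omega_{\pi^\star})$. This costs only logarithmic factors.
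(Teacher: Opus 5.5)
Your argument is the paper's proof essentially line for line. It uses the same twice short-time average, Corollary~\ref{cor:Yosida.DD} with $\|\rg\phi(x)_{\pi^\star,\epsilon,\epsilon}\|_{\mc{G}_2}\lesssim\epsilon^{-2}|x|$, the same pointwise choice $\epsilon\propto(t/(\lambda-\omega_{\pi^\star}))^{1/3}$, and the same integration against $t^{1/3}\mr{e}^{2\bar\omega t}$, so it is exactly as strong as the paper's argument. The ingredient you flagged is nonetheless a genuine hole, and the paper shares it. In the proof of Corollary~\ref{cor:Yosida.DD}, the factor $\mr{e}^{\omega_{\pi,\lambda}(t-s)}$ is used to bound $\|\mr{e}^{(t-s)A_{\pi,\lambda,\theta}}\|$ without justification.

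The estimate of Theorem~\ref{th:Yosida.DD} does not transport to the affine construction. It relies on $A_{\lambda,\theta}+\lambda I=\lambda^2R_{\lambda,\theta}$ together with $\|R_{\lambda,\theta}^n\|\lesssim(\lambda-\omega)^{-n}$, which holds because $R_{\lambda,\theta}$ is the finite-time resolvent of a single semigroup. In the closed-loop case, $A_{\pi^\star,\lambda,\theta}+\lambda I=\lambda^2\bigl(R_{0,\lambda,\theta}+\sum_j\rg\iota^*R_{j,\lambda,\theta}M_{\pi^\star_j}^*\bigr)-\lambda\sum_j\rg\iota^*M_{\pi^\star_j}^*$, and this has no such power bound. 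The bound can in fact fail. Take commuting diagonal generators on $\ell^2$ with eigenvalues $-2+\mr{i}k$ and $-1+\mr{i}k$, so that $A_0-A_1=-I$ generates $\mr{e}^{-t}I$. Then $A_{0,\lambda}-A_{1,\lambda}$ has eigenvalues $-\lambda^2/\bigl((\lambda+2-\mr{i}k)(\lambda+1-\mr{i}k)\bigr)$. For $k\approx\sqrt3\,\lambda$ their real parts are near $1/8>0$, and this happens for every large $\lambda$.

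Your fallback does not repair this, for two reasons.
\begin{itemize}
\item On $[0,\Theta]$, without the growth hypothesis you only have bounds of the form $\mr{e}^{c\lambda t}$ (from $\|A_{\pi^\star,\lambda,\theta}\|\lesssim\lambda$). These destroy the $(\lambda-\omega_{\pi^\star})^{-1/3}$ rate.
\item The tail $\int_\Theta^\infty$ of $v^\star_{\lambda,\theta}(x)$ can only be controlled by the very decay of $T^t_{\pi^\star,\lambda,\theta}$ that is in doubt. In the example above, the approximate cost is $+\infty$ for generic data.
\end{itemize}
The honest repair is to assume $\|T^t_{\pi^\star,\lambda,\theta}\|\lesssim\mr{e}^{\omega_{\pi^\star,\lambda}t}$ uniformly in $\lambda$ as an explicit hypothesis, which is what the paper tacitly does.

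Two further steps you take over from the paper are also unjustified as stated.
\begin{itemize}
\item $\pi^\star$ is in general only in $[\rg H^s(\mbb{X})]^{d_u}$, whereas Corollary~\ref{cor:Yosida.DD} and the growth-bound proposition are stated for $\rg C^s$ policies.
\item The twice-average lemma controls $A_{\pi^\star}^2\rg\phi(x)_{\pi^\star,\epsilon,\epsilon}$. It does not control the terms $A_0^2(\cdot)$ and $A_j^2M^*_{\pi^\star_j}(\cdot)$ that actually make up $\|\cdot\|_{\mc{G}_2}$.
\end{itemize}
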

\begin{proof}
    Because $v_{\lambda,\theta}^\star$ is obtained from policy evaluation under $\pi^\star$ for the Yosida-approximated system, 
    $$v_{\lambda,\theta}^\star(x) = \int_0^\infty \ipp{T_{\pi^\star,\lambda,\theta}^t \rg\phi(x)}{Q + \sum_{j=1}^{d_u} \pi^\star_j\times \pi^\star_j}{T_{\pi^\star,\lambda,\theta}^t \rg\phi(x)} \xD{t}, $$
    while the policy evaluation for the true system gives
    $$v_{\pi^\star}(x) = \int_0^\infty \ipp{T_{\pi^\star}^t \rg\phi(x)}{Q + \sum_{j=1}^{d_u} \pi^\star_j\times \pi^\star_j}{T_{\pi^\star}^t  \rg\phi(x)} \xD{t}, $$
    the bound to be proved is for the error resulting from replacing $T_{\pi^\star,\lambda,\theta}^t$ with $T_{\pi^\star}^t$. 
    Again using the twice short-time average $\rg\phi(x)_{\pi^\star, \epsilon, \epsilon} \in \mc{D}(A_{\pi^\star}^2)$, in an analogous way as in Proposition \ref{prop:error-integral}, we obtain 
    $$\begin{aligned}
        &\abs{ \ipp{T_{\pi^\star,\lambda,\theta}^t \rg\phi(x)}{Q + \sum_{j=1}^{d_u} \pi^\star_j\times \pi^\star_j}{T_{\pi^\star,\lambda,\theta}^t \rg\phi(x)} - \ipp{T_{\pi^\star}^t \rg\phi(x)}{Q + \sum_{j=1}^{d_u} \pi^\star_j\times \pi^\star_j}{T_{\pi^\star}^t  \rg\phi(x)} } \\
        &\quad \lesssim \epsilon \norm{Q + \sum_{j=1}^{d_u} \pi^\star_j\times \pi^\star_j} \mr{e}^{2 \max(\omega_{\pi^\star}, \omega_{\pi^\star, \lambda}) t} |x|^2 + \frac{1}{\epsilon^2} \frac{1}{\lambda-\omega_{\pi^\star}} \norm{Q + \sum_{j=1}^{d_u} \pi^\star_j\times \pi^\star_j} t \mr{e}^{2 \max(\omega_{\pi^\star}, \omega_{\pi^\star, \lambda}) t} |x|^2.
    \end{aligned} $$
    By optimizing the choice of $\epsilon$ and integrating over $t\in [0, \infty)$, we arrive at 
    $$ \abs{v_{\pi^\star}(x) -v_{\lambda,\theta}^\star(x)} \lesssim \int_0^\infty \frac{t^{1/3}}{(\lambda-\omega_{\pi^\star})^{1/3}} \mr{e}^{2 \max(\omega_{\pi^\star}, \omega_{\pi^\star, \lambda}) t}  \bra{\norm{Q} + \sum_{j=1}^{d_u} \norm{\pi^\star_j}^2} |x|^2 \xD{t}, $$
    which gives the proposition to be proved. 
\end{proof}

The following conclusion thus follows from the combination of previous propositions in this subsection, which we now state as a theorem. The theorem provides an upper bound on the lowest possible cost, which deviates from the kernel SOS form specified by $P_\pi$ by a constant multiple of $|x|^2$. This constant coefficient is dependent on the density of sample points (captured by the constant $r_\mbb{S}$) and the Yosida regularization constant $\lambda$. 
\begin{theorem}[Near optimality of the performance of Koopman--Riccati controller]
    Suppose Assumption \ref{assum:sampling.strategy} holds. Additionally, suppose that $\|T_\pi^t\|\lesssim \mr{e}^{\omega_\pi t}$ for some $\omega_\pi<0$, $\|T_{\pi^\star}^t\|\lesssim \mr{e}^{\omega_{\pi^\star} t}$ for some $\omega_{\pi^\star}<0$, and $s\geq 2$. Denote $\omega = \max(\omega_\pi, \omega_{\pi^\star})<0$ and $\omega_\lambda = \frac{\lambda\omega}{\lambda - \omega} < 0$. Then 
    \begin{equation}
        v(x) - \ipp{\rg\phi(x)}{P_\pi}{\rg\phi(x)} \lesssim \Bra{
        \bra{(\lambda-\omega_\lambda)^{-1/3} + r_{\mbb{S}}^{\min(1,\alpha)}} \|V_{\lambda,\theta}(P_\pi) \|_{\mc{B}_1} + (\lambda-\omega_\lambda)^{-1/3}  \bra{\norm{Q} + \sum_{j=1}^{d_u} \norm{\pi^\star_j}^2} } |x|^2. 
    \end{equation}
\end{theorem}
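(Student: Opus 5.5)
The plan is a triangle-inequality chain through the Yosida-approximated optimal cost $v_{\lambda,\theta}^\star(x)$, using the two propositions immediately preceding the theorem. I write
$$
v(x) - \ipp{\rg\phi(x)}{P_\pi}{\rg\phi(x)} \le \bra{v(x) - v_{\lambda,\theta}^\star(x)} + \bra{v_{\lambda,\theta}^\star(x) - \ipp{\rg\phi(x)}{P_\pi}{\rg\phi(x)}},
$$
and bound the two brackets separately. Only one-sided bounds are needed, since the claim is an upper bound.

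\textbf{Second bracket.} This is exactly the second inequality of the proposition on the performance bound on the Yosida-approximated cost. Its hypotheses are Assumption \ref{assum:sampling.strategy}, satisfaction of \eqref{eq:optimal.Riccati} on $\mbb{S}\times\mbb{S}$, $\|T_\pi^t\|\lesssim \mr{e}^{\omega_\pi t}$ with $\omega_\pi<0$, and $s\ge2$. These are all in force, reading the Riccati condition as implicitly assumed because $P_\pi$ is the data-based solution. That proposition gives the factor $(\lambda-\omega_\pi)^{-1/3}+r_{\mbb{S}}^{\min(1,\alpha)}$ multiplying $\|V_{\lambda,\theta}(P_\pi)\|_{\mc B_1}|x|^2$.

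\textbf{First bracket.} I use $v(x)\le v_{\pi^\star}(x)$. Here $\pi^\star$ is the minimizer of the Yosida-approximated problem \eqref{eq:optimal.problem.Yosida}, and I assume, as the statement implicitly does, that it is admissible for \eqref{eq:optimal.0}. Then $v(x)-v_{\lambda,\theta}^\star(x)\le v_{\pi^\star}(x)-v_{\lambda,\theta}^\star(x)$, which the discrepancy proposition bounds by $(\lambda-\omega_{\pi^\star})^{-1/3}\bra{\norm{Q}+\sum_j\norm{\pi_j^\star}^2}|x|^2$. The hypothesis $\|T_{\pi^\star}^t\|\lesssim\mr{e}^{\omega_{\pi^\star}t}$ is assumed in the theorem.

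\textbf{Unifying constants.} Both error terms carry factors $(\lambda-\omega_\cdot)^{-1/3}$ and implicit constants of the form $\int_0^\infty t^{1/3}\mr{e}^{2\max(\omega_\cdot,\omega_{\cdot,\lambda})t}\,\xD{t}$. The growth rates $\omega_\pi$ and $\omega_{\pi^\star}$ are both at most $\omega=\max(\omega_\pi,\omega_{\pi^\star})<0$. The map $w\mapsto \lambda w/(\lambda-w)$ is increasing for $w<\lambda$, so $\omega_{\pi,\lambda},\omega_{\pi^\star,\lambda}\le\omega_\lambda<0$. The integrals are therefore bounded by $\int_0^\infty t^{1/3}\mr{e}^{2\omega_\lambda t}\xD{t}\propto|\omega_\lambda|^{-4/3}$, a constant absorbed into $\lesssim$.

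\textbf{Main obstacle.} The delicate point is replacing $(\lambda-\omega_\pi)^{-1/3}$ and $(\lambda-\omega_{\pi^\star})^{-1/3}$ by the common $(\lambda-\omega_\lambda)^{-1/3}$ stated in the theorem. Since $\omega_\pi,\omega_{\pi^\star}<0$, both $\lambda-\omega_\pi$ and $\lambda-\omega_{\pi^\star}$ are at least $\lambda$. Moreover $\omega\le\omega_\lambda<0$, because $\lambda/(\lambda-\omega)<1$ for $\omega<0$, so $\lambda-\omega_\lambda\le\lambda-\omega$. Hence $\lambda\le\lambda-\omega_\lambda\le\lambda-\omega\le\lambda+|\omega|$. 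For $\lambda$ large relative to $|\omega|$ (or $\lambda\gtrsim 1$), the quantities $\lambda-\omega_\pi$, $\lambda-\omega_{\pi^\star}$ and $\lambda-\omega_\lambda$ are each bounded above by $\lambda+\max(|\omega_\pi|,|\omega_{\pi^\star}|)$ and below by $\lambda$, so they are all comparable to $\lambda$. The exchange then only costs a constant depending on $\omega_\pi,\omega_{\pi^\star}$, which is absorbed into $\lesssim$. Summing the two brackets yields the stated bound.
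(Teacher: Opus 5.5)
Your proposal is correct and matches the paper's route: the paper's proof consists only of the remark that the theorem ``follows from the combination of previous propositions.'' Concretely, that means chaining the second inequality of the Yosida-cost performance proposition with the Yosida/true-cost discrepancy proposition via $v(x)\le v_{\pi^\star}(x)$, under the same implicit assumptions that $\pi^\star$ is admissible for \eqref{eq:optimal.0} and that \eqref{eq:optimal.Riccati} holds on $\mbb{S}\times\mbb{S}$.

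Your exchange of $(\lambda-\omega_\pi)^{-1/3}$ and $(\lambda-\omega_{\pi^\star})^{-1/3}$ for $(\lambda-\omega_\lambda)^{-1/3}$ is simpler than you make it. Since $\omega_\pi,\omega_{\pi^\star}\le\omega<\omega_\lambda$, you have $\lambda-\omega_\pi,\ \lambda-\omega_{\pi^\star}>\lambda-\omega_\lambda$ directly, so that step needs no largeness of $\lambda$.
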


\subsection{Data-Driven Solution and Kleinman Iterations}\label{subsec:optimal.solution}
The final issue is an explicit expression of the sample-based version of the operator Riccati equation \eqref{eq:optimal.Riccati} as a matrix Riccati equation. Instead of aiming to find the representation of the $\flat$ symbol on operators, $\flat$ symbol on tensors, and the $\times$ symbol for tensor product, we directly impose the condition \eqref{eq:optimal.quartic} on each $(x_i, x_{i'})$-pair for $i,i'=1,\cdots,n$: 
\begin{equation}\label{eq:optimal.quartic.sample}
    \begin{aligned}
        & \ipp{\rg\phi(x_i)}{P_\pi A_{0,\lambda,\theta} + A_{0,\lambda,\theta}^*P_\pi + Q}{\rg\phi(x_{i'})} \\
        &\quad - \sum_{j=1}^{d_u} \ip{\rg\phi(x_{i'})}{P_\pi \rg\iota^* A_{j,\lambda,\theta}\phi(x_i)} \ip{\rg\phi(x_i)}{P_\pi \rg\iota^* A_{j,\lambda,\theta}\phi(x_i)} \\
        & \quad - \sum_{j=1}^{d_u} \ip{\rg\phi(x_i)}{P_\pi \rg\iota^* A_{j,\lambda,\theta}\phi(x_{i'})} \ip{\rg\phi(x_{i'})}{P_\pi \rg\iota^* A_{j,\lambda,\theta}\phi(x_{i'})} \\
        & \quad + \sum_{j=1}^{d_u} \ip{\rg\phi(x_i)}{P_\pi \rg\iota^* A_{j,\lambda,\theta}\phi(x_i)} \ip{\rg\phi(x_{i'})}{P_\pi \rg\iota^* A_{j,\lambda,\theta}\phi(x_{i'})} = 0, \ \forall i,i'=1,\cdots,n. 
    \end{aligned}
\end{equation}
We tacitly assume that $\ipp{\rg\phi(x_i)}{Q}{\rg\phi(x_{i'})}$ can be explicitly evaluated for any $i,i'=1,\cdots,n$. This is the case whenever $Q$ is given as an explicit finite sum of squares of $\rg H^s(\mbb{X})$-functions. That is, if $q(x) = \sum_{k=1}^{n_q} q_k(x)^2$ with $q_1, \cdots, q_{n_q} \in \rg H^s(\mbb{X})$, then $Q = \sum_{k=1}^{n_q} q_k\times q_k$ and hence $\ipp{\rg\phi(x_i)}{Q}{\rg\phi(x_{i'})} = \sum_{k=1}^{n_q} q_k(x_i)q_k(x_{i'})$ can be computed. 
Hence, let us denote $\ms{Q}$ as the matrix with its $(i,i')$-th entry equal to $\ipp{\rg\phi(x_i)}{Q}{\rg\phi(x_{i'})}$. For example, for $q(x) = |x|^2$, $\ms{Q}_{ii'} = x_i \cdot x_{i'}$ ($i,i'=1,\cdots,n$). 

Following the notations of $\ms{A}^0$ and $\ms{A}^j$ ($j=1,\cdots,d_u$) as given in \eqref{eq:stabilizing.synthesis.kernel.1} and \eqref{eq:stabilizing.synthesis.kernel.2}, we have $$\ip{\rg\phi(x_i)}{P_\pi \rg\iota^* A_{j,\lambda,\theta}\phi(x_{i'})} = (\ms{G}\ms{P}\ms{A}^j)_{ii'}$$ and hence by setting $P_\pi = \sum_{i=1}^n \sum_{i'=1}^n \ms{P}_{ii'} \rg\phi(x_i)\times \rg\phi(x_{i'})$, the condition \eqref{eq:optimal.quartic.sample} becomes equivalent to 
\begin{equation}\label{eq:optimal.sample.Riccati}
\begin{aligned} 
    &\ms{G}\ms{P}\ms{A}^0 + \ms{A}^{0\top}\ms{P}\ms{G} + \ms{Q} \\
    &+ \sum_{j=1}^{d_u} \Bra{ -\bra{\Diag \ms{G}\ms{P} \ms{A}^j} \bra{\ms{A}^{j\top} \ms{P}\ms{G}} - \bra{\ms{G}\ms{P}\ms{A}^j}\bra{\Diag \ms{A}^{j\top} \ms{P}\ms{G}} + \bra{\diag \ms{G}\ms{P} \ms{A}^j} \bra{\diag \ms{A}^{j\top} \ms{P}\ms{G}}^\top }= 0. 
\end{aligned}
\end{equation}
The meaning of $\Diag$ was the same as in the previous section, and the property of $\Diag \ms{M} = \Diag \ms{M}^\top$ is self-evident.

\par The solution procedure of \eqref{eq:optimal.sample.Riccati} as a quadratic equation of $\ms{P}$ is therefore analogous to the usual Riccati equation --- a \emph{Kleinman iteration algorithm} can be implemented. 
Conceptually, the Kleinman iteration algorithm consists of repeated alternations between policy evaluation and policy improvement. In the policy evaluation step, a control policy is fixed, and the resulting $\ms{P}$ is evaluated; in the policy improvement step, the matrix $\ms{P}$ is fixed, and the optimal policy is obtained. 
In the present setting, the optimal policy formula \eqref{eq:optimal.policy.Yosida} says 
$$\pi_j(x_i) = -\ip{\rg\phi(x_i)}{P_\pi \rg\iota^* A_{j,\lambda,\theta} \phi(x_i)} = -(\ms{G}\ms{P}\ms{A}^j)_{ii}.$$
Thus, the diagonal matrices in \eqref{eq:optimal.sample.Riccati} carry the policy information. Hence, we rewrite the equation as 
\begin{subequations}\label{eq:optimal.sample.Kleinman}
\begin{align}
    & \ms{G}\ms{P}\ms{A}^0 + \ms{A}^{0\top}\ms{P}\ms{G} + \ms{Q} + \sum_{j=1}^{d_u} \Bra{ \bra{\Diag \ms{u}_j} \bra{\ms{A}^{j\top} \ms{P}\ms{G}} + \bra{\ms{G}\ms{P}\ms{A}^j}\bra{\Diag \ms{u}_j} + \ms{u}_j\ms{u}_j^\top }= 0, \label{eq:optimal.sample.Kleinman.1} \\ 
    & \ms{u}_j = -\diag \ms{G}\ms{P} \ms{A}^j, \ j=1,\cdots,d_u . \label{eq:optimal.sample.Kleinman.2} 
\end{align}
\end{subequations} 
The Kleinman iteration algorithm is therefore the alternation of solving $\ms{P}$ with fixed $(\ms{u}_1, \cdots, \ms{u}_{d_u})$ from \eqref{eq:optimal.sample.Kleinman.1} and evaluating $(\ms{u}_1, \cdots, \ms{u}_{d_u})$ with given $\ms{P}$ from \eqref{eq:optimal.sample.Kleinman.2}. 
As discussed in Remark \ref{rem:policy.recovery}, the numerical routine only determines the policy values at the sample points. To evaluate the policy at any state $x$, we perform a $\rg C^s$ interpolation, e.g., by a linear--Gaussian kernel or a linear--Sobolev kernel with a higher smoothness index. 
At this point, the obstacle to \emph{Koopman-based optimal control} is cleared.

\section{Numerical Studies}
In this section, we aim to demonstrate that the proposed approach is a computationally efficient one that gives well-performing feedback control laws, with a particular focus on the Koopman--Riccati synthesis method that provides a near-optimal controller by direct synthesis.

The Python codes for all the examples in this section are publicly available in the following GitHub repository: \url{https://github.com/WentaoTang-Pack/KoopmanRiccatiController}. 
All computations were performed on a laptop with an Apple M4 Max processor (14 CPU cores, of which 10 are performance cores) and 36 GB of memory, running macOS 15.7, with Python 3.13, NumPy 2.2, and SciPy 1.15. Semidefinite programs are solved by MOSEK 11.2.4 called through CVXPY 1.8.1, and nonlinear programs (in nonlinear MPC and Koopman MPC) are solved by IPOPT 3.14.11 with the MUMPS linear solver, called through CasADi 3.7.2. The reported computational times are wall-clock times.

Two remarks are given below regarding some implementation details.
\begin{remark}
    In the experimentation, it is empirically found that the Koopman--Lyapunov and Koopman--Riccati problems can usually be solved successfully for open-loop stable systems, but not always for open-loop unstable ones. With open-loop stability, the problem \eqref{eq:optimal.sample.Kleinman} can be conveniently initialized at a zero policy. 
    More importantly, for open-loop stable systems, the Yosida approximations of the unbounded infinitesimal operators $A_0$, $A_j$ ($j=1,\cdots,d_u$) tend not to give a spurious spectrum that would cause the data-driven synthesis problems, formulated with a kernel method, to become infeasible. 
    As pointed out in Remark \ref{rem:spectrum}, it is generally impossible to guarantee that the spectrum after Yosida approximation remains close to the original, and hence unstable modes would result in unnecessary difficulties. 
    \par Hence, whenever the formulations of our Koopman--Lyapunov and Koopman--Riccati syntheses are used on open-loop unstable systems, it is beneficial to have a stabilizing control law $\pi_\mr{prior}$ known \textit{a priori}, e.g., a na{\"i}ve linearized LQR controller. Then by a typical trick $u = \pi_\mr{prior}(x) + u'$, we can consider $u'$ as the new input and consider the optimal control problem as one with cost term $q(x) + |\pi_\mr{prior}(x)+u'|^2$. The cost is still quadratically dependent on $u$, and hence the analysis in the previous section holds with minor changes. 
    To the author's knowledge, imposing such an auxiliary prior controller is a standard practice in controlling open-loop systems. 
\end{remark}
\begin{remark}
    The Koopman--Lyapunov control synthesis method strongly depends on the numerical implementation of the last step, where the policy values at the sample points are recovered from the non-diagonal matrices $\ms{F}^j$ ($j=1,\cdots,d_u$). In the case where the kernel matrix $\ms{G}$ is near-singular (with a large condition number), directly solving \eqref{eq:stabilizing.recovery.2} or finding a solution via \eqref{eq:stabilizing.bilinear} may not be easy, as the obtained solution for $\ms{P}$ may poorly satisfy the actuatability assumption. In addition, the spectrum conditions that are theoretically needed by the error analysis may not be well satisfied \textit{a posteriori}. 
    Therefore, the following remedy, motivated by the Kleinman iteration,
    \begin{equation*}
        \mathsf u_j = -\operatorname{diag}\big(\mathsf{GPA}^j\big), \qquad j = 1, \dots, d_u,
    \end{equation*}
    is adopted to ``guide'' the search of $\ms{u}_j$. 
    At the sample points where \eqref{eq:stabilizing.bilinear} would be violated due to this remedy, we project $(\mathsf u_{1i}, \dots, \mathsf u_{d_u i})$ onto the half-space defined by \eqref{eq:stabilizing.bilinear}.
\end{remark}

\subsection{A Simple Example with Closed-Form Optimal Controller}
We first consider the system of a special form \cite{nevistic1996constrained}:
$$\begin{bmatrix} \dot x_1 \\ \dot x_2 \end{bmatrix} = \begin{bmatrix}
    -x_1 + x_2 \\ -\frac{1}{2} x_1 - \frac{1}{2} x_2\big(1-\gamma(x)^2\big) 
\end{bmatrix} + \begin{bmatrix}
    0 \\ \gamma(x)
\end{bmatrix}u. $$
Let us consider the case $\gamma(x) = 2+ \cos 2\pi x_1$. The state space is assigned to be $\mbb{X} = [-1, 1]\times [-1, 1]$. 
Setting $q(x) = |x|^2 = x_1^2 + x_2^2$, the true solution to the optimal control problem \eqref{eq:optimal.0} is determined by the Hamilton--Jacobi--Bellman equation $\nabla v(x)^\top f_0(x) - \frac{1}{4} \big(f_1(x)^\top\nabla v(x)\big)^2 + q(x)=0$, which always has the classical solution $v(x) = \frac{1}{2}x_1^2 + x_2^2$ and $\pi(x) = -\frac{1}{2}f_1(x)^\top \nabla v(x) = -x_2 \gamma(x)$. 
In this case, the true optimal policy and optimal cost are
$$\pi_\mr{true}(x) = -x_2\bra{2+ \cos 2\pi x_1}, \ v_\mr{true}(x) = \frac{1}{2}x_1^2 + x_2^2.$$ 
The linearized system $\dot{x}_1 = -x_1 + x_2$, $\dot{x}_2 = -\frac{1}{2}x_1 + 4x_2 + 3u$ would result in a controller $\pi_{\mr{LQR}}(x) = -3x_2$, which is expected to differ discernibly from the true optimal policy. 

\paragraph{Hyperparameter tuning.} For the controller synthesis procedures below, the Sobolev smoothness index is chosen as $s=4.5$ and the Yosida constants are chosen to be $\lambda=200$ and $\theta=1/8$. These hyperparameters are selected to ensure that the Yosida approximation has good quality, which matters to the numerical implementation, especially for initializing a stabilizing controller in the Kleinman iteration algorithm. 
The sample size is $n=50$ for the Koopman--Lyapunov synthesis, whose SDP has $O(n^2)$ decision variables, and $n=100$ for the Koopman--Riccati synthesis.
In the implementation, owing to the ill-conditioning of $\ms{G}$, we add a regularization $10^{-5}\ms{I}$ to the kernel matrix $\ms{G}$ whenever it is inverted. 

\paragraph{Stabilizing controller synthesis.} The Koopman--Lyapunov synthesis problem was solved with hyperparameters $\alpha_\ms{X} = 10^{-4}$, $\beta_\ms{X} = 300$, and $\gamma_\ms{X} = 1$ in \eqref{eq:stabilizing.SDP}. The problem is solved in $3.82$ seconds by MOSEK 11.2.4 called through CVXPY 1.8.1.
Compared to the true optimal controller, the cost degrades by $3.42\%$ on average and by $39.11\%$ in the worst case. But since we are not looking for a near-optimal but only a stabilizing controller here, this degradation is not essential. 
\begin{figure}[!t]
    \centering
    \begin{subfigure}[b]{0.65\textwidth}
         \centering
         \includegraphics[width=\textwidth]{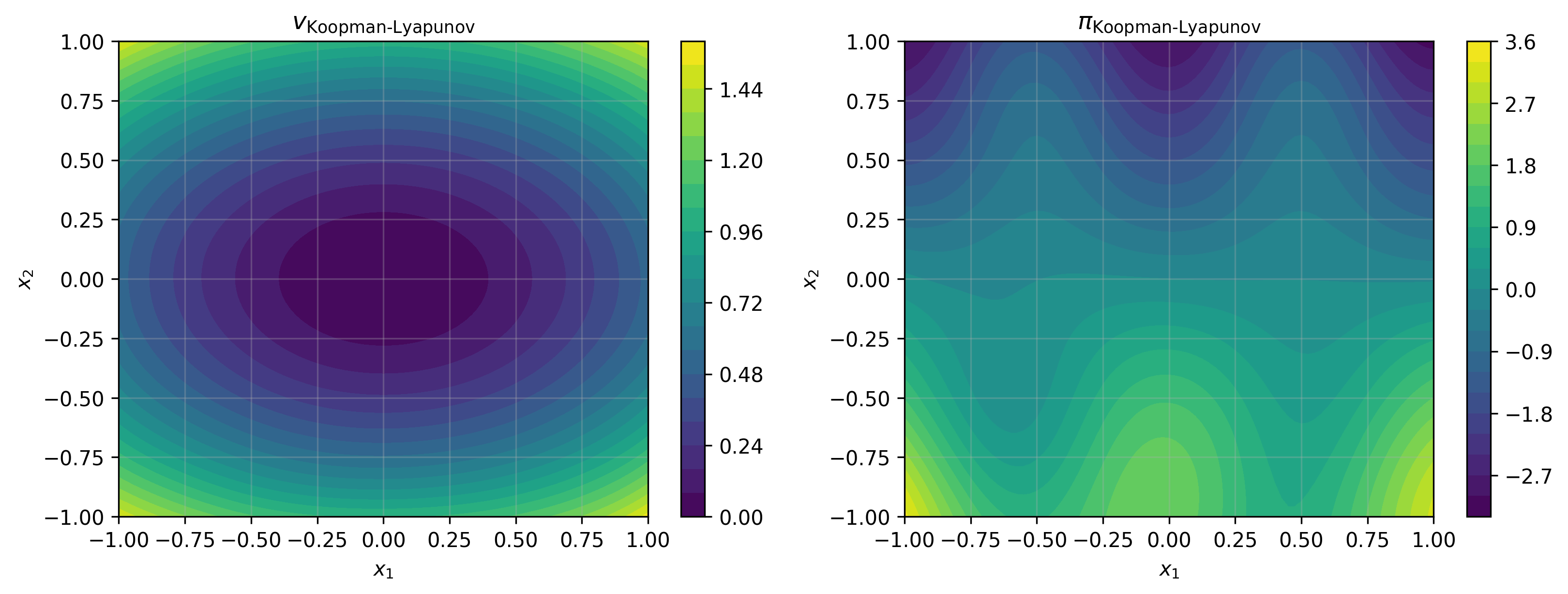}
         \caption{Profiles of cost and policy}
         \label{fig:ex1fig4}
     \end{subfigure} 
     \begin{subfigure}[b]{0.3\textwidth}
         \centering
         \includegraphics[width=\textwidth]{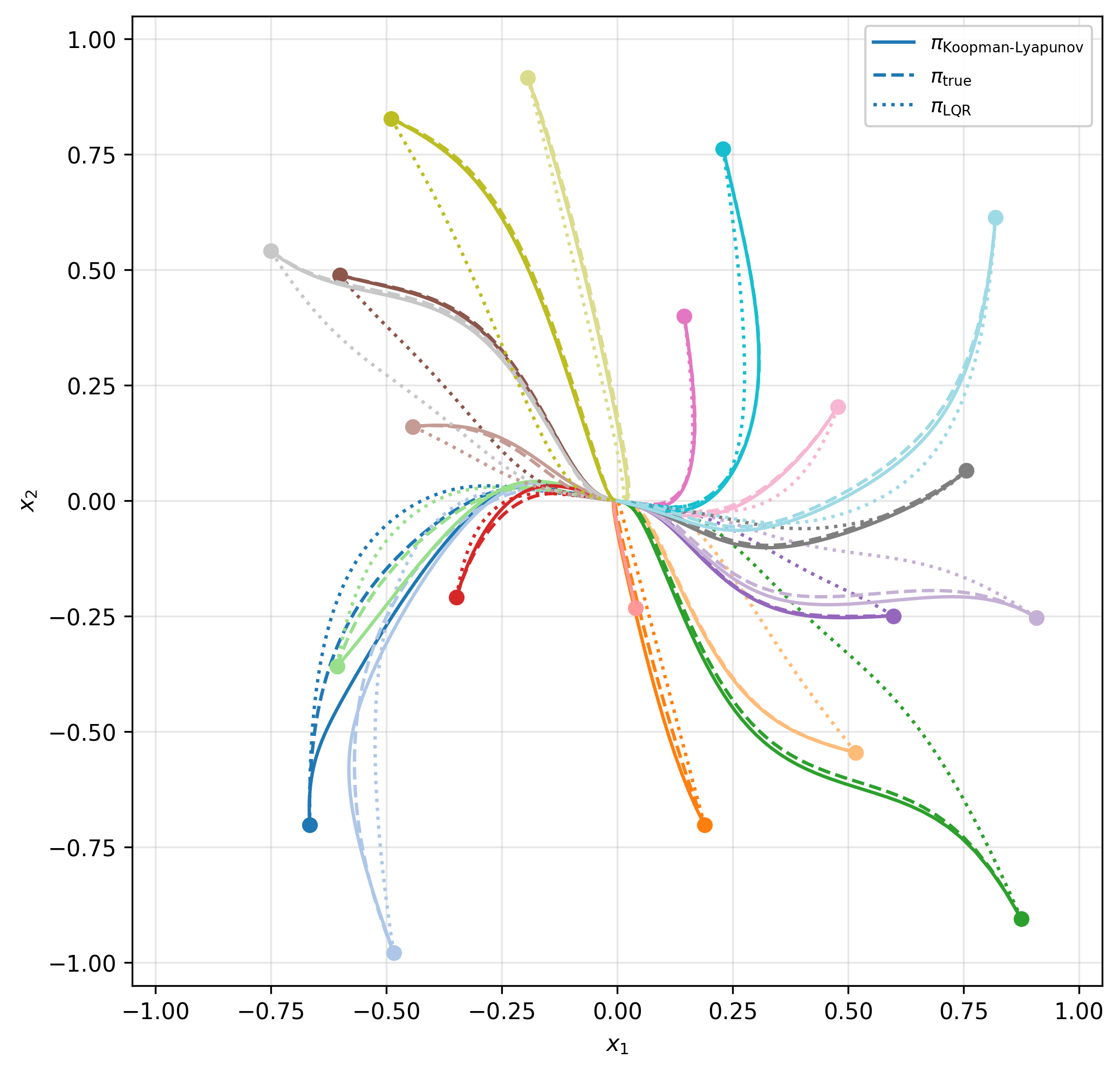}
         \caption{Comparison of closed-loop trajectories}
         \label{fig:ex1fig5}
     \end{subfigure}
    \caption{Koopman--Lyapunov synthesis of a stabilizing controller for Example 1.}
\end{figure}

\paragraph{Optimal controller synthesis.} 
We continue to compare the Koopman--Riccati controller $\pi_\mr{Koopman\text{-}Riccati}$ to $\pi_\mr{true}$ and $\pi_\mr{LQR}$ as two baselines. The convergence of the Kleinman iteration algorithm is shown in Figure \ref{fig:ex1fig1}, with a total computation time of $0.680$ seconds. Iterations are terminated when the newly updated vector $\ms{u}$ does not differ from the previous iterate in $\ell^\infty$-norm above a tolerance of $10^{-6}$. The number of iterations needed for this example is $121$, with an average computational time per iteration of $5.6$ ms.
\begin{figure}[!t]
    \centering
    \begin{subfigure}[b]{0.5\textwidth}
         \centering
         \includegraphics[width=\textwidth]{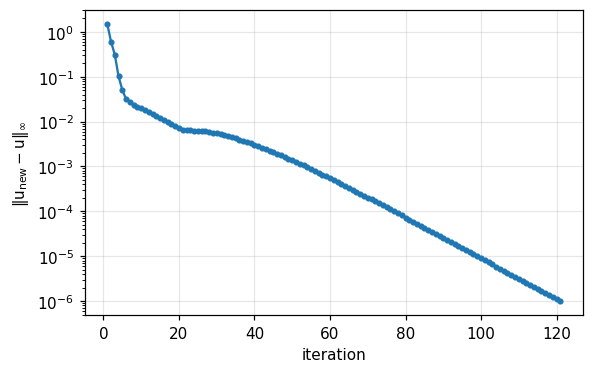}
         \caption{Convergence of Kleinman iterations}
         \label{fig:ex1fig1}
     \end{subfigure} 
     \begin{subfigure}[b]{0.3\textwidth}
         \centering
         \includegraphics[width=\textwidth]{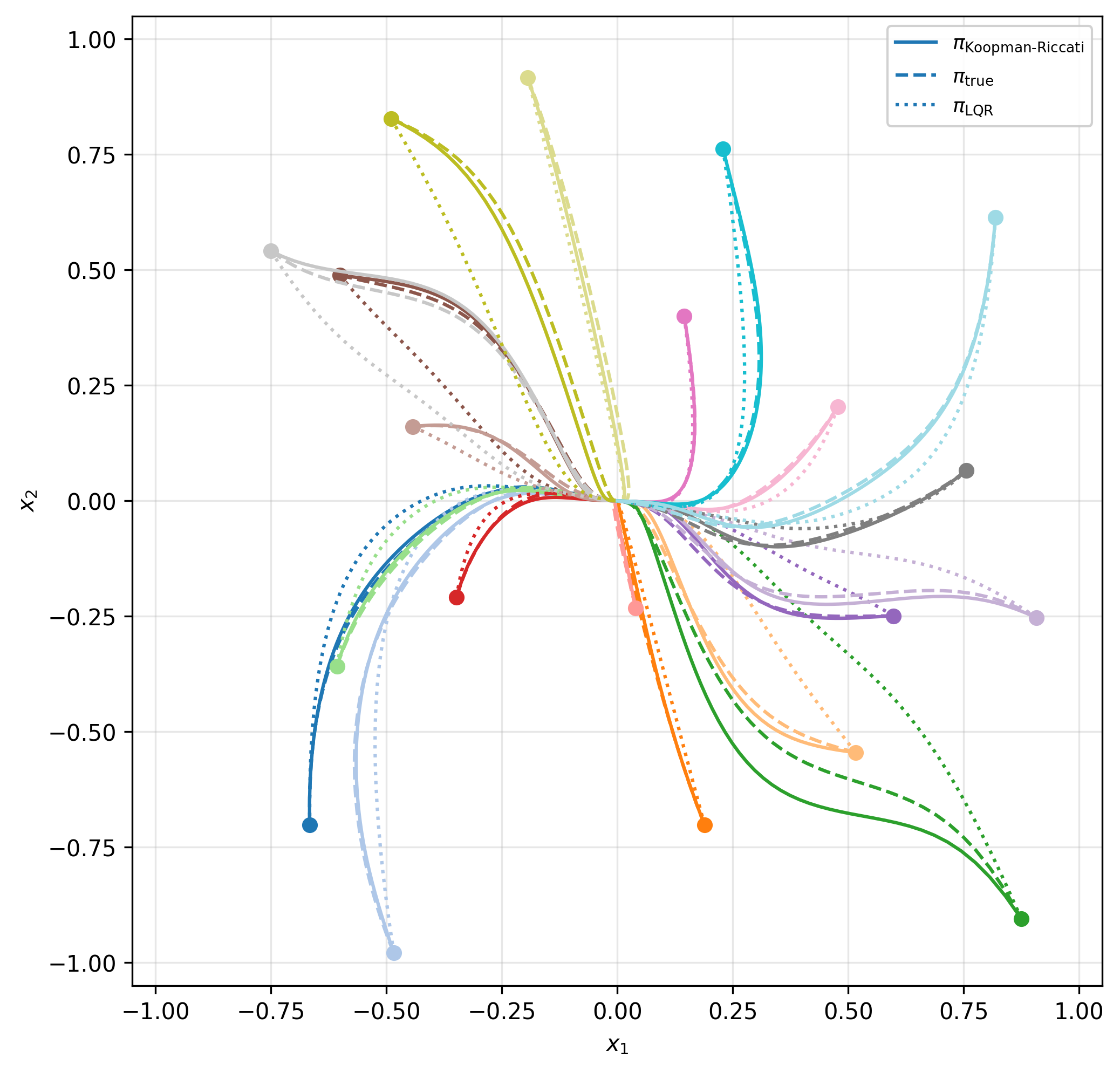}
         \caption{Comparison of closed-loop trajectories}
         \label{fig:ex1fig3}
     \end{subfigure}
    \caption{Koopman--Riccati synthesis of a near-optimal controller for Example 1.}
\end{figure}

Closed-loop trajectories from $20$ initial conditions, drawn by stratified random sampling (one uniformly random point in each cell of a $5\times 4$ partition of $\mbb{X}$), are shown in Figure \ref{fig:ex1fig3}. The trajectories under the Koopman--Riccati controller are close to those under the true optimal policy and differ markedly from those under the linearized LQR policy. 
Also, Table \ref{tab:ex1_costs} lists the accumulated costs under the three policies from these initial conditions, from which we see that $\pi_\mr{LQR}$ causes a performance degradation of $15.75\%$ on average and up to $51.15\%$ in the worst case, while $\pi_\mr{Koopman\text{-}Riccati}$ has only a loss of $0.47\%$ on average and $1.50\%$ in the worst case.
The comparison between the cost function obtained from the Koopman--Riccati method (as a kernel SOS form) and the true cost $v_\mr{true}$ is shown in Figure \ref{fig:ex1fig2}, from which it can be clearly seen that the proposed method synthesizes a policy that approximates $\pi_\mr{true}$ well and captures the periodicity of the $\gamma(\cdot)$ term in the governing equations, while a na{\"{i}}ve LQR loses this important feature. 
\begin{figure}[!t]
    \centering
    \includegraphics[width=\textwidth]{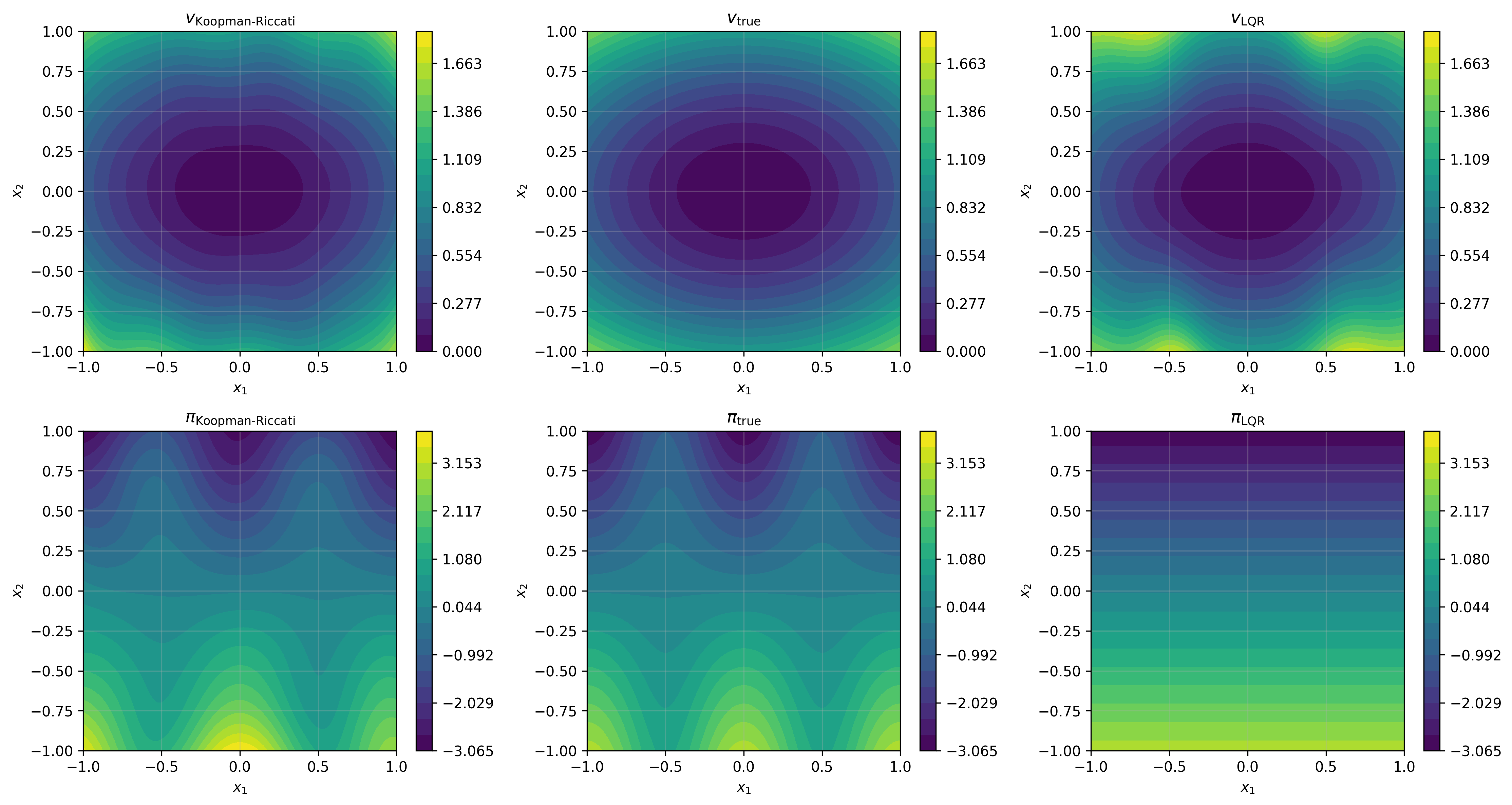}
    \caption{Comparison of cost functions and policies for Example 1.}
    \label{fig:ex1fig2}
\end{figure}
\begin{table}[htbp]
  \centering
  \caption{Costs under different policies for Example 1.}
  \label{tab:ex1_costs}
  \begin{tabular}{cr|rr|rr}
    \toprule
    $x_0$ & $v_\mr{true}(x_0)$ & Koopman--Riccati & error (\%) & linearized LQR & error (\%) \\
    \midrule
    $(-0.666, -0.702)$ & 0.7146 & 0.7162 & 0.22 & 0.8526 & 19.31 \\
    $(-0.484, -0.979)$ & 1.0756 & 1.0805 & 0.46 & 1.6257 & 51.15 \\
    $(0.189, -0.702)$ & 0.5107 & 0.5118 & 0.23 & 0.5195 & 1.72 \\
    $(0.516, -0.545)$ & 0.4302 & 0.4340 & 0.89 & 0.5826 & 35.45 \\
    $(0.875, -0.905)$ & 1.2018 & 1.2199 & 1.50 & 1.4144 & 17.68 \\
    $(-0.607, -0.358)$ & 0.3124 & 0.3135 & 0.36 & 0.3621 & 15.91 \\
    $(-0.348, -0.209)$ & 0.1042 & 0.1045 & 0.29 & 0.1138 & 9.16 \\
    $(0.040, -0.232)$ & 0.0546 & 0.0551 & 0.88 & 0.0548 & 0.33 \\
    $(0.598, -0.249)$ & 0.2408 & 0.2420 & 0.51 & 0.2914 & 21.03 \\
    $(0.908, -0.253)$ & 0.4762 & 0.4779 & 0.34 & 0.5058 & 6.20 \\
    $(-0.601, 0.489)$ & 0.4197 & 0.4212 & 0.36 & 0.5700 & 35.82 \\
    $(-0.443, 0.161)$ & 0.1240 & 0.1243 & 0.24 & 0.1389 & 11.96 \\
    $(0.145, 0.400)$ & 0.1705 & 0.1710 & 0.30 & 0.1740 & 2.07 \\
    $(0.477, 0.204)$ & 0.1554 & 0.1560 & 0.38 & 0.1717 & 10.48 \\
    $(0.756, 0.066)$ & 0.2901 & 0.2909 & 0.27 & 0.2967 & 2.25 \\
    $(-0.750, 0.541)$ & 0.5739 & 0.5781 & 0.72 & 0.7150 & 24.57 \\
    $(-0.490, 0.828)$ & 0.8056 & 0.8100 & 0.54 & 1.0778 & 33.78 \\
    $(-0.194, 0.917)$ & 0.8597 & 0.8626 & 0.33 & 0.8740 & 1.66 \\
    $(0.229, 0.762)$ & 0.6069 & 0.6088 & 0.32 & 0.6745 & 11.15 \\
    $(0.819, 0.614)$ & 0.7124 & 0.7140 & 0.22 & 0.7366 & 3.40 \\ 
    \bottomrule
  \end{tabular}
\end{table}

\subsection{A Chemical Reactor Example with Comparison to EDMD-based Controllers}
Next we consider a non-isothermal CSTR with an irreversible elementary reaction $\mr{A} \xrightarrow{k} \mr{B}$. The states are the reactant concentration and temperature, $(c_\mr{A}, \, T)$, and the input is the coolant temperature $T_\mr{c}$. The model is derived from mass and energy balances, with all parameter values listed in Table~\ref{tab:ex2parameters}:
\begin{equation*}
\begin{aligned}
V \frac{\mathrm{d}c_\mr{A}}{\mathrm{d}t} &= q\,(c_\mr{Ai} - c_\mr{A}) - Vk_0 \mr{e}^{-\frac{E}{RT}} c_\mr{A}, \\
V \rho C_p \frac{\mathrm{d}T}{\mathrm{d}t} &= q \rho C_p\,(T_\mr{i} - T) + (-\Delta H_\mr{R})\, Vk_0 \mr{e}^{-\frac{E}{RT}} c_\mr{A} + UA\,(T_\mr{c} - T).
\end{aligned}
\end{equation*}
The equilibrium point of interest is $(c_\mr{A}^\circ, T^\circ) = (0.7539\text{ mol/L}, 335\text{ K})$ under $T_\mr{c}^\circ = 322.91$ K. Hence, by defining $u = (T_\mr{c} - T_\mr{c}^\circ)/5\text{ K}$ and $x = ((c_\mr{A} - c_\mr{A}^\circ)/0.5\text{ mol/L}, (T-T^\circ)/10\text{ K})$, we have a continuous-time input-affine system \eqref{eq:system}. 
The state space is $\mbb{X} = [-0.4, 0.4]\times[-1, 1]$ after scaling (i.e., $c_\mr{A} \in c_\mr{A}^\circ \pm 0.2$ mol/L and $T\in [325, 345]$ K), and $n=100$ points are sampled uniformly from $\mbb{X}$. 

\begin{table}[!t]
  \centering
  \caption{Model parameters of Example 2.}\label{tab:ex2parameters}
  \begin{tabular}{llr}
    \toprule
    Parameter & Description & Value \\
    \midrule 
    $q$ & volumetric flow rate & $100$ L/min \\
    $V$ & reactor volume & $100$ L \\
    $c_\mr{Ai}$ & feed concentration of A & $1$ mol/L \\
    $T_\mr{i}$ & feed temperature & $350$ K \\
    $\rho$ & density & $1000$ g/L \\
    $C_p$ & heat capacity & $0.239$ J/(g$\cdot$K) \\
    $-\Delta H_\mr{R}$ & heat of reaction & $5 \times 10^{4}$ J/mol \\
    $E/R$ & activation temperature & $8750$ K \\
    $k_0$ & pre-exponential factor & $7.2 \times 10^{10}$ min$^{-1}$ \\
    $UA$ & heat-transfer coefficient $\times$ area & $1.3145 \times 10^{5}$ J/(min$\cdot$K) \\
    \bottomrule
  \end{tabular}
\end{table}

\paragraph{Comparison of Nonlinear MPC and Koopman--Riccati synthesis.} Consider the optimal control problem with $q(x) = |x|^2$, which no longer has a closed-form solution. Therefore, a nonlinear MPC (NMPC) based on the true model (assuming it is available) with a horizon of $4$ minutes ($400$ steps with a sampling time of $0.01$ minute) can be used as an implicit feedback law with approximately optimal performance. 
Comparing the cost and policy profiles of the nonlinear MPC and the proposed Koopman--Riccati synthesis (in Figure \ref{fig:ex2fig2}), the error is found to be very small, which is further confirmed by the comparison of closed-loop trajectories (in Figure \ref{fig:ex2fig3}). 
Computing the accumulated costs from $20$ initial conditions drawn by stratified random sampling (as in Example 1), the loss of performance is only $0.10\%$ on average and $0.36\%$ in the worst case. 
Therefore, Koopman--Riccati synthesis achieves near-optimal performance, while a closed-loop simulation over $6$ minutes takes only $16.4$ milliseconds on average, compared with $2887.6$ milliseconds ($\sim 2.9$ seconds) for nonlinear MPC, which solves a nonlinear program at every sampling instant. This acceleration for 2 orders of magnitude is due to the merit of direct controller synthesis. 

\begin{figure}[!t]
    \centering
    \includegraphics[width=\textwidth]{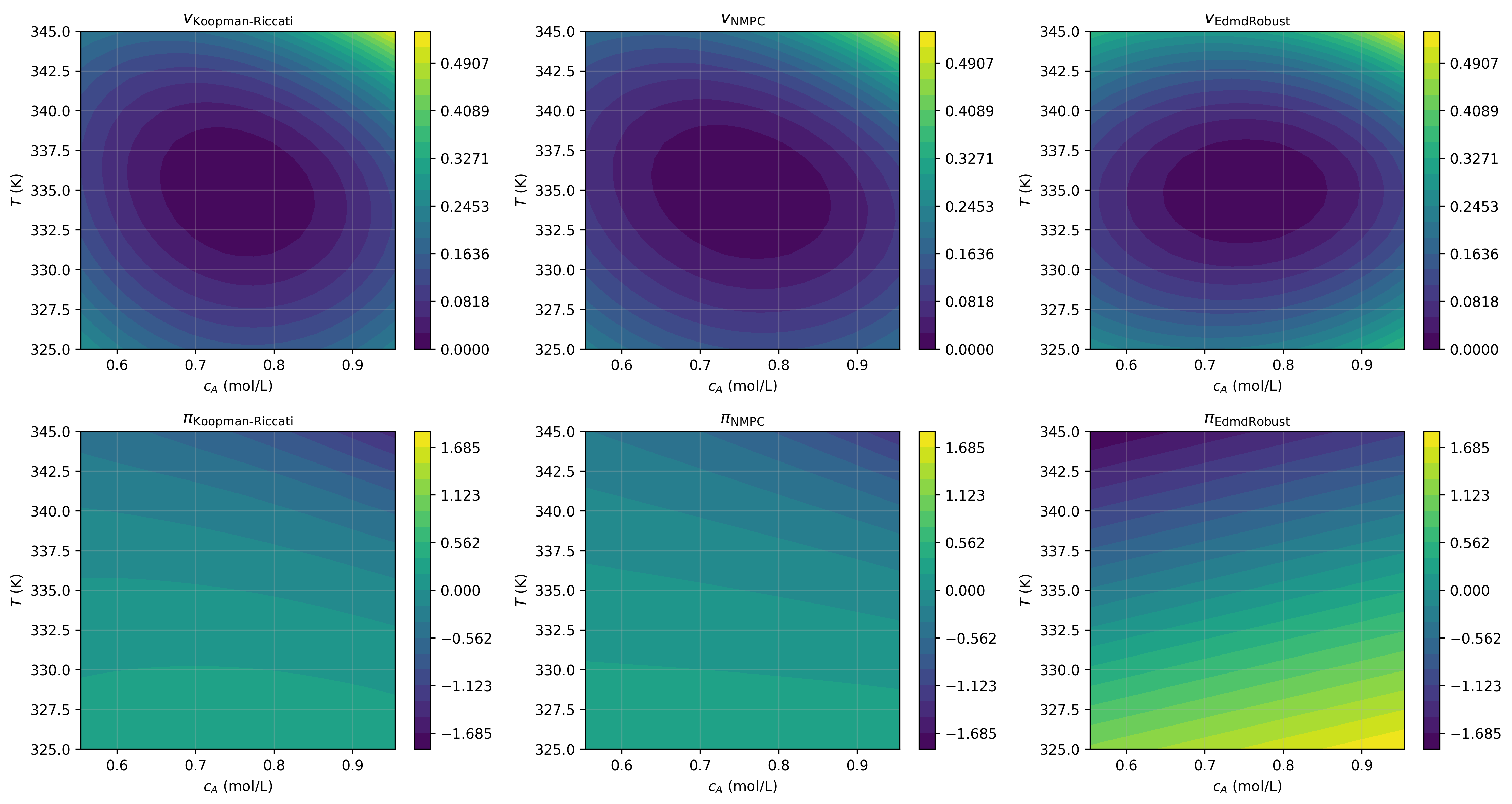}
    \caption{Comparison of cost function and policy profiles for Example 2.}
    \label{fig:ex2fig2}  
    \includegraphics[width=0.35\textwidth]{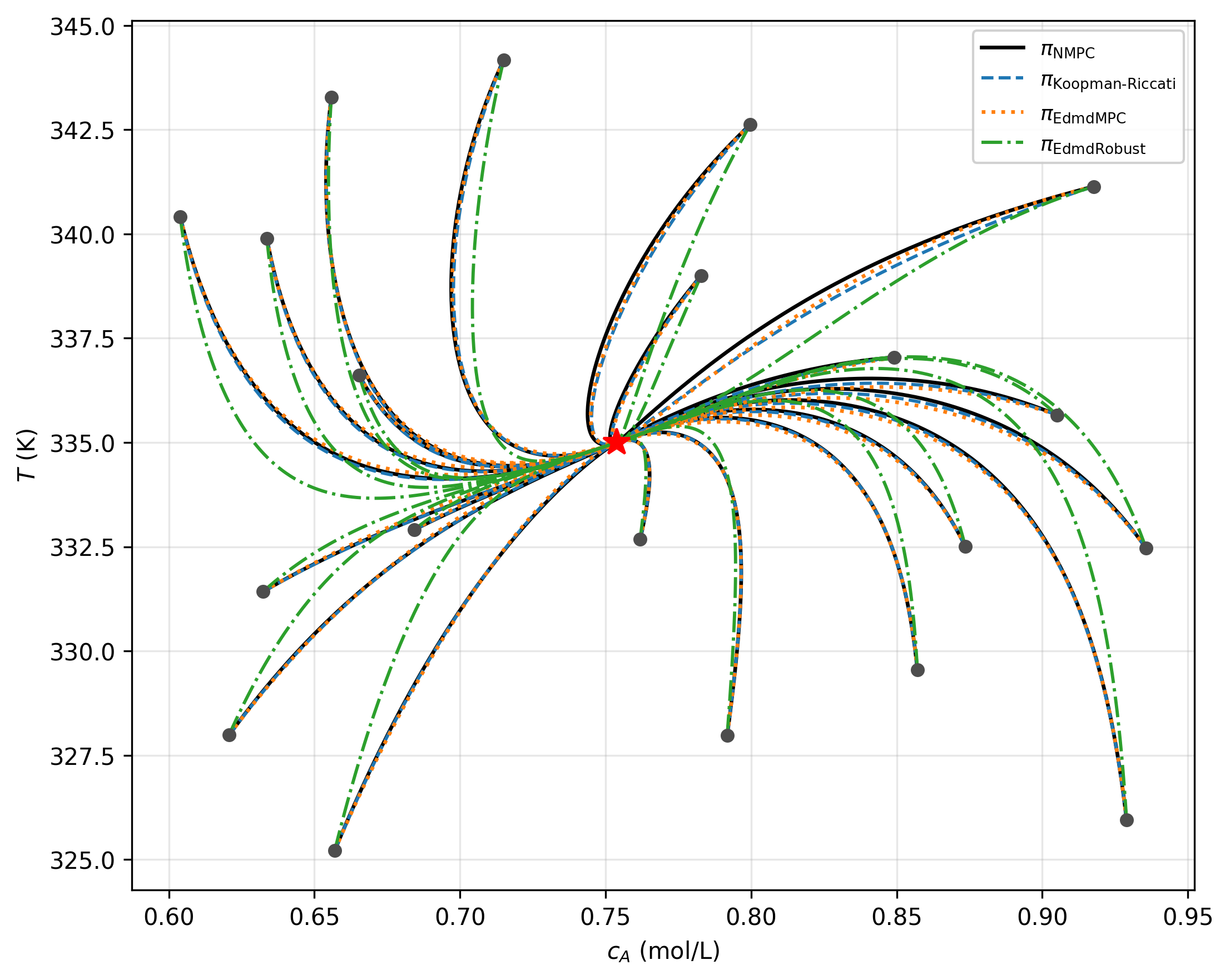}
    \caption{Comparison of closed-loop trajectories for Example 2.}
    \label{fig:ex2fig3}
\end{figure}

\begin{table}[!t]
  \centering
  \caption{Closed-loop cost relative to nonlinear MPC with the true model (mean and worst case over $20$ initial conditions) and mean computational time of one closed-loop simulation in $6$ min for Example 2.}
  \label{tab:ex2comparison}
  \begin{tabular}{lrrr}
    \toprule
    Controller & \multicolumn{2}{c}{Cost} & Time (ms) \\
    \cmidrule(lr){2-3}
     & mean & worst & \\
    \midrule
    Koopman--Riccati & $+0.10\%$ & $+0.36\%$ & $16.4$ \\
    EDMD--Robust ($c_r = 0.3$, degree 1) & $+97.84\%$ & $+179.85\%$ & $21.7$ \\
    EDMD--Robust ($c_r = 0.77$, degree 1) & $+63.34\%$ & $+122.75\%$ & $24.0$ \\
    EDMD--MPC (degree 2) & $+0.50\%$ & $+1.46\%$ & $10374.4$ \\
    EDMD--MPC (degree 3) & $+0.05\%$ & $+0.13\%$ & $30569.4$ \\
    EDMD--LQR (degree 2) & $+0.44\%$ & $+1.11\%$ & $9.6$ \\
    EDMD--LQR (degree 3) & $+0.08\%$ & $+1.10\%$ & $9.9$ \\
    \midrule
    NMPC (reference) & --- & --- & $2887.6$ \\
    \bottomrule
  \end{tabular}
\end{table}

\paragraph{Comparison to EDMD-based approaches.} Using a finite number of dictionary functions (here, monomial functions by convention) to lift the state coordinate $z=\psi(x)$, it is possible to use EDMD to identify a bilinear surrogate model $\dot z = Az + Bu + \sum_{j=1}^{d_u} u_jB_j z$. Hence, we apply EDMD with monomials of degree up to $3$ and first test whether an MPC based on the EDMD model performs well. 
As seen in the closed-loop trajectories (Figure \ref{fig:ex2fig3}) and in Table \ref{tab:ex2comparison}, the EDMD-based bilinear MPC is also near-optimal. With monomials of degree up to $2$, its performance is slightly inferior to that of the kernel-based Koopman--Riccati controller; with degree $3$, it is slightly better, but at a computational cost about three orders of magnitude higher. 
In fact, in this case, the nonlinearity is mild and even an LQR controller based on the linear part of the bilinear model would perform near-optimally. 
\par However, the bilinear EDMD model that is theoretically well justified does not give an effective direct synthesis formulation. In the robust control formulation of Str{\"a}sser et al. \cite{strasser2026overview}, the bilinear model is considered to have a mismatch term from the true dynamics: $|r(x, u)| \leq c_r (|\psi(x)|+|u|)$, and the controller synthesis is to maximize $\log\det P$ where $P\succeq0$ is such that $\BRA{x:\psi(x)^\top P^{-1}\psi(x)\leq 1}$ is certified to be a region of attraction. 
For this example, this EDMD-based robust control approach generally fails; only for a maximum degree of $1$ was it possible to obtain a feasible status from the solver and find a nontrivial controller (i.e., $K\neq 0$), and in this case, the controller is far from optimal.\footnote{The synthesis maximizes $\log\det P$ only, and the gain-related decision variables do not enter the objective. Hence the gain $K$ attaining the optimal $P$ is not unique, and the one returned depends on the numerical path of the SDP solver; the reported gains and closed-loop costs of EDMD--Robust may therefore differ across solver versions and computing platforms.} In addition, there is an essential violation of the theory --- from the sample data of $\psi(x)$ and $u$, it is estimated that $c_r \geq 2.8$, but under such high values of $c_r$, the synthesis problem is infeasible. The two designs in Table \ref{tab:ex2comparison} use $c_r = 0.3$ and $c_r = 0.77$ (the largest value for which the problem is feasible); both are below the estimated $c_r$, and hence neither is certified. 
As a result, among the direct synthesis approaches, the Koopman--Riccati method produces the best performance (by the worst-case cost degradation).

\subsection{A Higher-Dimensional Example}
As the last example we consider a polymerization reactor \cite{daoutidis1990feedforward}. 
The model is written as: 
$$\begin{aligned}
\frac{\mathrm{d}C_{\mathrm{m}}}{\mathrm{d}t} &= -\big[k_{\mathrm{P}}(T) + k_{\mathrm{fm}}(T)\big]\, C_{\mathrm{m}}\, P_0(C_{\mathrm{I}}, T) + \frac{F\,(C_{\mathrm{m,in}} - C_{\mathrm{m}})}{V}, \\
\frac{\mathrm{d}C_{\mathrm{I}}}{\mathrm{d}t} &= -k_{\mathrm{I}}(T)\, C_{\mathrm{I}} + \frac{F_{\mathrm{I}}\, C_{\mathrm{I,in}} - F\, C_{\mathrm{I}}}{V}, \\
\frac{\mathrm{d}T}{\mathrm{d}t} &= k_{\mathrm{P}}(T)\, C_{\mathrm{m}}\, \frac{(-\Delta H_{\mathrm{P}})}{\rho\, c_{\mathrm{p}}}\, P_0(C_{\mathrm{I}}, T) - \frac{U A}{\rho\, c_{\mathrm{p}}\, V}\,(T - T_{\mathrm{j}}) + \frac{F\,(T_{\mathrm{in}} - T)}{V}, \\
\frac{\mathrm{d}D_0}{\mathrm{d}t} &= \big[0.5\, k_{\mathrm{Tc}}(T) + k_{\mathrm{Td}}(T)\big]\, P_0(C_{\mathrm{I}}, T)^2 + k_{\mathrm{fm}}(T)\, C_{\mathrm{m}}\, P_0(C_{\mathrm{I}}, T) - \frac{F\, D_0}{V}, \\
\frac{\mathrm{d}D_1}{\mathrm{d}t} &= M_{\mathrm{m}}\, \big[k_{\mathrm{P}}(T) + k_{\mathrm{fm}}(T)\big]\, C_{\mathrm{m}}\, P_0(C_{\mathrm{I}}, T) - \frac{F\, D_1}{V}, \\
\frac{\mathrm{d}T_{\mathrm{j}}}{\mathrm{d}t} &= \frac{F_{\mathrm{cw}}}{V_0}\,(T_{\mathrm{w0}} - T_{\mathrm{j}}) + \frac{U A}{\rho_{\mathrm{w}}\, c_{\mathrm{w}}\, V_0}\,(T - T_{\mathrm{j}}),
\end{aligned}$$
where the rate constants follow the Arrhenius law, $k_{\ell}(T) = Z_{\ell} \exp(-E_{\ell}/RT)$ for reactions $\ell \in \{\mathrm{Tc}, \mathrm{Td}, \mathrm{I}, \mathrm{P}, \mathrm{fm}\}$, and the concentration of polymer chains is given by the quasi-steady-state approximation:
\begin{equation*}
P_0(C_{\mathrm{I}}, T) = \sqrt \frac{2 f^{\ast} C_{\mathrm{I}}\, k_{\mathrm{I}}(T)}{k_{\mathrm{Td}}(T) + k_{\mathrm{Tc}}(T)}.
\end{equation*}
The states include $(C_\mr{m},\,C_\mr{I},\,T,\,D_0,\,D_1,\,T_\mr{j})$, the monomer concentration, initiator concentration, temperature, polymer molar concentration, polymer mass concentration, and cooling jacket temperature. 
The inputs include the initiator flow rate and cooling water flow rate $(F_\mr{I},\,F_\mr{cw})$. 

\begin{table}[!t]
  \centering
  \caption{Parameters and steady state of Example 3.}
  \label{tab:ex3parameters}
  \begin{tabular}{llr}
    \toprule
    Parameter & Description & Value \\
    \midrule
    \multicolumn{3}{l}{\emph{Kinetics}} \\
    $Z_{\mathrm{Tc}}$, $E_{\mathrm{Tc}}$ & termination by coupling & $3.8223 \times 10^{10}$ m$^3$/(kmol\,h), $2.9442 \times 10^{3}$ kJ/kmol \\
    $Z_{\mathrm{Td}}$, $E_{\mathrm{Td}}$ & termination by disproportionation & $3.1457 \times 10^{11}$ m$^3$/(kmol\,h), $2.9442 \times 10^{3}$ kJ/kmol \\
    $Z_{\mathrm{I}}$, $E_{\mathrm{I}}$ & initiation & $3.7920 \times 10^{18}$ h$^{-1}$, $1.2877 \times 10^{5}$ kJ/kmol \\
    $Z_{\mathrm{P}}$, $E_{\mathrm{P}}$ & propagation & $1.7700 \times 10^{9}$ m$^3$/(kmol\,h), $1.8283 \times 10^{4}$ kJ/kmol \\
    $Z_{\mathrm{fm}}$, $E_{\mathrm{fm}}$ & chain transfer to monomer & $1.0067 \times 10^{15}$ m$^3$/(kmol\,h), $7.4478 \times 10^{4}$ kJ/kmol \\
    $f^{\ast}$ & initiator efficiency & $0.58$ \\
    $-\Delta H_{\mathrm{P}}$ & heat of propagation & $57\,800$ kJ/kmol \\
    $R$ & ideal-gas constant & $8.314$ kJ/(kmol\,K) \\ 
    \midrule
    \multicolumn{3}{l}{\emph{Reactor and cooling system}} \\
    $F$ & monomer volumetric flow rate & $1.00$ m$^3$/h \\
    $V$ & reactor volume & $0.1$ m$^3$ \\
    $\rho$ & density of the reacting mixture & $866$ kg/m$^3$ \\
    $c_{\mathrm{p}}$ & heat capacity of the reacting mixture & $2.0$ kJ/(kg\,K) \\
    $C_{\mathrm{I,in}}$ & initiator concentration in the initiator feed & $8.0$ kmol/m$^3$ \\
    $M_{\mathrm{m}}$ & molecular weight of the monomer & $100.12$ kg/kmol \\
    $U$ & overall heat-transfer coefficient & $720$ kJ/(h\,K\,m$^2$) \\
    $A$ & heat-transfer area & $2.0$ m$^2$ \\
    $\rho_{\mathrm{w}}$ & density of the cooling water & $1000$ kg/m$^3$ \\
    $c_{\mathrm{w}}$ & heat capacity of the cooling water & $4.2$ kJ/(kg\,K) \\
    $V_0$ & effective volume of the cooling jacket & $0.02$ m$^3$ \\
    $T_{\mathrm{w0}}$ & inlet temperature of the cooling water & $293.2$ K \\
    $C_{\mathrm{m,in}}$ & monomer concentration in the feed & $6.0$ kmol/m$^3$ \\
    $T_{\mathrm{in}}$ & feed temperature & $350$ K \\
    \midrule
    \multicolumn{3}{l}{\emph{Nominal inputs and setpoints}} \\
    $F_{\mathrm{I}}$ & initiator flow rate, nominal & $0.05392$ m$^3$/h \\
    $F_{\mathrm{cw}}$ & cooling-water flow rate, nominal & $3.2582$ m$^3$/h \\
    $y_1^{\mathrm{sp}}$ & setpoint of $D_1/D_0$ & $25\,000$ kg/kmol \\
    $y_2^{\mathrm{sp}}$ & setpoint of $T$ & $335$ K \\
    \bottomrule
  \end{tabular}
\end{table}

\paragraph{Data collection and processing.} 
The state space is no longer a very low-dimensional one. For such cases, it is not effective to use random uniform sampling in a hypercube as we did for the previous two examples. Instead, a random binary sequence is applied to the inputs, at the scale of $30\%$ of their steady-state values, with a dwell time of $0.3$ minutes ($1/20$ of the residence time of the reactor) to excite the system around the equilibrium. 
Due to the open-loop stability, this excitation results in a stationary process of the state; from its stationary distribution, a sample of size $n=300$ is collected (see Figure \ref{fig:ex3fig1}). 
We note that the sixth state is in fact redundant due to $D_1 = M_\mr{m}(C_\mr{m,in}-C_\mr{m})$, and hence we consider the system as one with $d_x = 5$ and discard $D_1$. Hence, we use $s=4.5$ which is greater than $d_x/2=2.5$. 
The five state components are rescaled by $C_\mr{m}$: 0.1 kmol/m$^3$, $C_\mr{I}$: 0.08 kmol/m$^3$, $T$: 2 K, $D_0$: $9 \times 10^{-4}$ kmol/m$^3$, $T_\mr{j}$: 2 K. 
\begin{figure}[!t]
    \centering
    \includegraphics[width=0.75\linewidth]{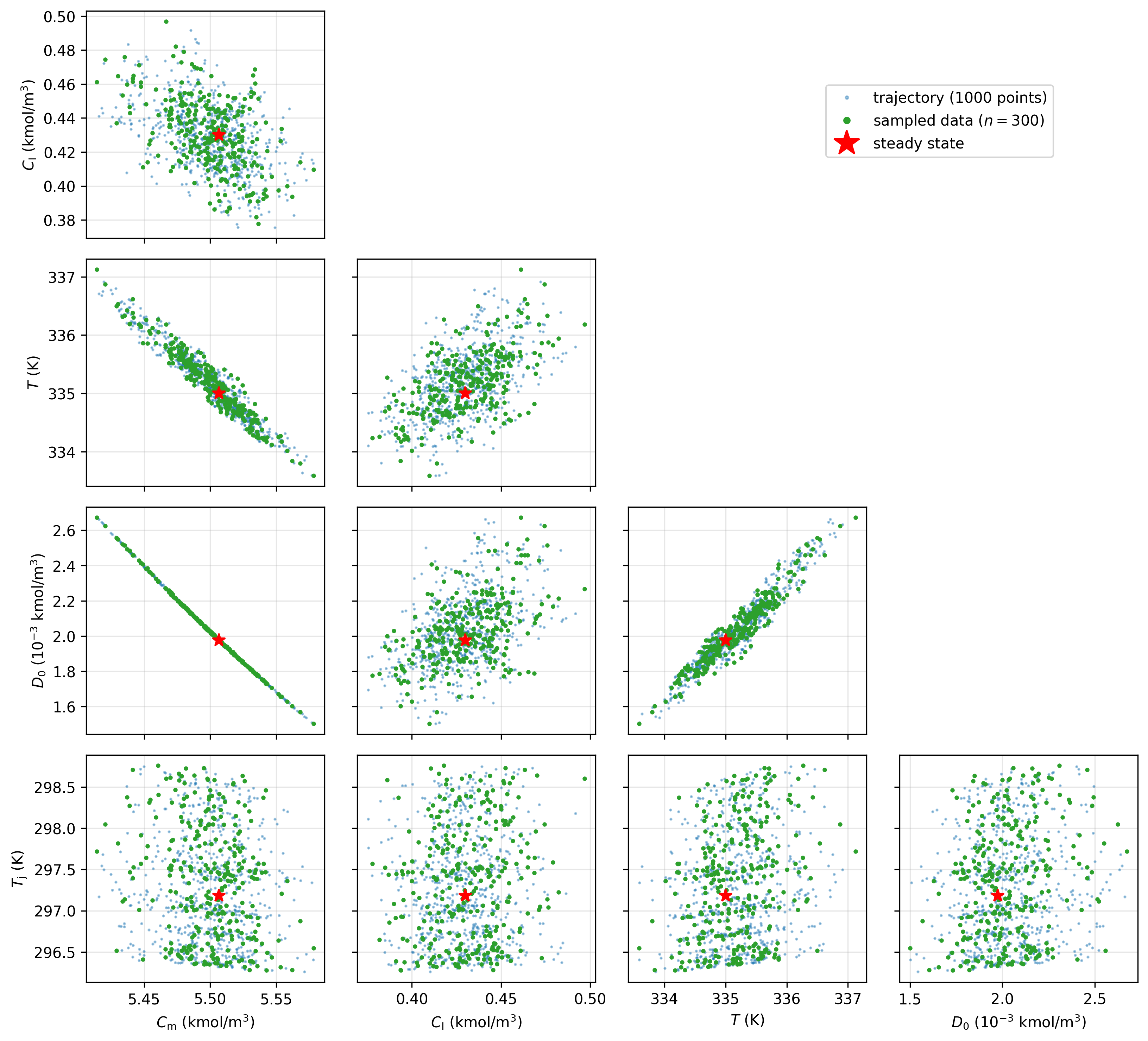}
    \caption{Stationary distribution of data points under excitation, used for sampling in Example 3.}
    \label{fig:ex3fig1}
\end{figure}

\paragraph{Optimal controller synthesis.} The parameter values are given in Table \ref{tab:ex3parameters}. For control, we consider $y_1=D_1/D_0$ (number-average molecular weight of the polymer) and $y_2=T$ (temperature) as two controlled variables to be regulated at their setpoints, and hence the optimal control problem defines $q(x) = (y_1 - y_1^\mr{sp})^2 / (y_1^\mr{scale})^2 + (y_2 - y_2^\mr{sp})^2 / (y_2^\mr{scale})^2$. Here the scales $y_1^\mr{scale} = 2500$ kg/kmol (which is 10\% of the setpoint) and $y_2^\mr{scale} = 2$ K. The input scales for the two input components are chosen to be $20\%$ of their steady-state values.  
With these settings, the Koopman--Riccati controller is synthesized and compared to nonlinear MPC in closed-loop simulations for $1.5$ hours under 20 initial conditions. The results are reported in Table \ref{tab:ex3comparison}, which again shows that Koopman--Riccati is near-optimal. (For this specific example, linearized LQR performs well too, since the state region covered by data is not wide enough to exhibit severe nonlinearity. On the other hand, linearized LQR still requires the full model, and it can be difficult to tell \textit{a priori} whether nonlinearity is strong.)

\paragraph{Comparison to kernel-based Koopman MPC.} We also compare against Koopman MPC with a kernel-based surrogate model. Following the kernel EDMD approach of \cite{bold2025kernel}, a discrete-time control-affine surrogate $x^+ = \hat{f}_0(x) + \sum_{j=1}^{d_u} u_j \hat{f}_j(x)$ over the sampling interval of $0.3$ minutes is learned from one-step simulations of the reactor. The centers are the $300$ data points used above together with the origin, and at each center the system is simulated under the $3$ inputs $u = (0,0)$, $u = (\delta,0)$, and $u = (0,\delta)$ with $\delta = 0.25$, so that $\hat{f}_0$ and $\hat{f}_j$ are identified exactly at the centers and then interpolated. 
In the \emph{Sobolev} variant, both $\hat{f}_0$ and $\hat{f}_j$ are interpolated with a Wendland kernel ($k = 1$, $\ell=8$). In the \emph{linear--Sobolev} variant, $\hat{f}_0$ is instead interpolated with the corresponding linear--Sobolev kernel, in line with the requirement $f_0 \in \rg C^s$, while $\hat{f}_j$ is interpolated with the Wendland kernel as before. 
On $500$ held-out states, the mean one-step prediction errors are $6.2 \times 10^{-3}$ and $5.7 \times 10^{-3}$ (relative to a mean step of $0.03$) for the two surrogates, respectively. Each surrogate is used as the predictor in MPC with the same horizon ($2$ hours) and stage cost as the nonlinear MPC and without terminal cost, as in \cite{schimperna2026data}.

As shown in Table \ref{tab:ex3comparison}, both Koopman MPC controllers are near-optimal, with the linear--Sobolev variant (mean $+0.02\%$, worst $+0.05\%$) more accurate than the Sobolev variant (mean $+0.14\%$, worst $+0.43\%$) due to the better approximation of the drift near the equilibrium. However, they require solving a large-scale nonlinear program at every sampling instant, and each closed-loop simulation takes about $130$--$150$ seconds, about three orders of magnitude slower than Koopman--Riccati and even $30$--$40$ times longer than with the nonlinear MPC using the true model (whose dynamics are integrated much more cheaply than the kernel surrogate with $n=300$ comprising terms).
\begin{table}[!t]
  \centering
  \caption{Closed-loop cost relative to nonlinear MPC with the true model (mean and worst case over $20$ initial conditions) and mean computational time of one closed-loop simulation on $[0, 1.5]$ h for Example 3.}
  \label{tab:ex3comparison}
  \begin{tabular}{lrrr}
    \toprule
    Controller & \multicolumn{2}{c}{Cost} & Time (ms) \\
    \cmidrule(lr){2-3}
     & mean & worst & \\
    \midrule
    Koopman--Riccati & $0.00\%$ & $+0.08\%$ & $170.6$ \\
    Koopman MPC (Sobolev) & $+0.14\%$ & $+0.43\%$ & $147934.0$ \\
    Koopman MPC (linear--Sobolev) & $+0.02\%$ & $+0.05\%$ & $127995.5$ \\
    Linearized LQR & $0.00\%$ & $+0.09\%$ & $34.9$ \\
    Open loop & $+82.93\%$ & $+163.46\%$ & $28.8$ \\
    \midrule
    Nonlinear MPC (reference) & --- & --- & $3722.8$ \\
    \bottomrule
  \end{tabular}
\end{table}

The closed-loop trajectories starting from a few initial conditions are shown in Figure \ref{fig:ex3fig2}. The differences between the Koopman--Riccati control policy and nonlinear MPC are minor, while the computational time is highly different. 
The near-optimal performance of Koopman--Riccati is further confirmed by a disturbance rejection test, where $T_\mr{in}$ is randomly perturbed by a disturbance uniformly distributed on $\pm 5$ K with a dwell time of $0.3$ minutes. As seen in Figure \ref{fig:ex3fig3}, the closed-loop trajectory nearly coincides with that under nonlinear MPC. Over a $5$-hour simulation, the time-averaged cost under Koopman--Riccati is $1.3\%$ lower than that under nonlinear MPC (since the continuous-time feedback reacts to each change of $T_\mr{in}$ immediately, whereas MPC holds its input over each sampling interval), and those under the Sobolev and linear--Sobolev Koopman MPC are $0.5\%$ and $0.1\%$ higher, respectively, while the computational time is $2.8$ seconds for Koopman--Riccati, $36$ seconds for nonlinear MPC, and about $870$ seconds for each Koopman MPC.  
\begin{figure}[!t]
    \centering
    \includegraphics[width=0.5\linewidth]{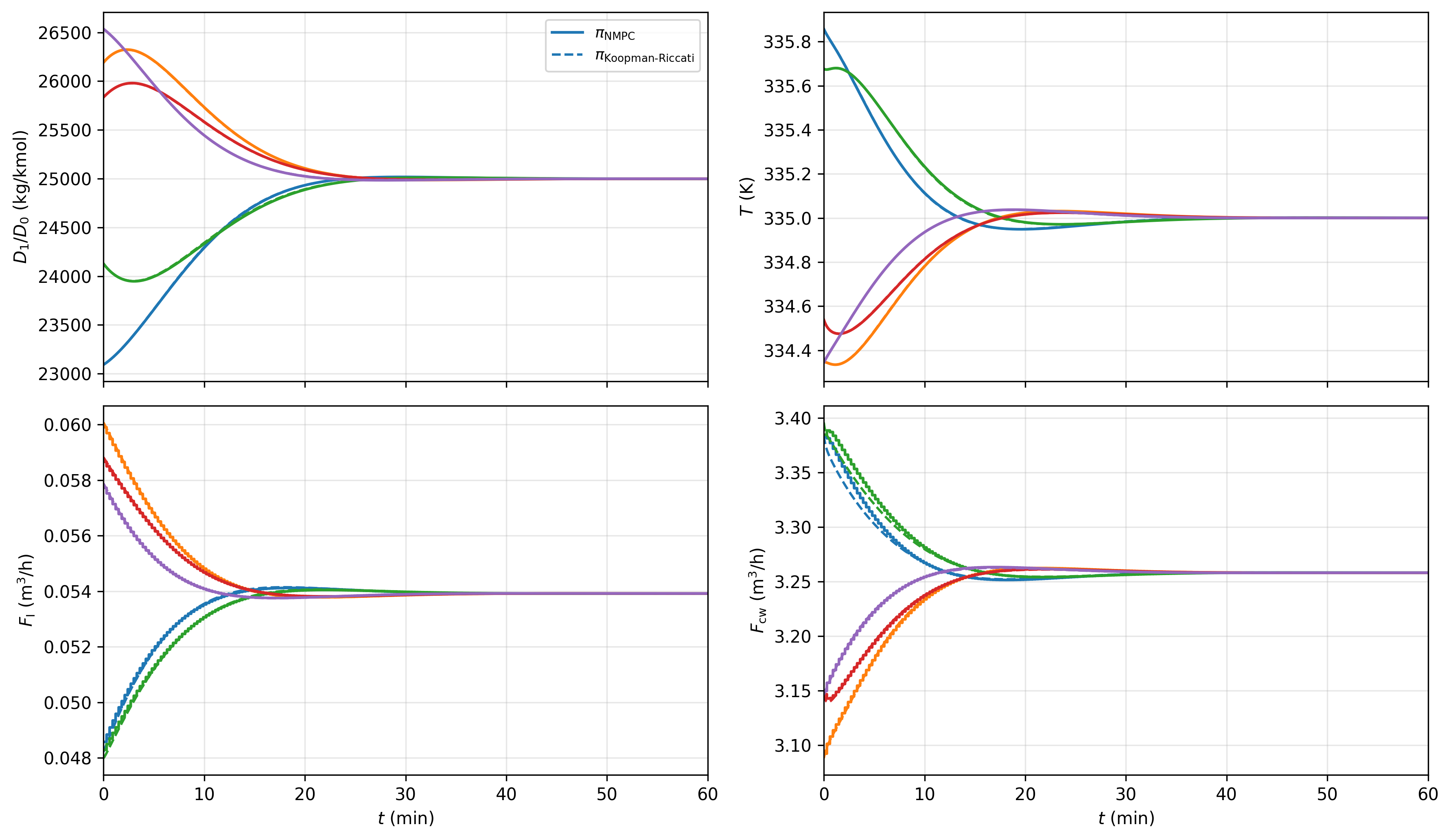}
    \caption{Closed-loop trajectories under different initial conditions for Example 3.}
    \label{fig:ex3fig2} 
    \includegraphics[width=0.5\linewidth]{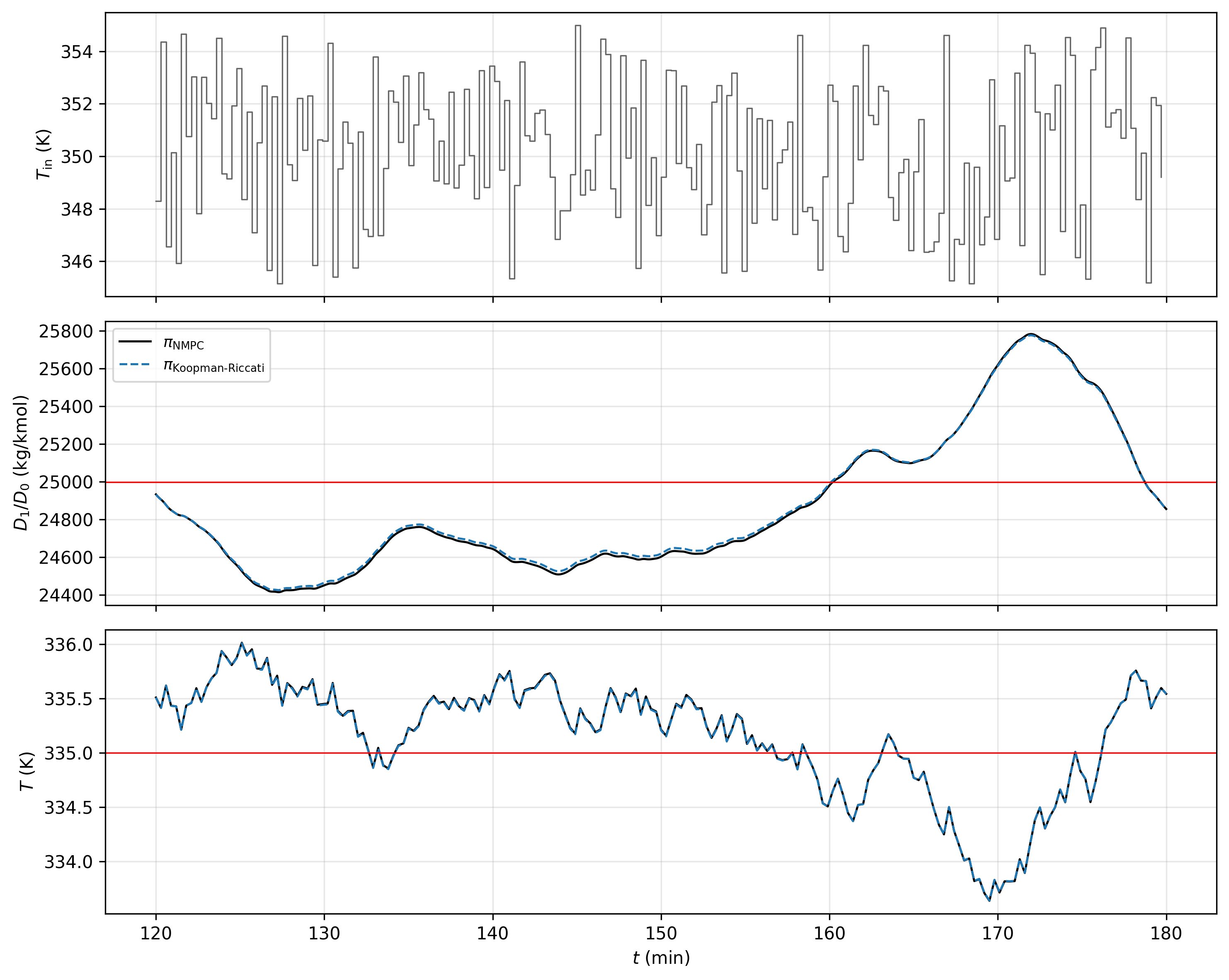}
    \caption{Closed-loop trajectories under inlet temperature disturbances for Example 3.}
    \label{fig:ex3fig3}
\end{figure}

\section{Conclusions and Discussions}
\paragraph{Conclusion of this paper.} We conclude that for continuous-time input-affine systems, provided that their nonlinearity belongs to a class of sufficient smoothness, the problems of stabilizing controller synthesis and optimal controller synthesis (unconstrained, with a quadratic cost on input) can be solved with kernel-based methods that result in formulations analogous to finite-dimensional linear systems. 
The kernel adopted here is the linear--Sobolev kernel that defines an RKHS comprising state-dependent functions that are locally at least linear and globally Sobolev-regular, and the kernel quadratic/SOS forms on this RKHS can be used for capturing the control Lyapunov function and its decay rate, or the control costs. 
For computational tractability, Yosida approximation is used to approximate unbounded infinitesimal operators of semigroups as bounded operators, and the stabilization constraints or optimality conditions are imposed on the finite sample. 
Practically, compared to existing methods for Koopman-based control and Koopman-based controller synthesis, the proposed Koopman--Lyapunov and Koopman--Riccati synthesis methods are computationally advantageous, avoid repetitive online computation and large-scale nonlinear programming as in nonlinear MPC or kernel-based MPC, and do not suffer from unnecessary complications of bilinearity. 
This work, therefore, provides a relatively complete answer to the program envisioned by the pioneer researchers on Koopman operators for control --- \emph{using ``linear predictors'' for nonlinear control}. 

\paragraph{Future directions.} The present work purposefully restricts the scope to continuous-time input-affine systems (for which the operator models do not involve a ``lifting'' of inputs), and for the optimal control problem, requires a cost that involves a quadratic cost on the inputs. In such a problem, the relation of the optimal cost and the optimal policy, under the Yosida approximation of the Perron--Frobenius operators, becomes explicit, and hence the classical Riccati structure is recovered in the form of a kernel method. 
This structure is no longer observed in two more complex settings. (i) When the input is \emph{constrained}, the optimality condition involves an inner layer optimization on inputs, whose solution is nonsmooth even in the simplest case of box-constrained inputs. With a nonsmooth policy, the synthesis must rely on either a different approach from the RKHS or an approximation scheme by a sequence of smooth policies. (ii) When the state is also \emph{constrained}, e.g., in safety-critical systems where the state is forbidden to enter an assigned ``unsafe region'', even the cost function is nonsmooth. 
It is therefore of interest to see if related problems, such as estimating the domain of null controllability or synthesizing a control barrier function, can be formulated and solved efficiently in the same spirit as this paper. 

\paragraph{Open problems.} In the nonlinear control literature, the controllability concepts were typically characterized through differential geometry \cite{jurdjevic1997geometric}. Revealing the connections between geometric controllability or stabilizability and the existence of a Koopman semigroup-based controller would be a meaningful addition to the current Koopman operators research. 
Moreover, observer synthesis for autonomous nonlinear systems and controller synthesis for state-measurable systems can be addressed separately; although a generic separation principle is too difficult to establish, it would be of interest to clarify when and whether a separately designed observer and controller can be combined. 
If the state vector is indeed unmeasurable, the Koopman operator model itself can only be identified based on input and output data, instead of on the state data. The author believes that this problem is of particular importance to control engineers. 

\section*{Acknowledgments}
The work is supported by NSF CBET Award \#2414369 and CAREER Award \#2543029. The author acknowledges the use of Claude Code (Anthropic), powered by Claude Opus 5.5, for assistance with code development, hyperparameter tuning, and model validation in this work. All AI-generated code was reviewed, tested, and validated by the author, who takes full responsibility for its correctness. 

\bibliographystyle{ieeetr}
\bibliography{mybib.bib}
\end{document}